\definecolor{dblue}{rgb}{0,0,.6}
\numberwithin{equation}{section}
\newtheorem{theorem}{Theorem}[section]
\theoremstyle{plain}
\newtheorem{corollary}[theorem]{Corollary}
\newtheorem{definition}[theorem]{Definition}
\newtheorem{lemma}[theorem]{Lemma}
\newtheorem{proposition}[theorem]{Proposition}
\newtheorem{remark}[theorem]{Remark}
\newcommand{\del}{\partial}
\newcommand{\Z}{\mathbb Z}
\newcommand{\Q}{\mathbb Q}
\newcommand{\C}{\mathbb C}
\newcommand{\N}{\mathbb N}
\newcommand{\CP}{\mathbb P}
\newcommand{\RR}{\operatorname{R}}
\newcommand{\im}{\operatorname{im}}
\newcommand{\Pic}{\operatorname{Pic}}
\newcommand{\id}{\operatorname{id}}
\newcommand{\Spec}{\operatorname{Spec}}
\newcommand{\pr}{\operatorname{pr}}
\newcommand{\NS}{\operatorname{NS}}
\newcommand{\codim}{\operatorname{codim}}
\newcommand{\CH}{\operatorname{CH}}
\newcommand{\supp}{\operatorname{supp}}
\newcommand{\sing}{\operatorname{sing}} 
\newcommand{\red}{\operatorname{red}}
\newcommand{\cl}{\operatorname{cl}}
\newcommand{\F}{\mathbb F}
  \newcommand{\coker}{\operatorname{coker}}
  \newcommand{\Griff}{\operatorname{Griff}}  
\newcommand{\tors}{\operatorname{tors}}  
\newcommand{\Tors}{\operatorname{Tors}}
\newcommand{\alg}{\operatorname{alg}}
  \newcommand{\Grifftilde}{A_0 }  
     \newcommand{\Sh}{\operatorname{Shv}}
\newcommand{\et}{\text{\'et}}
\newcommand{\proet}{\text{pro\'et}}
\newcommand{\Ab}{\operatorname{Ab}}
\newcommand{\Tr}{\operatorname{Tr}}
\newcommand{\Mod}{\operatorname{Mod}}
\newcommand{\VV}{\mathcal{V}}
\newcommand{\colim}{\operatorname{colim}}
\newcommand{\cx}{\operatorname{an}}
\newcommand{\dashedlongrightarrow}{\xymatrix@1@=15pt{\ar@{-->}[r]&}}
\renewcommand{\longrightarrow}{\xymatrix@1@=15pt{\ar[r]&}}
\renewcommand{\mapsto}{\xymatrix@1@=15pt{\ar@{|->}[r]&}}
\renewcommand{\twoheadrightarrow}{\xymatrix@1@=15pt{\ar@{->>}[r]&}}
\newcommand{\hooklongrightarrow}{\xymatrix@1@=15pt{\ar@{^(->}[r]&}}
\newcommand{\congpf}{\xymatrix@1@=15pt{\ar[r]^-\sim&}}
\renewcommand{\cong}{\simeq}
\begin{document}

\title[Refined unramified cohomology of schemes]{Refined unramified cohomology of schemes}
 
\author{Stefan Schreieder} 
\email{schreieder@math.uni-hannover.de}
\address{Institute of Algebraic Geometry, Leibniz University Hannover, Welfengarten 1, 30167 Hannover, Germany.}

\date{March 2, 2023} 
\subjclass[2020]{primary 14C25; secondary 14F20, 14C30} 
%

\keywords{Algebraic Cycles,  Abel--Jacobi maps, Unramified Cohomology,   Integral Hodge Conjecture.}

\begin{abstract}  
We introduce the notion of  refined unramified cohomology of algebraic schemes and prove comparison theorems that identify some of these groups with cycle groups. 
This generalizes to cycles of arbitrary codimension previous results of Bloch--Ogus, Colliot-Thélène--Voisin, Kahn, Voisin, and Ma. 
We combine our approach with  the Bloch--Kato conjecture, proven by Voevodsky, to show that on a smooth complex projective variety,  any homologically trivial torsion cycle with trivial Abel--Jacobi invariant has coniveau $1$.
This establishes a torsion version of a conjecture of Jannsen  originally formulated $\otimes \Q$. 
 We further show that the group of homologically trivial torsion cycles modulo algebraic equivalence has a finite filtration (by coniveau) such that the graded quotients are determined by higher Abel--Jacobi invariants that we construct.
 This may be seen as a variant for torsion cycles modulo algebraic equivalence of a conjecture of Green.  
 We also prove  $\ell$-adic  analogues of these results  over 
 any field $k$ which contains all $\ell$-power roots of unity.  
\end{abstract}

\maketitle 

\tableofcontents
 
\section{Introduction}
 
Unramified cohomology  
of a smooth variety may be defined as the subgroup of the  cohomology of the generic point given by all classes that have trivial residues at all codimension one points, see \cite{BO} and \cite[4.1.1(a)]{CT}. 
Bloch--Ogus \cite{BO} showed that unramified cohomology in degree 3 is related to the Griffiths group of codimension 2 cycles.
Colliot-Thélène--Voisin \cite{CTV}  computed the failure of the integral Hodge conjecture for codimension 2 cycles on smooth complex projective varieties in terms of unramified cohomology in degree $3$; a similar statement holds for the integral Tate conjecture by Kahn \cite{Kahn}.
A relation between torsion codimension 3 cycles with unramified cohomology in degree $4$ is due to  
Voisin \cite{Voi-unramified} and Ma \cite{Ma}.

The results in \cite{CTV,Kahn,Voi-unramified,Ma} use two main ingredients: the Gersten conjecture proven by Bloch--Ogus \cite{BO}, which identifies unramified cohomology of smooth varieties with the global sections of a certain Zariski sheaf, and the Bloch--Kato conjecture, proven in \cite{MS,Voe:Bloch-Kato}. 

This paper arose from the observation that the aforementioned results from \cite{CTV,Kahn} have more elementary proofs, not relying on the Gersten conjecture, nor on the Bloch--Kato conjecture,  see Section \ref{sec:CTV} 
below for more details. 
This leads us to the notion of refined unramified cohomology,  which generalizes unramified cohomology.  
Our arguments work for cycles of arbitrary codimensions, over arbitrary  fields, and even on singular schemes, see Theorems \ref{thm:main-2-intro} and  \ref{thm:singular} below.
 
Our main results on algebraic torsion cycles that we explain next combine the machinery of refined unramified cohomology with the Bloch--Kato conjecture \cite{Voe:Bloch-Kato}.  

\subsection{Torsion cycles and Abel--Jacobi invariants} \label{subsec:main-result}  
A cycle $z\in \CH^i(X)$ on a complex variety $X$ has coniveau $j$, i.e.\ $z\in N^j\CH^i(X)$, if $z$ is homologically trivial on a closed subset of codimension $j$. 
That is, $z\in N^j$ if $z=\del \gamma$ is the boundary of a locally finite singular chain $\gamma$ whose support is contained in a closed algebraic subset of codimension $j$ in $X$.
This yields a finite descending filtration with $N^0=\CH^i(X)_{\hom}$ and  $N^{i-1}=\CH^i(X)_{\alg}$,  the subgroups of homologically and algebraically trivial cycles, respectively, cf.\ \cite{bloch-duke} and  \cite[p.\ 491, Remark 2]{totaro-JAMS}.

Jannsen showed that on smooth complex projective varieties, $N_j\CH^i(X):=N^{i-j}\CH^i(X)$ is a filtration by adequate equivalence relations, see \cite[Theorem 5.6]{jannsen-3} 
(stated $\otimes \Q$, but the same arguments work integrally).
In particular, $N^\ast$  interpolates naturally between algebraic and homological equivalence, respectively, and it is multiplicative in a strong sense: $N^j\CH^i(X)\cdot \CH^h(X)\subset N^{j+h}\CH^{i+h}(X)$.

\begin{theorem} \label{thm:coniveau1}
Let $X$ be a smooth projective variety over $\C$ and let $i\geq 2$.
A homologically trivial torsion cycle $z\in \CH^i(X)_{\tors}$ has coniveau 1 if and only if Griffiths' Abel--Jacobi invariant $\lambda(z)\in H^{2i-1}(X,\Q/\Z)$ admits a lift to  $N^1H^{2i-1}(X,\Q)$. 
\end{theorem}

The above theorem uses that the torsion subgroup of Griffiths' intermediate Jacobian $J^{2i-1}(X)$ identifies canonically to the image of $H^{2i-1}(X,\Q)$ in $ H^{2i-1}(X,\Q/\Z)$, cf.\ \cite[p.\ 116]{bloch-compositio}.
The subgroup $N^1H^{2i-1}(X,\Q)\subset H^{2i-1}(X,\Q)$ consists of those classes that vanish away from a divisor. 

The case $i=2$ may be deduced from \cite[\S 18]{MS}; the case  $i\geq 3$ is new.
Torsion cycles to which the above theorem applies are constructed e.g.\ in \cite{totaro-JAMS,SV,Sch-Griffiths}. 

\begin{corollary} \label{cor:n-torsion}
Let $X$ be a smooth projective variety over $\C$.
Then for any positive integer $n$, the $n$-torsion subgroup of $\CH^i(X)_{\tors}/N^1\CH^i(X)_{\tors}$ is finite. 
\end{corollary}

We illustrate the above corollary in the case of cycles of codimension $3$. 
In this case the coniveau filtration is of the form $N^2\subset N^1\subset N^0\subset \CH^3(X)$.
The above corollary shows that the $n$-torsion in $\CH^3(X)$ is finite modulo $N^1$.
In contrast, it is shown in \cite{schoen-product,Rosenschon-Srinivas,totaro-annals}  that the $n$-torsion in $N^2\CH^3(X)$ can be infinite (the torsion classes constructed there are all algebraically trivial, hence contained in $N^2$).  
Similarly, using the theory developed in this paper we show in \cite{Sch-Griffiths} that at least for $n$ even, the $n$-torsion in $\CH^3(X)_{\tors}/N^2$ is in general infinite. 
It follows that $\CH^3(X)_{\tors}/N^0$ and $N^0\CH^3(X)_{\tors}/N^1$ are the only graded pieces of $\CH^3(X)_{\tors}$ whose torsion is always finite.
In this sense, Corollary \ref{cor:n-torsion} is optimal for $i=3$.  
We also note that $ \CH^i(X)/N^1\CH^i(X)$  
is for $i=2$  in general not a finitely generated group, see \cite{clemens,voisin-duke} (it is conceivable that the same holds all $i\geq 2$). 

Another immediate consequence of Theorem \ref{thm:coniveau1} is as follows.

\begin{corollary} \label{cor:jannsen}
Let $X$ be a smooth projective variety over $\C$.
A homologically trivial torsion cycle $z\in \CH^i(X)_{\tors}$ with trivial Abel--Jacobi invariant has coniveau 1.
\end{corollary}

Corollary \ref{cor:jannsen} proves a torsion analogue of a conjecture of Jannsen (going back to a question of Esnault) who writes in \cite[p.\ 227, (5)]{jannsen-3}  that ``cycles in the kernel of the Abel-Jacobi map should be homologous to zero on a divisor, at least modulo torsion''. 
Jannsen shows that his conjecture follows from deep motivic conjectures:  the existence of a Bloch--Beilinson filtration $F^\ast$ (see e.g.  \cite[\S 2.1]{jannsen-3}) together with the standard conjecture B  imply $F^{j}\subset N^{j-1}$ at least rationally, see \cite[p.\ 226, (4)]{jannsen-3}.  
Jannsen's conjecture generalizes a conjecture of Nori \cite{nori}, which predicts that the  transcendental Abel--Jacobi map on codimension 2 cycles modulo algebraic equivalence is injective,  see \cite[p.\ 468]{totaro-JAMS}.

Nowadays essentially all deep conjectures in the theory of algebraic cycles on smooth complex projective varieties are formulated rationally. 
For instance,  Hodge  originally formulated his famous conjecture integrally, but when Atiyah and Hirzebruch showed that it fails for torsion cycles \cite{AH}, it became clear that one should phrase it rationally.
Nonetheless, investigating instances where the Hodge  conjecture may hold integrally remained an active field of research, see e.g.\ \cite{voisin-IHC,CTV,BW,perry}. 
Similarly, it is natural to investigate to which extent other cycle conjectures may hold integrally, or on torsion cycles,  see e.g.\ \cite[\S 8]{totaro-JAMS}.
In view of the many torsion counterexamples to the integral Hodge conjecture (see e.g.\ \cite{AH,totaro-JAMS,SV,Benoist-Ottem-Enriques}), it may be surprising that the integral Jannsen conjecture holds by Corollary  \ref{cor:jannsen}  unconditionally on torsion cycles.

The proof of Theorem \ref{thm:coniveau1} uses a homological interpretation of the transcendental Abel--Jacobi invariant on torsion cycles that does not require the smoothness assumption on $X$. 
The main point of this observation is that it allows for $0\leq j\leq i-2$ to define higher Abel--Jacobi invariants
$$
\overline \lambda_{j,tr}^i:N^j\CH^i(X)_{\tors}\longrightarrow \overline{J}^i_{j,tr}(X)_{\tors},
$$
by applying suitable Abel--Jacobi mappings on closed subschemes of $X$.
Here,
$$
\overline{J}^i_{j,tr}(X)_{\tors}:=\lim_{\substack{\longrightarrow \\ Z\subset X}} H^{BM}_{2d-2i+1}(Z_{\cx},\Q/\Z)/N^1H^{BM}_{2d-2i+1}(Z_{\cx},\Q),
$$
where $d:=\dim X$,  $Z\subset X$ runs through all closed subschemes with $j=\dim X-\dim Z$ and $N^\ast$ denotes Grothendieck's coniveau filtration on the respective Borel--Moore homology group. 

\begin{theorem} \label{thm:green}
Let $X$ be a separated scheme of finite type over $\C$.
Then for all $0\leq j \leq i-2$, we have
$$
 N^{j+1}\CH^i(X)_{\tors}=\ker \left(\overline \lambda_{j,tr}^i:N^j\CH^i(X)_{\tors}\to  \overline{J}^i_{j,tr}(X)_{\tors}\right) .
$$ 
\end{theorem}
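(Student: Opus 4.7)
Both inclusions reduce, via the pushforward isomorphism (\ref{eq:iota_ast-iso-Nj}), to the case $j=0$ of the theorem applied to closed subschemes of $X$: one must show that for every separated scheme $Z$ of finite type over $\C$ and every $n$,
$$
N^1\Griff^n(Z)_{\tors}=\ker\!\left(\overline{\lambda}^{n}_{tr}\colon \Griff^n(Z)_{\tors}\to \overline{J}^n_{tr}(Z)_{\tors}\right).
$$
Indeed, any $z\in N^j\Griff^i(X)_{\tors}$ is of the form $\iota_*(z')$ with $z'\in \Griff^{i-j}(Z)_{\tors}$ for some closed $Z\subset X$ of codim $j$. The vanishing $\overline{\lambda}^{i}_{j,tr}(z)=0$ means, after enlarging $Z$ within the filtered colimit defining $\overline{J}^i_{j,tr}$, that $\overline{\lambda}^{i-j}_{tr}(z')=0$ in $\overline{J}^{i-j}_{tr}(Z)_{\tors}$; and $z\in N^{j+1}\Griff^i(X)_{\tors}$ translates, after further enlarging $Z$ and using injectivity of the colimit map in (\ref{eq:iota_ast-iso-Nj}), into $z'\in N^1\Griff^{i-j}(Z)_{\tors}$.

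The inclusion $N^1\Griff^n(Z)_{\tors}\subseteq\ker(\overline{\lambda}^{n}_{tr})$ is immediate from the functoriality of the transcendental Abel--Jacobi map of Theorem~\ref{thm:main-2-intro}(2) under the Gysin pushforward $H^{2n-3}(W,\Q/\Z(n-1))\to H^{2n-1}(Z,\Q/\Z(n))$ along a codim-$1$ closed embedding $W\hookrightarrow Z$: classes in the image of this pushforward restrict to zero on $Z\setminus W \supseteq F_0 Z$, hence lie in $N^1H^{2n-1}(Z,\Q/\Z(n))$.

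The reverse inclusion is the main obstacle. Given $z'\in \Griff^n(Z)_{\tors}$ with $\overline{\lambda}^{n}_{tr}(z')=0$, Theorem~\ref{thm:main-2-intro}(1) realises $z'$ as a class $\tilde z'\in H^{2n-1}_{n-2,nr}(Z,\Z(n))$ modulo $H^{2n-1}(Z,\Z(n))$. Unwinding the Bockstein construction of $\lambda^n_{tr}$ from Theorem~\ref{thm:main-2-intro}(2), the hypothesis $\overline{\lambda}^n_{tr}(z')=0$ translates into the statement that a representative of $\tilde z'$ on an open neighbourhood of $F_{n-2}Z$ can be adjusted, by a globally defined integral class on $Z$ and by a Bockstein lift drawn from $H^{2n-2}(F_{n-2}Z,\Q/\Z(n))$, so that it restricts to zero on some dense open $U'\subset Z$ with $\codim_Z(Z\setminus U')=1$. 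Restricting the resulting cycle data to $W:=Z\setminus U'$ and reapplying Theorem~\ref{thm:main-2-intro}(1) now to $W$ produces a torsion class $z''\in \Griff^{n-1}(W)_{\tors}$ whose pushforward to $Z$ equals $z'$ modulo algebraic equivalence, so $z'\in N^1\Griff^n(Z)_{\tors}$. The delicate point is to arrange that $z''$ be integral (hence torsion) rather than merely rational: this is precisely what the subgroup $G^nH^{2n-2}_{n-3,nr}$ appearing in Theorem~\ref{thm:main-2-intro}(2) encodes, as it records exactly when a $\Q/\Z$-class on $F_{n-3}Z$ admits a Bockstein lift extending to an integral class on $Z$, which is the mechanism that upgrades the rational vanishing modulo $N^1$ to the geometric statement about codim-$1$ support.
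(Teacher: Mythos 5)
Your overall architecture is correct and matches the paper: one reduces to $j=0$ via the pushforward isomorphism of Lemma \ref{lem:N^jAi_0}, shows the easy containment $N^1\Griff^n(Z)_{\tors}\subseteq\ker(\overline{\lambda}^n_{tr})$ by functoriality of $\lambda^{\bullet}_{tr}$ under Gysin pushforward, and the content is in the reverse inclusion. Your sketch of the easy inclusion, up to a small imprecision (what one actually wants is that the pushed-forward lift $\iota_\ast\beta_W/\ell^r$ lands in $N^1H^{2n-1}(Z,\Q(n))$, not just that the $\Q/\Z$-class lies in $N^1H^{2n-1}(Z,\Q/\Z(n))$), is fine.

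The gap is in the reverse inclusion, specifically in how you justify that ``a representative of $\tilde z'$ \dots\ can be adjusted \dots\ so that it restricts to zero on some dense open $U'$.'' You attribute the delicacy to the subgroup $G^nH^{2n-2}_{n-3,nr}$, but that filtration has nothing to do with this step: $G^\ast$ is the device used to compute $\ker(\lambda^n_{tr})=\mathcal T^n(Z)$ (Theorem \ref{thm:Griff_tors}), whereas here one is computing $\ker(\overline{\lambda}^n_{tr})$, which is a larger group, and the relevant comparison is between the lift $\beta\in H^{2n-1}(Z,\Z(n))$ of $\ell^r\alpha$ and the subgroup $N^1H^{2n-1}(Z,\Q(n))$ that is quotiented out in $\overline{J}^n_{tr}$. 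The actual input that makes the argument work is the torsion-freeness of $H^{2n-1}(F_0Z,\Z(n))$ (Lemma \ref{lem:voev-torsionfree-betti}, a consequence of Remark \ref{rem:bloch-kato} and Voevodsky's proof of the Bloch--Kato conjecture). It is used twice in the proof of Theorem \ref{thm:ker-lambda}(\ref{item:thm:ker-lambda-1}): first, to see that the preimage of $N^1H^{2n-1}(Z,\Q(n))$ under $H^{2n-1}(Z,\Z(n))\to H^{2n-1}(Z,\Q(n))$ is exactly $N^1H^{2n-1}(Z,\Z(n))$; second, after expressing $\ell^s\beta=\ell^s\beta'+\ell^{r+s}\delta$ and adjusting $\alpha$ by $\delta$, to pass from the vanishing of $\ell^{r+s}\alpha$ on $F_0Z$ to the vanishing of $\alpha$ itself on $F_0Z$. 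Without this torsion-freeness, the vanishing you obtain is only up to torsion, which is not enough to conclude $\alpha\in N^1H^{2n-1}_{n-2,nr}(Z,\Z(n))$ and invoke Proposition \ref{prop:Nj-Ai}. As written, your paragraph asserts the conclusion of this manipulation without supplying the argument, and points to the wrong tool for the part you flag as delicate.
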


Since $N^{i-1}\CH^i(X)=\CH^i(X)_{\alg}$ is divisible, the above theorem implies that the torsion subgroup of $A^i(X):=\CH^i(X)/\sim_{\alg}$ admits a finite filtration (by coniveau) such that the graded pieces are determined by higher Abel--Jacobi invariants.
This should be compared to Green's conjecture \cite{green,voisin-annals},  which predicts that  rational Chow groups of smooth complex projective varieties  carry a finite filtration (expected to be the Bloch--Beilinson filtration) such that the graded quotients are determined by higher Abel--Jacobi invariants.

Theorem \ref{thm:green} admits the following $\ell$-adic analogue, concerning the $\ell$-power torsion subgroup $\CH^i(X)[\ell^\infty]$ of $\CH^i(X):=\CH_{\dim X-i}(X)$.

\begin{theorem} \label{thm:green-ell-adic-arbitrary-field}
Let $X$ be a separated scheme of finite type over a field $k$ and let $\ell$ be a prime invertible in $k$.
Assume that $k$ contains all  $\ell$-power roots of unity.
Then for all $0\leq j\leq i-2$, we have
$$
N^{j+1}\CH^i(X)[\ell^\infty]=\ker \left(\overline \lambda_{j,tr}^i:N^j\CH^i(X)[\ell^\infty]\to \overline{J}^i_{j,tr}(X)[\ell^\infty] \right).
$$ 
\end{theorem}
 
The coniveau filtration on  $\CH^i(X)[\ell^\infty]$ as well as the (higher) Abel--Jacobi invariants are defined analogues to the case of complex schemes above, where we replace ordinary Borel--Moore homology by its $\ell$-adic pro-\'etale analogue \cite{BS}, see Proposition \ref{prop:proetale-coho-arbitrary-field} and Definition \ref{def:bar-J^i_j,tr} below.

The above theorem proves analogues of Theorems \ref{thm:coniveau1} and \ref{thm:green} over any field $k$ that contains all $\ell$-power roots of unity. 
This includes in particular an $\ell^{\infty}$-torsion version of Jannsen's conjecture over any field that contains all $\ell$-power roots of unity, such as (function fields over) algebraically closed fields.

\subsection{Refined unramified cohomology and algebraic cycles}
Let $X$ be a separated scheme of finite type over a field $k$.
We consider the 
increasing filtration
$$
 F_0X\subset F_1X\subset \dots \subset F_{\dim X}X= X,
\ \ \ \text{where}\ \ \ 
F_jX:= \{x\in X\mid\codim(x)\leq j\},
$$
and $\codim(x):=\dim X-\dim (\overline{\{x\}})$. 
Each $F_jX$ may be seen as a pro-object in the category of schemes.
For a given (co-)homology functor $H^i(-,A(n))$ that admits pullbacks along open immersions of schemes of the same dimension, the (co-)homology of $F_jX$ is defined as direct limit over all open subsets $U\subset X$ with $F_jX\subset U$. 
We then define the associated $j$-th refined unramified cohomology by
\begin{align} \label{def:H_jnr-intro}
H^i_{j,nr}(X,A(n)):=\im (H^i(F_{j+1}X,A(n))\to H^i(F_jX,A(n))) .
\end{align}
The Gysin sequence (see (\ref{eq:exact-couple})) shows that the case $j=0$ corresponds to classical unramified cohomology.

\subsubsection{Complex schemes} 
Let now $k=\C$.
For an abelian group $A$,  let  
\begin{align} \label{eq:Borel-Moore}
H^{i}(X, A(n)):=H^{BM}_{2d_X-i}(X_{\cx},A(d_X-n)),  
\end{align} 
where
$H^{BM}_\ast$ denotes Borel--Moore homology of the underlying analytic space $X_{\cx}$ and $A(m)=A\otimes_\Z (2 \pi i)^{m}\Z$ denotes the $m$-th Tate twist.
Restriction maps as required above exist in this setting and so we get refined unramified cohomology groups $H^i_{j,nr}(X,A(n))$.

By \cite[\S 19.1]{fulton}, there is a cycle class map   
\begin{align*}
\cl_X^i:\CH^i(X)\longrightarrow H^{2i}(X,\Z(i)) ,
\end{align*}
where $\CH^i(X):=\CH_{d_X-i}(X)$.
We let  $\Griff^i(X):=\ker(\cl_X^i)/\sim_{\alg} $.  

If $X$ is smooth and equi-dimensional, then $H^{BM}_{2d_X-i}(X_{\cx},A)\cong H^{i}_{sing}(X_{\cx}, A)$ and the above map agrees
 with the usual cycle class map in singular cohomology.

\begin{theorem} \label{thm:main-2-intro} 
Let $X$ be a separated scheme of finite type over $\C$ and define refined unramified cohomology as in (\ref{def:H_jnr-intro}) with cohomology theory  in (\ref{eq:Borel-Moore}). 
\begin{enumerate}
\item There are canonical isomorphisms\label{item:Tors(coker)} 
$$
\coker(\cl_X^i) _{\tors}\cong \frac{H^{2i-1}_{i-2,nr}(X,\Q/\Z(i))}{H^{2i-1}_{i-2,nr}( X,\Q(i) )},\ \ \ 
\ \ \ \Griff ^i(X)\cong \frac{H^{2i-1}_{i-2,nr}(X,\Z(i)) }{H^{2i-1} (X,\Z(i)) } .
$$
\item There is a transcendental Abel--Jacobi map \label{item:Abel-Jacobi}
\begin{align*} 
\lambda_{tr}^i: \Griff^i(X)_{\tors} \longrightarrow  \frac{H^{2i-1}(X,\Q/\Z(i))}{N^{i-1}H^{2i-1}(X,\Q(i) )}.
\end{align*}
If $X$ is a smooth projective variety, this agrees with Griffiths' transcendental Abel--Jacobi map \cite{griffiths} restricted to torsion cycles.   
Its kernel 
is  isomorphic to
$$
\ker(\lambda_{tr}^i)\cong  H^{2i-2}_{i-3,nr} (X,\Q/\Z(i)) /G^{i}H^{2i-2}_{i-3,nr} (X,\Q/\Z(i) ) 
$$
and its image is given by $\im(\lambda_{tr}^i)=N^{i-1}H^{2i-1}(X,\Q /\Z (i))_{\operatorname{div}}/N^{i-1}H^{2i-1}(X,\Q (i) ) $.
\end{enumerate}
\end{theorem}
 In the above theorem,  
$N^\ast$ denotes Grothendieck's coniveau filtration and $H^\ast(X,\Q/\Z(i))_{\operatorname{div}}\subset H^\ast(X,\Q/\Z(i))$ denotes the divisible subgroup. 
 Moreover,
 $G^{i}H^{2i-2}_{i-3,nr} (X,\Q /\Z(i) )$ denotes the subspace of $H^{2i-2}_{i-3,nr} (X,\Q /\Z(i) $ generated by classes  that admit a lift $\alpha\in H^{2i-2}(F_{i-2}X,\Q /\Z(i) )$ whose image $\delta(\alpha)\in H^{2i-1}(F_{i-2}X, \Z(i) )$ via the Bockstein map lifts to $H^{2i-1}(X, \Z(i) )$,  cf.\ Definition \ref{def:G} and Lemma \ref{lem:Gi} below.

The above theorem contains the aforementioned results from \cite{BO,CTV,Voi-unramified,Ma} as the special case where $i=2$ in item (\ref{item:Tors(coker)}) and $i=3$ in (\ref{item:Abel-Jacobi}), and  where $X$ is a smooth projective variety. 
Item  (\ref{item:Tors(coker)}) uses Hilbert 90, but not the Bloch--Kato conjecture. 
Item (\ref{item:Abel-Jacobi})  uses the Bloch--Kato conjecture in degree 2, proven by Merkurjev--Suslin,  but not in higher degrees.  

Item (\ref{item:Tors(coker)}) in Theorem \ref{thm:main-2-intro} leads to new results on the integral Hodge conjecture for uniruled varieties. 
Indeed, Voisin \cite{voisin-IHC} proved that the integral Hodge conjecture holds for smooth complex projective threefolds $X$ that are uniruled  (i.e.\ $Z^\ast(X)_{\tors}=0$) and conjectured that it should fail for codimension two cycles on rationally connected varieties of dimension at least four.
This has later been proven in \cite{CTV}  ($\dim X\geq 6$) and in full generality in \cite{Sch-JAMS}.
Taking products $X\times \CP^n$ with $\CP^n$, the examples in \cite{CTV,Sch-JAMS} also yield counterexamples to the integral Hodge conjecture on unirational varieties for cycles of codimension greater than two.
However,  in some sense these non-algebraic Hodge classes should still be regarded as degree four classes, because they are Gysin pushforwards of non-algebraic degree four Hodge classes on a subvariety of $X\times \CP^n$ (namely $X\times \{pt.\}$).  

The tools of this paper allow us to go further by studying the integral Hodge conjecture for Hodge classes (of arbitrary degree)  
 in the following strong sense.

\begin{corollary} \label{cor:IHC}
For any integer $i\geq 2$, there is a smooth uniruled complex projective variety $X$ such that the integral Hodge conjecture fails for codimension $i$-cycles on $X$ in a way that cannot be explained by the failure on proper subvarieties of $X$ in the following sense:
There is a class $\alpha\in  \coker(\cl_X^i) _{\tors}$ such that for any closed subscheme $Z\subset X$ of codimension $j\geq 1$, the class $\alpha$ is not in the image of the natural map $ \coker(\cl_Z^{i-j}) _{\tors} \to  \coker(\cl_X^i) _{\tors}$.
\end{corollary}
 
In the above corollary we may take $X=Y\times E$, where $Y$ is a certain unirational variety of dimension $3i$ and  $E$ is an elliptic curve,  see Theorem \ref{thm:IHC:example} below.
The problem of finding a unirational variety $X$ with the property stated in the corollary remains open for $i\geq 3$.

\subsubsection{Arbitrary ground fields}
Theorem \ref{thm:main-2-intro} admits an $\ell$-adic analogue over any field $k$ in which $\ell$ is invertible. 
The corresponding (co-)homology functor will be the $\ell$-adic pro-\'etale Borel--Moore cohomology, see \cite{BS} and Proposition \ref{prop:proetale-coho-arbitrary-field} below.
For instance, 
\begin{align} \label{eq:Borel-Moore-l-adic}
H^{i}(X, \Z_\ell(n)):=H^{i-2d_X} (X_{\proet},\pi_X^!\widehat \Z_\ell(n-d_X)), 
\end{align}
where $\pi_X:X\to \Spec k$ is the structure map and $d_X=\dim X$.
We construct in Section \ref{subsec:cli} cycle class maps
$$
\cl_X^i:\CH^i(X)_{\Z_\ell} \longrightarrow H^{2i}(X,\Z_\ell(i)),\ \ \text{where}\ \ \CH^i(X)_{\Z_\ell}:=\CH_{d_X-i}(X)\otimes_{\Z} \Z_\ell ,
$$ 
which for $X$ smooth and equi-dimensional coincide with  Jannsen's cycle class in continuous \'etale cohomology, see Lemma \ref{lem:cl=Jannsen}. 
Let $N^\ast$ be the associated coniveau filtration on $\CH^i(X)_{\Z_\ell} $ and put
$$
A_0^i(X)_{\Z_\ell}:=N^0\CH^i(X)_{\Z_\ell}/N^{i-1}\CH^i(X)_{\Z_\ell}.
$$ 
This is the $\ell$-adic Griffiths group of homologically trivial  $\ell$-adic cycles modulo algebraic equivalence if $k$ is algebraically closed, and it is the group of homologically trivial $\ell$-adic cycles modulo rational equivalence if $k$ is finitely generated, see \cite[Lemmas 5.7 and 5.8]{jannsen-3}  and Lemma \ref{lem:AiX} below.
We denote the torsion subgroup of $A_0^i(X)_{\Z_\ell}$ by $A_0^i(X)[\ell^\infty]$.

\begin{theorem} \label{thm:singular}
Let $k$ be a field and let $\ell$ be a prime invertible in $k$.
Let $X$ be a separated scheme of finite type over $k$ and define refined unramified cohomology as in (\ref{def:H_jnr-intro}) with cohomology theory in (\ref{eq:Borel-Moore-l-adic}).
\begin{enumerate}
\item There are canonical isomorphisms  
\label{item:coker-proet}
$$ 
\coker(\cl_X^i) [\ell^\infty] \cong \frac{H^{2i-1}_{i-2,nr}(X,\Q_\ell /\Z_\ell (i))}{H^{2i-1}_{i-2,nr}( X,\Q_\ell(i) ) } \ \ \text{and}\ \ A^i_0(X)_{\Z_\ell}\cong \frac{H^{2i-1}_{i-2,nr}(X,\Z_\ell(i))}{H^{2i-1}(X,\Z_\ell(i))}.
$$
\item \label{item:lambda-thm:singular}
There is a transcendental Abel--Jacobi map  
$$
\lambda_{tr}^i: A^i_0(X)[\ell^\infty] \longrightarrow H^{2i-1}(X,\Q_\ell/\Z_\ell(i))/ N^{i-1}H^{2i-1}(X,\Q_\ell(i) ) .
$$
If  $X$ is a smooth projective variety and $k$ is algebraically closed, then this map is induced by Bloch's Abel--Jacobi map on torsion cycles \cite{bloch-compositio}.
Its kernel is isomorphic to 
$$
\ker(\lambda_{tr}^i)\cong  H^{2i-2}_{i-3,nr} (X,\Q_\ell /\Z_\ell (i)) /G^{i}H^{2i-2}_{i-3,nr} (X,\Q_\ell/\Z_\ell(i) )
$$  
and its image is given by$\im(\lambda_{tr}^i)=N^{i-1}H^{2i-1}(X,\Q_\ell/\Z_\ell(i))_{\operatorname{div} }/N^{i-1}H^{2i-1}(X,\Q_\ell(i) )$.
\end{enumerate}
\end{theorem}

The filtration $N^\ast$ is the coniveau filtration and $G^\ast$ is defined similarly as in Theorem \ref{thm:main-2-intro},  see Definition \ref{def:G} and Lemma \ref{lem:Gi} below.
Moreover, $H^{2i-1}(X,\Q_\ell/\Z_\ell(i))_{\operatorname{div} } \subset H^{2i-1}(X,\Q_\ell/\Z_\ell(i))$ denotes the image of
$H^{2i-1}(X,\Q_\ell (i))\to H^{2i-1}(X,\Q_\ell/\Z_\ell(i))$.

The computation of $\ker(\lambda_{tr}^i)$ uses as before Merkurjev--Suslin's theorem.  
The Bloch--Kato conjecture is not used otherwise (in particular not in item (\ref{item:coker-proet})).

The first isomorphism in item (\ref{item:coker-proet}) generalizes a result of Kahn \cite{Kahn} who proved it for $i=2$ and  $X$ smooth projective.  

\subsection{Comparison to Bloch--Ogus theory and Kato homology} \label{subsec:intro:BO}

Let $X$ be an algebraic scheme over a field $k$ and let $H^i(-,A(n))$ be one of the (co)-homology theories considered above. 
For a point $x\in X$ with closure $Z_x:=\overline{\{x\}}\subset X$, we let $H^i(x,A(n))=H^i(F_0Z_x,A(n))$.  
The Gysin sequence (i.e.\ long exact sequence of pairs), yields in the colimit a long exact sequence
\begin{small}
\begin{align} \label{eq:exact-couple} 
   H^{p+q-1}(F_{p}X,A(n)) \stackrel{f}\to   H^{p+q-1}(F_{p-1}X,A(n))  \stackrel{\del}\to  \bigoplus_{x\in X^{(p)}}H^{q-p}(x,A(n-p)) \stackrel{\iota_\ast} \to    H^{p+q}(F_{p}X,A(n)) ,
\end{align} 
\end{small}
see Lemma \ref{lem:les} below.
Note that the image of $f$ agrees by definition with the refined unramified cohomology group  $H^{p-1+q}_{p-1,nr}(X,A(n))$, which thus coincides with the kernel of the residue map $\del$ above.
This shows in particular that $H^q_{0,nr}(X,A(n))$ corresponds to traditional unramified cohomology. 

The above sequence gives rise to an exact couple $D_1\stackrel{f}\to D_1\stackrel{\del}\to E_1\stackrel{\iota_\ast}\to D_1$, where  
$$
D_1^{p,q}=H^{p-1+q} (F_{p-1}X,A(n)),\ \ \ \ \ \text{and} \ \ \ \ \  
E_1^{p,q}=\bigoplus_{x\in X^{(p)}} H^{q-p}(x,A(n-p)),
$$
and 
$f$, $\del$, and $\iota_\ast$ have bi-degrees $(-1,1)$, $(0,0)$, and $(1,0)$, respectively.
The associated spectral sequence $E_1^{p,q}\Longrightarrow H^{p+q}(X,A(n))$ is convergent.
The derived couple has  the form
$
D_2 \to D_2\to E_2\to D_2
$, where
$$
D_2=\bigoplus_{p,q} D_2^{p,q},\ \ \ \ \ D_2^{p,q}=H^{p-1+q}_{p-1,nr}(X,A(n)) ,
$$
is the direct sum of all refined unramified cohomology groups of $X$.

It follows from Lemma \ref{lem:del=delta} below that $d_1:E_1\to E_1$ agrees with the differential of the coniveau spectral sequence from \cite[\S 3]{BO}, but see also  Remark \ref{rem:refined-intro-2} below.
Hence, $E_2$ agrees with the second page of the coniveau spectral sequence and the derived couple $
D_2 \to D_2\to E_2\to D_2
$
shows that $E_2$ is up to extensions determined by $D_2$,  hence by refined unramified cohomology, see Proposition \ref{prop:higher-unramified} below.
In the special case where $X$ is smooth and equi-dimensional,  the Gersten conjecture proven in \cite{BO} (see also \cite{CTHK}) thus implies that the  cohomologies $H^p(X_{Zar},\mathcal H^q )$  of the Zariski sheaf associated to $U\mapsto H^q(U,A(n))$ are up to extensions determined by refined unramified cohomology.

Without any smoothness assumption on $X$,  but under the condition that the ground field $k$ has finite cohomological dimension $c$,  the derived couple $D_2\to D_2\to E_2\to D_2$ yields for $A=\Z/\ell^r$ canonical isomorphisms 
$$
E_2^{p,d+c} \stackrel{\sim}\longrightarrow H^{p+d+c}_{p,nr}(X,\mu_{\ell^r}^{\otimes n}) ,\ \ \ \ \ d=\dim X,
$$
see Corollary \ref{cor:kato} below.
By definition,  $E_2^{p,d+c}$ agrees with the Kato homology of $X$ (see e.g.\ \cite{kato,KeSa,tian}),  and so we find that the latter is in fact a special instance of refined unramified cohomology, cf.\ Remark \ref{rem:kato} below. 
We remark that for  $X$ smooth projective, Kato homology as well as traditional unramified cohomology are stable birational invariants of $X$,  see e.g.\ \cite{CTO,CTV,tian}.

\begin{remark}\label{rem:refined-intro-2} 
One of the key differences of this paper compared to previous work is the observation that for our purposes,  the couple $D_1\to D_1\to E_1\to D_1$  is  better suited than the couple $D_1'\stackrel{f'}\to D_1'\to E_1\to D_1'$, used in \cite[\S 3]{BO} to define the coniveau spectral sequence.
(Both couples stem from the long exact sequence of triples, but applied to different geometric situations.) 
Moreover, we will not pass to the coniveau spectral sequence (as done e.g.\ in  \cite{bloch-compositio,CTV,Kahn,Voi-unramified,Ma}), but work directly with the above couple, which contains more information. 
\end{remark}

\begin{remark} 
Assume that $X$ is smooth and equi-dimensional.
The main result of \cite{BO} (see also \cite{CTHK}) is that the map $f':D_1'\to D_1'$ is zero locally on $X$ with the exception of only some trivial bidegrees; as a consequence, $E_2^{p,q}=0$  locally on $X$ for all $p\neq 0$.
In contrast, the image of $f:D_1\to D_1$ is refined unramified cohomology and this invariant in general does not vanish locally on $X$.
In fact, 
the local vanishing of $E_2^{p,q}$ for $p\neq 0$ implies that $f:D^{p+1,q-1}_2\to D^{p,q}_2$ is an isomorphism locally on $X$ for all $p\geq 1$.
It follows that the Zariski sheaf $\mathcal H^i_{j,nr}$ associated to $U\mapsto H^i_{j,nr}(U,A(n))$ does not depend on $j\geq 0$,  hence agrees with  $\mathcal H^i $   for all $j\geq 0$, which is in general nonzero.  
(This shows in particular that at least in the smooth case, 
 refined unramified cohomology contains no interesting local information.) 
\end{remark}

\subsection{Homology or cohomology?} \label{subsec:homology-coho}
The results above relied on a twisted Borel--Moore homology theory $H^{BM}_\ast(-,A(n))$ with corresponding Borel--Moore cohomology theory $H^i(X,A(n)):=H^{BM}_{2d_X-i}(X,A(d_X-n))$, see  (\ref{eq:Borel-Moore}) and (\ref{eq:Borel-Moore-l-adic}).
We will collect the properties of this functor that are crucial for us in Section \ref{sec:axioms} below.
In sheaf theoretic terms, 
Borel--Moore cohomology will in practice be the (hyper-)cohomology of some complex of sheaves.
If $X$ is smooth and equi-dimensional, Poincar\'e duality identifies this complex to a locally constant sheaf.
(For instance,  $\ell$-adic Borel--Moore pro-\'etale cohomology is given by  $H^i(X,\Z_\ell(n))=\RR ^i\Gamma(X_{\proet},\pi_X^! \widehat \Z_\ell(n-d_X)[-2d_X])$, where $\pi_X:X\to \Spec k$ and $\pi_X^!\cong \pi_X^\ast(d_X)[2d_X]$ if $X$ is smooth and equi-dimensional of dimension $d_X$, see Section \ref{sec:proet} below.)
%
The resulting theory thus coincides on smooth equi-dimensional algebraic schemes with ordinary cohomology, but it differs in general.
In particular, as long as one is interested only in smooth equi-dimensional schemes, $H^i(X,A(n))$ can be identified with ordinary cohomology in all our applications.
(Working with singular schemes is however important for the proof of several of the main results such as Theorems \ref{thm:coniveau1}, \ref{thm:green}, and \ref{thm:green-ell-adic-arbitrary-field} even if the total space is smooth projective.)

The functoriality properties of Borel--Moore cohomology differ from those of ordinary cohomology: the latter has arbitrary pullbacks but no pushforwards, while the former  admits only pullbacks along \'etale maps, but has proper pushforwards (which shift the degree), see Section \ref{sec:axioms} below.
The situation is similar to the distinction between ordinary singular homology and Borel--Moore homology (i.e.\ finite versus locally finite singular chains)  in topology, which have also different functoriality properties; they agree for compact analytic spaces but differ in general. 

Instead of exploiting the notion of Borel--Moore cohomology, we could of course use the formula $H^i(X,A(n))=H^{BM}_{2d_X-i}(X,A(d_X-n))$ to write everything in terms of Borel--Moore homology, which may be preferred by some readers.
The reason we use Borel--Moore cohomology and wrote this paper cohomologically is that in the important special case where $X$ is smooth and equi-dimensional, $H^i(X,A(n))$ will coincide with ordinary cohomology in all our applications.
This has in particular the advantage that the formulas that we prove for singular varieties and in arbitrary codimension reduce in the special case of smooth projective varieties to those in  \cite{CTV,Kahn,Voi-unramified,Ma}, which motivate this paper.
Moreover, the applications of the theory in \cite{Sch-Griffiths,Sch-preparation} concern  smooth projective varieties and use the identification of Borel--Moore cohomology with ordinary cohomology.
This allows one to make use of cup products, which will be crucial (and which requires a cohomological formulation).
Writing this paper homologically would thus make it significantly harder to read those applications. 

After all it is a matter of formal manipulations to
 rewrite this paper homologically, but note that it will not be enough to just use $H^i(X,A(n))=H^{BM}_{2d_X-i}(X,A(d_X-n))$, one should also change the indices in the filtration $F_\ast X$ to make the indices in the resulting formulas in Theorems \ref{thm:main-2-intro} and \ref{thm:singular} appealing.
Unfortunately, the translation between the homological and the cohomological notation is tedious, so that we restrict ourselves to only one version here.

While only a matter of notation, we do believe that the notion of Borel--Moore cohomology may be useful also in future.

\section{Notation} \label{sec:notation}

A field is said to be finitely generated, if it is finitely generated over its prime field.
An algebraic scheme is a separated scheme of finite type over a field.
A variety  is an integral algebraic scheme. 
An open subset of a scheme will always refer to a Zariski open subset, unless specified otherwise.
The dimension of an algebraic scheme is the maximum of the dimensions of its irreducible components.

For an algebraic scheme $X$, we write $X_{(i)}$ for the set of all points $x\in X$ with $\dim(\overline {\{x\}})=i$.
We then define $X^{(i)}:=X_{(d_X-i)}$, where $d_X=\dim X$.
That is, $x\in X$ lies in $X^{(i)}$ if and only if $ \dim X-\dim(\overline {\{x\}})=i$.
Note that this is slightly non-standard, as it does not imply that the codimension of $x$ defined locally in $X$ is $i$, but it has the advantage that the Chow group $\CH^i(X):=\CH_{d_X-i}(X)$ (see \cite{fulton}) is the quotient of $\bigoplus_{x\in X^{(i)}}[x]\Z$ by rational equivalence, where $[x]\Z $ denotes the free $\Z$-module with generator $[x]$.
We refer to \cite[\S 10.3]{fulton} for the definition of algebraic equivalence of cycles on algebraic schemes. 

Whenever $G$ and $H$ are abelian groups (or $R$-modules for some ring $R$) so that there is a canonical map $H\to G$ (and there is no reason to confuse this map with a different map), we write $G/H$ as a short hand for $\coker(H\to G)$.
For an abelian group $G$, we denote by $G[\ell ^r]$ the subgroup of $\ell^r$-torsion elements, and by $G[\ell^\infty]:=\bigcup_{r}G[\ell ^r]$ the subgroup of elements that are $\ell^r$-torsion for some $r\geq 1$.
We further write $\Tors(G)$ or $G_{\tors}$ for the torsion subgroup of $G$.

Let $I$ be a directed index set and let  $(G_i)_{i\in I}$ be a direct system of abelian groups. 
We then denote by 
$$
\lim_{\substack{\longrightarrow\\ i\in I}}G_i
$$
the direct limit of this system.
Synonymously, we sometimes also call this the (filtered) colimit of $(G_i)_{i\in I}$ and denote it by $\colim G_i$.

\section{Warm-up: a simple proof of the Colliot-Th\'el\`ene--Voisin theorem} \label{sec:CTV}
Let $X$ be a smooth complex variety. 
In this section we present as a warm-up a proof of the formula
\begin{align} \label{eq:CTV-theorem}
\coker(\cl^2_X) _{\tors}\cong  H^{3}_{nr}(X,\Q/\Z )/H^{3}_{nr}( X,\Q ) ,
\end{align}
which is due to Colliot-Th\'el\`ene--Voisin \cite{CTV}.
Their original proof relied on  Voevodsky's proof of the Bloch--Kato conjecture; 
Kahn \cite{Kahn}  later showed that Bloch--Kato in degree 2, i.e.\ the Merkurjev--Suslin theorem, suffices to prove the result.
Both  approaches use the Gersten conjecture proven by Bloch--Ogus, which identifies unramified cohomology with the global sections of a certain Zariski sheaf, see \cite{BO}.
In this section we explain a simpler argument that does not need Bloch--Kato in any degree and which does not make use of the Gersten conjecture.
The proof presented here generalizes easily to give the result for arbitrary codimension and in fact on possibly singular schemes,  see Theorem \ref{thm:IHC} below.
 This is the starting point of the more general theory presented in the body of the paper. 
 
To fix notation  in this section, we denote  by $H^i(X,A)$ singular cohomology of the underlying analytic space $X_{\cx}$ with coefficients in an abelian group  $A$.
This coincides with Borel--Moore cohomology as considered  in the rest of this paper, because $X$ is smooth and irreducible (hence equi-dimensional).

As above, we define $H^i(F_jX,A)$ as the direct limit over $H^i(U,A)$ where $U\subset X$ runs through all (Zariski) open subsets whose complement has codimension at least $j+1$.
The unramified cohomology of $X$ is defined by $H^i_{nr}(X,A)=\im(H^i(F_1X,A)\to H^i(F_0X,A))$.
The Gysin sequence implies that this agrees with the definition given in \cite[Theorem 4.1.1(a)]{CT} (cf.\ Lemma \ref{lem:les} below).
In other words, an element $[\alpha]\in H^i_{nr}(X,A)$ is represented by a class $\alpha \in H^i(U,A)$ for some open subset $U\subset X$ whose complement has codimension at least 2 (such open subsets are called ``big'') and two such representatives yield the same element in $H^i_{nr}(X,A)$ if they coincide on some dense open subset of $X$.

Our proof of (\ref{eq:CTV-theorem}) relies on the following lemma. 

\begin{lemma} \label{lem:intro:Psi}
The  natural restriction  map is an isomorphism
$$
f:\frac{H^{3} (F_1X,\Q/\Z )}{H^{3} ( F_1X,\Q )}\stackrel{\sim}\longrightarrow \frac{H^{3}_{nr}(X,\Q/\Z )}{H^{3}_{nr}( X,\Q )}.
$$
\end{lemma}

\begin{proof}  
Since $f$ is clearly surjective, it suffices to prove that it is injective.

Note that  $H^{3} ( F_1X,\Q )\to H^{3}_{nr}( X,\Q )$ is surjective by definition.
Hence, in order to prove the injectivity of $f$ it suffices to show the following:
Let $U\subset X$ be a big open subset and let $\alpha\in H^3(U,\Q/\Z)$ such that 
$$
\alpha|_V=0\in  H^3(V,\Q/\Z)
$$
for some dense open subset $V\subset U$.
Then we need to show that up to removing a codimension 2 subset from $U$, the class $\alpha$ lifts to $H^3(U,\Q)$. 
Equivalently,  we need to show that the image $\delta(\alpha)\in H^4(U,\Z)_{\tors}$ via the Bockstein map (associated to $0\to \Z\to \Q\to \Q/\Z\to 0$) vanishes after removing a codimension 2 subset from $U$.

Up to removing a codimension 2 subset from $U$, we may assume that $D:=U\setminus V$ is smooth of pure codimension 1 in $U$.
Since $\alpha|_V=0$, the Gysin sequence shows that there is a class $\xi\in H^1(D,\Q/\Z)$ with $\alpha=\iota_\ast \xi$, where $\iota_\ast:H^1(D,\Q/\Z)\to H^3(U,\Q/\Z)$ denotes the Gysin pushforward.
Identifying the respective cohomology groups via Poincar\'e duality with Borel--Moore homology,  it follows directly from the definitions that the Bockstein map is compatible with $\iota_\ast$. 
It thus suffices to show that
$$
 \delta (\xi) \in  H^2(D, \Z)_{\tors}
$$
vanishes after removing a codimension 1 subset of $D$.
This in turn is   a well-known consequence of Hilbert 90, see \cite[end of Lecture 5]{bloch},  
which concludes the proof of the lemma. 
\end{proof}

By the above lemma, it suffices to construct an isomorphism
\begin{align} \label{eq:intro:Phi}
g: \coker(\cl^2_X) _{\tors}\longrightarrow H^{3} (F_1X,\Q/\Z )/H^{3} ( F_1X,\Q ) .
\end{align}
Here we note that both sides in (\ref{eq:intro:Phi}) remain unchanged if we remove from $X$ a closed codimension 3 subset (this is obvious for the right hand side and it follows from the Gysin sequence and purity for the left-hand side).
We will allow ourselves to perform such shrinkings in what follows (this could   be avoided if we were using Borel--Moore cohomology so that we can work with singular schemes).
Let $\alpha\in H^4(X,\Z)$ such that some multiple $n\alpha=\cl_X^2(z)$ is algebraic.
Let $Z:=\supp z$.
Up to removing $Z^{\sing}$ from $X$, we may assume that $Z$ is smooth.
The Gysin sequence then yields
\begin{align} \label{eq:intro:Gysin1}
H^0(Z,\Z)\stackrel{\iota_\ast}\longrightarrow
H^4(X,\Z)\longrightarrow H^4(U,\Z) \stackrel{\del}\longrightarrow H^1(Z,\Z) ,
\end{align}
where $\del$ denotes the residue map and the pushforward $\iota_\ast$ corresponds to the cycle class map.
Since $n\alpha=\cl^2_X(z)\in \im(\iota_\ast)$, we find that $\alpha|_U\in H^4(U,\Z)_{\tors}$ is torsion.
The Bockstein map $\delta:H^3(U,\Q/\Z)\to H^4(U,\Z)$ induces an isomorphism
$$
\delta^{-1}: H^4(U,\Z)_{\tors}\stackrel{\cong}\longrightarrow H^3(U,\Q/\Z)/H^3(U,\Q) .
$$
The right hand side in the above isomorphism maps naturally to the right hand side in (\ref{eq:intro:Phi}) (in fact, the latter is just the direct limit of the former where one runs through all big open subsets $U\subset X$). 
The map $g$ above is then defined by
$$
g(\alpha):=[\delta^{-1}(\alpha|_U)].
$$
The Gysin sequence implies that this definition is well-defined, i.e.\ $g(\alpha)$ does not change if we add to $\alpha$ some algebraic class.

We aim to construct an inverse of $g$.
To this end, let  $\beta\in H^3(U,\Q/\Z)$ for some big open subset $U\subset X$.
The class $\alpha':=\delta(\beta)\in H^4(U,\Z)$ is a torsion class.
Let $Z=X\setminus U$.
Up to shrinking $X$, we may assume that $Z$ is smooth of pure codimension 2 in $X$.
Then we have an exact sequence as in (\ref{eq:intro:Gysin1}) and the (trivial) fact that $H^1(Z,\Z)$ is torsion-free implies that the torsion class $\alpha'\in H^4(U,\Z)$ lifts to a class $\alpha\in H^4(X,\Z)$.
The fact that $\alpha'$ is torsion implies that some multiple of $\alpha$ is algebraic and hence $[\alpha]\in \coker(\cl^2_X)_{\tors}$.
If $\beta$ lifts to a class in $H^3(U,\Q)$, then $\alpha'=0$ and so the above construction yields a map
$$
g': H^{3} (F_1X,\Q/\Z )/H^{3} ( F_1X,\Q )\longrightarrow \coker(\cl^2_X) _{\tors} ,\ \ [\beta]\mapsto [\alpha] .
$$
It follows from the construction that $g$ and $g'$ are inverse to each other.
Hence, $g$ is an isomorphism and the formula in (\ref{eq:CTV-theorem}) is proved.

 \section{Borel--Moore cohomology} \label{sec:axioms}
  
We list here properties of a functor, that we call Borel--Moore cohomology, which allows to run the  arguments from  Section \ref{sec:CTV} (and more). 
 Technically speaking, Borel--Moore cohomology will in all our applications agree up to shifts with Borel--Moore homology,  see also  Section \ref{subsec:homology-coho} above.
In practice and in terms of sheaf theory,  this means that Borel--Moore cohomology will be the hypercohomology of some complex of sheaves on some site; the complex in question has by Poincar\'e duality the property that it simplifies on smooth equi-dimensional schemes to a locally constant sheaf.
In other words,  
 on smooth equi-dimensional varieties, Borel--Moore cohomology will agree with ordinary cohomology.
However, on singular spaces, Borel--Moore cohomology and ordinary cohomology differ: 
for Borel--Moore cohomology we require pullbacks only along open immersions of equi-dimensional schemes (and not along arbitrary morphisms), while
 we require pushforwards along proper morphisms of possibly singular schemes (and not only for smooth equi-dimensional schemes).

\begin{definition} \label{def:V}
Let $\VV$ be a category of Noetherian schemes such that the morphisms are given  by open immersions $U\hookrightarrow X$ of schemes with $\dim (U)=\dim (X)$.
We call $\VV$ constructible, if 
for any $X\in \VV$, the following holds: 
\begin{itemize}
\item if $Y\hookrightarrow X$ is an open or closed immersion, then $Y\in \VV$;
\item if $X\in \VV$ is reduced, then the normalization of $X$ is also in $\VV$.
\end{itemize} 
\end{definition}

\begin{definition} \label{def:Borel--Moore-cohomology}
Let $\VV$ be a constructible category of Noetherian schemes as in Definition \ref{def:V}.
Let $R$ be a ring and let $\mathcal A\subset \Mod_R$ be a full subcategory of $R$-modules with $R\in \mathcal A$.
A twisted Borel--Moore cohomology theory on $\VV$  with coefficients in $\mathcal A$ is a family of 
contravariant 
 functors
\begin{align} \label{eq:functor}  
\mathcal V \longrightarrow \Mod_R,\ \ X\mapsto   H_{BM}^i(X,A(n)) \ \ \ \ \ \ \text{with $i,n\in \Z$ and $A\in \mathcal A$}
\end{align} 
that are covariant in $A$ and such that the following holds, where we write for simplicity
$$ H^i(X,A(n)):= H_{BM}^i(X,A(n)) , $$
\begin{enumerate}[label=\textbf{P\arabic*}]
\item \label{ax:pushforward}
For $X,Y\in \VV$ and any proper morphism $f:X\to Y$ of schemes of relative codimension $c=\dim Y-\dim X$,  there are functorial pushforward maps 
$$
f_\ast : H^{i-2c}(X,A(n-c))\longrightarrow H^{i }(Y,A(n )),
$$ 
 compatible with pullbacks along morphisms  in $\VV$. 
\item \label{ax:Gysin} For any pair $(X,Z)$ of schemes in $\VV$ with a closed immersion $Z\hookrightarrow X$ of codimension $c=\dim(X)-\dim(Z)$  and with complement $U$ with $\dim(X)=\dim(U)$, there is a Gysin exact sequence
\begin{align*} 
\dots\longrightarrow 
H^i(X,A(n))\stackrel{r}\longrightarrow H^i(U,A(n))\stackrel{\del}\longrightarrow H^{i+1-2c}(Z,A(n-c))\stackrel{\iota_\ast}\longrightarrow H^{i+1}(X,A(n)) 
 \longrightarrow \dots
\end{align*}
where $r$ is induced by functoriality,  $\del$  is called residue map and $\iota_\ast$ is induced by proper pushforward from (\ref{ax:pushforward}). 
The Gysin sequence is functorial with respect to pullbacks along  open immersions $f:V\hookrightarrow X$ with $\dim V= \dim X$,  $\dim(V\cap Z)=\dim Z$, and $\dim (V\setminus(V\cap Z))=\dim V$, giving rise to a commutative diagram
$$
\xymatrix{
H^i(X,A(n))\ar[r]^-{r}\ar[d]^{f^\ast}&H^i(U,A(n))\ar[r]^-{\del}\ar[d]^{f^\ast}& H^{i+1-2c}(Z,A(n-c))\ar[d]^{f^\ast}\ar[r]^-{\iota_\ast} &H^{i+1}(X,A(n)) \ar[d]^{f^\ast}\\
H^i(V,A(n))\ar[r]^-{r} &H^i(V\cap U,A(n))\ar[r]^-{\del} & H^{i+1-2c}(V\cap Z,A(n-c)) \ar[r]^-{\iota_\ast} &H^{i+1}(V,A(n))  
}
$$
for all $i$.
Similarly, if $f:X'\to X$ is proper with $Z'=f^{-1}(Z)$ and $\dim X'=\dim(X'\setminus Z')$, then the proper pushforward along $f$ induces for all $i$ a commutative diagram
$$
\xymatrix{
H^i(X',A(n))\ar[r]^-{r}\ar[d]^{f_\ast}&H^i(U',A(n))\ar[r]^-{\del}\ar[d]^{f_\ast}& H^{i+1-2c}(Z',A(n-c))\ar[d]^{f_\ast}\ar[r]^-{\iota_\ast} &H^{i+1}(X',A(n)) \ar[d]^{f_\ast}\\
H^i(X,A(n))\ar[r]^-{r} &H^i(U,A(n))\ar[r]^-{\del}& H^{i+1-2c}(Z,A(n-c))\ar[r]^-{\iota_\ast} &H^{i+1}(X,A(n)) .
}
$$
\item \label{ax:normalization}
For any $X\in \VV$ and  $x\in X$, the groups
\begin{align} \label{eq:Hi(kappa(x))}  
H^i(x,A(n)):=\lim_{\substack{\longrightarrow \\ \emptyset \neq V_x\subset \overline {\{x\}}}} H^i(V_x,A(n))  , 
\end{align}
where $V_x$ runs through all (Zariski) open dense subsets of $\overline{\{x\}}\subset X$ (with the reduced subscheme structure), satisfy
$
H^{i}(x,A(n))=0
$ for $i<0$.
Moreover, there are isomorphisms $ H^0(x,A(0))\cong A$ that are functorial in $A$, and for $A=R$ there is a distinguished class $[x]\in H^0(x,R(0))$ (called the fundamental class)  such that 
$
H^0(x,R(0))=[x] R
$ 
is freely generated by $[x]$.
\end{enumerate}
\end{definition}
 
 \begin{remark} 
 We warn the reader that  even if 
 $\Spec \kappa(x)\in \VV$, the cohomology of a point $x\in X$ in (\ref{eq:Hi(kappa(x))})
 may not agree with $H^i(\Spec \kappa(x) ,A(n))$. 
This phenomenon is not new but already present in \cite{BO} and in any other work where the Bloch-Ogus resolution for non-torsion coefficients is used. 
\end{remark}

In what follows,  we will usually write $\mu_{\ell^r}^{\otimes n}:=\Z/\ell^r(n)$.  

\begin{definition} \label{def:Borel--Moore-cohomology-l-adic}
Let $\VV$ be a constructible category of Noetherian schemes as in Definition \ref{def:V}.
Let $\ell$ be a prime and let $R=\Z_\ell$.
A twisted Borel--Moore cohomology theory on $\VV$ with coefficients in $\mathcal A\subset \Mod_{\Z_\ell}$ (see Definition \ref{def:Borel--Moore-cohomology}) is called $\ell$-adic, if  $\Z_\ell$, $\Q_\ell$, $\Q_\ell/\Z_\ell$, and $\Z/\ell^r$ for all $r\geq 1$ are contained in $\mathcal A$, such that the following holds:
\begin{enumerate}[label=\textbf{P\arabic*}]
\setcounter{enumi}{3}
\item \label{ax:Ql/Zl,Ql} Functoriality in the coefficients induces  isomorphisms of functors
\begin{align*} 
 \lim_{\substack{\longrightarrow\\ r}} H^i(-,\mu_{\ell^r}^{\otimes n})\stackrel{\sim} \longrightarrow H^i(-,\Q_\ell/\Z_{\ell}(n)) \ \ \text{and}\ \ H^i(-,\Z_{\ell}(n))\otimes_{\Z_\ell}\Q_{\ell}\stackrel{\sim}\longrightarrow H^i(-,\Q_{\ell}(n)) .
\end{align*}  
\item For any $X\in \VV$, there is a long exact 
Bockstein sequence
\begin{align*} 
\dots \longrightarrow H^i(X,\Z_\ell(n))\stackrel{\times \ell^r}\longrightarrow H^i(X,\Z_\ell(n)) \longrightarrow H^i(X,\mu_{\ell^r}^{\otimes n}) \stackrel{\delta}\longrightarrow & H^{i+1}(X,\Z_\ell(n)) \stackrel{\times \ell^r}\longrightarrow \dots
\end{align*}
where $H^i(X,\Z_\ell(n)) \to H^i(X,\mu_{\ell^r}^{\otimes n})$ is given by functoriality in the coefficients and where  $\delta$ is called the Bockstein map. 
This sequence is functorial with respect to proper pushforwards and pullbacks along morphisms in $\VV$.\label{ax:Bockstein} 
\item    
For any $X\in \VV$ and $x\in X$, there is a map $\epsilon : \kappa(x)^\ast\to H^1( x,\Z_\ell(1))$ such that Hilbert 90 holds in the sense that
the map
$\overline 
\epsilon:\kappa(x)^\ast\to H^1( x,\mu_{\ell^r}^{\otimes 1})
$
 induced by reduction modulo $\ell^r$ 
is surjective. 
Moreover, for $X\in \VV$ integral with generic point $\eta$, there is a unit $u\in \Z_\ell$ such that for any regular point $x\in X^{(1)}$, the natural composition
$$
\kappa(\eta)^\ast \stackrel{ \epsilon}\longrightarrow H^1(\eta,\Z_{\ell}(1))\stackrel{\del}\longrightarrow H^0( x ,\Z_{\ell}(0))=[x] \Z_{\ell},
$$
where $\del$ is induced by (\ref{ax:Gysin}) and the last equality comes from (\ref{ax:normalization}),
sends $f$ to $[x](u\cdot \nu_x(f)) $.
Here,  $\nu_x$ denotes the valuation on $\kappa(\eta)$ induced by $x$.\label{ax:Hilbert90} 
\end{enumerate}
\end{definition}

Let $X\in \VV$ be integral and let $U\subset X$ be a big open subset, i.e.\ $\dim(X\setminus U)<\dim X-1$. 
Then $H^2(U,\Z_\ell(1))\cong H^2(X,\Z_\ell(1))$ by (\ref{ax:Gysin}) and  (\ref{ax:normalization}) (see Corollary \ref{cor:F^i} below).
Taking the direct limit over all $U$ and using $H^0(x,\Z_\ell(0))=[x]\Z_\ell$ from (\ref{ax:normalization}), we find that the proper pushforwards from (\ref{ax:pushforward}) induce a cycle class map
\begin{align}\label{eq:iota_ast}
\iota_\ast:\bigoplus_{x\in X^{(1)}}[x]\Z_\ell\longrightarrow  
H^2(X,\Z_\ell(1)) .
\end{align} 
The following two options are of particular interest:
\begin{enumerate}[label=\textbf{P7.\arabic*}]
\item If $X$ is integral and regular,  the kernel of (\ref{eq:iota_ast}) is given by $\Z_\ell$-linear combinations of  algebraically trivial divisors.\label{ax:homolog=alg-div}
\item If $X$ is integral and regular, the kernel of (\ref{eq:iota_ast}) is given by  $\Z_\ell$-linear combinations of principal divisors. \label{ax:homolog=ratl-div}  
\end{enumerate}

\begin{definition} \label{def:adapted-alg/ratl-equivalence}
Let $\VV$ be a constructible category of Noetherian schemes, see Definition \ref{def:V}.
An $\ell$-adic twisted Borel--Moore cohomology theory $H^\ast(-,A(n))$  on $\VV$ as in Definition \ref{def:Borel--Moore-cohomology-l-adic}  is adapted to algebraic equivalence, if (\ref{ax:homolog=alg-div}) holds, and it is adapted to rational equivalence, if (\ref{ax:homolog=ratl-div}) holds.
\end{definition}

In addition to $\ell$-adic theories, we will also need the following integral variant.
To this end,  we perform in each of the statements (\ref{ax:Ql/Zl,Ql})--(\ref{ax:Hilbert90}),  (\ref{ax:homolog=alg-div}), and  (\ref{ax:homolog=ratl-div})  the formal replacement of symbols:
$$
\Z_\ell \rightsquigarrow \Z, \ \ \Q_\ell \rightsquigarrow \Q, \ \ \ \ell^r \rightsquigarrow r,
$$
and denote the corresponding statements by (\ref{ax:Ql/Zl,Ql}')--(\ref{ax:Hilbert90}'),  (\ref{ax:homolog=alg-div}'), and (\ref{ax:homolog=ratl-div}'), respectively. 

\begin{definition}\label{def:Borel--Moore-integral}
Let $\VV$ be a constructible category of Noetherian schemes (see Definition \ref{def:V}) and let $R=\Z$. 
A twisted Borel--Moore cohomology theory on $\VV$ with coefficients in $\mathcal A\subset \Mod_{\Z}$ as in Definition \ref{def:Borel--Moore-cohomology} is called integral,  if  $\Z$, $\Q$,  $ \Q/\Z$, and $ \Z/r$  for all $r\geq 1$ are contained in $\mathcal A$, such that items (\ref{ax:Ql/Zl,Ql}')--(\ref{ax:Hilbert90}') hold.
The theory is adapted to algebraic (resp.\ rational) equivalence, if item (\ref{ax:homolog=alg-div}') (resp.\ (\ref{ax:homolog=ratl-div}')) holds true.
\end{definition}

\begin{remark}
It seems natural to add in (\ref{ax:pushforward}) the requirement that pushforwards are compatible with the functoriality in the coefficients.
We did not do so because in this paper we will only need this compatibility for the natural maps $\Z_\ell\stackrel{\times \ell}\to \Z_\ell$, $\Z_\ell\to \Z/\ell^r$, $\Z_\ell\to \Q_\ell$, and $\Q_\ell\to \Q_\ell/\Z_\ell$, where it follows from (\ref{ax:Ql/Zl,Ql}) together with the compatibility of the Bockstein sequence with proper pushforwards formulated in (\ref{ax:Bockstein}). 
\end{remark}

\section{Definition of refined unramified cohomology and simple consequences} 
\label{sec:def}

In this section, we fix  a constructible category $\VV$ of Noetherian schemes, see Definition \ref{def:V}.
We further fix  a ring $R$ and a twisted Borel--Moore cohomology theory $H^\ast(-,A(n))$ on $\VV$ with coefficients in a full subcategory $\mathcal A\subset \Mod_R$,  see Definition \ref{def:Borel--Moore-cohomology}. 
In particular, (\ref{ax:pushforward})--(\ref{ax:normalization}) hold true.

For  $X\in \VV$ we write $F_jX:=\{x\in X\mid \codim(x)\leq j \} $, where  $\codim(x):=\dim (X)- \dim(\overline{\{x\}})$.
We then define
$$
H^i(F_jX,A(n)):=\lim_{\substack{\longrightarrow \\ F_jX\subset U\subset X}} H^i(U,A(n)) ,
$$
where the direct limit runs through all open subschemes $U\subset X$ with $F_jX\subset U$.
Since the direct limit functor (i.e. filtered colimits) is exact, many of the properties of $H^\ast(X,A(n))$ remain true for $F_jX$ in place of $X$.  
Moreover, for $m\geq j$,  there are canonical restriction maps $H^i(F_mX,A(n)) \to  H^i(F_jX,A(n))$. 

\begin{definition}
The $j$-th refined unramified cohomology of $X\in \VV$ with respect to a twisted Borel--Moore cohomology theory $H^\ast(-,A(n))$ on $\VV$ with coefficients in a full subcategory $\mathcal A\subset \Mod_R$,  is given by
$$
H^i_{j,nr}(X,A(n)):= \im \left( H^i(F_{j+1}X,A(n)) \longrightarrow  H^i(F_jX,A(n))  \right) .
$$
\end{definition}

\subsection{Three filtrations}

Following Grothendieck, the coniveau filtration on $H^i(X,A(n))$ is defined by
\begin{align} \label{def:coniveau-grothendieck}
N^jH^i(X,A(n)):=\ker(H^i(X,A(n))\to H^i(F_{j-1}X,A(n))).
\end{align}
There is a similar coniveau filtration on refined unramified cohomology, defined as follows.

\begin{definition} \label{def:N^j}
Let $X\in \VV$.  
The coniveau filtration $N^\ast$ is for $h\leq j+1$ given by 
$$
N^hH^i(F_jX,A(n)):=\ker\left(H^i(F_jX,A(n))\to H^i(F_{h-1}X,A(n)) \right),\ \ \ \ \text{and}
$$
$$
N^hH^i_{j,nr}(X,A(n)):=H^i_{j,nr}(X,A(n))\cap N^hH^i(F_jX,A(n)).
$$ 
\end{definition}

Somewhat dually to the coniveau filtration, we have the following filtration, which is also decreasing.

\begin{definition}\label{def:F}
Let $X\in \VV$.
The decreasing filtration
 $F^\ast$ is  for $m\geq j$ given by:
$$
F^m  H^i(F_jX,A(n)):= 
 \im \left( H^i(F_mX,A(n)) \longrightarrow  H^i(F_jX,A(n))  \right),\ \ \ \ \text{and}
$$
$$
F^mH^i_{j,nr}(X,A(n)):=H^i_{j,nr}(X,A(n))\cap F^m  H^i(F_jX,A(n)).
$$
\end{definition}

Note that for $m\geq j+1$, $F^m  H^i(F_jX,A(n))=F^mH^i_{j,nr}(X,A(n))
$.
 
Related to $F^\ast$ there is another filtration that will be important for us, and which exists only for $A=\Z/\ell^r$ or $A=\Q_\ell/\Z_\ell$.
To define it,  note that exactness of the direct limit functor ensures that the Bockstein sequence in (\ref{ax:Bockstein}) holds for $F_jX$ in place of $X$.
In particular, there is a Bockstein map 
$$
\delta:H^i(F_{j}X,\mu_{\ell^r}^{\otimes n})\to H^{i+1}(F_{j}X,\Z_{\ell}(n)).
$$

\begin{definition}\label{def:G}
Let $X\in \VV$. 
The decreasing filtration $G^\ast$ is for $m\geq j$  given by:
%
%
$$
\alpha\in G^m H^i(F_jX,\mu_{\ell^r}^{\otimes n})\ \ \ \Longleftrightarrow\ \ \ \delta(\alpha)\in F^mH^{i+1}(F_jX,\Z_{\ell}(n)) .
$$ 
We then define 
$$
 G^m H^i_{j,nr}(X,\mu_{\ell^r}^{\otimes n}) :=\im(G^mH^i(F_{j+1}X,\mu_{\ell^r}^{\otimes n})\to H^i(F_jX,\mu_{\ell^r}^{\otimes n}) ) .
$$ 
Using the isomorphism in (\ref{ax:Ql/Zl,Ql}),  we finally let
$$
 G^m H^i_{j,nr}(X,\Q_{\ell}/\Z_\ell(n)):=\lim_{\substack{\longrightarrow \\ r}}  G^m H^i_{j,nr}(X,\mu_{\ell^r}^{\otimes n}) .
$$
\end{definition}

\begin{remark}
By definition,  $F^\ast$ and $N^\ast$ on $H^i_{j,nr}(X,A(n))$  are induced by the corresponding filtration on $H^i(F_jX,A(n))$.
We warn the reader that the corresponding assertion does not hold true for $G^\ast$. 
\end{remark}

 \subsection{Consequence of the Gysin sequence}

\begin{lemma} \label{lem:XsqcupY}
Let $X\sqcup Y\in \VV$ with $\dim X=\dim Y$ and let $A\in \mathcal A$.
Then the canonical map given by pullback is an isomorphism:
$$
H^i(X\sqcup Y,A(n)) \stackrel{\cong}\longrightarrow H^i(X, A(n)) \oplus H^i( Y,A(n)) .
$$
\end{lemma}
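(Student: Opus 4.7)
The plan is to apply the Gysin exact sequence from axiom \ref{ax:Gysin} to the closed immersion $\iota_X \colon X \hookrightarrow X \sqcup Y$ with complement $Y$. Since $\dim X = \dim Y = \dim(X \sqcup Y)$, this closed immersion has codimension $c = 0$, and the Gysin sequence reads
$$
\cdots \to H^{i-1}(Y,A(n)) \stackrel{\partial}\to H^i(X,A(n)) \stackrel{\iota_{X,\ast}}\to H^i(X \sqcup Y,A(n)) \stackrel{j_Y^\ast}\to H^i(Y,A(n)) \stackrel{\partial}\to \cdots,
$$
where $j_Y \colon Y \hookrightarrow X \sqcup Y$ is the open immersion giving the complement.

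The main step is to show that this sequence splits. The key observation is that $Y$ is also closed in $X \sqcup Y$ of codimension $0$, so axiom \ref{ax:pushforward} provides a pushforward $\iota_{Y,\ast} \colon H^i(Y,A(n)) \to H^i(X \sqcup Y,A(n))$. Invoking the compatibility of proper pushforwards with pullbacks along morphisms in $\VV$ (part of \ref{ax:pushforward}) for the trivial cartesian square in which $j_Y^{-1}(Y) = Y$ and both vertical maps are the identity, one obtains $j_Y^\ast \circ \iota_{Y,\ast} = \id$. In particular $j_Y^\ast$ is surjective, so both residue maps $\partial$ in the long exact sequence above vanish, yielding a split short exact sequence
$$
0 \to H^i(X,A(n)) \stackrel{\iota_{X,\ast}}\to H^i(X \sqcup Y,A(n)) \stackrel{j_Y^\ast}\to H^i(Y,A(n)) \to 0.
$$

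Swapping the roles of $X$ and $Y$ and repeating the same argument yields an analogous split short exact sequence
$$
0 \to H^i(Y,A(n)) \stackrel{\iota_{Y,\ast}}\to H^i(X \sqcup Y,A(n)) \stackrel{j_X^\ast}\to H^i(X,A(n)) \to 0,
$$
from which one reads off $j_X^\ast \circ \iota_{X,\ast} = \id$ and $j_X^\ast \circ \iota_{Y,\ast} = 0$ (and symmetrically $j_Y^\ast \circ \iota_{X,\ast} = 0$). Combining these identities, the map $(j_X^\ast, j_Y^\ast) \colon H^i(X \sqcup Y,A(n)) \to H^i(X,A(n)) \oplus H^i(Y,A(n))$ admits a two-sided inverse sending $(\beta,\gamma) \mapsto \iota_{X,\ast}(\beta) + \iota_{Y,\ast}(\gamma)$, proving the claim.

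The only delicate point is the verification of the base change identity $j_Y^\ast \circ \iota_{Y,\ast} = \id$; this is really just the statement of compatibility in \ref{ax:pushforward} applied in the degenerate case of a trivial cartesian square, so no genuine obstacle is expected.
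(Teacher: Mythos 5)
Your argument is correct and takes essentially the same route as the paper: both apply the Gysin sequence of axiom (P2) to the closed immersion $X\hookrightarrow X\sqcup Y$ with open complement $Y$, and use the compatibility of proper pushforward with pullback along open immersions to obtain the splittings $j_Y^\ast\circ\iota_{Y,\ast}=\id$ (and symmetrically $j_X^\ast\circ\iota_{X,\ast}=\id$), so the long exact sequence degenerates into split short exact sequences. The paper compresses this into the statement that functoriality of the Gysin sequence with respect to proper pushforwards and pullbacks shows $i_{X\ast}$ and $i_Y^\ast$ admit splittings; your explicit base-change verification in the degenerate cartesian square is exactly that step.
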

\begin{proof}
Let $i_X$ (resp.\ $i_Y$) denote the inclusions of $X$ (resp.\ $Y$) into $X\sqcup Y$.
By the Gysin sequence (\ref{ax:Gysin}), we have an exact sequence
$$
H^i(X, A(n))\stackrel{i_{X\ast}}\longrightarrow H^i(X\sqcup Y,A(n)) \stackrel{i_Y^\ast} \longrightarrow H^i( Y,A(n)) .
$$
Functoriality of this sequence with respect to proper pushforward and pullbacks along morphisms in $\VV$ shows that $i_{X\ast}$ and $i_Y^\ast$ admit splittings.
Hence, the above sequence is part of a short exact sequence that splits, which proves the lemma.
\end{proof}

\begin{lemma} \label{lem:les-general}
Let $X\in \VV$ and  $A\in \mathcal A$. 
Then for any $n\in \Z$ and $m,j\geq 0$, the Gysin sequence  in (\ref{ax:Gysin}) induces a long exact sequence
\begin{align*}
\dots \longrightarrow   
 H^i( F_{j+m}X,A(n)) \longrightarrow   H^i(F_{j-1}X,A(n)) \stackrel{\del}\longrightarrow   \lim_{\substack{\longrightarrow \\ Z\subset X\\
\codim(Z)=j}}  H^{i+1-2j}( F_mZ,A(n-j)) \stackrel{\iota_\ast} \longrightarrow   \dots ,
\end{align*}
where the direct limit runs through all closed reduced subschemes $Z\subset X$  of codimension $\codim(Z)=\dim X-\dim Z=j$.
\end{lemma}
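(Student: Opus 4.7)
The plan is to apply the Gysin sequence from (\ref{ax:Gysin}) to a directed system of pairs $(V,Z)$, where $Z\subset X$ runs through reduced closed subschemes of pure codimension $j$ (i.e.\ $\dim Z=\dim X-j$) and $V\subset X$ runs through open subschemes containing $F_{j+m}X$, and then to pass to the direct limit. For any such pair, the generic points of $Z$ all have codimension $j$ in $X$ and therefore lie in $F_{j+m}X\subset V$, so $V\cap Z$ is a closed subscheme of $V$ of pure codimension $j$ with $\dim(V\cap Z)=\dim Z$, and $V\setminus Z$ has dimension $\dim V=\dim X$. Thus (\ref{ax:Gysin}) yields a Gysin long exact sequence
\[
\dots\longrightarrow H^i(V,A(n))\longrightarrow H^i(V\setminus Z,A(n))\stackrel{\del}\longrightarrow H^{i+1-2j}(V\cap Z,A(n-j))\stackrel{\iota_\ast}\longrightarrow H^{i+1}(V,A(n))\longrightarrow\dots
\]
I would order these pairs by $(V_1,Z_1)\leq(V_2,Z_2)$ iff $V_2\subset V_1$ and $Z_1\subset Z_2$; the system is directed since $(V_1\cap V_2,Z_1\cup Z_2)$ dominates both. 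The Gysin sequence is functorial in the $V$-variable by (\ref{ax:Gysin}), while functoriality in the $Z$-variable at fixed $V$ (for $Z_1\subset Z_2$) combines pullback $H^i(V\setminus Z_1,A(n))\to H^i(V\setminus Z_2,A(n))$ with proper pushforward $H^\ast(V\cap Z_1,-)\to H^\ast(V\cap Z_2,-)$ along the codimension-$0$ closed immersion.

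Since directed colimits of $R$-modules are exact, passing to the direct limit yields a long exact sequence, and it remains to identify the three terms. First, $\colim H^i(V,A(n))=H^i(F_{j+m}X,A(n))$ by definition. Second, for fixed $Z$ the subset $Z\setminus V\subset Z\cap(X\setminus V)$ has dimension at most $\dim X-j-m-1=\dim Z-m-1$, so $V\cap Z$ is an open of $Z$ containing $F_mZ$; conversely any open $W\subset Z$ containing $F_mZ$ is realized by $V:=X\setminus(Z\setminus W)$, yielding $\colim_V H^{i+1-2j}(V\cap Z,A(n-j))=H^{i+1-2j}(F_mZ,A(n-j))$, and the further colimit over $Z$ produces the direct limit in the statement. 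Third, the complement $Z\cup(X\setminus V)$ of $V\setminus Z$ in $X$ has dimension at most $\dim X-j$, so $V\setminus Z\supset F_{j-1}X$; conversely, any open $U\supset F_{j-1}X$ has closed complement $Y$ of dimension $\leq\dim X-j$, which I would enlarge to a reduced closed subscheme $Z$ of pure codimension $j$ by absorbing each irreducible component of $Y$ of codimension $>j$ into an integral closed subscheme of codimension exactly $j$ (possible on any Noetherian catenary scheme). Hence this colimit equals $H^i(F_{j-1}X,A(n))$.

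The main obstacle is establishing the naturality of the Gysin sequence under enlarging $Z$ at fixed $V$, which is not directly stated in (\ref{ax:Gysin}). The plan there is to form, for $Z_1\subset Z_2$, the Gysin sequence of the pair $(V\setminus Z_1,V\cap(Z_2\setminus Z_1))$---this makes sense because $Z_2\setminus Z_1$ is open in $Z_2$ and hence still pure of dimension $\dim X-j$---and to compare it with the Gysin sequences of $(V,V\cap Z_i)$ for $i=1,2$ via a diagram chase, reducing the compatibility of $\del$ with pullback and pushforward to the naturality relations already granted by (\ref{ax:Gysin}). A secondary subtlety is verifying the cofinality claim used above, but for the categories $\VV$ of interest in this paper (whose objects are Noetherian algebraic schemes over a field) this follows from standard catenary and chain arguments.
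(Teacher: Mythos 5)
Your proposal is correct in substance and takes essentially the same route as the paper: apply the Gysin sequence (P2) to a directed system of pairs built from closed codimension-$j$ subschemes and small removed closed subsets, then use exactness of filtered colimits and identify the three colimit terms. The indexing differs slightly: you parametrize by pairs $(V,Z)$ with $V\supset F_{j+m}X$ open and $Z$ closed of codimension $j$, applying (P2) to $(V,V\cap Z)$, whereas the paper parametrizes by pairs $(Z,W)$ with $W\subset Z$ closed of dimension $\leq \dim X-j-m-1$, applying (P2) to $(X\setminus W, Z\setminus W)$. These are cofinal in one another (your $W':=X\setminus V$ need not lie in $Z$, but $W:=Z\cap W'$ does), so the colimits agree and the term-identifications you give are right. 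Your cofinality argument for $\colim_V H^i(V\setminus Z)=H^i(F_{j-1}X)$ via catenary enlargement is the same one the paper uses implicitly.

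The point you single out as the main obstacle — naturality of the Gysin sequence under enlarging $Z$ at fixed ambient scheme — is a genuine subtlety, and you are right that it is not literally one of the compatibilities listed in (P2). The paper's proof simply says ``taking the direct limit over $Z$'' without spelling out the transition maps, so it relies on the same unproved compatibility; you are not missing something the paper supplies. Your proposed fix — comparing the Gysin sequences of $(V,V\cap Z_1)$, $(V,V\cap Z_2)$, and $(V\setminus Z_1, V\cap(Z_2\setminus Z_1))$ via the open-restriction and proper-pushforward ladders granted by (P2) — is the standard approach. Note, though, that the two ladders from (P2) give you only the relations $r\circ\del_{(V,V\cap Z_2)}=\del_{(V\setminus Z_1,\,V\cap(Z_2\setminus Z_1))}$ (via open restriction applied to $(V,V\cap Z_2)$ along $V\setminus Z_1\hookrightarrow V$) and $\del_{(V,V\cap Z_1)}\circ g_\ast=\del_{(V\cap Z_2,\,V\cap Z_1)}$ (via proper pushforward applied to $\iota\colon V\cap Z_2\hookrightarrow V$ with $Z=V\cap Z_1$); assembling these into the commuting square $\iota_\ast\circ\del_{(V,V\cap Z_1)}=\del_{(V,V\cap Z_2)}\circ\mathrm{restr}$ on $H^i(V\setminus Z_1)$ still requires a diagram chase through the three long exact sequences that you would need to carry out explicitly, and which the paper also leaves to the reader. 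So: same approach, same (minor and fixable) gap, which you have the merit of naming.
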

\begin{proof}
This follows immediately from  (\ref{ax:Gysin}) by taking direct limits.
We explain the details for convenience of the reader.
Let $Z\subset X$ be closed with $\dim Z=\dim X-j$.
Let further $W\subset Z$ be closed of dimension $\dim W=\dim Z-m-1=\dim X-j-m-1$. 
By (\ref{ax:Gysin}), we get an exact sequence
\begin{align*}
\dots \longrightarrow   
 H^i( X\setminus W,A(n)) \longrightarrow   H^i(X\setminus Z,A(n)) \stackrel{\del}\longrightarrow H^{i+1-2j}( Z\setminus W,A(n-j)) \stackrel{\iota_\ast} \longrightarrow   \dots .
\end{align*}
We can now consider the index set $I$ that consists of pairs $(Z,W)$ of closed subsets  $W\subset Z\subset X$ with $\dim W=\dim Z-m-1=\dim X-j-m-1$. 
This is a directed set with respect to the preorder given by declaring $(Z,W)\leq (Z',W')$ if and only if $Z\subset Z'$ and $W\subset W'$.
Taking the direct limit over this index set, the above long exact sequence yields the sequence
\begin{align*}
\dots \longrightarrow   
 H^i( F_{j+m}X,A(n)) \longrightarrow   H^i(F_{j-1}X,A(n)) \stackrel{\del}\longrightarrow   \lim_{\substack{\longrightarrow \\  Z\subset X\\
\codim(Z)=j}}  H^{i+1-2j}( F_mZ,A(n-j)) \stackrel{\iota_\ast} \longrightarrow   \dots ,  
\end{align*}
which is exact because the direct limit functor is exact.
This proves the lemma. 
\end{proof}

\begin{lemma} \label{lem:les}
Let $X\in \VV$ and $A\in \mathcal A$.
Then for any $j,n\in \Z$, (\ref{ax:Gysin}) induces a long exact sequence
\begin{align*}
 \longrightarrow
   H^i(F_jX,A(n)) \longrightarrow   H^i(F_{j-1}X,A(n))  \stackrel{\del}\longrightarrow \bigoplus_{x\in X^{(j)}}H^{i+1-2j}(x,A(n-j)) \stackrel{\iota_\ast} \longrightarrow    H^{i+1}(F_jX,A(n))  ,
\end{align*}
where $\iota_\ast$ (resp.\ $\del$) is induced by the pushforward (resp.\ residue) map from the Gysin exact sequence (\ref{ax:Gysin}).
\end{lemma}
\begin{proof} 
Using additivity from Lemma \ref{lem:XsqcupY}, this identifies to the special case $m=0$ in Lemma \ref{lem:les-general}.
\end{proof}

\begin{corollary} \label{cor:restr-H_nr}
Let $X\in \VV$.
Then for any $n\in \Z$ and $j,m\geq 0$, the following sequence is exact
$$
 \lim_{\substack{\longrightarrow \\ Z\subset X\\
\codim(Z)=j}}  H^{i-2j}_{m,nr}(Z,A(n-j)) \stackrel{\iota_\ast} \longrightarrow
 H^i_{j+m,nr}( X,A(n)) \longrightarrow   H^i_{j-1,nr}(X,A(n)) ,
$$
where the direct limit runs through all closed reduced subschemes $Z\subset X$  of codimension $\codim(Z)=\dim X-\dim Z=j$.
\end{corollary}
\begin{proof}
The composition of the two arrows in the corollary is zero by Lemma \ref{lem:les-general}.
Conversely, assume that $\alpha\in  H^i_{j+m,nr}( X,A(n)) $ maps to zero in $H^i_{j-1,nr}(X,A(n))$.
By Lemma \ref{lem:les-general}, $\alpha=\iota_\ast \xi$ for some $\xi\in H^{i-2j} (F_mZ,A(n-j))$ and some $Z\subset X$ of codimension $j$.
Since $\alpha$ is unramified, Lemma \ref{lem:les} shows that 
\begin{align} \label{eq:iota-del}
\iota_\ast (\del \xi)=\del (\iota_\ast \xi)=0\in \bigoplus_{x\in X^{(j+m+1)}}H^{i-2j-2m-1}(x,A(n-j-m-1)) ,
\end{align}
where the first equality uses that the Gysin sequence is functorial with respect to proper pushforwards (see (\ref{ax:Gysin})), so that $\iota_\ast$ and $\del$ commute. 
But this implies that the class
$$
\del \xi\in \bigoplus_{x\in Z^{(m+1)}}H^{i-2j-2m-1}(x,A(n-j-m-1))
$$
vanishes, as the above right hand side is a subgroup of the right hand side of (\ref{eq:iota-del}), and $\iota_\ast$ identifies to the inclusion.
Hence,  Lemma \ref{lem:les} implies $\xi\in H^{i-2j}_{m,nr} (Z,A(n-j)) $, as we want.
This concludes the proof of the corollary.
\end{proof}

\begin{corollary} \label{cor:F^i} 
Let $X\in \VV$ and $A\in \mathcal A$. 
 Then $H^i(F_jX,A(n))\cong H^i(X,A(n))$ for all $j\geq \lceil i/2\rceil$.
 \end{corollary}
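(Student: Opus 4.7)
The plan is to proceed by downward induction on $j$, starting from the trivial base case $j=\dim X$: there $F_{\dim X}X=X$, so the only open subset $U\subset X$ containing $F_{\dim X}X$ is $X$ itself, and therefore $H^i(F_{\dim X}X,A(n))=H^i(X,A(n))$. The statement then reduces to showing that the canonical restriction
$H^i(F_{j+1}X,A(n))\to H^i(F_jX,A(n))$
is an isomorphism for every $j\geq \lceil i/2\rceil$.

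To produce each such isomorphism, I would invoke Lemma \ref{lem:les} with its index $j$ replaced by $j+1$. This embeds the restriction map into the four-term exact sequence
\begin{align*}
\bigoplus_{x\in X^{(j+1)}} H^{i-2j-2}(x,A(n-j-1)) &\longrightarrow H^i(F_{j+1}X,A(n)) \\
&\longrightarrow H^i(F_jX,A(n)) \stackrel{\del}\longrightarrow \bigoplus_{x\in X^{(j+1)}} H^{i-2j-1}(x,A(n-j-1)),
\end{align*}
so it suffices to show that both flanking sums vanish. Each consists of point cohomology in cohomological degree $i-2j-2$ or $i-2j-1$; once $j\geq \lceil i/2\rceil$, the inequality $2j\geq i$ forces both degrees to be strictly negative (it is enough to check even and odd parities of $i$, but the conclusion is uniform). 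Axiom (\ref{ax:normalization}) then furnishes the required vanishing and hence the isomorphism.

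I do not anticipate any serious obstacle: the corollary is a bookkeeping consequence of the Gysin sequence packaged in Lemma \ref{lem:les}, combined with the vanishing of negative-degree point cohomology in (\ref{ax:normalization}). The threshold $\lceil i/2\rceil$ appears naturally from the factor of $2$ in the degree shift produced by the Gysin residue at codimension $j+1$ points.
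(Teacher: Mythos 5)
Your argument is correct and is essentially the paper's own proof: both reduce the statement, by downward induction from $j=\dim X$, to the vanishing of the negative-degree point cohomology groups flanking the restriction map in the long exact sequence of Lemma \ref{lem:les}, using (\ref{ax:normalization}). The only difference is a harmless index shift (you compare levels $j+1$ and $j$ for $j\geq\lceil i/2\rceil$, the paper compares $j$ and $j-1$ for $j>\lceil i/2\rceil$), and your degree bookkeeping $i-2j-2,\ i-2j-1<0$ is accurate.
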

 \begin{proof}
Since $H^i(x,A(n))$ vanishes for $i<0$ by  (\ref{ax:normalization}), Lemma \ref{lem:les} implies
$$
H^i(F_jX,A(n))\cong H^i(F_{j-1}X,A(n))
$$
for all $j$ with  $j>\lceil i/2\rceil$.
This proves   the corollary by induction on $j$, because 
$
H^i(F_jX,A(n))=H^i(X,A(n))
$ 
for $j\geq \dim(X)$. 
\end{proof}
 
 \begin{corollary}  \label{cor:F^i-2} 
Let $X\in \VV$ and $A\in \mathcal A$. 
Assume that there is a non-negative integer $c$, such that 
 for any $X\in \VV$ and $x\in X_{(j)}$, $H^i(x,A(n))=0$ for $i>j+c$ and all $n$.
 Then $H^i(F_jX,A(n))=0$ for all $i>\dim X+j+c$ and all $n$.  
\end{corollary}
\begin{proof}
Our assumption implies by Lemma \ref{lem:les} that $H^i(F_jX,A(n))\cong H^i(F_{j-1}X,A(n)) $ for all $j$ with $i>j+\dim X+c$.
Hence, $H^i(F_jX,A(n))\cong H^i(F_{0}X,A(n))$ for all $j$ with $i>j+\dim X+c$.
But $H^i(F_{0}X,A(n))=0$ for all $i>\dim X+c$ by Lemma \ref{lem:XsqcupY} and our assumption, because $F_0X$ is the union of the generic points of the maximal-dimensional components of $X$.   
This proves the corollary.
\end{proof}

The following lemma identifies the differential $d_1$ on the $E_1$-page of the coniveau spectral sequence of Bloch--Ogus \cite[\S 3]{BO} with the composition $\del\circ \iota_\ast$.

\begin{lemma}\label{lem:del=delta}
Let $X\in \VV$ and $A \in \mathcal A$.
Let $w\in X^{(p-1)}$ with closure $W\subset X$ and let $\tau:\tilde W\to W$ be the normalization with generic point $\eta_{\tilde W}\in \tilde W$.
Then the following diagram commutes for all integers $i$ and $n$  
$$
\xymatrix{
H^i(w,A(n))=H^i( \eta_{\tilde W} ,A(n))\ar[r]^-{\del}\ar@{^{(}->}[d]&\bigoplus_{\tilde w\in \widetilde W^{(1)}}H^{i-1}( \tilde w ,A(n-1)) \ar[d]^{\tau_\ast}\\
\bigoplus_{x\in X^{(p-1)}} H^i(x,A(n)) \ar[r]^-{\del \circ \iota_\ast}& \bigoplus_{x\in X^{(p)}}H^{i-1}(x,A(n-1)) ,
}
$$
where the  vertical arrow on the left is the natural inclusion, the vertical arrow on the right is induced by the proper pushforward maps from (\ref{ax:pushforward}), the upper horizontal arrow is induced by the residue map in (\ref{ax:Gysin}) and the lower horizontal arrow is given by 
$$
\bigoplus_{x\in X^{(p-1)}} H^i(x,A(n))\stackrel{\iota_\ast}\longrightarrow  H^{i+2p-2}(F_{p-1}X,A(n+p))\stackrel{\del}\longrightarrow \bigoplus_{x\in X^{(p)}}H^{i-1}(x,A(n-1)),
$$
where $\iota_\ast$ resp.\ $\del$ is the pushforward resp.\ residue map induced by  (\ref{ax:Gysin}).
\end{lemma}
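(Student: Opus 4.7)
The plan is to deduce the lemma from the functoriality of the Gysin sequence with respect to proper pushforwards, as stated in axiom (\ref{ax:Gysin}), applied to the proper morphism $f := \iota \circ \tau : \tilde W \to X$ of relative codimension $p-1$. By functoriality of proper pushforwards from (\ref{ax:pushforward}), $f_\ast = \iota_\ast \circ \tau_\ast$. Since $\tau$ is a finite birational morphism, its restriction to the generic point is an isomorphism, so the induced map $\tau_\ast$ on generic-point cohomology is precisely the tautological identification $H^i(\eta_{\tilde W}, A(n)) = H^i(w, A(n))$ appearing in the left vertical arrow of the statement. Moreover, by (\ref{ax:Gysin}) the Gysin pushforward along the closed immersion $W \hookrightarrow X$ coincides with the proper pushforward from (\ref{ax:pushforward}). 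Under these identifications, the lower horizontal composition of the diagram equals $\partial_X \circ f_\ast$, and the lemma reduces to verifying the equality $f_\ast \circ \partial_{\tilde W} = \partial_X \circ f_\ast$ of residue maps.

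To check this, I would choose a cofinal system of closed reduced subschemes $Z_2 \subset X$ of pure codimension $p$, and set $Z_1 := f^{-1}(Z_2) \subset \tilde W$. Because $f$ is finite and satisfies $f^{-1}(X^{(\ge p)}) \subseteq \tilde W^{(\ge 1)}$, the scheme $Z_1$ has pure codimension $1$ in $\tilde W$, the complement $\tilde W \setminus Z_1$ has the same dimension as $\tilde W$, and the induced map $f|_{Z_1} : Z_1 \to Z_2$ is proper of relative codimension $0$. The functoriality clause in (\ref{ax:Gysin}) applied to the proper morphism $f$ then yields a commutative ladder between the Gysin sequences of $(\tilde W, Z_1)$ and $(X, Z_2)$, whose central square is
$$
\xymatrix{
H^i(\tilde W \setminus Z_1, A(n)) \ar[r]^-{\partial} \ar[d]_{f_\ast} & H^{i-1}(Z_1, A(n-1)) \ar[d]^{f_\ast} \\
H^{i+2p-2}(X \setminus Z_2, A(n+p-1)) \ar[r]^-{\partial} & H^{i-1}(Z_2, A(n-1)) .
}
$$

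Taking direct limits as $Z_2$ ranges over the cofinal system, the complements $\tilde W \setminus Z_1$ and $X \setminus Z_2$ converge to $\eta_{\tilde W}$ and to $F_{p-1}X$ respectively; while by Lemma \ref{lem:XsqcupY} (additivity over disjoint generic points) and the definition (\ref{eq:Hi(kappa(x))}) of cohomology of a point, the groups $H^{i-1}(Z_1, A(n-1))$ and $H^{i-1}(Z_2, A(n-1))$ converge to the direct sums $\bigoplus_{\tilde w \in \tilde W^{(1)}} H^{i-1}(\tilde w, A(n-1))$ and $\bigoplus_{x \in X^{(p)}} H^{i-1}(x, A(n-1))$, respectively. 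The upper and lower residue maps in the limit become those from Lemma \ref{lem:les}, and passing to the limit in the commutative ladder above yields exactly the claimed diagram. The main obstacle I anticipate is the colimit bookkeeping: one must check that the $Z_1 := f^{-1}(Z_2)$ are cofinal among closed subschemes of $\tilde W$ containing $\tilde W^{(\ge 2)}$ (this relies on the finiteness of $f$, which controls how preimages interact with closures), and that the right vertical limit map recovers precisely the pushforward $\tau_\ast$ of the statement, using that cohomology of a point depends only on the point itself and not on the ambient scheme, so that $\iota_\ast$ acts as the identity on codimension-$p$ point cohomology.
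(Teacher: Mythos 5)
Your strategy is the same as the paper's: the paper proves this lemma in one line, by remarking that $W$ and hence $\tilde W$ lie in $\VV$ (Definition \ref{def:V}) and that the claim is a direct consequence of the functoriality of the Gysin sequence (\ref{ax:Gysin}) with respect to the proper pushforwards (\ref{ax:pushforward}), applied to the proper map $f=\iota\circ\tau$ exactly as you do. So the core idea is right and matches the intended proof; the limit bookkeeping you add is precisely what the paper suppresses, and it is there that two of your steps are not correct as stated.

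First, the claim that $Z_1=f^{-1}(Z_2)$ has pure codimension $1$ in $\tilde W$ and that $f|_{Z_1}:Z_1\to Z_2$ has relative codimension $0$ is unjustified and false in general: $W\cap Z_2$ can have dimension strictly less than $\dim X-p$ (or be empty), so finiteness of $f$ only gives $\codim_{\tilde W}Z_1\ge 1$, not purity or equality of dimensions. Fortunately this is not needed: the ladder clause of (\ref{ax:Gysin}) only requires $\dim\tilde W=\dim(\tilde W\setminus Z_1)$, which holds because $W\not\subset Z_2$, and the degrees and twists still line up since $\dim\tilde W-\dim Z_1=(\dim Z_2-\dim Z_1)+1$; so either invoke the ladder in that generality or drop the purity claim, but as written your central square has the wrong degrees whenever $\dim Z_1<\dim Z_2$. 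Second, the single colimit over enlarging $Z_2$ does not compute the direct sums of point cohomologies: for $Z_2\subset Z_2'$ the transition maps on $H^{i-1}(Z_2,A(n-1))$ are pushforwards, whereas $H^{i-1}(x,A(n-1))$ is by (\ref{eq:Hi(kappa(x))}) a colimit over shrinking dense opens of $\overline{\{x\}}$, i.e.\ over removing closed subsets of smaller dimension. To recover the two rows of Lemma \ref{lem:les} you must take the two-stage limit exactly as in the proof of Lemma \ref{lem:les-general}: also remove closed subsets $W'\subset Z_2$ of smaller dimension, pass to the pairs $(X\setminus W',Z_2\setminus W')$ and their preimage pairs on $\tilde W$, and only then apply Lemma \ref{lem:XsqcupY} and (\ref{eq:Hi(kappa(x))}). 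With these two repairs your argument is a faithful expansion of the paper's one-line proof.
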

\begin{proof}
Note that $W\in \VV$ and hence $\widetilde W\in \VV$, cf.\ Definition \ref{def:V}.
The lemma is thus a direct consequence of the functoriality of the Gysin sequence (\ref{ax:Gysin}) with respect to proper pushforwards (\ref{ax:pushforward}), as required in (\ref{ax:Gysin}).
\end{proof}

\subsection{Torsion-freeness of the cohomology of  points}

In this section we fix a prime $\ell$ and assume that the twisted Borel--Moore cohomology theory $H^\ast(-,A(n))$ on $\VV$ is $\ell$-adic, see Definition \ref{def:Borel--Moore-cohomology-l-adic}.
It is an observation of Bloch (see \cite[end of Lecture 5]{bloch}) that properties (\ref{ax:Bockstein}) and (\ref{ax:Hilbert90}) have the following important consequence.

\begin{lemma} \label{lem:torsionfree} 
Let $\VV$ be a constructible category of Noetherian schemes as in Definition \ref{def:V}.
Fix a prime $\ell$ and assume that  $H^\ast(-,A(n))$ is an $\ell$-adic twisted Borel--Moore cohomology theory on $\VV$ as in  Definition \ref{def:Borel--Moore-cohomology-l-adic}. 
Then for any  $X\in \VV$ and $x\in X$, $ H^i(x,\Z_\ell(i-1))$ is torsion-free for $1\leq i\leq 2$.
\end{lemma}
\begin{proof} 
Taking direct limits of abelian groups is exact, so that property (\ref{ax:Bockstein}) implies that 
$$
H^{i}(x,\Z_\ell(i-1))[\ell^r]\cong \coker(H^{i-1}(x,\Z_\ell(i-1))\longrightarrow H^{i-1}(x,\mu_{\ell^r}^{\otimes i-1})).
$$
This vanishes for $i=1$, as in this case we have by  (\ref{ax:Bockstein}) an exact sequence
$$
H^{0}(x, \Z_\ell(0)) \stackrel{\times \ell^r}\longrightarrow H^{0}(x, \Z_\ell(0)) \longrightarrow H^{0}(x, \mu_{\ell^r}^{\otimes 0}) 
$$
which by (\ref{ax:normalization}) identifies to $\Z_\ell \stackrel{\times \ell^r}\to \Z_\ell\to \Z/\ell$ and so the last arrow is surjective.  

By (\ref{ax:Hilbert90}), there is a surjection $\overline \epsilon:\kappa(x)^\ast\twoheadrightarrow H^{1}(x,\mu_{\ell^r}^{\otimes 1})$ which factors through $ H^{1}(x,\Z_\ell(1))$ and so the above cokernel also vanishes for $i=2$. 
This concludes the proof.
\end{proof}


\begin{remark}\label{rem:bloch-kato}
The above proof shows more generally that $H^{i+1}(x,\Z_\ell(i))$ is torsion-free if there are surjections $(\kappa(x)^\ast)^{\otimes i}\twoheadrightarrow H^{i}(x,\mu_{\ell^r}^{\otimes i})$ that factor through $H^{i}(x,\Z_{\ell}(i))$. 
In particular, $H^{i+1}(x,\Z_\ell(i))$ is torsion-free if a version of the Bloch--Kato conjecture holds in degree $i$ in the sense that there is a map $ K^M_i(\kappa(x))\to H^i(x,\Z_\ell(i))$ which induces isomorphisms  $K^M_i(\kappa(x))/\ell^r\cong H^i(x,\mu_{\ell^r}^{\otimes i})$. 
It follows from Voevodsky's proof of the Bloch--Kato conjecture \cite{Voe:Bloch-Kato} that the theories that we discuss in Proposition \ref{prop:proetale-coho-arbitrary-field} and \ref{prop:Betti-coho} below have this property.
\end{remark}

 \section{Examples of Borel--Moore cohomologies}
 
 In this section we discuss some examples of  functors that satisfy the properties from Section \ref{sec:axioms}.
 The results are certainly well-known to experts and we only include them for convenience of the reader.

\subsection{\texorpdfstring{$\ell$}{ell}-adic Borel--Moore pro-\'etale cohomology} \label{sec:proet} 

\subsubsection{Continuous \'etale cohomology of Jannsen} \label{subsec:jannsen}
Let $X$ be a scheme over a field $k$ and let $\Ab(X_{\et})^{\N}$ be the abelian category of inverse systems of abelian \'etale sheaves on the small \'etale site $X_{\et}$ of $X$.
This category has enough injectives (see \cite{jannsen}) and we may consider the left exact functor
$$
\varprojlim\circ \Gamma:\Ab(X_{\et})^{\N}\longrightarrow \Ab,\ \ (F_r)\mapsto \lim_{\substack{\longleftarrow\\ r}} \Gamma(X,F_r).
$$
Jannsen then defines the continuous \'etale cohomology groups 
$$
H^i_{cont}(X,(F_r)):=\RR^i (\varprojlim\circ \Gamma)((F_r)) .
$$ 
These groups are closely related to the corresponding \'etale cohomology groups via the following canonical short exact sequence (see \cite[\S 1.6]{jannsen}):
\begin{align} \label{eq:ses-lim-Jannsen}
0\longrightarrow \RR^1 \varprojlim  \ H^{i-1}(X_{\et},F_r)\longrightarrow H^i_{cont}(X_{\et},(F_r))\longrightarrow \varprojlim H^i(X_{\et},F_r)\longrightarrow 0 ,
\end{align}
where $\varprojlim$ denotes the inverse limit functor over $r$.

By  \cite[(3.27)]{jannsen}, we have the   following Kummer exact sequence in $\Ab(X_\et)^{\N}$:
\begin{align} \label{eq:kummer}
0\longrightarrow (\mu_{\ell^r})_r \longrightarrow (\mathbb G_m,\times \ell)_r \stackrel{\times \ell^r}\longrightarrow  (\mathbb G_m,\id)_r \longrightarrow 0 ,
\end{align}
where $\ell$ is a prime invertible in $k$.
Taking cohomology, the boundary map of the corresponding long exact sequence yields maps
\begin{align} \label{eq:epsilon-c1}
\epsilon:H^0(X,\mathbb G_m)\longrightarrow H^1_{cont}(X,\Z_\ell(1))\ \ \ \text{and}\ \ \ c_1:\Pic(X)\longrightarrow H^2_{cont}(X,\Z_\ell(1)),
\end{align}
where $H^i_{cont}(X,\Z_\ell(n)):=H^i_{cont}(X,(\mu_{\ell^r}^{\otimes n})_r)$.

\subsubsection{Pro-\'etale cohomology of Bhatt and Scholze}
For a scheme $X$ we denote by $X_\proet$ the pro-\'etale site of $X$ formed by weakly \'etale maps of schemes $U\to X$ (with $U$ of not too big cardinality), see \cite[Definition 4.1.1 and Remark 4.1.2]{BS}. 
Since every \'etale map is  weakly \'etale, there is a natural map of associated topoi 
\begin{align} \label{def:nu}
\nu:\Sh(X_{\proet})\longrightarrow \Sh(X_\et).
\end{align}
The  pullback $\nu^\ast:D^+(X_\et)\to D^+(X_\proet)$ on bounded below derived categories is fully faithful and the adjunction $\id\to \RR \nu_\ast \nu^\ast$ is an isomorphism, see \cite[Proposition 5.2.6]{BS}.
For a sheaf $F\in \Ab(X_\proet)$ of abelian groups on $X_\proet$, one defines
$$
H^i(X_\proet,F):=\RR^i \Gamma(X_\proet , F),
$$
where $\RR^i \Gamma$ denotes the $i$-th right derived functor of the global section functor $ F\mapsto \Gamma(X,F)$. 

If the transition maps in the inverse system $(F_r)\in \Ab(X_{\et})^{\N}$ are surjective, then there is a canonical isomorphism
\begin{align}\label{eq:H^i_cont=H^i_proet}
 H^i(X_\proet,\lim \nu^\ast F_r) \cong H^i_{cont}(X_{\et},(F_r)) ,
\end{align}
see   \cite[\S 5.6]{BS}.

\subsubsection{Constructible complexes in the pro-\'etale topology}\label{subsec:constructible-complex}
We present in this section some parts of the six functor formalism on constructible complexes of Bhatt and Scholze in the special case of algebraic schemes, i.e.\ separated schemes of finite type over a field, which suffices for our purposes.
In  Remark \ref{rem:BS-more-general} below we add some comments on the more general setting from \cite{BS}. 

Let $X$ be an algebraic scheme over a field $k$ and recall $\nu$ from (\ref{def:nu}).
For a prime $\ell$ invertible in $k$, let
\begin{align} \label{def:Zhat}
\widehat \Z_\ell(n):=\lim \nu^\ast \mu_{\ell^r}^{\otimes n} \in \Ab(X_\proet)
\end{align}
and write $\widehat \Z_\ell:=\widehat \Z_\ell(0)$.
Note that $ \widehat \Z_\ell$  is a sheaf of rings on $X_\proet$ and $\widehat \Z_\ell(n)$ are $\widehat \Z_\ell$-modules, which are in fact locally free (e.g.\ they are free on the pro-\'etale covering $X_{\overline k}\to X$).
We may then consider the derived category $D(X_\proet,\widehat \Z_\ell)$ of  the abelian category $\Mod(X_\proet,\widehat \Z_\ell)$ of sheaves of $\widehat \Z_\ell$-modules on $X_\proet$.
A complex $K\in D(X_\proet,\widehat \Z_\ell)$ is constructible, if it is complete, i.e.\ $K\stackrel{\cong}\to \RR \lim (K\otimes^{\mathbb L}_{\widehat \Z_\ell} \Z/\ell^r)$, and $K\otimes^{\mathbb L}_{\widehat \Z_\ell} \Z/\ell^r\cong \nu^\ast K_r$ for a constructible complex $K_r\in D(X_\et,\Z/\ell^r)$,  see \cite[Definition 6.5.1]{BS}.
The full subcategory spanned by constructible complexes is denoted by $D_{cons}(X_\proet,\widehat \Z_\ell)\subset D(X_\proet,\widehat \Z_\ell)$. 
Constructible complexes are bounded, see \cite[Lemma 6.5.3]{BS}.

For a morphism $f:X\to Y$ of algebraic schemes,  $\RR f_\ast$ respects constructibility and is right adjoint to $f^\ast_{comp}:D_{cons}(Y_\proet,\widehat \Z_\ell)\to D_{cons}(X_\proet,\widehat \Z_\ell)$, which is given by pullback followed by (derived) completion, see \cite[Lemma 6.7.2]{BS}. 
There is also a functor $\RR f_!:D_{cons}(X_\proet,\widehat \Z_\ell)\to D_{cons}(Y_\proet,\widehat \Z_\ell)$ (see \cite[Definition 6.7.6]{BS}) with a right adjoint $f^!: D_{cons}(Y_\proet,\widehat \Z_\ell )\to D_{cons}( X_\proet,\widehat \Z_\ell)$, see \cite[Lemma 6.7.19]{BS}.
If $f$ is proper, $\RR f_!=\RR f_\ast$ (by definition).

To explain the construction of $f^!$ in \cite{BS}, note that the pullback
\begin{align}\label{eq:nu*}
\nu^\ast: D_{cons}(X_\et,\Z/\ell^r)\stackrel{\cong}\longrightarrow D_{cons}(X_\proet,\Z/\ell^r)
\end{align}
is an equivalence (see paragraph after \cite[Definition 6.5.1]{BS}).
Using this, we will freely identify complexes on the two sides with each other.
For instance, we will freely identify $\mu_{\ell^r}^{\otimes n}$ on $Y_{\et}$ with its pullback $\nu^\ast \mu_{\ell^r}^{\otimes n}$ to $Y_\proet$.
Let now $K\in D_{cons}( Y_\proet,\widehat \Z_\ell)$ with truncation $K_r=K\otimes^{\mathbb L}_{ \Z_\ell}\Z/\ell^r$ and let $f_r^!:D_{cons}(Y_\et, \Z/\ell^r )\to D_{cons}( X_\et,  \Z/\ell^r)$ be the exceptional pullback on the \'etale site, induced by $f$, cf.\ \cite[Expos\'e XVIII]{SGA4.3}.
Since any constructible complex of sheaves of $\Z/\ell^r$-modules on $X_\proet$ is also a constructible complex of $\widehat \Z_\ell$-modules on $X_\proet$, we may by (\ref{eq:nu*}) identify $f_r^!K_r$ with an object in $D_{cons}( X_\proet,\widehat \Z_\ell)$.
By \cite[Lemma 6.7.18]{BS}, the natural reduction maps $K_r\to K_m$ for $m\leq r$ make $(f_r^!K_r)$ into a projective system and so, following Bhatt--Scholze (see \cite[Lemma 6.7.19]{BS}), one may define
\begin{align} \label{eq:f!K}
f^!K:=\RR \lim f_r^! K_r \in D_{cons}( X_\proet,\widehat \Z_\ell).
\end{align}
The above construction implies that many properties known from the \'etale site carry over to the pro-\'etale site. 

\begin{lemma} \label{lem:functors-proet}
Let $f:X\to Y$ be a morphism between algebraic schemes over a field $k$ and let $\ell$ be a prime invertible in $k$.
Then the following holds in $D_{cons}(X_\proet,\widehat \Z_\ell)$:
\begin{enumerate}
\item If $f$ is weakly \'etale or a closed immersion, then $ f_{comp}^\ast\cong f^\ast$; \label{item:f*=f-comp*}
\item If $f$ is \'etale, then $f^!\cong f^\ast \cong f^\ast_{comp}$; \label{item:f-etale}
\item If $g:Y\to Z$ is another morphism, then there is a natural isomorphism of functors $f^!g^!\stackrel{\cong}\longrightarrow (g\circ f)^! $;\label{item:f!g!=gf!} 
\item If $f$ is smooth of pure relative dimension $d$,  then 
there is a canonical isomorphism of functors $f_{comp}^\ast(d)[2d]\stackrel{\cong}\longrightarrow f^!$, where $f_{comp}^\ast(n):=f_{comp}^\ast(-\otimes_{\widehat \Z_\ell} \widehat \Z_{\ell}(n))$.\label{item:f!=f*(d)} 
\item  \label{item:f!=f*(d)-compatible} 
Let $f$ be smooth of pure relative dimension $d$.
Then for any \'etale map $j:U\to X$, the diagram 
$$
\xymatrix{
 (f\circ j)_{comp}^\ast(d)[2d] \ar[r]^-{\cong}&
(f\circ j)^!\\ 
 j_{comp}^\ast f_{comp}^\ast(d)[2d] \ar[r]^-{\cong} \ar[u]  &
  j_{comp}^\ast f^! \cong j^!f^! \ar[u]\\ 
}
$$
commutes, where the horizontal maps are induced by the canonical isomorphisms from  item (\ref{item:f!=f*(d)}) and the vertical arrows are induced by the canonical maps given by functoriality of $f^\ast_{comp}$ and $f^!$. 
\end{enumerate}
\end{lemma}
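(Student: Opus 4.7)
The plan is to reduce every item to the corresponding statement on the small \'etale site modulo $\ell^r$, using the equivalence (\ref{eq:nu*}) together with the defining formula $f^!K=\RR\lim f_r^!K_r$ from (\ref{eq:f!K}) and the completeness condition on constructible complexes. Since constructible complexes $K\in D_{cons}(Y_\proet,\widehat \Z_\ell)$ are recovered from their reductions $K_r:=K\otimes^{\mathbb L}_{\widehat\Z_\ell}\Z/\ell^r$ via $K\cong \RR\lim K_r$, it suffices in each case to construct the asserted natural transformation on each level $r$, check that it is compatible with the transition maps $K_r\to K_m$ for $m\le r$, and then pass to $\RR\lim$.

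For item (\ref{item:f*=f-comp*}), if $f$ is weakly \'etale then the sheaf pullback $f^\ast$ is exact and preserves the inverse systems $(\nu^\ast\mu_{\ell^r}^{\otimes n})_r$ on the nose, so $f^\ast K$ is already complete, hence agrees with $f^\ast_{comp}K$. For $f$ a closed immersion, the \'etale pullback $f^\ast_r K_r$ is constructible and the resulting system $(f^\ast_r K_r)$ is automatically $\ell$-complete after applying $\nu^\ast$, giving $f^\ast K\cong \RR\lim\nu^\ast f^\ast_r K_r\cong f^\ast_{comp}K$. For item (\ref{item:f-etale}), if $f$ is \'etale then $f_r^!\cong f_r^\ast$ on the \'etale site (trivially from the construction of $f^!$ in SGA 4, since $f$ is smooth of relative dimension $0$), and since $f^\ast$ commutes with $\RR\lim$ on bounded complexes of $\widehat\Z_\ell$-modules, the formula (\ref{eq:f!K}) yields $f^!K\cong f^\ast K$, which further equals $f^\ast_{comp}K$ by item (\ref{item:f*=f-comp*}).

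For item (\ref{item:f!g!=gf!}), on the \'etale site one has canonical isomorphisms $f_r^!g_r^!\stackrel{\cong}\to(g\circ f)_r^!$ from SGA 4, \'Expose XVIII. These are compatible with the reduction maps $K_r\to K_m$, which can be checked by unwinding the definitions of $f_r^!$ and $g_r^!$ via compactifications and applying the corresponding compatibilities for $\RR f_!$ and $\otimes^{\mathbb L}$. Applying $\RR\lim$ then produces the desired isomorphism $f^!g^!\cong (g\circ f)^!$ in $D_{cons}(X_\proet,\widehat \Z_\ell)$. For item (\ref{item:f!=f*(d)}), relative Poincar\'e duality for smooth morphisms on the \'etale site (SGA 4, Expos\'e XVI) supplies natural isomorphisms $f_r^!K_r\cong f_r^\ast K_r(d)[2d]$ compatible with reductions; taking $\RR\lim$ and using that $f^\ast_{comp}$ is by definition the derived completion of $f^\ast$ yields $f^!K\cong f^\ast_{comp}K(d)[2d]$. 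Item (\ref{item:f!=f*(d)-compatible}) is then the naturality of this purity isomorphism with respect to \'etale localization, which again holds on the \'etale site level by level and passes to $\RR\lim$.

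The main obstacle is the bookkeeping in items (\ref{item:f!g!=gf!}) and (\ref{item:f!=f*(d)}): verifying that the \'etale-level isomorphisms are strictly compatible with the transition maps in the inverse systems, so that the construction in (\ref{eq:f!K}) descends to a well-defined isomorphism of functors on $D_{cons}$ rather than merely a levelwise isomorphism. In particular, one has to check that the identifications of $\nu^\ast f_r^!K_r$ with $f^!K\otimes^{\mathbb L}_{\widehat\Z_\ell}\Z/\ell^r$ promised in \cite[Lemma 6.7.19]{BS} are functorial in a way that intertwines the purity isomorphism and the base change isomorphism; this is standard but notationally heavy. All other verifications reduce to known facts about the \'etale six functor formalism.
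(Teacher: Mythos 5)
Your proposal is correct and takes essentially the same route as the paper: reduce each item to its \'etale analogue modulo $\ell^r$ (namely $f_r^!\cong f_r^\ast$ for \'etale maps, composition of exceptional pullbacks, and relative Poincar\'e duality from \cite[Exp.\ XVIII]{SGA4.3}), combine this with the defining formula $f^!K=\RR\lim f_r^!K_r$ and completeness of constructible complexes, and pass to $\RR\lim$; the levelwise compatibilities you flag are exactly what the paper disposes of by citing \cite[Lemmas 6.7.18 and 6.7.19]{BS}, and item (1) is handled there by \cite[Remark 6.5.10]{BS}. One minor point: the duality isomorphism $f_r^!\cong f_r^\ast(d)[2d]$ is \cite[XVIII, Th\'eor\`eme 3.2.5]{SGA4.3}, not Expos\'e XVI.
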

\begin{proof}
Item (\ref{item:f*=f-comp*}) follows from \cite[Remark 6.5.10]{BS}.
Item (\ref{item:f-etale}) follows from this together with the fact that for any $K\in D_{cons}(Y_\proet,\widehat \Z_\ell)$,
$$
f_{comp}^\ast K \stackrel{\cong}\longrightarrow \RR \lim f_r^\ast K_r=\RR \lim f_r^!K_r=
f^!K
$$
because $f_r^!=f_r^\ast$ since $f$ is \'etale, see \cite[XVIII, Proposition 3.1.8(iii)]{SGA4.3}. 

Let $K\in D_{cons}(Z_\proet,\widehat \Z_\ell)$ with truncations $K_r=K\otimes_{\widehat \Z_\ell}^{\mathbb L} \Z/\ell^r \in D_{cons}(Y_\et,\Z/\ell^r)$.
By  (\ref{eq:nu*}) and the construction of $f^!$ from (\ref{eq:f!K}),  there is a natural map
$$
f^!g^!K=\RR \lim f_r^!(\RR \lim g_r^! K_r)=\RR \lim (f_r^!\circ g_r^!(K_r))\stackrel{\cong}\longrightarrow \RR \lim ((g_r\circ f_r)^!K_r)=(g\circ f)^! K
$$
induced by the natural isomorphism $f_r^!\circ g_r^!\stackrel{\sim} \to (g_r\circ f_r)^!$, given by adjunction and $\RR (g_r)_!\RR (f_r)_!\stackrel{\cong}\to \RR (g_r\circ f_r)_!$. 
This proves (\ref{item:f!g!=gf!}). 

Let $K\in D_{cons}(Y_\proet ,\widehat \Z_\ell)$ with truncations $K_r=K\otimes_{\widehat \Z_\ell}^{\mathbb L} \Z/\ell^r \in D_{cons}(Y_\et,\Z/\ell^r)$.
Assume that $f$ is smooth of pure relative dimension $d$.
By Poincar\'e duality on the \'etale site,  there are canonical identifications $f_r^!= f_r^\ast(d)[2d]$, see \cite[XVIII, Th\'eor\`eme 3.2.5]{SGA4.3} (cf.\ \cite[\S 4.4]{verdier}). 
We thus get a canonical isomorphism
$$ 
f_{comp}^\ast K(d)[2d]\stackrel{\cong}\longrightarrow \RR \lim f_r^\ast K_r(d)[2d]=\RR \lim f_r^! K_r= f^!K.
$$ 
This holds functorially in $K$ and so we get an isomorphism $f_{comp}^\ast(d)[2d]\stackrel{\cong}\to f^!$, which proves (\ref{item:f!=f*(d)}). 

By item (\ref{item:f-etale}), $j_{comp}^\ast\cong j^!$ and so the commutativity of the diagram in item (\ref{item:f!=f*(d)-compatible}) follows from the fact that the isomorphism in (\ref{item:f!=f*(d)}) is compatible with respect to compositions of smooth maps.
The latter follows by construction of $f^!$ from the analogous result for constructible complexes on the \'etale site and hence from \cite[XVIII, diagram above Th\'eor\`eme 3.2.5]{SGA4.3}.
This concludes the proof of the lemma.
\end{proof}

Let $f:X\to Y$ be a morphism between algebraic $k$-schemes.
By adjunction, there are natural transformations
\begin{align} \label{eq:adjunction}
\Tr_f: \RR f_! f^!\longrightarrow \id \ \ \ \text{and}\ \ \ 
\theta_f: \id\longrightarrow \RR f_\ast f^\ast_{comp} 
\end{align}
between functors on $D_{cons}(Y_\proet,\widehat \Z_\ell)$.
For $K\in D_{cons}(Y_\proet,\widehat \Z_\ell)$, the above maps are defined by asking  (see \cite[Lemmas 6.7.2 and 6.7.19]{BS})   that the following diagrams commute:
\begin{align} \label{eq:adjunction-construction}
\xymatrix{
K \ar[d]\ar[rr]^-{\theta_f}& &\RR f_\ast f^\ast_{comp}K \ar[d]\\
 \RR \lim K_r\ar[rr]^-{\RR \lim(\theta_{f_r})}&&  \RR \lim (\RR f_{r\ast} f_r^\ast K_r) ,
}\ \ \ \ 
\xymatrix{
\RR f_!f^!K \ar[d]\ar[rr]^-{\Tr_f}& & K \ar[d]\\
\RR \lim  \RR f_{r!}f_r^!K_r\ar[rr]^-{\RR \lim(\Tr_{f_r})}&&  \RR \lim (  K_r) ,
}
\end{align}
where $K_r=K\otimes^{\mathbb L}_{\widehat \Z_\ell}\Z/\ell^r\in  D_{cons}(Y_\et, \Z/\ell^r)$, $f_r^\bullet$, $f_{r\bullet}$ denote the corresponding functors on $D_{cons}(Y_\et, \Z/\ell^r)$ (resp.\ $D_{cons}(X_\et, \Z/\ell^r)$) induced by $f$, and where as before  we identify $K_r$ with $\nu^\ast K_r$, using the equivalence (\ref{eq:nu*}).

For $K\in D_{cons}(Y_\proet,\widehat \Z_\ell)$, $\theta_f$ induces pullback maps
\begin{align} \label{eq:proet-pullback}
f^\ast:\RR^i \Gamma(Y_\proet, K)\longrightarrow \RR^i \Gamma(X_\proet, f_{comp}^\ast K).
\end{align}
If $f$ is proper, then $\RR f_!=\RR f_\ast$ and so $\Tr_f$ induces a pushforward map
\begin{align} \label{eq:proet-pushforward-complex}
f_\ast: \RR^i \Gamma(X_\proet, f^! K)\longrightarrow \RR^i \Gamma(Y_\proet, K).
\end{align} 

\begin{lemma} \label{lem:proet-Rlim-extension} 
Let $K\in D_{cons}(Y_\proet,\widehat \Z_\ell )$ with truncations $K_r=K\otimes^{\mathbb L}_{\widehat \Z_\ell}\Z/\ell^r\in  D_{cons}( Y_\et, \Z/\ell^r )$. 
Then for any morphism $f:X\to Y$ between algebraic $k$-schemes, the following diagram commutes with exact rows:
$$
\xymatrix{
0\ar[r]&
  \RR ^1\lim \RR^{i-1}\Gamma(X_{\et},f_r^\ast K_r)\ar[r]&  
  \RR ^i \Gamma(X_\proet ,f^\ast_{comp} K)\ar[r] & \lim \RR^{i} \Gamma(X_{\et},f _r^\ast K_r) \ar[r] & 0\\
0\ar[r]&  \RR ^1\lim \RR^{i-1}\Gamma (Y_{\et},K_r)\ar[r] \ar[u]^{\RR^1 \lim (f_r^\ast)}&  \RR^i \Gamma (Y_\proet , K)\ar[r] \ar[u]^{ f^\ast} & \ar[u]^{  \lim (f_r^\ast)} \lim \RR^{i} \Gamma (Y_{\et},K_r) \ar[r] & 0 .
}
$$
If  $f$ is proper, then the following commutes with exact rows as well:
$$
\xymatrix{
0\ar[r]&  \RR ^1\lim \RR^{i-1}\Gamma(X_{\et},f_r^! K_r)\ar[r] \ar[d]^{\RR^1 \lim ((f_r)_\ast)}&  
  \RR ^i \Gamma(X_\proet ,f^! K)\ar[r] \ar[d]^{ f_\ast} & \lim \RR^{i} \Gamma(X_{\et},f_r ^! K_r)\ar[d]^{  \lim ((f_r)_\ast)} \ar[r] & 0  \\
 0\ar[r]& \RR ^1\lim \RR^{i-1}\Gamma (Y_{\et},K_r)\ar[r] &  \RR^i \Gamma (Y_\proet , K)\ar[r] & \lim \RR^{i} \Gamma (Y_{\et},K_r)   \ar[r] & 0.
}
$$ 
\end{lemma}
\begin{proof}
The horizontal lines are parts of short exact sequences given by the composed functor spectral sequence of $\RR \lim\circ \RR \Gamma$, where we note that $\RR \lim$ has cohomological dimension $\leq 1$ on abelian groups.
The lemma follows thus immediately from the commutative diagrams in (\ref{eq:adjunction-construction}). 
\end{proof}

\begin{remark}\label{rem:BS-more-general}
Bhatt--Scholze's six functor formalism on constructible complexes of $\widehat \Z_\ell$-sheaves on the pro-\'etale site works more generally for quasi-excellent quasi-compact quasi-separated schemes over $\Z[1/\ell]$ and separated finitely presented maps between them, see \cite[\S 6.7]{BS}.
Lemmas \ref{lem:functors-proet} and \ref{lem:proet-Rlim-extension} remain true in this set-up (with the same proofs). 
\end{remark}

\begin{remark}
Related to Bhatt--Scholze's pro-\'etale theory \cite{BS}, there is Ekedahl's $\ell$-adic formalism \cite{Eke}, which leads to a six functor formalism (see \cite[Theorem 6.3]{Eke}) that is closer in spirit to Jannsen's continuous \'etale cohomology groups.   
We prefer to use Bhatt--Scholze's theory,  as it allows to work with actual sheaves on a site,  while Ekedahl's formalism as well as Jannsen's theory involve inverse systems of sheaves.
The resulting triangulated categories agree under suitable finiteness assumptions, see \cite[\S 5.5]{BS}.
\end{remark}
 
\subsubsection{Properties (\ref{ax:pushforward})--(\ref{ax:Hilbert90}),  (\ref{ax:homolog=alg-div}), and (\ref{ax:homolog=ratl-div})}
Let $k$ be a field and let $\ell$ be a prime that is invertible in $k$.
Let $\VV$ be the category whose objects are separated schemes of finite type over $k$ and where the morphisms are open immersions of schemes of the same dimension.
For $X\in \VV$ of dimension $d$ with structure morphism $\pi_X:X\to \Spec k$, we define 
\begin{align} \label{eq:H^i(X,mu-ell)}
H^i(X,\mu_{\ell^r}^{\otimes n}):= \RR^{i-2d} \Gamma(X_{\proet},\pi_X^! \mu_{\ell^r}^{\otimes n-d}) \in \Mod_{\Z_\ell},
\end{align} 
\begin{align}\label{eq:H^i(X,Z-ell)}
H^i(X,\Z_{\ell}(n)):=\RR^{i-2d} \Gamma(X_{\proet},\pi_X^!\widehat \Z_\ell(n-d)) \in \Mod_{\Z_\ell} , 
\end{align}
\begin{align}\label{eq:H^i(X,Q/Z-ell)}
H^i(X,\Q_\ell/\Z_{\ell}(n)):=\lim_{\substack{\longrightarrow \\ r}}H^i(X,\mu_{\ell^r}^{\otimes n}),\ \ \text{and}\ \ H^i(X,\Q_{\ell}(n)):=H^i(X,\Z_{\ell}(n))\otimes_{\Z_\ell} \Q_{\ell},
\end{align}
where $\widehat \Z_\ell(n)$ denotes the sheaf on $X_\proet$ defined in (\ref{def:Zhat}). 
By item (\ref{item:f-etale}) in Lemma \ref{lem:functors-proet} and (\ref{eq:proet-pullback}), the cohomology groups in (\ref{eq:H^i(X,mu-ell)})--(\ref{eq:H^i(X,Q/Z-ell)}) are contravariantly functorial with respect to morphisms in $\VV$ (in fact with respect to arbitrary \'etale maps $U\rightarrow X$  with $\dim U=\dim X$).

\begin{lemma}\label{lem:proet:coho=homo}
Assume that $X\in \VV$ is equi-dimensional and smooth over $k$.
Then there are  canonical isomorphisms
$$
H^i(X,\mu_{\ell^r}^{\otimes n})\cong  \RR^{i} \Gamma(X_{\proet},\mu_{\ell^r}^{\otimes n}) \cong H^i_{cont}(X_{\et},(\mu_{\ell^r}^{\otimes n},\id )_s)\cong H^i(X_{\et},\mu_{\ell^r}^{\otimes n}),
$$
and
$$
H^i(X,\Z_{\ell}(n))\cong  \RR^{i} \Gamma(X_{\proet},\widehat \Z_\ell(n)) \cong H^i_{cont}(X,(\mu_{\ell^r}^{\otimes n})_r) ,
$$ 
which are compatible with respect to pullbacks along open immersions.
\end{lemma}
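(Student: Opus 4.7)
The plan is to reduce the $d$-shifted definitions in (\ref{eq:H^i(X,mu-ell)}) and (\ref{eq:H^i(X,Z-ell)}) to unshifted pro-\'etale cohomology of the natural coefficient sheaves via the Poincar\'e-duality isomorphism of Lemma \ref{lem:functors-proet}(\ref{item:f!=f*(d)}), and then to compare the result with continuous \'etale and classical \'etale cohomology using (\ref{eq:H^i_cont=H^i_proet}) and (\ref{eq:ses-lim-Jannsen}).

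Concretely, since $\pi_X:X\to \Spec k$ is smooth of pure relative dimension $d=\dim X$ by hypothesis, Lemma \ref{lem:functors-proet}(\ref{item:f!=f*(d)}) produces a canonical isomorphism $\pi_X^!\cong \pi_{X,comp}^\ast(d)[2d]$. I would apply this to the coefficient sheaves $\mu_{\ell^r}^{\otimes n-d}$ and $\widehat \Z_\ell(n-d)$ on $(\Spec k)_\proet$; using the tautological identifications $\mu_{\ell^r}^{\otimes n-d}\otimes_{\widehat \Z_\ell}\widehat \Z_\ell(d)\cong \mu_{\ell^r}^{\otimes n}$ and $\widehat \Z_\ell(n-d)\otimes_{\widehat \Z_\ell}\widehat \Z_\ell(d)\cong \widehat \Z_\ell(n)$, together with the fact that $\pi_{X,comp}^\ast$ sends these sheaves from $(\Spec k)_\proet$ to the sheaves of the same name on $X_\proet$, this produces isomorphisms $\pi_X^!\mu_{\ell^r}^{\otimes n-d}\cong \mu_{\ell^r}^{\otimes n}[2d]$ and $\pi_X^!\widehat \Z_\ell(n-d)\cong \widehat \Z_\ell(n)[2d]$ in $D_{cons}(X_\proet,\widehat \Z_\ell)$. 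Applying $\RR^{i-2d}\Gamma(X_\proet,-)$ then matches (\ref{eq:H^i(X,mu-ell)}) and (\ref{eq:H^i(X,Z-ell)}) with $\RR^i\Gamma(X_\proet,\mu_{\ell^r}^{\otimes n})$ and $\RR^i\Gamma(X_\proet,\widehat \Z_\ell(n))$, respectively.

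For the remaining identifications I would invoke (\ref{eq:H^i_cont=H^i_proet}). Applied to the inverse system $(\mu_{\ell^r}^{\otimes n})_r$, whose transition maps are visibly surjective, this yields $\RR^i\Gamma(X_\proet,\widehat \Z_\ell(n))\cong H^i_{cont}(X_\et,(\mu_{\ell^r}^{\otimes n})_r)$. Applied to the constant inverse system $(\mu_{\ell^r}^{\otimes n},\id)_s$ it yields $\RR^i\Gamma(X_\proet,\mu_{\ell^r}^{\otimes n})\cong H^i_{cont}(X_\et,(\mu_{\ell^r}^{\otimes n},\id)_s)$, and feeding this into (\ref{eq:ses-lim-Jannsen}) collapses the continuous group onto $H^i(X_\et,\mu_{\ell^r}^{\otimes n})$, because $\RR^1\lim$ vanishes on a constant inverse system of abelian groups.

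Finally, compatibility with restriction along an open immersion $j:U\hookrightarrow X$ of schemes of the same dimension will follow from Lemma \ref{lem:functors-proet}(\ref{item:f!=f*(d)-compatible}) applied to $\pi_X$ and $j$ (which is \'etale of relative dimension $0$): this says precisely that the Poincar\'e isomorphism used above commutes with pullback along $j$. Combined with the functoriality of the $\RR^1\lim$ short exact sequence recorded in Lemma \ref{lem:proet-Rlim-extension} (and the standard functoriality of (\ref{eq:ses-lim-Jannsen})), this delivers the asserted compatibility. I do not expect any serious obstacle; the only delicate point is keeping careful track of the Tate twists and the $[2d]$-shift through Lemma \ref{lem:functors-proet}(\ref{item:f!=f*(d)}), and of the identification $\pi_{X,comp}^\ast\widehat\Z_\ell(n)\cong \widehat\Z_\ell(n)$, neither of which requires any new input beyond what is already set up in Section \ref{sec:proet}.
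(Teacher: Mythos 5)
Your proposal is correct and follows essentially the same route as the paper: the first isomorphism in each row comes from the Poincaré-duality identification $\pi_{X,comp}^\ast(d)[2d]\cong \pi_X^!$ of Lemma \ref{lem:functors-proet}(\ref{item:f!=f*(d)}) (with compatibility along open immersions from item (\ref{item:f!=f*(d)-compatible})), and the comparisons with continuous and classical étale cohomology come from (\ref{eq:H^i_cont=H^i_proet}) and (\ref{eq:ses-lim-Jannsen}) together with the vanishing of $\RR^1\lim$ on constant systems. The only difference is that you spell out the bookkeeping of twists, shifts, and the identification of $\pi^\ast_{comp}$ on the coefficient sheaves, which the paper leaves implicit.
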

\begin{proof} 
Since $\pi:X\to \Spec k$ is smooth of pure dimension $d$, there is a canonical isomorphism $(\pi_X)_{comp}^\ast(d)[2d]\stackrel{\cong}\to \pi_X^!$, see item  (\ref{item:f!=f*(d)})  in Lemma \ref{lem:functors-proet}.
This isomorphism is compatible with respect to pullbacks along open immersions by item  (\ref{item:f!=f*(d)-compatible})  in Lemma \ref{lem:functors-proet}.
This yields the first isomorphism in each row of the lemma.
The comparison to continuous \'etale cohomology follows from (\ref{eq:H^i_cont=H^i_proet}) and that to \'etale cohomology for finite coefficients from (\ref{eq:ses-lim-Jannsen}), because $\RR \lim^1$ vanishes on constant inductive systems.
This concludes the proof of the lemma.
\end{proof}
 
The main result of this section is the following. 
 
\begin{proposition}\label{prop:proetale-coho-arbitrary-field}
Let $k$ be a  field and let $\ell$ be a prime that is invertible in $k$.
Let $\mathcal A\subset \Mod_{\Z_\ell}$ be the full subcategory of $\Z_\ell$-modules containing  $\Z_\ell$,  $\Q_\ell$,  $\Q_\ell/\Z_\ell$, and $\Z/\ell^r$ for all $r\geq 1$. 
Let $\VV$ be the category of separated schemes of finite type over $k$ with morphisms given by open immersions $U\hookrightarrow X$ with $\dim U=\dim X$.
Let the cohomology functor (\ref{eq:functor}) be given by (\ref{eq:H^i(X,mu-ell)})--(\ref{eq:H^i(X,Q/Z-ell)}). 
Then  (\ref{ax:pushforward})--(\ref{ax:Bockstein}) from Definitions \ref{def:Borel--Moore-cohomology} and \ref{def:Borel--Moore-cohomology-l-adic} hold true.
If $k$ is perfect, then  (\ref{ax:Hilbert90}) holds true as well.
Moreover,
\begin{itemize}
\item property (\ref{ax:homolog=alg-div}) holds if $k$ is algebraically closed;
\item property (\ref{ax:homolog=ratl-div}) holds if $k$ is the perfect closure of a finitely generated field. 
\end{itemize}
\end{proposition}

In the terminology of Definitions \ref{def:Borel--Moore-cohomology}, \ref{def:Borel--Moore-cohomology-l-adic}, and \ref{def:adapted-alg/ratl-equivalence}, the above proposition says that $H^\ast(-,A(n))$ is a twisted Borel--Moore cohomology theory on $\VV$ that is $\ell$-adic if $k$ is perfect and it is adapted to algebraic equivalence if $k$ is algebraically closed,  while it is adapted to rational equivalence, if $k$ is the perfect closure of a finitely generated field. 

\begin{proof}[Proof of Proposition \ref{prop:proetale-coho-arbitrary-field}]
Item (\ref{ax:Ql/Zl,Ql}) is clear (by definition). Since the direct limit functor as well as $\otimes_{\Z_\ell}\Q_\ell$ is exact, it suffices to prove the remaining properties for $A=\Z/\ell^r$ and $A=\Z_\ell$.

\medskip
\noindent
\textbf{Step 1.} Item  (\ref{ax:pushforward}).
\smallskip

Let $X,Y\in \VV$ and let $f:X\to Y$ be  a proper morphism of schemes with $c=\dim Y-\dim X$.
The existence of the pushforward $f_\ast:H^{i-2c}(X,A(n-c))\to H^i(Y,A(n))$ follows from (\ref{eq:proet-pushforward-complex}) and item (\ref{item:f!g!=gf!}) in  Lemma \ref{lem:functors-proet}.
Functoriality in $f$ (i.e.\ $(f\circ g)_\ast=f_\ast g_\ast$) follows from the functoriality of the trace map (which by  (\ref{eq:adjunction})  may either be deduced from the corresponding statement on the \'etale site, or directly from item (\ref{item:f!g!=gf!}) in Lemma \ref{lem:functors-proet}). 
Compatibility of $f_\ast$ with pullbacks along open immersions may by Lemma \ref{lem:proet-Rlim-extension} be checked in the case where $A=\Z/\ell^r$  on the \'etale site of $X$, which is well-known (and holds in fact for arbitrary \'etale maps in place of open immersions), see \cite[(1.2.2) and \S 2.1]{BO}.
This proves (\ref{ax:pushforward}). 

\medskip
\noindent
\textbf{Step 2.} Item  (\ref{ax:Gysin}).
\smallskip

Let $X\in \VV$ and let $i:Z\hookrightarrow X$ be a closed immersion with complement $j:U\hookrightarrow X$.
Let $c=\dim X-\dim Z$. 
By \cite[Lemma 6.1.16]{BS}, there is an exact triangle
$$
\RR i_\ast i^! \pi_X^! \stackrel{\Tr_i}\longrightarrow \pi_X^!\stackrel{\theta_j}\longrightarrow \RR j_\ast j_{comp}^\ast \pi_X^!,
$$
where we used $j^\ast\cong j_{comp}^\ast$ (see Lemma \ref{lem:functors-proet}).
By Lemma \ref{lem:functors-proet}, $j_{comp}^\ast\cong j^! $, $\pi_Z^!\stackrel{\cong}\to  i^! \pi_X^! $ and $\pi_{U}^!\stackrel{\cong}\to j_{comp}^\ast\pi_X^! $.
Hence, the above triangle identifies to an exact triangle
$$
\RR i_\ast \pi_Z^!\longrightarrow \pi_X^!\longrightarrow \RR j_\ast \pi_U^! .
$$ 
Applying $\RR \Gamma(X_{\proet},-)$, the corresponding long exact sequence yields the Gysin sequence claimed in (\ref{ax:Gysin}).
The map $\iota_\ast$ in the Gysin sequence from item (\ref{ax:Gysin}) coincides by construction with the proper pushforward with respect to the inclusion $Z\hookrightarrow X$.
Functoriality with respect to open immersions follows by Lemma \ref{lem:proet-Rlim-extension} from the case $A=\Z/\ell^r$  on the \'etale site which is well-known (and holds in fact for arbitrary \'etale maps), see e.g.\ \cite[(1.1.2), Lemma 1.4, and (2.1)]{BO}.  
Similarly, functoriality with respect to proper pushforwards follows by Lemma \ref{lem:proet-Rlim-extension} from the case of finite coefficients $A=\Z/\ell^r$  on the \'etale site, which is well-known, see e.g.\ \cite[(1.2.4) and (2.1)]{BO}.
This proves  (\ref{ax:Gysin}).

\medskip
\noindent
\textbf{Step 3.} Item (\ref{ax:normalization}).
\smallskip

By the topological invariance of  the pro-\'etale topos (see \cite[Lemma 5.4.2]{BS}), we may replace $X$ be the base change to the perfect closure $k^{per}$ of $k$, and $x$ by the unique point in $X_{k^{per}}$ that lies over it via the natural map $X_{k^{per}}\to X$ (where we use that the latter is a universal homeomorphism).
After this reduction step, we may assume that $k$ is perfect.

Note that $H^i(x,A(n))$ in (\ref{eq:Hi(kappa(x))}) is defined as a direct limit where it suffices to run only through  the cohomology of regular (hence smooth, since $k$ is perfect) schemes, so that the vanishing $H^i(x,A(n))=0$ for $i<0$ as well as the canonical isomorphism $H^0(x,A(0))\cong A$ which is functorial in $A$ follows from Lemma \ref{lem:proet:coho=homo}. 
The fundamental class $[x]\in H^0(x,\Z_\ell(0))$ corresponds via the canonical isomorphism $H^0(x,\Z_\ell(0))\cong \Z_\ell$ to $1\in \Z_\ell$.
More precisely,  let  $U\subset \overline{\{x\}}$ be dense and smooth over $k$.
By Lemma \ref{lem:proet:coho=homo}, the canonical isomorphism in item (\ref{item:f!=f*(d)}) of Lemma \ref{lem:functors-proet}  induces a canonical isomorphism
$$
H^0(U,\Z_\ell(0))\cong H^0(U_\proet,\widehat \Z_\ell(0))
$$
which is compatible with respect to restrictions to open subsets.
The class of $H^0(U,\Z_\ell(0))$ induced by the unit section of the pro-\'etale sheaf $\widehat \Z_\ell(0) $  yields a canonical fundamental class $[U]\in  H^0(U,\Z_\ell(0))$ with $H^0(U,\Z_\ell(0))=[U]\Z_\ell$.
This class  is compatible  with respect to restrictions to open subsets (see item (\ref{item:f!=f*(d)-compatible}) in Lemma \ref{lem:functors-proet}), hence induces a canonical class $[x]\in H^0(x,\Z_\ell(0))$ in the limit.
This proves  (\ref{ax:normalization}).  
 
 \medskip
\noindent
\textbf{Step  4.} Item (\ref{ax:Bockstein}).
\smallskip

There is a canonical short exact sequence
$$
0\longrightarrow \widehat \Z_\ell(n) \stackrel{\times \ell^r}\longrightarrow \widehat \Z_\ell(n)\longrightarrow \mu_{\ell^r}^{\otimes n}\longrightarrow 0 
$$
of sheaves on $(\Spec k)_\proet$.
Applying $\pi_X^!$, we arrive at the exact triangle
\begin{align}\label{ses:coeff}
\pi_X^! \widehat \Z_\ell(n) \stackrel{\times \ell^r}\longrightarrow \pi_X^! \widehat \Z_\ell(n)\longrightarrow \pi_X^! \mu_{\ell^r}^{\otimes n}  .
\end{align}
The Bockstein sequence in (\ref{ax:Bockstein}) is (up to some shifts) the long exact sequence associated to this triangle after applying $\RR \Gamma(X_\proet,-)$. 
Functoriality of the exceptional pullback $\pi_X^!$ shows that the Bockstein sequence is functorial with respect to pullbacks along morphisms in $\VV$ and with respect to proper pushforwards from (\ref{ax:pushforward}). 
This proves (\ref{ax:Bockstein}).

\medskip
\noindent
\textbf{Step 5.} Item (\ref{ax:Hilbert90}) for $k$ perfect.
\smallskip

Let $X\in \VV$ and let $x\in X^{(1)}$. 
In the direct limit (\ref{eq:Hi(kappa(x))}) that defines $H^i(x,A(n))$, we may restrict ourselves to regular (hence smooth, as $k$ is perfect) dense open subsets $V_x\subset \overline{\{x\}}$, so that Lemma \ref{lem:proet:coho=homo} identifies $H^i(V_x,A(n))$ canonically with continuous \'etale cohomology. 
The map $\epsilon:\kappa(x)^\ast\to H^1(x,\Z_\ell(1))$ is then induced by the Kummer sequence in continuous \'etale cohomology, see (\ref{eq:epsilon-c1}).
Surjectivity of the reduction $\overline \epsilon:\kappa(x)^\ast\to H^1(x,\mu_{\ell^r})$ follows from the Kummer sequence in \'etale cohomology and  
 Grothendieck's Hilbert theorem 90, which implies that $$
\lim_{\substack{\longrightarrow \\ F_0X\subset U\subset X}} H^1_{\text{\'et}}(U,\mathbb G_m)\cong \lim_{\substack{\longrightarrow \\ F_0X\subset U\subset X}} \Pic(U)=0 .
 $$ 
 
 Let $X\in \VV$ be integral of dimension $d$ with generic point $\eta$ and let $x\in X^{(1)}$ be a regular point with closure $i:D\hookrightarrow X$.
 We claim that the composition $\del\circ   \epsilon: \kappa(\eta)^\ast \to  H^0(x ,\Z_{\ell}(0))=[x] \Z_{\ell}$ 
satisfies 
$$
\del\circ   \epsilon(f)=[x](- \nu_x(f)), 
$$
where  $\nu_x$ denotes the valuation on $\kappa(\eta)$ induced by $x$.
It suffices to check this after reduction modulo $\ell^r$ for $r\geq 1$.
It follows from \cite[p.\ 147, (cycle), Lemme 2.3.6]{SGA4.5} 
that the fundamental class $[x]\in H^0(x,\mu_{\ell^r}^{\otimes 0})$ that we defined above via Poincar\'e duality (i.e.\ via item (\ref{item:f!=f*(d)}) in Lemma \ref{lem:functors-proet}) is induced by the cycle class  
$$
\cl_{\ell^r}(D)\in H^2_D(X_\et,\mu_{\ell^r})=H^2(D_\et,i_r^!\mu_{\ell^r})\cong H^{2-2d}(D_\et,\pi_D^!\mu_{\ell^r}^{\otimes 1-d})
$$  
from \cite[p.\ 138, (cycle), D\'efinition 2.1.2]{SGA4.5}.
The claim in question follows therefore from the anticommutativity of the diagram in \cite[p.\ 138, (cycle), (2.1.3)]{SGA4.5}.
This concludes the proof of (\ref{ax:Hilbert90}).

%

\medskip

By Lemma \ref{lem:les} and Corollary \ref{cor:F^i} (which apply because we have proven (\ref{ax:pushforward})--(\ref{ax:Hilbert90}) already), the proper pushforward map from (\ref{ax:pushforward}) together with $H^0(x,\Z_\ell(0))=[x]\Z_\ell$ from (\ref{ax:normalization}) yields for any $X\in \VV$ a canonical map
$$
\iota_\ast : \bigoplus_{x\in X^{(1)}}[x]\Z_\ell \longrightarrow H^2(F_1X,\Z_\ell(1))\cong H^2(X,\Z_\ell(1)) ,
$$
as claimed in (\ref{eq:iota_ast}).
If $X$ is a smooth variety, then there is a canonical isomorphism $H^2(X,\Z_\ell(1)) \cong H^2_{cont}(X,\Z_\ell(1)) $ (see Lemma \ref{lem:proet:coho=homo}) and so we may compare $\iota_\ast$ to the first Chern class map $c_1$ from (\ref{eq:epsilon-c1}), as follows.

\begin{lemma} \label{lem:c_1}
Let $k$ be a field and let $X$ be a smooth $k$-variety. 
 For any Weil divisor $D\in \bigoplus_{x\in X^{(1)}} [x]\Z$, we have
\begin{enumerate}
\item
$
\iota_\ast D=c_1(\mathcal O_X(D)) ,
$
where $\iota_\ast$ is the cycle class map from (\ref{eq:iota_ast}) and $c_1$ is from (\ref{eq:epsilon-c1}).
\item $c_1(\mathcal O_X(D)) =0$ if and only if $\mathcal O_X(D)\in \Pic(X)$ is contained in the subgroup of $\ell$-divisible elements of $\Pic(X)$. 
\end{enumerate}
\end{lemma}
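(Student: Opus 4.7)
The plan is to split the proof into two parts mirroring the statement, using the Gysin sequence together with property (\ref{ax:Hilbert90}) for part~(1), and the Kummer sequence for part~(2).

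For part~(1), I would first show that $\iota_\ast$ annihilates principal divisors, so that it descends to a map out of $\Pic(X)$. Since $X$ is smooth, every $x\in X^{(1)}$ is regular, so (\ref{ax:Hilbert90}) together with the identification $u=-1$ established in Step~5 of the proof of Proposition \ref{prop:proetale-coho-arbitrary-field} gives
\[
\del \epsilon(f) \;=\; -\sum_{x\in X^{(1)}}[x]\,\nu_x(f) \;=\; -\operatorname{div}(f) \quad\text{in}\quad \bigoplus_{x\in X^{(1)}} H^0(x,\Z_\ell(0))
\]
for every $f\in \kappa(X)^\ast$. Exactness of the Gysin sequence (Lemma \ref{lem:les}) then forces $\iota_\ast(\operatorname{div}(f))=0$, so that $\iota_\ast$ descends to a $\Z_\ell$-linear map $\bar\iota_\ast\colon \Pic(X)\otimes_{\Z}\Z_\ell \to H^2(X,\Z_\ell(1))$. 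To identify $\bar\iota_\ast$ with $c_1$, I would use Lemma \ref{lem:proet:coho=homo} to rewrite $H^2(X,\Z_\ell(1))$ as continuous \'etale cohomology $H^2_{cont}(X_\et,\Z_\ell(1))$, and then reduce the comparison modulo $\ell^r$ via Lemma \ref{lem:proet-Rlim-extension}. Tracking Steps~1--2 of the proof of Proposition \ref{prop:proetale-coho-arbitrary-field} shows that both $\bar\iota_\ast$ and $c_1$ on the pro-\'etale side reduce to their classical \'etale counterparts; and for finite coefficients, the equality of the \'etale cycle class of a prime divisor with the first Chern class of its associated line bundle is classical, see \cite[Cycle, \S 2]{SGA4.5}. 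Passing to the inverse limit yields part~(1).

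For part~(2), the plan is to read off $\ker c_1$ from the long exact sequence in continuous \'etale cohomology attached to the Kummer sequence (\ref{eq:kummer}). This long exact sequence produces
\[
H^1_{cont}(X,(\mathbb G_m,\times\ell)_r)\;\longrightarrow\; H^1(X,\mathbb G_m)=\Pic(X)\;\xrightarrow{\,c_1\,}\; H^2_{cont}(X,\Z_\ell(1)),
\]
so that $\ker c_1$ is the image of the first arrow. Applying the $\RR^1\lim$ short exact sequence (\ref{eq:ses-lim-Jannsen}) to $(\mathbb G_m,\times\ell)_r$, this image coincides with the image of $\lim_{\times\ell}\Pic(X)\to \Pic(X)$, $(L_r)\mapsto L_0$. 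A line bundle lies in this image precisely when it admits a coherent system of $\ell^r$-th roots for all $r\geq 1$, i.e.\ when it lies in the $\ell$-divisible subgroup of $\Pic(X)$. This establishes part~(2).

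The main obstacle will be the bookkeeping in part~(1) matching the pro-\'etale proper pushforward underlying $\iota_\ast$ with the classical \'etale cycle class map modulo $\ell^r$. The required technical input is already in place in Lemmas \ref{lem:functors-proet} and \ref{lem:proet-Rlim-extension} and in the construction of $\pi_X^!$ as an $\RR\lim$ in (\ref{eq:f!K}); granting these, the comparison reduces to the classical \'etale statement, after which the inverse-limit and Kummer arguments take over.
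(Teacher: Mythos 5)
Your part (2) follows the paper's own route: the paper likewise takes the long exact sequence in continuous cohomology attached to the Kummer sequence (\ref{eq:kummer}) and identifies the image of $H^1_{cont}(X,(\mathbb G_m,\times \ell)_r)\to \Pic(X)$ with the $\ell$-divisible part of $\Pic(X)$ via functoriality of the extension (\ref{eq:ses-lim-Jannsen}); that portion of your argument is correct. Likewise, your preliminary observation that $\iota_\ast$ kills principal divisors, via (\ref{ax:Hilbert90}) and exactness of the Gysin sequence, is fine.

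The gap is in part (1), at the step ``passing to the inverse limit yields part (1).'' By (\ref{eq:ses-lim-Jannsen}) one has a short exact sequence $0\to \RR^1\lim_r H^{1}(X_{\et},\mu_{\ell^r}^{\otimes 1})\to H^2_{cont}(X,\Z_\ell(1))\to \lim_r H^{2}(X_{\et},\mu_{\ell^r}^{\otimes 1})\to 0$, and your comparison modulo $\ell^r$ (via Lemma \ref{lem:proet-Rlim-extension} and the classical statement in \cite{SGA4.5}) only shows that $\iota_\ast D$ and $c_1(\mathcal O_X(D))$ have the same image in $\lim_r H^{2}(X_{\et},\mu_{\ell^r}^{\otimes 1})$. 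Their difference is then an undetermined element of $\RR^1\lim_r H^{1}(X_{\et},\mu_{\ell^r}^{\otimes 1})$, which does not vanish for a general field $k$ (the lemma is stated over an arbitrary field, and the nontriviality of precisely such $\RR^1\lim$-terms is what the paper exploits elsewhere, e.g.\ in part (2) of this lemma and in Step 8 of the proof of Proposition \ref{prop:proetale-coho-arbitrary-field}). To close the gap you would have to perform the comparison between the Gysin/trace construction of $\iota_\ast$ and the Kummer boundary before taking cohomology, i.e.\ at the level of the inverse systems (or of $\RR\lim$ of complexes), compatibly in $r$, so that no $\RR^1\lim$ ambiguity arises. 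This is in effect what Jannsen's argument accomplishes, and the paper avoids the issue altogether by simply quoting \cite[Lemma 3.26]{jannsen} for assertion (1) rather than reproving it.
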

\begin{proof}
The first assertion is {\cite[Lemma 3.26]{jannsen}}.
For the second assertion, note that the Kummer sequence (\ref{eq:kummer}) yields an exact sequence
$$
H^1(X_{\et},  (\mathbb G_m,\times \ell)_r )\longrightarrow \Pic(X)\stackrel{c_1}\longrightarrow H^2_{cont}(X,\Z_\ell(1)).
$$
Functoriality of the extension in (\ref{eq:ses-lim-Jannsen}) shows that the image of the first map above is given by subgroup of $\ell$-divisible elements of $\Pic(X)$ (cf.\ \cite[Remark 6.15]{jannsen}), which concludes the proof of the lemma.
\end{proof}
\medskip

\noindent
\textbf{Step 6.} Item (\ref{ax:homolog=alg-div}) for $k$ algebraically closed.
\smallskip

Let us now assume that $k$ is algebraically closed and let $X$ be a regular (hence smooth, as $k=\overline k$) variety over $k$.
Note that (\ref{ax:homolog=alg-div})   is well-known in the case where $X$ is smooth projective, and we will deduce the general case from this statement in what follows.
We denote by $\NS(X)=\Pic(X)/\sim_{\alg}$ the group of divisors modulo algebraic equivalence on $X$. 
Since $k$ is algebraically closed,  the subgroup of algebraically trivial divisors in $\Pic(X)$ is $\ell$-divisible.
By Lemma \ref{lem:c_1}, the first Chern class map from (\ref{eq:epsilon-c1}) descends to a map 
$
c_1:\NS(X)\otimes \Z_\ell\to H^2(X,\Z_\ell(1))
$ 
and,  again by Lemma \ref{lem:c_1},  it suffices to show that this is injective. 
Let $\overline X$ be a projective normal compactification of $X$ and let $\tau:\overline X'\to \overline X$ be an alteration (i.e. \ a projective generically finite morphism with regular source) of degree prime to $\ell$, which exists by  \cite[Expos\'e X, Th\'eor\`eme 2.1]{ILO}.
Since $k$ is algebraically closed,   $\overline X'$ is smooth.
Putting $X':=\tau^{-1}(X)$, we get a commutative diagram
$$
\xymatrix{
\NS(\overline X')\otimes \Z_\ell \ar[rr]^{c_1}\ar[d]^{\text{restr.}} &   &  H^2(\overline X',\Z_\ell(1))  \ar[d]^{\text{restr.}} \\
\NS( X') \otimes \Z_\ell\ar[d]^{(\tau|_{X'})_\ast}\ar[rr]^{c_1} & & H^2(X',\Z_\ell(1))\ar[d]^{(\tau|_{X'})_\ast} \\
\NS( X)\otimes \Z_\ell \ar[rr]^{c_1}  &   &  H^2( X,\Z_\ell(1)) .
}
$$
We claim that it suffices to show that the horizontal arrow in the middle is injective.
To see this, let $\alpha\in \NS( X)\otimes \Z_\ell $.
Then $c_1(\tau^\ast\alpha)=\tau^\ast c_1(\alpha)$ and so injectivity of the horizontal arrow in the middle implies $c_1(\alpha)\neq 0$ unless $\tau^\ast\alpha=0 $ which in turn implies $\tau_\ast \tau^\ast \alpha=\deg \tau\cdot \alpha=0$ and so $\alpha=0$ since $\deg \tau$ is coprime to $\ell$. 

By the localization sequence,  the kernel of $\NS(\overline X') \otimes \Z_\ell\to \NS(X')\otimes \Z_\ell$ is generated by classes of divisors supported on $\overline X'\setminus X$.
Similarly, the Gysin sequence (see (\ref{ax:Gysin})) shows that the kernel of the restriction map $ H^2(\overline X',\Z_\ell(1)) \to  H^2(X',\Z_\ell(1)) $ is generated by the cycle classes of these divisors.
Since the first horizontal map in the above diagram is injective by \cite[p.\ 216, V.3.28]{milne}, while the restriction map $\NS(\overline X')\to \NS( X') $ is surjective (see \cite[Proposition 1.8]{fulton}), this shows that the horizontal arrow in the middle of the above diagram is injective, as we want. This proves (\ref{ax:homolog=alg-div}).

\medskip
\noindent
\textbf{Step 7.} Item (\ref{ax:homolog=ratl-div}) for $k$ the perfect closure of a finitely generated field.
\smallskip

We will use the following well-known lemma.

\begin{lemma}\label{lem:Chow-inseparable-extension}
Let $X$ be a separated scheme of finite type over a field $k$ of characteristic $p>0$ and let $E/k$ be a purely inseparable extension. 
Then the flat pullback map $\CH^i(X)[1/p]\to \CH^i(X_E)[1/p]$ is an isomorphism.
\end{lemma}
\begin{proof}
The argument is well-known; we recall it for convenience.
By a standard limit argument, it suffices to treat the case where $E/k$ is a finite extension of degree $p^s$ for some $s$.
Let $f:X_E\to X$ be the canonical map.
Then $f_\ast\circ f^\ast=p^s\cdot \id$ and so $f^\ast$ is injective after inverting $p$. 
Since $f$ is a universal homeomorphism,  we have that for any subvariety $Z\subset X_E$: $f^\ast f_\ast [Z]=m[Z]$ for some $m\geq 1$ and $f_\ast\circ f^\ast=p^s\cdot \id$ implies that $m$ must be a $p$-power.
Hence, $f^\ast$ is surjective after inverting $p$, as we want. 
\end{proof}

Let now $k$ be the perfect closure of a finitely generated field $k_0\subset k$,  and let $X$ be a regular (hence smooth) variety over $k$. 
By Lemma \ref{lem:c_1}, it suffices to show that the  map
\begin{align}  \label{eq:c1-Zell}
c_1\otimes \Z_\ell:\Pic(X)\otimes_\Z \Z_\ell\longrightarrow H^2(X,\Z_\ell(1))
\end{align}
induced by $c_1$ from (\ref{eq:epsilon-c1})
is injective, where we note that the right hand side identifies to continuous \'etale cohomology by Lemma \ref{lem:proet:coho=homo}. 
Using the existence of prime to $\ell$ alterations, the same argument as in Step 6 reduces us to the case where $X$ is smooth projective over $k$.
At this point the argument is similar to \cite[Remark 6.15]{jannsen}.

Since $X$ is defined over some finitely generated field, we may assume (up to enlarging $k_0$) that $X=X_0\times_{k_0}k$ for some smooth $k_0$-variety $X_0$.

Assume for the moment that $X_0$ is geometrically integral. 
By Grothendieck's theorem, the Picard functor on $X_0$ is then represented by the Picard scheme $\operatorname{\textbf{Pic}}_{X_0/k}$, see e.g.\ \cite[Theorem 9.4.8]{kleiman}.
In particular, $\Pic(X_0)$ is given by the group of $k_0$-rational points of $\operatorname{\textbf{Pic}}_{X_0/k}$.
The quotient of  $\operatorname{\textbf{Pic}}_{X_0/k}$ by the identity component is always a finitely generated group scheme (the N\'eron-Severi group).
Moreover, the identity component is an abelian variety over $k_0$ and since $k_0$ is finitely generated, its group of $k_0$-rational points is finitely generated by N\'eron's Mordell--Weil theorem  \cite{neron}.
It follows that  $\Pic (X_0)$ is a finitely generated abelian group.

In general, $X_0$ will split into a finite union of geometrically integral smooth projective varieties after a finite extension of the base field.
The above argument together with a pull and push argument then shows that in general, $\Pic(X_0)$ contains an $n$-torsion subgroup $T$ for some $n\geq 1$ such that $Q:=\Pic(X_0)/T$ is a finitely generated abelian group.
We consider the short exact sequence $0\to T\to \Pic(X_0)\to Q\to 0$.
Since $Q$ is finitely generated and $T$ is $n$-torsion, this sequence remains exact if we apply either the $\ell$-adic completion functor or  $\otimes_\Z \Z_\ell$.
Comparing the two resulting short exact sequences, we find that 
\begin{align} \label{eq:lim-Pic-X_0}
\lim_{\substack{\longleftarrow\\ r}}  (\Pic(X_0)/\ell^r) \stackrel{\cong}\longrightarrow \Pic(X_0)\otimes_\Z \Z_\ell.
\end{align}
  
The usual Kummer sequence on the \'etale site yields compatible injections $\Pic(X_0)/\ell^r\hookrightarrow H^2(X_0,\mu_{\ell^r}^{\otimes 1})$, i.e.\ an injection of projective systems.
Applying the inverse limit functor, this yields by (\ref{eq:lim-Pic-X_0}) an injection  $\Pic(X_0)\otimes_\Z\Z_\ell\hookrightarrow \lim H^2(X_0,\mu_{\ell^r}^{\otimes 1})$.
By   (\ref{eq:ses-lim-Jannsen}) and the construction of $c_1$ in (\ref{eq:epsilon-c1}), this injection factors through 
$$
c_1\otimes\Z_\ell:\Pic(X_0)\otimes_\Z\Z_\ell\longrightarrow  H^2(X_0,\Z_\ell(1))
$$
and so the latter must be injective as well.
It follows that (\ref{eq:c1-Zell}) is injective, because $c_1$ is functorial with respect to pullbacks, and the canonical pullback maps yield isomorphisms $\Pic(X_0)\otimes_\Z \Z_\ell\cong  \Pic(X)\otimes_\Z \Z_\ell $ (see Lemma \ref{lem:Chow-inseparable-extension}) and $H^2(X_0,\Z_\ell(1))\cong  H^2(X,\Z_\ell(1))$ (see \cite[Lemma 5.4.2]{BS}), since $k/k_0$ is purely inseparable by assumption.
This concludes the proof of (\ref{ax:homolog=ratl-div}) and hence finishes the proof of the proposition.
\end{proof} 

\subsection{Borel--Moore cohomology of complex analytic spaces}
If $X$ is a complex algebraic scheme with underlying analytic space $X_{\cx}$ and $A$ is an abelian group, then one may (and we will) define its Borel--Moore homology $H^{BM}_i(X_{\cx},A)$ analogous to singular homology with values in $A$, but with locally finite chains instead of finite ones, see \cite[Theorem V.12.14 and Corollary V.12.21]{bredon}. 
An alternative sheaf theoretic definition of the same group can be found in  \cite{BM},  \cite[Chapter V]{bredon}; a definition in terms of relative singular cohomology is given in \cite[Example 19.1.1]{fulton} and the references therein.
If $X$ is smooth and equi-dimensional of dimension $d_X$, then $H^{BM}_i(X_{\cx},A)\cong H^{2d_X-i}_{sing}(X_{\cx},A)$ by Poincar\'e duality, see \cite[Chapter V, Section 9]{bredon}.

\begin{proposition}\label{prop:Betti-coho}
Let $\VV$ be the category whose objects are separated schemes of finite type over $\C$ and whose morphisms are given by open immersions of schemes of the same dimension.
Let further $\mathcal A=\Mod_{\Z}$ and put $A(n):=A\otimes_{\Z}(2\pi i)^n\Z$ for all $A\in \mathcal A$ and $n\in \Z$. 
Let then  
$$
H^i(X,A(n)):=H^{BM}_{2d_X-i}(X_{\cx},A(d_X-n) ),
$$
where the right hand side denotes Borel--Moore homology of the underlying analytic space, and where $d_X=\dim X$.
Then $H^\ast(-,A(n))$ defines an integral twisted Borel--Moore cohomology theory that is adapted to algebraic equivalence, see Definition \ref{def:Borel--Moore-integral}. 
\end{proposition}
\begin{proof} 
Property (\ref{ax:pushforward}') follows from covariant functoriality of Borel--Moore homology with respect to proper maps, and 
item (\ref{ax:Gysin}') is a consequence of the long exact sequence of pairs in Borel--Moore homology, see e.g.\ \cite[\S 19.1]{fulton} and the references therein.
If $X$ is smooth and integral, then $H^i(X,A(n))\cong H^i_{sing}(X_{\cx},A(n))$.
In particular, $H^0(X,A(0))\cong A$ and there is a canonical class 
$$
[X]\in  H^0_{sing}(X_{\cx},\Z(0))=\Z,
$$ 
which corresponds to $1\in \Z$.
This proves (\ref{ax:normalization}'),  as in the direct limit (\ref{eq:Hi(kappa(x))}), it suffices to run through smooth integral varieties $V_{x}\subset \overline{\{x\}}$.

It remains to prove that (\ref{ax:Ql/Zl,Ql}')--(\ref{ax:Hilbert90}') and (\ref{ax:homolog=alg-div}') hold, where we recall that these properties are formally   deduced from   (\ref{ax:Ql/Zl,Ql})--(\ref{ax:Hilbert90}) and (\ref{ax:homolog=alg-div}) by the  replacement of symbols
$
\Z_\ell \rightsquigarrow \Z$,  $\Q_\ell \rightsquigarrow \Q$, and $ \ell^r \rightsquigarrow r$.

Item (\ref{ax:Ql/Zl,Ql}') is clear and item (\ref{ax:Bockstein}') follows from the long exact sequence associated to the coefficient sequence $0\to \Z(1)\stackrel{\times r}\to \Z(1)\to \Z/r(1)\to 0$; functoriality of the Bockstein sequence with respect to pullbacks and pushforwards in Borel--Moore (co-)homology are well-known and left to the reader.

For property (\ref{ax:Hilbert90}'), note that in the direct limit (\ref{eq:Hi(kappa(x))}) it suffices to run through regular (Zariski) open subsets $V:=V_x$ of the closure of $x$ in $X$.
In this case, $H^i(V ,A(n))$ identifies to singular cohomology and so the exponential sequence yields a map
$
H^0(V_{\cx} ,\mathcal O_{V_{\cx}}^\ast)\to H^1(V ,\Z(1)).
$
Taking direct limits,  and using that algebraic functions are holomorphic,  we get a map
$
\epsilon: \kappa(x)^\ast\to H^1(x,\Z(1)).
$
For any positive integer $r$, this induces by reduction modulo $r$ a map
$$
\overline \epsilon:\kappa(x)^\ast\to H^1(x,\Z/r(1))
$$
and we need to prove that this is surjective.

Consider the following commutative diagram of sheaves on $V_{\cx}$ (cf.\ \cite[\S 3.1]{CTV}):
$$
\xymatrix{
0\ar[r]& \Z(1)\ar[r]\ar[d]^{e^{\frac{1}{r}-}} & \mathcal O_{V_{\cx}}\ar[d]^{e^{\frac{1}{r}-}}\ar[r]^{e^{-} }& \mathcal O^\ast_{V_{\cx}}\ar[d]^{=}\ar[r] & 0  \\
0\ar[r]& \mu_{r}\ar[r]  & \mathcal O^\ast_{V_{\cx}} \ar[r]^{(-)^r}& \mathcal O^\ast_{V_{\cx}} \ar[r] & 0 .
}
$$
The rows in the above diagram are exact and we get a boundary map $\beta:H^0(V_{\cx}, \mathcal O^\ast_{V_{\cx}})\to H^1(V,\mu_r)$.
Taking the direct limit over all (Zariski) open dense $V\subset\overline{\{x\}} $ and restricting $\beta$ to algebraic functions, we get a map
$\overline \beta: \kappa(x)^\ast\to H^1(x,\mu_r)$.
Commutativity of the above diagram shows that  $\overline \beta$ identifies to $\overline \epsilon$ under the isomorphism $\Z/r(1)\to \mu_r$, $1\otimes (2\pi i)\mapsto e^{\frac{2\pi i}{r}}$.
It thus suffices to show that $\overline \beta$ is surjective.
This follows by comparing the sequence above with the Kummer sequence $0\to \mu_r\to \mathbb G_m\to \mathbb G_m\to 0$ on the \'etale site $V_\et$ and using that
$H^1(V_{\cx},\mu_r)\cong H^1(V_{\et},\mu_r)$ (see e.g.\ \cite[p. \ 117,  III.3.12]{milne}) and
$$
\lim_{\substack{\longrightarrow \\ \emptyset \neq V\subset \overline{\{x\}}}} H^1(V_{\et},\mathbb G_m)=0 ,
$$
because $
H^1(V_{\et},\mathbb G_m)\cong \Pic(V)
$ by Grothendieck's Hilbert 90 theorem.
We have thus shown that $\overline \epsilon$ is surjective.
Finally,  let $X\in \VV$ integral with generic point $\eta$ and a regular point $x\in X^{(1)}$.
We claim that   the natural composition
$$
\kappa(\eta)^\ast \stackrel{ \epsilon}\twoheadrightarrow H^1(\eta,\Z(1))\stackrel{\del}\longrightarrow H^0(x ,\Z(0))=[x]\Z ,
$$
where $\del$ is induced by (\ref{ax:Gysin}'), maps $f$ to $[x](-\nu_x(f))$, where $\nu_x$ denotes the valuation on $\kappa(\eta)$ induced by $x$.
It suffices to check this modulo an arbitrary prime power, which, thanks to the comparison between \'etale cohomology and singular cohomology with finite coefficients (see \cite[p.\ 117,  III.3.12]{milne}), follows from  Step 5 in the proof of Proposition \ref{prop:proetale-coho-arbitrary-field}. 
This concludes the proof of (\ref{ax:Hilbert90}').


Finally, property (\ref{ax:homolog=alg-div}') is well-known in the case where $X$ is smooth projective and follows for arbitrary smooth $X$ by choosing a smooth compactification (using resolution of singularities) by a similar argument as in Step 6 of the proof of Proposition \ref{prop:proetale-coho-arbitrary-field}.
This concludes the proof of the proposition.
\end{proof}

\section{Comparison theorems to algebraic cycles} \label{sec:comparison-thm}

In this section, we fix a prime $\ell$ and an $\ell$-adic twisted Borel--Moore cohomology theory $H^\ast(-,A(n))$ on a constructible category of Noetherian schemes $\VV$ with coefficients in a full subcategory $\mathcal A\subset \Mod_{\Z_\ell}$,  see Definition  \ref{def:Borel--Moore-cohomology-l-adic}.
In particular, (\ref{ax:pushforward})--(\ref{ax:Hilbert90}) hold true.
The main result is that this set-up allows to compute several cycle groups efficiently.

\subsection{\texorpdfstring{$\ell$}{ell}-adic Chow groups} \label{subsec:CHi-Ai}
We use the notation $\CH^i(X)_{\Z_\ell}:=\CH^i(X)\otimes_\Z \Z_\ell$.

\begin{lemma} \label{lem:CHi}
For any $X\in \VV$, there is a canonical isomorphism
$$
\CH^i(X)_{\Z_\ell}\cong \frac{\bigoplus_{x\in X^{(i)}} [x]\Z_\ell }{\im \left( \bigoplus_{x\in X^{(i-1)}} \kappa(x)^\ast \otimes_{\Z} \Z_\ell \stackrel{\epsilon} \longrightarrow  \bigoplus_{x\in X^{(i-1)}} H^1(x,\Z_\ell(1))\stackrel{\del\circ \iota_\ast}\longrightarrow  \bigoplus_{x\in X^{(i)}} [x]\Z_\ell \right)   } ,
$$  where $\epsilon$  is induced by the map from (\ref{ax:Hilbert90}), and where $\del\circ \iota_\ast$ denotes the composition
$$
\bigoplus_{x\in X^{(i-1)}} H^1(x,\Z_\ell(1)) \stackrel{\iota_\ast }\longrightarrow   H^{2i-1}(F_{i-1}X,\Z_\ell(1))\stackrel{\del}\longrightarrow \bigoplus_{x\in X^{(i)}} H^0(x,\Z_\ell(0))= \bigoplus_{x\in X^{(i)}}  [x]\Z_\ell ,
$$
where $\iota_\ast$ and $\del$ are induced by (\ref{ax:Gysin}) and the last equality uses (\ref{ax:normalization}).
\end{lemma}
\begin{proof}
We recall our convention that for a Noetherian scheme $X$, $X^{(j)}$ denotes the set of points $x\in X$ of dimension $\dim(\overline{\{x\}})=\dim (X)-j$. 
In particular, $\CH^i(X)_{\Z_\ell}$ is the quotient of $\bigoplus_{x\in X^{(i)}} [x]\Z_\ell$ by the $\Z_\ell$-submodule generated by cycles that are given by the pushforward of a principal divisor on the normalization of some subvariety $W\subset X$ with $\dim W=\dim X-i+1$.
The lemma follows therefore directly from Lemma \ref{lem:del=delta} and the second part of (\ref{ax:Hilbert90}).
This concludes the proof.
\end{proof}

In view of Lemma \ref{lem:CHi}, it is natural to make the following definition.
\begin{definition}\label{def:Ai}
For $X\in \VV$, we define
$$
A^i(X)_{\Z_\ell}:=\frac{\bigoplus_{x\in X^{(i)}} [x]\Z_\ell }{\im \left(   \bigoplus_{x\in X^{(i-1)}} H^1(x,\Z_\ell(1))\stackrel{\del\circ \iota_\ast}\longrightarrow  \bigoplus_{x\in X^{(i)}} [x]\Z_\ell \right)  }.
$$ 
\end{definition}

By Lemma \ref{lem:CHi},  there is a canonical surjection $$
\CH^i(X)_{\Z_\ell}\twoheadrightarrow A^i(X)_{\Z_\ell}.
$$
We compute the kernel of this surjection in Lemma \ref{lem:Ai-vs-Ni-1} below.

\subsection{Cycle class maps and coniveau filtration} \label{subsec:cli}

By Corollary \ref{cor:F^i}, $H^{2i}(X,\Z_\ell(i))\cong H^{2i}(F_iX,\Z_\ell(i))$.
The Gysin sequence from
Lemma \ref{lem:les} yields therefore a map
$$
\iota_\ast:\bigoplus_{x\in X^{(i)}}[x]\Z_\ell\longrightarrow H^{2i}(X,\Z_\ell(i)) ,
$$
which is zero on the image of 
$
\del: \bigoplus_{x\in X^{(i-1)}} H^1(x,\Z_\ell(1))\to \bigoplus_{x\in X^{(i)}}[x]\Z_\ell .
$
It thus follows from Lemma \ref{lem:CHi} and Definition \ref{def:Ai} that there is a well-defined cycle class map
\begin{align} \label{eq:cli}
\cl^i_X:\CH^i(X)_{\Z_\ell}\longrightarrow H^{2i}(X,\Z_\ell(i))
\end{align}
which factors through the canonical surjection $\CH^i(X)_{\Z_\ell}\twoheadrightarrow A^i(X)_{\Z_\ell}$.
We then define
$$
\CH_0^i(X)_{\Z_\ell}:=\ker(\cl_X^i).
$$

Since the category $\VV$ is constructible,  $Z\in \VV$ for any closed subscheme $Z\subset X$.
Using this, we can define the coniveau filtration $N^\ast$ on $\CH^i(X)_{\Z_\ell}$ as follows.

\begin{definition}\label{def:NjCHi}
A class $z\in \CH^i(X)_{\Z_\ell}$ has coniveau $j$, i.e.\ $z\in N^j\CH^i(X)_{\Z_\ell}$, if and only if it is homologically trivial on a closed  subscheme of codimension $j$.
More precisely,  $z\in N^j\CH^i(X)_{\Z_\ell}$ if and only if 
there is  a closed  subscheme $\iota: Z\hookrightarrow X$ with $j=\dim X-\dim Z$ and a cycle $z'\in \CH^{i-j}_0(Z)_{\Z_\ell}$ with $z=\iota_\ast z'\in \CH^i(X)_{\Z_\ell}$.
\end{definition}
For the case when $X$ is not equi-dimensional, we
recall from Section \ref{sec:notation} that $X^{(i)}:=X_{(d-i)}$ and so $\CH^i(X)_{\Z_\ell}$ is the group of $\ell$-adic cycles of dimension $d-i$, where $d=\dim X$.

The coniveau filtration $N^\ast$ on $\CH^i(X)_{\Z_\ell}$ is of the following form
$$
N^{i}=0\subset N^{i-1}\subset N^{i-2}\subset \cdots \subset N^1\subset N^0=\CH_0^i(X)_{\Z_\ell}\subset \CH^i(X)_{\Z_\ell}.
$$
This definition is related to the groups $A^i(X)_{\Z_\ell}$ from Definition \ref{def:Ai}, as follows.

\begin{lemma}\label{lem:Ai-vs-Ni-1}  
Let $X\in \VV$. 
Then  $A^i(X)_{\Z_\ell}=\CH^i(X)_{\Z_\ell}/N^{i-1}\CH^i(X)_{\Z_\ell}$.
\end{lemma}
\begin{proof}
We need to show that a codimension $i$-cycle on $X$ has coniveau $i-1$  if and only if it is represented by a cycle in
\begin{align} \label{eq:lem:Ai-vs-Ni-1}
  \im \left(   \bigoplus_{x\in X^{(i-1)}} H^1(x,\Z_\ell(1))\stackrel{\del\circ \iota_\ast}\longrightarrow  \bigoplus_{x\in X^{(i)}} [x]\Z_\ell \right) .
\end{align}

For $Z\in \VV$ the Gysin sequence (\ref{ax:Gysin}) yields a residue map
\begin{align*} 
\del: H^1(F_0Z,\Z_\ell(1)) \longrightarrow \bigoplus_{z\in Z^{(1)}}[z]\Z_\ell .
\end{align*}
The compatibility of the Gysin sequence with proper pushforwards yields a commutative diagram
$$
\xymatrix{
 H^1(F_0Z^{\red},\Z_\ell(1)) \ar[r]\ar[d] & \bigoplus_{z\in (Z^{\red})^{(1)}}[z]\Z_\ell\ar[d]^{=}\\
  H^1(F_0Z,\Z_\ell(1)) \ar[r] & \bigoplus_{z\in Z^{(1)}}[z]\Z_\ell
}
$$
where $Z^{\red}$ denotes the reduced scheme that underlies $Z$.
If $Z\subset X$ is closed so that $U=X\setminus Z$ satisfies $\dim U=\dim X$, then the comparison of the Gysin sequences for the pairs $(X,Z)$ and $(X,Z^{\red})$ shows by the five lemma that the pushforward map  $H^i(Z^{\red},A(n))\to H^i(Z,A(n))$ is an isomorphism.
This argument remains valid if we replace $Z$ by some dense open subset $Z^\circ\subset Z$ and $X$ by $X\setminus(Z\setminus Z^\circ)$.
It follows that  in the above diagram the vertical arrow on the left is an isomorphism for all  $Z\subset X$ closed with $\dim (X\setminus Z)=\dim X$.  
The   image in (\ref{eq:lem:Ai-vs-Ni-1}) therefore agrees with 
$$
  \im \left(   \bigoplus_{Z\subset X} H^1(F_0Z,\Z_\ell(1))\stackrel{\del\circ \iota_\ast}\longrightarrow  \bigoplus_{x\in X^{(i)}} [x]\Z_\ell \right) ,
$$ 
where $Z\subset X$ runs through all closed subschemes $Z\subset X$ with $i-1=\dim X-\dim Z$ and $\iota:Z\hookrightarrow X$ denotes the inclusion.   

Let us now fix a subscheme $Z\subset X$  with $i-1=\dim X-\dim Z$.
Since the Gysin sequence (\ref{ax:Gysin}) is functorial with respect to proper pushforwards, we get from Lemma \ref{lem:les} a commutative diagram
$$
\xymatrix{
  H^1(F_0Z,\Z_\ell(1)) \ar[d]^{\iota_\ast} \ar[r]^-{\del}&   \bigoplus_{z\in Z^{(1)}} [z]\Z_\ell \ar@{^{(}->}[d]^{\iota_\ast}  \ar[r]^{\cl_Z^1} &   H^2(Z,\Z_\ell(1))\ar[d]^{\iota_\ast}  \\
H^{2i-1}(F_{i-1}X,\Z_\ell(i))\ar[r]^-{\del}& \bigoplus_{x\in X^{(i)}} [x]\Z_\ell  \ar[r]^{\cl_X^i} & H^{2i}(X,\Z_\ell(i)) ,
}
$$
with exact rows.
Exactness of the first row shows that the $\ell$-adic cycles that are homologically trivial on  $Z $  are exactly those in the image of $\del$ in the left upper corner.
This description together with the commutativity of the square on the left implies the lemma.
\end{proof}

By the above lemma,  $A^i(X)_{\Z_\ell}=\CH^i(X)_{\Z_\ell}/N^{i-1}\CH^i(X)_{\Z_\ell}$.
The following lemma computes this quotient (and hence $N^{i-1}\CH^i(X)_{\Z_\ell}$) in the cases where (\ref{ax:homolog=alg-div}) and  (\ref{ax:homolog=ratl-div}) hold, respectively; the result is essentially due to Jannsen, see \cite[Lemmas 5.7 and 5.8]{jannsen-3} where it is proven $\otimes \Q_\ell$. 

Before we state the next result, we refer the reader to \cite[\S 10.3]{fulton} for the definition of algebraic equivalence of algebraic cycles.

\begin{lemma}\label{lem:AiX}  
For $X\in \VV$, the following holds:
\begin{enumerate} 
\item If (\ref{ax:homolog=alg-div}) holds, then 
$$
N^{i-1}\CH^i(X)_{\Z_\ell}=\CH^i(X)_{\alg}\otimes_\Z \Z_\ell \ \ \text{and}\ \ A^i(X)_{\Z_\ell}=(\CH^i(X)/\sim_{\alg})\otimes_\Z \Z_\ell .
$$  
\item If (\ref{ax:homolog=ratl-div}) holds, then $N^{i-1}\CH^i(X)_{\Z_\ell}=0$ and $A^i(X)_{\Z_\ell}=\CH^i(X)\otimes_\Z \Z_\ell$.  
\end{enumerate} 
\end{lemma}

\begin{proof}
We aim to describe the image of 
$$
\bigoplus_{x\in X^{(i-1)}}H^1(x,\Z_\ell(1)) \stackrel{\iota_\ast}\longrightarrow  H^{2i-1}(F_{i-1}X,\Z_\ell(1))\stackrel{\del}\longrightarrow \bigoplus_{x\in X^{(i)}}[x]\Z_\ell .
$$
By Lemma \ref{lem:del=delta}, the image is generated by the images of the maps
$$
H^1(F_0W',\Z_\ell(1))\stackrel{\del}\longrightarrow \bigoplus_{w\in (W')^{(1)}}[w]\Z_\ell \stackrel{\tau_\ast}\longrightarrow  \bigoplus_{x\in X^{(i)}}[x]\Z_\ell,
$$
where $W\subset X$ runs through all closed subvarieties of codimension $\dim X-\dim W=i-1$ and $\tau:W'\to W$ denotes the normalization.
By Lemma \ref{lem:les}, the image of $\del$ above is given by the kernel of
\begin{align} \label{eq:iota-lem:AiX}
\iota_\ast:\bigoplus_{w\in (W')^{(1)}}[w]\Z_\ell \longrightarrow H^2(F_1W',\Z_\ell(1))\cong H^2(W',\Z_\ell(1)) ,
\end{align}
where the last isomorphism is due to Corollary \ref{cor:F^i}. 
Since $W'$ is normal, it is regular in codimension one and so we may in (\ref{eq:iota-lem:AiX}) up to shrinking $W'$ assume that $W'$ is regular.
The kernel of (\ref{eq:iota-lem:AiX}) then coincides with the $\Z_\ell$-module spanned by algebraically trivial divisors on $W'$ if (\ref{ax:homolog=alg-div}) holds, and it coincides with the $\Z_\ell$-module spanned by principal divisors if (\ref{ax:homolog=ratl-div}) holds. 
This description proves the lemma.
\end{proof}

\subsection{The cokernel of the cycle class map} \label{sec:Zi} 

\begin{definition}
For $X\in \VV$, we define  
$$
Z^i(X)_{\Z_\ell} := \coker\left(  \cl_X^i:\CH^i(X)_{\Z_\ell}\longrightarrow H^{2i}(X,\Z_\ell(i)) \right)  ,
$$
and $Z^i(X)[\ell^r]:=Z^i(X)_{\Z_\ell} [\ell ^r]$,
where the cycle class map $\cl_X^i$ is from (\ref{eq:cli}). 
\end{definition}  

The following result generalizes Colliot-Th\'el\`ene--Voisin's computation of the failure of the integral Hodge conjecture for codimension two cycles on smooth complex projective varieties from \cite{CTV}.
The argument follows the same lines as  in Section \ref{sec:CTV} above.

\begin{theorem}\label{thm:IHC}
For any $X\in \VV$, there are canonical isomorphisms:
\begin{align}
 Z^i(X) [\ell ^r]&\cong  
 H^{2i-1}_{i-2,nr}(X,\mu_{\ell^r}^{\otimes i}) /  H^{2i-1}_{i-2,nr}( X,\Z_{\ell}( i)), \label{eq:Zi(X)[ell]}
 \\
 Z^i(X) [\ell ^\infty]&\cong  
H^{2i-1}_{i-2,nr}(X,\Q_{\ell}/ \Z_\ell(i))/H^{2i-1}_{i-2,nr}( X,\Q_{\ell}( i)).  \label{eq:Zi(X)[ell-infty]}
\end{align} 
The image of   $ H^{2i}(X,\Z_\ell(i))[\ell^r]\to Z^i(X)  [\ell ^r]$  corresponds via the  isomorphism in (\ref{eq:Zi(X)[ell]}) to the subspace generated by the image of $H^{2i-1} (X,\mu_{\ell^r}^{\otimes i})\to H^{2i-1}_{i-2,nr}(X,\mu_{\ell^r}^{\otimes i}) $.
Similarly, the  image of   $ H^{2i}(X,\Z_\ell(i))[\ell^\infty]\to Z^i(X)  [\ell ^\infty]$  corresponds via the  isomorphism in (\ref{eq:Zi(X)[ell-infty]}) to the subspace generated by the image of 
$$
H^{2i-1} (X,\Q_{\ell}/\Z_\ell(i))\to H^{2i-1}_{i-2,nr}(X,\Q_{\ell}/\Z_\ell(i)) .
$$ 
\end{theorem} 
Before we turn to the proof of the above theorem we need the following:

\begin{lemma} \label{lem:IHC}
Let $X\in \VV$.
Then
 the natural map
$$
 \frac{  H^{2i-1} (F_{i-1}X,\mu_{\ell^r}^{\otimes i})}{ H^{2i-1} (F_{i-1}X,\Z_\ell(i))}\longrightarrow  \frac{F^{i-1} H^{2i-1} (F_{i-2}X,\mu_{\ell^r}^{\otimes i})}{ F^{i-1}   H^{2i-1} (F_{i-2}X,\Z_\ell(i))}=\frac{ H^{2i-1}_{i-2,nr}(X,\mu_{\ell^r}^{\otimes i})}{ H^{2i-1}_{i-2,nr}( X,\Z_{\ell}( i))} 
$$
is an isomorphism.
\end{lemma}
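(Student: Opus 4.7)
The plan is to establish surjectivity directly from the definition, and injectivity by combining the Gysin sequence of Lemma~\ref{lem:les} with Hilbert~90 (axiom~(\ref{ax:Hilbert90})).

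First I would note that the restriction $H^{2i-1}(F_{i-1}X,A(i))\to H^{2i-1}(F_{i-2}X,A(i))$ commutes with the coefficient reduction $\Z_\ell(i)\to\mu_{\ell^r}^{\otimes i}$ by functoriality of $H^\ast(-,A(n))$ in $A$, so the displayed map is well-defined on quotients. Surjectivity is then immediate from the very definition $H^{2i-1}_{i-2,nr}(X,A(i))=\im\!\bigl(H^{2i-1}(F_{i-1}X,A(i))\to H^{2i-1}(F_{i-2}X,A(i))\bigr)$.

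For injectivity I would take a class $\alpha\in H^{2i-1}(F_{i-1}X,\mu_{\ell^r}^{\otimes i})$ whose image in the right hand side vanishes. Then $\alpha|_{F_{i-2}X}$ equals the mod-$\ell^r$ reduction of $\beta|_{F_{i-2}X}$ for some $\beta\in H^{2i-1}(F_{i-1}X,\Z_\ell(i))$, so after subtracting the reduction of $\beta$ I may assume $\alpha|_{F_{i-2}X}=0$. Applying Lemma~\ref{lem:les} with $j=i-1$ in cohomological degree $2i-1$ (so that the residue term lives in $\bigoplus_{x\in X^{(i-1)}} H^{1}(x,\mu_{\ell^r}^{\otimes 1})$) then writes $\alpha=\iota_\ast(\gamma)$ for some finitely supported $\gamma=\sum_x\gamma_x$ with $\gamma_x\in H^1(x,\mu_{\ell^r}^{\otimes 1})$.

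The key input is now Hilbert~90 in the form of axiom~(\ref{ax:Hilbert90}): the surjection $\bar\epsilon:\kappa(x)^\ast\twoheadrightarrow H^1(x,\mu_{\ell^r}^{\otimes 1})$ factors through $\epsilon:\kappa(x)^\ast\to H^1(x,\Z_\ell(1))$, so the coefficient reduction $H^1(x,\Z_\ell(1))\to H^1(x,\mu_{\ell^r}^{\otimes 1})$ is itself surjective. Lifting each $\gamma_x$ to $\tilde\gamma_x\in H^1(x,\Z_\ell(1))$, the class $\iota_\ast(\tilde\gamma)\in H^{2i-1}(F_{i-1}X,\Z_\ell(i))$ reduces mod $\ell^r$ to $\alpha$, because proper pushforward is compatible with coefficient reduction via the functoriality built into axioms~(\ref{ax:pushforward}) and~(\ref{ax:Bockstein}). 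Hence $\alpha$ lies in the image of $H^{2i-1}(F_{i-1}X,\Z_\ell(i))$, so vanishes in the left hand quotient. I do not anticipate a real obstacle: once the Gysin sequence is unpacked, everything reduces to the Hilbert~90 statement that was axiomatized precisely for this kind of integral lifting argument.
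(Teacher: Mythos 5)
Your proof is correct and takes essentially the same approach as the paper: reduce to a class restricting to zero on $F_{i-2}X$, write it as $\iota_\ast$ of a residue class in $\bigoplus_{x\in X^{(i-1)}}H^1(x,\mu_{\ell^r}^{\otimes 1})$, and lift that residue to $\Z_\ell$-coefficients. The only cosmetic difference is that you invoke the surjectivity of $H^1(x,\Z_\ell(1))\to H^1(x,\mu_{\ell^r}^{\otimes 1})$ from axiom~(\ref{ax:Hilbert90}) directly, while the paper phrases the lifting via the Bockstein criterion $\delta(\alpha)=0$ and the torsion-freeness of $H^2(x,\Z_\ell(1))$ from Lemma~\ref{lem:torsionfree} — two equivalent statements, the latter being derived from the former.
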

\begin{proof}
Since the above map is clearly surjective, it suffices to show that it is injective.
Since
$$
F^{i-1}H^{2i-1} (F_{i-2}X,\Z_\ell(i))=\im(H^{2i-1} (F_{i-1}X,\Z_\ell(i))\to H^{2i-1} (F_{i-2}X,\Z_\ell(i)) ),
$$ 
it thus suffices to show that 
any element 
$$
\alpha\in \ker \left(  H^{2i-1} (F_{i-1}X,\mu_{\ell^r}^{\otimes i}) \longrightarrow H^{2i-1} (F_{i-2}X,\mu_{\ell^r}^{\otimes i})  \right)  
$$
satisfies
$$
\delta(\alpha)=0\in   H^{2i} (F_{i-1}X,\Z_\ell(i)),
$$
which by (\ref{ax:Bockstein}) implies that $\alpha$ lifts to an integral class.
By Lemma \ref{lem:les},
$\alpha=\iota_\ast \xi$ for some 
$$
\xi\in \bigoplus_{x\in X^{(i-1)}}H^1(x,\mu_{\ell^r}^{\otimes 1}).
$$
Since $\delta$ commutes with $\iota_\ast$ by functoriality of the Bockstein sequence in (\ref{ax:Bockstein}), we find
$
\delta(\alpha)=\iota_\ast(\delta(\xi))
$.
On the other hand,
$$
\delta(\xi)\in \bigoplus_{x\in X^{(i-1)}}H^2(x,\Z_{\ell}(1))
$$
is $\ell^r$-torsion by property (\ref{ax:Bockstein}), while the above direct sum is torsion-free by Lemma \ref{lem:torsionfree}.
Hence, $\delta(\alpha)=0$, which concludes the proof of the lemma.
\end{proof}

\begin{proof}[Proof of Theorem \ref{thm:IHC}]
By Lemma \ref{lem:les}, we have an exact sequence
\begin{align*}
 \bigoplus_{x\in X^{(i)}}[x]\Z_\ell\stackrel{\iota_\ast}\longrightarrow 
 H^{2i} (X,\Z_\ell(i))\longrightarrow   H^{2i}(F_{i-1}X,\Z_\ell(i))\longrightarrow  \bigoplus_{x\in X^{(i)}} H^1(x,\Z_\ell(0)) .
\end{align*} 
By Lemma \ref{lem:torsionfree}, the last term in this sequence is torsion-free and so 
$$
Z^i(X) [\ell ^r]\cong  H^{2i} (F_{i-1}X,\Z_\ell(i))[\ell^r] .
$$
By property (\ref{ax:Bockstein}), the Bockstein map thus induces an isomorphism
\begin{align} \label{eq:Zi(X)-proof}
Z^i(X) [\ell ^r]\cong \frac{ H^{2i-1} (F_{i-1}X,\mu_{\ell^r}^{\otimes i})}{ H^{2i-1} (F_{i-1}X,\Z_\ell(i))} .
\end{align}
By Lemma \ref{lem:IHC}, we then get a canonical isomorphism
$$
Z^i(X) [\ell ^r]\cong\frac{ H^{2i-1}_{i-2,nr}(X,\mu_{\ell^r}^{\otimes i})}{ H^{2i-1}_{i-2,nr}( X,\Z_{\ell}( i))} ,
$$
which proves (\ref{eq:Zi(X)[ell]}).

Let now $\alpha\in H^{2i-1} (F_{i-1}X,\mu_{\ell^r}^{\otimes i})$ with image
$$
[\alpha] \in \frac{ H^{2i-1} (F_{i-1}X,\mu_{\ell^r}^{\otimes i})}{ H^{2i-1} (F_{i-1}X,\Z_\ell(i))} .
$$
By Corollary \ref{cor:F^i},  $  H^{2i-1}(F_{i}X,\mu_{\ell^r}^{\otimes i})  \cong  H^{2i-1}(X,\mu_{\ell^r}^{\otimes i})  $ and so $[\alpha]$
lifts to $F^{i} H^{2i-1}(F_{i-1}X,\mu_{\ell^r}^{\otimes i})  $ if and only if $\delta(\alpha)\in H^{2i} (F_{i-1}X,\Z_\ell(i))[\ell^r]$
lifts to an $\ell^r$-torsion class in $H^{2i} (X,\Z_\ell(i))$.
Hence,
 the image of the $\ell^r$-torsion classes $ H^{2i}(X,\Z_\ell(i))[\ell^r]$ inside $Z^i(X) [\ell ^r]$  correspond via (\ref{eq:Zi(X)-proof}) to the subspace 
 $$
 \frac{F^i H^{2i-1} (F_{i-1}X,\mu_{\ell^r}^{\otimes i})}{  H^{2i-1} (F_{i-1}X,\Z_\ell(i))}\subset  \frac{ H^{2i-1} (F_{i-1}X,\mu_{\ell^r}^{\otimes i})}{ H^{2i-1} (F_{i-1}X,\Z_\ell(i))} ,
 $$
 where we recall that $F^i H^{2i-1} (F_{i-1}X,\mu_{\ell^r}^{\otimes i})\subset H^{2i-1} (F_{i-1}X,\mu_{\ell^r}^{\otimes i})$ is  the image of $H^{2i-1} (X,\mu_{\ell^r}^{\otimes i}) $, see Corollary \ref{cor:F^i}.
 Combining this with the isomorphism in Lemma \ref{lem:IHC}, we find that the image of   $ H^{2i}(X,\Z_\ell(i))[\ell^r]\to Z^i(X)  [\ell ^r]$  corresponds via the  isomorphism in (\ref{eq:Zi(X)[ell]}) to the subspace generated by the image of $H^{2i-1} (X,\mu_{\ell^r}^{\otimes i})\to H^{2i-1}_{i-2,nr}(X,\mu_{\ell^r}^{\otimes i}) $, as claimed.

By (\ref{ax:Ql/Zl,Ql}),  the identity  (\ref{eq:Zi(X)[ell-infty]}) as well as the assertion on the image of $ H^{2i}(X,\Z_\ell(i))[\ell^\infty]\to Z^i(X)  [\ell ^\infty]$ follow from what we have proven above by taking direct limits over $r$. 
This concludes the proof of the theorem.
\end{proof}
 
The above proof has the following consequence, that we want to record here.

\begin{corollary}\label{cor:delta(extension)}
Let $X\in \VV$ and $\alpha\in H^{2i-1}(F_{i-1}X,\mu_{\ell ^r}^{\otimes i})$.
Then 
$$
 \delta(\alpha)\in \im( H^{2i}( X,\Z_{\ell}(i))\to H^{2i}( F_{i-1}X,\Z_{\ell}(i)) ) .
$$
\end{corollary}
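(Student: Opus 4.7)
The plan is to reduce the claim to the torsion-freeness statement in Lemma \ref{lem:torsionfree} via the Gysin sequence. First I would use Corollary \ref{cor:F^i} to replace $X$ by $F_iX$: it gives a canonical isomorphism $H^{2i}(X,\Z_\ell(i))\cong H^{2i}(F_iX,\Z_\ell(i))$, compatible with the restriction map to $F_{i-1}X$. It therefore suffices to show that $\delta(\alpha)$ lies in the image of $H^{2i}(F_iX,\Z_\ell(i))\to H^{2i}(F_{i-1}X,\Z_\ell(i))$.

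Next, I would write down the Gysin exact sequence provided by Lemma \ref{lem:les} with the indices $i\rightsquigarrow 2i$, $j\rightsquigarrow i$, $n\rightsquigarrow i$:
$$
H^{2i}(F_iX,\Z_\ell(i))\longrightarrow H^{2i}(F_{i-1}X,\Z_\ell(i)) \stackrel{\del}\longrightarrow \bigoplus_{x\in X^{(i)}}H^{1}(x,\Z_\ell(0)).
$$
The claim is thus equivalent to $\del(\delta(\alpha))=0$ in the direct sum on the right.

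For this, I would combine two simple observations. On the one hand, the Bockstein sequence (\ref{ax:Bockstein}) shows that $\delta(\alpha)\in H^{2i}(F_{i-1}X,\Z_\ell(i))$ is $\ell^r$-torsion (it sits between the two multiplications by $\ell^r$ in that sequence), hence so is $\del(\delta(\alpha))$. On the other hand, Lemma \ref{lem:torsionfree} (applied with $i=1$) tells us that $H^{1}(x,\Z_\ell(0))$ is torsion-free for every $x\in X^{(i)}$, so the target of $\del$ is torsion-free. Combining these forces $\del(\delta(\alpha))=0$, and exactness then lifts $\delta(\alpha)$ to $H^{2i}(F_iX,\Z_\ell(i))\cong H^{2i}(X,\Z_\ell(i))$, which is the desired conclusion.

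There is essentially no obstacle here: the content of the corollary is the combination of the Gysin sequence, the torsion property of the Bockstein, and Lemma \ref{lem:torsionfree}. The only thing to be a bit careful about is the bookkeeping of indices in the Gysin sequence (making sure that the residue lands in degree $1$ at codimension-$i$ points of $X$, so that torsion-freeness of $H^1(x,\Z_\ell(0))$ applies) and the identification of $H^{2i}(X,\Z_\ell(i))$ with $H^{2i}(F_iX,\Z_\ell(i))$ via Corollary \ref{cor:F^i}.
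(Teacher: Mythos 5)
Your proposal is correct and follows essentially the same route as the paper: invoke the Gysin sequence from Lemma \ref{lem:les} (together with Corollary \ref{cor:F^i} to identify $H^{2i}(F_iX,\Z_\ell(i))$ with $H^{2i}(X,\Z_\ell(i))$), observe from the Bockstein sequence (\ref{ax:Bockstein}) that $\delta(\alpha)$ is torsion, and conclude that its residue in $\bigoplus_{x\in X^{(i)}}H^1(x,\Z_\ell(0))$ vanishes because that group is torsion-free by Lemma \ref{lem:torsionfree}. The only difference is presentational: you make the appeal to Corollary \ref{cor:F^i} explicit, whereas the paper writes the exact sequence directly with $H^{2i}(X,\Z_\ell(i))$.
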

\begin{proof}
By Lemma \ref{lem:les}, we have an exact sequence
\begin{align*}
 \bigoplus_{x\in X^{(i)}}[x]\Z_\ell\stackrel{\iota_\ast}\longrightarrow 
 H^{2i} (X,\Z_\ell(i))\longrightarrow   H^{2i}(F_{i-1}X,\Z_\ell(i))\longrightarrow  \bigoplus_{x\in X^{(i)}} H^1(x,\Z_\ell(0)) .
\end{align*}
For any
$
\alpha\in H^{2i-1}(F_{i-1}X,\mu_{\ell^r}^{\otimes i}) 
$,
the class $\delta(\alpha)\in H^{2i}(F_{i-1}X,\Z_\ell(i))$ is torsion and so Lemma \ref{lem:torsionfree} implies that it maps to zero in 
$
 \bigoplus_{x\in X^{(i)}} H^1(x,\Z_\ell(0)) $.
\end{proof}

\subsection{The $\ell$-adic Griffiths group} \label{subsec:Griffiths}

Recall from (\ref{eq:cli}) that the cycle class map $\cl_X^i:\CH^i(X)_{\Z_\ell}\to H^{2i}(X,\Z_\ell(i))$ factorizes through $A^i(X)_{\Z_\ell}$ from Definition \ref{def:Ai}.
We denote the induced cycle class map on $A^i(X)_{\Z_\ell}$ by $\widetilde \cl_X^i$.

\begin{definition} \label{def:Griff}
For $X\in \VV$, we define
$$
\Grifftilde^i(X)_{\Z_\ell}:=\ker \left(\widetilde \cl_X^i: A^i(X)_{\Z_\ell}\longrightarrow H^{2i}(X,\Z_\ell(i))\right).
$$ 
\end{definition}

By Lemma \ref{lem:AiX}, $\Grifftilde^i(X)_{\Z_\ell}$ coincides with the $\ell$-adic Griffiths group of homologically trivial $\Z_\ell$-cycles modulo algebraic equivalence if (\ref{ax:homolog=alg-div}) holds, while it is given by the kernel of the cycle class map $\CH^i(X)_{\Z_\ell}\to H^{2i}(X,\Z_\ell(i))$ if (\ref{ax:homolog=ratl-div}) holds.

Using the definition of $A^i(X)_{\Z_\ell}$ from Section \ref{subsec:CHi-Ai}, we get
\begin{align} \label{eq:Griff=def}
\Grifftilde^i(X)_{\Z_\ell}=\frac{\ker \left( \iota_\ast:\bigoplus_{x\in X^{(i)}}[x]\Z_\ell\longrightarrow H^{2i}(X,\Z_\ell(i))\right) }{\im \left( \del\circ \iota_\ast:  \bigoplus_{x\in X^{(i-1)}} H^1(x,\Z_\ell(1)) \longrightarrow  \bigoplus_{x\in X^{(i)}} [x]\Z_\ell \right) } .
\end{align} 
The following result is motivated by Bloch--Ogus' computation of the second Griffiths group of a smooth complex projective variety in \cite[(7.5)]{BO}.

\begin{proposition} \label{prop:Griff}
For $X\in \VV$, there is a canonical isomorphism
$$
\Grifftilde^i(X)_{\Z_\ell}\cong  H^{2i-1}_{i-2,nr}( X,\Z_\ell(i))/ H^{2i-1}(X,\Z_\ell(i))  .
$$
\end{proposition}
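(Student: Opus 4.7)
The plan is to combine two instances of the Gysin exact sequence (Lemma \ref{lem:les}) with Corollary \ref{cor:F^i} and the explicit description of $\Grifftilde^i(X)_{\Z_\ell}$ given in (\ref{eq:Griff=def}).

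First I would apply Lemma \ref{lem:les} in cohomological degree $2i-1$ with $n=i$ and $j=i$, and use Corollary \ref{cor:F^i} to replace $H^\ast(F_iX,\Z_\ell(i))$ by $H^\ast(X,\Z_\ell(i))$. This yields the exact sequence
$$
H^{2i-1}(X,\Z_\ell(i)) \longrightarrow H^{2i-1}(F_{i-1}X,\Z_\ell(i)) \stackrel{\del}\longrightarrow \bigoplus_{x\in X^{(i)}}[x]\Z_\ell \stackrel{\iota_\ast}\longrightarrow H^{2i}(X,\Z_\ell(i)),
$$
so that $\del$ induces an isomorphism
$$
H^{2i-1}(F_{i-1}X,\Z_\ell(i))\big/\im\bigl(H^{2i-1}(X,\Z_\ell(i))\bigr) \stackrel{\sim}\longrightarrow \ker(\iota_\ast),
$$
identifying the numerator of (\ref{eq:Griff=def}) with a quotient of $H^{2i-1}(F_{i-1}X,\Z_\ell(i))$.

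Next I would apply Lemma \ref{lem:les} again in degree $2i-1$ with $n=i$ and $j=i-1$ to obtain the exact sequence
$$
\bigoplus_{x\in X^{(i-1)}} H^1(x,\Z_\ell(1)) \stackrel{\iota_\ast}\longrightarrow H^{2i-1}(F_{i-1}X,\Z_\ell(i)) \longrightarrow H^{2i-1}(F_{i-2}X,\Z_\ell(i)),
$$
whence the image of $\iota_\ast$ equals $N^{i-1}H^{2i-1}(F_{i-1}X,\Z_\ell(i))$ in the sense of Definition \ref{def:N^j}. Since the denominator in (\ref{eq:Griff=def}) is the image of the composition $\del\circ\iota_\ast$ factoring through $H^{2i-1}(F_{i-1}X,\Z_\ell(i))$, the previous identification of $\ker(\iota_\ast)$ with a quotient of $H^{2i-1}(F_{i-1}X,\Z_\ell(i))$ translates this denominator into the image of $N^{i-1}H^{2i-1}(F_{i-1}X,\Z_\ell(i))$ modulo $\im(H^{2i-1}(X,\Z_\ell(i)))$.

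Combining both steps, $\Grifftilde^i(X)_{\Z_\ell}$ becomes
$$
\frac{H^{2i-1}(F_{i-1}X,\Z_\ell(i))}{N^{i-1}H^{2i-1}(F_{i-1}X,\Z_\ell(i))+\im\bigl(H^{2i-1}(X,\Z_\ell(i))\bigr)}.
$$
Finally, by Definition \ref{def:N^j} and (\ref{def:filtration:F^mHi(FjX)}), the restriction map induces an isomorphism
$$
H^{2i-1}(F_{i-1}X,\Z_\ell(i))\big/N^{i-1}H^{2i-1}(F_{i-1}X,\Z_\ell(i)) \stackrel{\sim}\longrightarrow F^{i-1}H^{2i-1}(F_{i-2}X,\Z_\ell(i)) = H^{2i-1}_{i-2,nr}(X,\Z_\ell(i)),
$$
and quotienting further by the image of $H^{2i-1}(X,\Z_\ell(i))$ yields the claimed formula (interpreted via the convention $G/H:=\coker(H\to G)$). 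The main obstacle will be bookkeeping: one has to verify that the factorization in Lemma \ref{lem:CHi} of $\del\circ\iota_\ast$ through $H^{2i-1}(F_{i-1}X,\Z_\ell(i))$ is the same as the one used above, which ultimately rests on the functoriality of the Gysin sequence with respect to open immersions from (\ref{ax:Gysin}).
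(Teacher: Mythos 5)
Your proposal is correct and follows essentially the same route as the paper: both proofs combine the two Gysin sequences from Lemma \ref{lem:les} (for $j=i$ and $j=i-1$) with Corollary \ref{cor:F^i} and the presentation (\ref{eq:Griff=def}), the only cosmetic difference being that you express the result as the quotient of $H^{2i-1}(F_{i-1}X,\Z_\ell(i))$ by $N^{i-1}+\im\bigl(H^{2i-1}(X,\Z_\ell(i))\bigr)$ while the paper phrases it as the image of the induced map of quotients into $H^{2i-1}(F_{i-2}X,\Z_\ell(i))/H^{2i-1}(X,\Z_\ell(i))$, which is the same group. The compatibility of the two occurrences of $\del\circ\iota_\ast$ that you flag is immediate, since (\ref{eq:Griff=def}) uses literally the composition through $H^{2i-1}(F_{i-1}X,\Z_\ell(i))$ that you employ.
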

\begin{proof}
By Lemma \ref{lem:les}  and Corollary \ref{cor:F^i}, we have exact sequences
$$
H^{2i-1}(X,\Z_\ell(i))\longrightarrow H^{2i-1}(F_{i-1}X,\Z_\ell(i))\stackrel{\del}\longrightarrow \bigoplus_{x\in X^{(i)}} [x]\Z_\ell\stackrel{\iota_\ast}\longrightarrow H^{2i}(X,\Z_\ell(i))
$$
and
$$
\bigoplus_{x\in X^{(i-1)}}H^1(x,\Z_{\ell}(1))\stackrel{\iota_\ast}\longrightarrow H^{2i-1}(F_{i-1}X,\Z_{\ell}(i))  \longrightarrow  H^{2i-1}(F_{i-2}X,\Z_{\ell}(i)) .
$$
This shows by (\ref{eq:Griff=def}) that $\Grifftilde^i(X)_{\Z_\ell}$ 
is isomorphic to
$$
 \im \left( \frac{ H^{2i-1}(F_{i-1}X,\Z_\ell(i))}{H^{2i-1}(X,\Z_\ell(i))}\longrightarrow \frac{ H^{2i-1}(F_{i-2}X,\Z_\ell(i))}{H^{2i-1}(X,\Z_\ell(i))} \right) ,
$$
which proves the proposition by definition of $ H^{2i-1}_{i-2,nr}( X,\Z_\ell(i))$.
\end{proof}

Combining Theorem  \ref{thm:IHC} and Proposition \ref{prop:Griff}, we obtain the following.  

\begin{corollary}\label{cor:extension}
Let $X\in \VV$. Then there is a canonical short exact sequence  
$$
0\longrightarrow \Grifftilde^i(X)_{\Z_\ell}\otimes \Z/\ell^r\longrightarrow \frac{ H^{2i-1}_{i-2,nr}( X,\mu_{\ell^r}^{\otimes i} )}{H^{2i-1}(X,\mu_{\ell^r}^{\otimes i}) }\longrightarrow \frac{Z^{i}(X)[\ell^r]}{H^{2i}(X,\Z_\ell(i))[\ell^r]}\longrightarrow 0 .
$$
\end{corollary}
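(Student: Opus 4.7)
The idea is to apply the snake lemma to a commutative diagram comparing the Bockstein short exact sequence on $X$ with its analogue for refined unramified cohomology.

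First, I will produce an auxiliary short exact sequence
$$0 \to H^{2i-1}_{i-2,nr}(X, \Z_\ell(i))/\ell^r \to H^{2i-1}_{i-2,nr}(X, \mu_{\ell^r}^{\otimes i}) \to Z^i(X)[\ell^r] \to 0,$$
obtained by starting from the Bockstein sequence of axiom (\ref{ax:Bockstein}) applied to $F_{i-1}X$, passing to images under restriction to $F_{i-2}X$, and identifying the right-most term with $Z^i(X)[\ell^r]$ via the isomorphism $Z^i(X)[\ell^r] \cong H^{2i}(F_{i-1}X, \Z_\ell(i))[\ell^r]$ established inside the proof of Theorem \ref{thm:IHC}. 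Injectivity on the left uses the Gysin sequence (Lemma \ref{lem:les}) together with the torsion-freeness of $H^2(x,\Z_\ell(1))$ from Lemma \ref{lem:torsionfree}, while middle exactness and right surjectivity follow from Lemma \ref{lem:IHC}.

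Next, I compare this sequence with the ordinary Bockstein SES for $X$ via the natural restriction maps $\phi,\psi,\chi$, obtaining a commutative diagram with exact rows. Applying the snake lemma and identifying the three cokernels---using Proposition \ref{prop:Griff} to see $\coker\phi \cong \Grifftilde^i(X)_{\Z_\ell}\otimes\Z/\ell^r$, the definition of the middle term for $\coker\psi$, and the statement of Theorem \ref{thm:IHC} about the image of $H^{2i}(X,\Z_\ell(i))[\ell^r]$ in $Z^i(X)[\ell^r]$ for $\coker\chi$---yields the tail
$$\ker\chi \stackrel{\partial}{\to} \Grifftilde^i(X)_{\Z_\ell}\otimes\Z/\ell^r \to \frac{H^{2i-1}_{i-2,nr}(X,\mu_{\ell^r}^{\otimes i})}{H^{2i-1}(X,\mu_{\ell^r}^{\otimes i})} \to \frac{Z^i(X)[\ell^r]}{H^{2i}(X,\Z_\ell(i))[\ell^r]} \to 0.$$

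The main obstacle will be showing that the connecting homomorphism $\partial$ vanishes, equivalently that the leftmost map is injective. By Lemma \ref{lem:les} combined with Lemma \ref{lem:torsionfree}, $\ker\chi$ consists precisely of $\ell^r$-torsion classes lying in the image of $\cl_X^i$. For such $c = \cl_X^i(z) = \iota_\ast(\tau)$ with $\tau\in \bigoplus_{x\in X^{(i)}}[x]\Z_\ell$, tracing $\partial(c)$ through the snake diagram shows that a Bockstein preimage $y\in H^{2i-1}(X,\mu_{\ell^r}^{\otimes i})$ of $c$ satisfies $y|_{F_{i-1}X} = \overline{\alpha_1}$ for some $\alpha_1\in H^{2i-1}(F_{i-1}X,\Z_\ell(i))$ (using Lemma \ref{lem:IHC}), and under the isomorphism of Proposition \ref{prop:Griff} the class $\partial(c)$ is represented by $\del(\alpha_1) = \ell^r\tau'$ for some $\tau'\in \bigoplus[x]\Z_\ell$ coming from the naturality of Gysin under the mod-$\ell^r$ reduction (since $\del^\mu(\overline{\alpha_1}) = \del^\mu(y|_{F_{i-1}X}) = 0$). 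The compatibility of the Bockstein $\delta$ with the Gysin boundary $\iota_\ast$ should then identify $\iota_\ast(\tau')$ with $c = \iota_\ast(\tau)$, so that $\tau$ and $\tau'$ agree modulo $\ker(\iota_\ast)$; combined with the fact that $\del(\alpha_1) = \ell^r\tau'\in \ell^r\ker(\iota_\ast)+\im(\del\circ\iota_\ast)$ once the ambiguity in the choice of $\alpha_1$ is absorbed into $\im(\del\circ\iota_\ast)$, this yields the vanishing of the class in $\Grifftilde^i(X)_{\Z_\ell}\otimes\Z/\ell^r$. Pinning down this Bockstein--Gysin compatibility cleanly within the axiomatic framework is the delicate step.
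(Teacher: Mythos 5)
Your set-up is correct, and it is essentially a repackaging of the paper's own reduction. The auxiliary sequence $0\to H^{2i-1}_{i-2,nr}(X,\Z_\ell(i))/\ell^r\to H^{2i-1}_{i-2,nr}(X,\mu_{\ell^r}^{\otimes i})\to Z^i(X)[\ell^r]\to 0$ is indeed exact, and your justification of left-exactness works: if $\gamma$ is unramified and $\gamma=\ell^r\eta$, then $\ell^r\del\eta=\del\gamma=0$ in $\bigoplus_{x\in X^{(i-1)}}H^2(x,\Z_\ell(1))$, which is torsion-free by Lemma \ref{lem:torsionfree}, so $\eta$ is again unramified. The cokernel identifications ($\coker\phi\cong\Grifftilde^i(X)_{\Z_\ell}\otimes\Z/\ell^r$ via Proposition \ref{prop:Griff}, $\coker\chi$ via the image statement in Theorem \ref{thm:IHC}) and the snake lemma then reduce the corollary to the vanishing of $\partial$, i.e.\ to the injectivity of the first arrow; this is exactly the point the paper isolates after quoting Theorem \ref{thm:IHC} and Proposition \ref{prop:Griff}, so up to here the two arguments agree.

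The gap is in your argument that $\partial=0$. First, the identity $\iota_\ast(\tau')=c$ rests on an anticommutation of the Bockstein with the Gysin boundary, i.e.\ a compatibility of two connecting maps that is not among the axioms (\ref{ax:pushforward})--(\ref{ax:MS}) and is not established in this framework, as you yourself note. Second, and more seriously, even granting it the conclusion does not follow: what your chase actually yields is that $\partial(c)$ is represented, under Proposition \ref{prop:Griff}, by the cycle $\del\alpha_1=\ell^r\tau'$, so that $\partial(c)=\ell^r[\tau']$ in $A^i(X)_{\Z_\ell}$ with $\iota_\ast[\tau']=c$ a (generally nonzero) torsion class. To conclude $\partial(c)=0$ in $\Grifftilde^i(X)_{\Z_\ell}\otimes\Z/\ell^r$ you must write $\ell^r\tau'=\ell^r\kappa+\del\iota_\ast\zeta$ with $\iota_\ast\kappa=0$; but the comparison $\tau\equiv\tau'$ modulo $\ker\iota_\ast$ merely rewrites $\partial(c)$ as $\ell^r[\tau]$ modulo $\ell^r\Grifftilde^i(X)_{\Z_\ell}$, and varying $\alpha_1$ (by classes restricted from $H^{2i-1}(X,\Z_\ell(i))$ or by $\iota_\ast$ of classes in $\bigoplus_{x\in X^{(i-1)}}H^1(x,\Z_\ell(1))$) only moves $\tau'$ within that same class. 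So the displayed membership is precisely the statement to be proved, and no argument for it is given: the chase is circular at the decisive point. Note also that your route never uses axiom (\ref{ax:Hilbert90}), whereas this is the key move in the paper's proof of the injectivity: starting from a cycle $z=\del\alpha$ representing a kernel class, the hypothesis produces $\xi\in\bigoplus_{x\in X^{(i-1)}}H^1(x,\mu_{\ell^r}^{\otimes 1})$ with $\del(\overline\alpha+\iota_\ast\xi)=0$, and lifting $\xi$ integrally to $\xi'$ via (\ref{ax:Hilbert90}) converts this mod-$\ell^r$ relation into the integral congruence saying that $\del(\alpha+\iota_\ast\xi')$ is divisible by $\ell^r$ and has the same class as $z$ in $A^i(X)_{\Z_\ell}$; the argument stays entirely at the level of cycle representatives and never needs the torsion cohomology class $c$ or the Bockstein--Gysin compatibility. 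If you keep the snake-lemma formulation, the vanishing of $\partial$ has to be proved by this cycle-level argument (or an equivalent); the diagram chase you sketch does not close it.
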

\begin{proof} 
By  Proposition \ref{prop:Griff}, there is an exact sequence
\begin{align*} 
\Grifftilde^i(X)_{\Z_\ell}\otimes \Z/\ell^r\longrightarrow \frac{H^{2i-1}_{i-2,nr}( X,\mu_{\ell^r}^{\otimes i} )}{H^{2i-1}(X,\mu_{\ell^r}^{\otimes i}) }\longrightarrow \frac{H^{2i-1}_{i-2,nr}( X,\mu_{\ell^r}^{\otimes i} )}{H^{2i-1}_{i-2,nr}( X,\Z_{\ell}(i) )\oplus H^{2i-1}(X,\mu_{\ell^r}^{\otimes i}) }\longrightarrow 0 .
\end{align*}
By Theorem \ref{thm:IHC}, $Z^{i}(X)[\ell^r]\cong  H^{2i-1}_{i-2,nr}( X,\mu_{\ell^r}^{\otimes i} )/ H^{2i-1}_{i-2,nr}( X,\Z_{\ell}(i))$.
One checks that the natural map $H^{2i-1}(X,\mu_{\ell^r}^{\otimes i})\to Z^{i}(X)[\ell^r]$ is induced by the Bockstein morphism and so its image coincides with the image of the natural map
$H^{2i}(X,\Z_\ell(i))[\ell^r]\to  Z^{i}(X)[\ell^r]$.
We thus get  an exact sequence
\begin{align} \label{eq:extension-proof}
\Grifftilde^i(X)_{\Z_\ell}\otimes \Z/\ell^r\longrightarrow \frac{H^{2i-1}_{i-2,nr}( X,\mu_{\ell^r}^{\otimes i} )}{H^{2i-1}(X,\mu_{\ell^r}^{\otimes i}) }\longrightarrow \frac{Z^{i}(X)[\ell^r]}{H^{2i}(X,\Z_\ell(i))[\ell^r]}\longrightarrow 0 
\end{align}
and it remains to show that the first arrow is injective.
For this, let $z\in \bigoplus_{x\in X^{(i)}}[x]\Z_\ell$ with $\iota_\ast z=0$ and let $[z]\in \Grifftilde^i(X)_{\Z_\ell}$.
By Lemma \ref{lem:les}, there is a class $\alpha\in  H^{2i-1}(F_{i-1}X,\Z_\ell(i))$ with $\del \alpha=z$.
The  map in question sends $[z]$ to the image of $\alpha$ in
$$
\frac{F^{i-1}  H^{2i-1}(F_{i-2}X,\mu_{\ell^r}^{\otimes i} )}{H^{2i-1}(X,\mu_{\ell^r}^{\otimes i}) }= \frac{H^{2i-1}_{i-2,nr}( X,\mu_{\ell^r}^{\otimes i} )}{H^{2i-1}(X,\mu_{\ell^r}^{\otimes i}) } .
$$
If this vanishes, then there is a class $\xi\in \bigoplus_{x\in X^{(i-1)}}H^1(x,\mu_{\ell^r}^{\otimes 1})$ such that
$$
\del(\overline \alpha+ \iota_\ast \xi)=0\in \bigoplus_{x\in X^{(i)}}[x]\Z/\ell^r,
$$
where $\overline \alpha$ denotes the image of $\alpha$ in $H^{2i-1}(F_{i-1}X,\mu_{\ell^r}^{\otimes i} ) $.
By property (\ref{ax:Hilbert90}), we can pick a lift $\xi'\in \bigoplus_{x\in X^{(i-1)}}H^1(x,\Z_{\ell}(1))$ of $\xi$ and find that
$$
\del(\alpha+\iota_\ast \xi')\in \bigoplus_{x\in X^{(i)}}[x]\Z_\ell
$$
is zero modulo $\ell^r$.
The above cycle and $z=\del \alpha$ have the same class in $A^i(X)_{\Z_\ell}$  and so $z$ has trivial image in $\Grifftilde^i(X)_{\Z_\ell} / \ell^r $.
This shows that the first map in (\ref{eq:extension-proof}) is injective and so the exact sequence from the corollary follows.
This concludes the proof.
\end{proof}

The following result gives a geometric interpretation of the extension
$$
E_{\ell^r}^i:= \frac{ H^{2i-1}_{i-2,nr}( X,\mu_{\ell^r}^{\otimes i} )}{H^{2i-1}(X,\mu_{\ell^r}^{\otimes i}) }
$$
from Corollary \ref{cor:extension}.

\begin{lemma} \label{lem:E}
For $X\in \VV$, there is a canonical isomorphism
$$
 E_{\ell^r}^i(X) \cong \ker(\bar \cl ^i_X:A^i(X)_{\Z_\ell}/\ell ^r\longrightarrow H^{2i}(X,\mu_{\ell^r}^{\otimes i}) ),
$$
where $\bar \cl ^i_X$ denotes by reduction modulo $\ell ^r$ of the cycle class map $\widetilde \cl_X^i:A^i(X)_{\Z_\ell}\to H^{2i}(X,\Z_\ell(i))$.
\end{lemma}
\begin{proof}
By Lemma \ref{lem:les}, we have exact sequences
$$
H^{2i-1}(X,\mu_{\ell^r}^{\otimes i}) \longrightarrow  H^{2i-1}(F_{i-1}X,\mu_{\ell^r}^{\otimes i})\stackrel{\del}\longrightarrow \bigoplus_{x\in X^{(i)}}[x]\Z/\ell^r \stackrel{\iota_\ast}\longrightarrow H^{2i}(X,\mu_{\ell^r}^{\otimes i})
$$
and
$$
\bigoplus_{x\in X^{(i-1)}}H^1(\kappa(x),\mu_{\ell^r}^{\otimes 1})\stackrel{\iota_\ast} \longrightarrow   H^{2i-1}(F_{i-1}X,\mu_{\ell^r}^{\otimes i}) \longrightarrow  H^{2i-1}(F_{i-2}X,\mu_{\ell^r}^{\otimes i}) .
$$
Combining these two sequences, we find that $E_{\ell^r}^i(X)$ is isomorphic to
$$
\coker \left( \del\circ \iota_\ast:\bigoplus_{x\in X^{(i-1)}}H^1(\kappa(x),\mu_{\ell^r}^{\otimes 1}) \longrightarrow  \ker \left(\iota_\ast: \bigoplus_{x\in X^{(i)}}[x]\Z/\ell^r \to  H^{2i}(X,\mu_{\ell^r}^{\otimes i}) \right) \right) .
$$
By (\ref{ax:Bockstein}) and (\ref{ax:Hilbert90}),  $H^1(\kappa(x),\Z_\ell(1))/\ell^r\cong H^1(\kappa(x),\mu_{\ell^r}^{\otimes 1})$  and so the above cokernel injects into
$$
A^i(X)_{\Z_\ell}/\ell ^r=
\coker\left(  \bigoplus_{x\in X^{(i-1)}} H^1(x,\Z_\ell(1))\stackrel{\del\circ \iota_\ast}\longrightarrow  \bigoplus_{x\in X^{(i)}} [x]\Z_\ell \right) /\ell^r ,
$$
cf.\ Definition \ref{def:Ai}.
Moreover, a class $z\in A^i(X)_{\Z_\ell}$ with reduction $[z]\in A^i(X)_{\Z_\ell}/\ell ^r$ satisfies $[z]\in E_{\ell^r}^i$ if and only if 
$$
\widetilde \cl_X^i(z)\in \ker(H^{2i}(X,\Z_\ell(i))\to H^{2i}(X,\Z_\ell(i))/\ell^r )
$$
where we use that $H^{2i}(X,\Z_\ell(i))/\ell^r \hookrightarrow H^{2i}(X,\mu_{\ell^r}^{\otimes i})$ by (\ref{ax:Bockstein}).
This concludes the proof of the lemma, because
$$
H^{2i}(X,\Z_\ell(i))/\ell^r \longrightarrow H^{2i}(X,\mu_{\ell^r}^{\otimes i})
$$
is injective by (\ref{ax:Bockstein}).
\end{proof}

\subsection{A transcendental Abel--Jacobi map on torsion cycles}  \label{subsec:AJ_tr} 
We write for simplicity $ \Grifftilde^i(X)[\ell^r]:=\Grifftilde^i(X)_{\Z_\ell}[\ell^r]$, 
where $ \Grifftilde^i(X)_{\Z_\ell}$ denotes the kernel of the cycle class map  $\widetilde \cl_X^i:A^i(X)_{\Z_\ell}\to H^{2i}(X,\Z_\ell(i))$, see Definition \ref{def:Griff}.

In this section we show that there is a canonical  map
\begin{align}   \label{def:lambda}
\lambda^i_{tr}:\Grifftilde^i(X)[\ell^\infty]\longrightarrow  \frac{H^{2i-1}(X,\Q_\ell/\Z_\ell(i))}{N^{i-1}H^{2i-1}(X,\Q_\ell(i) )},
\end{align}
where we recall that $N^jH^i(X,A(n))=\ker(H^i(X,A(n))\to H^i(F_{j-1}X,A(n)))$. 
Our result is motivated by Bloch's Abel--Jacobi map on  $\CH^i(X)[\ell^\infty]$ constructed in \cite{bloch-compositio} in the case where $X$ is smooth projective over an algebraically closed field.  
We compare the two constructions in Section \ref{sec:lambda} below.
We will see that for $k=\bar k$ and when $H^\ast$ denotes Borel--Moore pro-\'etal cohomology (see Proposition \ref{prop:proetale-coho-arbitrary-field}), then 
 the above map is the  transcendental Abel--Jacobi map on torsion cycles,  i.e.\ the smallest quotient of Bloch's map that descends to a map on 
 $$
 \Grifftilde^i(X)[\ell^\infty]=\frac{N^0\CH^i(X)_{\Z_\ell}}{N^{i-1}\CH^i(X)_{\Z_\ell}}[\ell^\infty].
 $$
 (The adjective transcendental stems from the fact that  $N^{i-1}\CH^i(X)_{\Z_\ell}=\CH^i(X)_{\alg}\otimes_{\Z}\Z_\ell$ is the space of algebraically trivial $\ell$-adic cycles in this case.) 
We will call the above map the transcendental Abel--Jacobi map, regardless of the ground field and the Borel--Moore cohomlogy theory chosen. 

Lemma \ref{lem:les} and Corollary \ref{cor:F^i}, we have an exact sequence
\begin{align} \label{eq:prop:Griff-tors-1}
H^{2i-1}(X,\Z_\ell(i))\longrightarrow  H^{2i-1}(F_{i-1}X,\Z_\ell(i))\stackrel{\del}\longrightarrow \bigoplus_{x\in X^{(i)}}[x]\Z_\ell\stackrel{\iota_\ast}\longrightarrow H^{2i}(X,\Z_\ell(i)) .
\end{align} 
Let now $[z]\in \Grifftilde^i(X)[\ell^r]$ for some $r$ and some $z\in \bigoplus_{x\in X^{(i)}}[x]\Z_\ell$.
Then $\iota_\ast z=\widetilde \cl_X^i(z)=0$ by the definition of $\widetilde \cl_X^i$  and so we may choose a lift $\alpha\in  H^{2i-1}(F_{i-1}X,\Z_\ell(i))$ via (\ref{eq:prop:Griff-tors-1}). 
This is well-defined up to classes that come from $ H^{2i-1}(X,\Z_\ell(i))$.
Since $[z]$ is $\ell^r$-torsion, (\ref{eq:Griff=def}) implies that 
$
\del(\ell^r\alpha-\iota_\ast \xi)=0
$
for some $\xi\in \bigoplus_{x\in X^{(i-1)}}H^1(x,\Z_\ell(1))$.
Hence there is a class $\beta\in H^{2i-1}(X,\Z_\ell(i))$ with
\begin{align} \label{eq:beta=l^r-alpha}
\beta=\ell^r\alpha-\iota_\ast \xi\in F^i H^{2i-1}(F_{i-1}X,\Z_\ell(i)) .
\end{align}
By (\ref{ax:Ql/Zl,Ql}), $\beta/\ell^r\in H^{2i-1}(X,\Q_\ell(i))$.
Using functoriality of $H^\ast(-,A(n))$ in the coefficients, we may consider the image of that class in $ H^{2i-1}(X,\Q_\ell/\Z_\ell(i))$ and  define (\ref{def:lambda}) via
\begin{align}   \label{def:lambda-2}
 \lambda^i_{tr}([z]):=  \left[\beta/\ell^r \right] .
\end{align} 

\begin{lemma}\label{lem:lambda-tr-well-defined}
The map $\lambda^i_{tr}$ given by (\ref{def:lambda-2})  is well-defined.
\end{lemma}
\begin{proof}
Let us first fix a  representative $z$ of $[z]$.
Then $\lambda^i_{tr}([z])$ does not depend on the choice of $\alpha$, as this would change $\beta$ by a class in $\ell^r\cdot H^{2i-1}(X,\Z_\ell(i) )$.
Also, the class $\xi$ is well-defined up to classes $\zeta\in \bigoplus_{x\in X^{(i-1)}}H^1(x,\Z_\ell(1))$ with $\del(\iota_\ast \zeta)=0$.
This changes $\beta$ by 
$$
\iota_\ast \zeta\in F^i H^{2i-1}(F_{i-1}X, \Z_\ell(i))\cong H^{2i-1}(X, \Z_\ell(i))
$$
and hence by a class in $ N^{i-1}H^{2i-1}(X,\Z_\ell(i) )$.
In particular,  
$$
  \left[\beta/\ell^r \right]\in \frac{H^{2i-1}(X,\Q_\ell/\Z_\ell(i))}{N^{i-1}H^{2i-1}(X,\Q_\ell(i) )}
$$
remains unchanged.

Finally, if we replace $z$ by a cycle $z'$ that represents the same class in $\Grifftilde^i(X)_{\Z_\ell}$, then, by (\ref{eq:Griff=def}), $z-z'=\del\iota_\ast \zeta$ for some $\zeta\in \bigoplus_{x\in X^{(i-1)}}H^1(x,\Z_\ell(1))$.
But then we can replace $\alpha$  by $\alpha-\iota_\ast \zeta$ and $\xi$ by $\xi-\ell^r\cdot \zeta$, so that the class $\beta$ does not change at all via this process.
This proves the lemma.
\end{proof} 

The following alternative description of $\lambda_{tr}^i$ is useful.

\begin{lemma} \label{lem:lambda_tr-alternative}
Let $X\in \VV$ and let $[z]\in \Grifftilde^i(X)_{\Z_\ell}$ be an $\ell^r$-torsion class.
By Proposition \ref{prop:Griff}, $[z]$ is represented by a class $\alpha\in H^{2i-1}_{i-2,nr}(X,\Z_\ell(i))$ such that $\ell^r\cdot \alpha$ lifts to a class $\beta\in H^{2i-1}(X,\Z_\ell(i))$. Then
$$
\lambda_{tr}^{i}([z])=[\beta/\ell^r]\in \frac{H^{2i-1}(X,\Q_\ell/\Z_\ell(i))}{N^{i-1}H^{2i-1}(X,\Q_\ell(i) )}.
$$
\end{lemma}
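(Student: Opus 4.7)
The strategy is to unravel the isomorphism in Proposition \ref{prop:Griff} and match it with the definition of $\lambda_{tr}^i$ in (\ref{def:lambda-2}). First I would pick a lift $\alpha' \in H^{2i-1}(F_{i-1}X, \Z_\ell(i))$ of the class $\alpha$, which exists because by definition $H^{2i-1}_{i-2,nr}(X,\Z_\ell(i))$ is the image of the restriction map from $H^{2i-1}(F_{i-1}X,\Z_\ell(i))$ to $H^{2i-1}(F_{i-2}X,\Z_\ell(i))$. Setting $z' := \del \alpha' \in \bigoplus_{x \in X^{(i)}} [x] \Z_\ell$, the construction of the isomorphism in Proposition \ref{prop:Griff} shows that $[z']=[z]$ in $\Grifftilde^i(X)_{\Z_\ell}$, so it suffices to compute $\lambda^i_{tr}([z])$ using $z'$ as the representative.

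Next I would follow the recipe (\ref{def:lambda-2}) with this choice of lift. Since $[z]$ is $\ell^r$-torsion, (\ref{eq:Griff=def}) produces $\xi \in \bigoplus_{x\in X^{(i-1)}} H^1(x,\Z_\ell(1))$ with $\ell^r z' = \del(\iota_\ast \xi)$, so $\del(\ell^r \alpha' - \iota_\ast \xi) = 0$. By Lemma \ref{lem:les} combined with Corollary \ref{cor:F^i}, this forces $\ell^r \alpha' - \iota_\ast \xi$ to lie in the image of the natural map $H^{2i-1}(X,\Z_\ell(i)) \to H^{2i-1}(F_{i-1}X, \Z_\ell(i))$, say as the image of $\beta' \in H^{2i-1}(X,\Z_\ell(i))$, and the definition gives $\lambda_{tr}^i([z]) = [\beta'/\ell^r]$.

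It then remains to prove $[\beta'/\ell^r] = [\beta/\ell^r]$ in $H^{2i-1}(X,\Q_\ell/\Z_\ell(i))/N^{i-1}H^{2i-1}(X,\Q_\ell(i))$. The key observation is that, by the exact sequence of Lemma \ref{lem:les} applied in codimension $i-1$, the class $\iota_\ast \xi$ restricts to zero in $H^{2i-1}(F_{i-2}X,\Z_\ell(i))$. Hence the image of $\beta'$ in $H^{2i-1}(F_{i-2}X,\Z_\ell(i))$ equals the image of $\ell^r \alpha'$, which is $\ell^r \alpha$; by hypothesis this also equals the image of $\beta$. Therefore $\beta-\beta' \in N^{i-1}H^{2i-1}(X,\Z_\ell(i))$, which yields $(\beta-\beta')/\ell^r \in N^{i-1}H^{2i-1}(X,\Q_\ell(i))$, giving the desired equality. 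The entire argument is a careful tracking of the constructions involved; the only subtle point I foresee is ensuring the two descriptions use compatible lifts so that the comparison produces an honest equality rather than merely a statement modulo a larger subgroup.
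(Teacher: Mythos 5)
Your proposal is correct and follows essentially the same route as the paper's proof: lift $\alpha$ to $\alpha'\in H^{2i-1}(F_{i-1}X,\Z_\ell(i))$, note that $\del\alpha'$ represents $[z]$ via the isomorphism of Proposition \ref{prop:Griff}, run the construction of $\lambda_{tr}^i$ to get $\beta'$ with $\lambda_{tr}^i([z])=[\beta'/\ell^r]$, and then observe that $\beta$ and $\beta'$ restrict to the same class on $F_{i-2}X$, so their difference lies in $N^{i-1}$ and the two invariants agree.
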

\begin{proof}
By the proof of  Proposition \ref{prop:Griff}, $\alpha$ lifts to a class $\alpha'\in H^{2i-1} (F_{i-1}X,\Z_\ell(i))$ such that $\del \alpha'=z\in \bigoplus_{x\in X^{(i)}}[x]\Z_\ell$ is a representative of $[z]\in \Grifftilde^i(X)_{\Z_\ell}$.
Since $[z]$ is $\ell^r$-torsion, the construction of $\lambda_{tr}^{i}$ shows that there is a class $\beta'\in H^{2i-1}(X,\Z_\ell(i))$ and $\xi\in \bigoplus_{x\in X^{(i-1)}}H^1(x,\Z_\ell(1))$ such that
$$
\beta'=\ell^r\cdot \alpha'+\iota_\ast \xi
$$
and
$$
\lambda_{tr}^{i}([z])=[\beta'/\ell^r]\in \frac{H^{2i-1}(X,\Q_\ell/\Z_\ell(i))}{N^{i-1}H^{2i-1}(X,\Q_\ell(i) )}.
$$
Since $\beta$ and $\beta'$ both restrict to the same class on $F_{i-2}X$, we find that
$$
\beta/\ell^r-\beta'/\ell^r\in N^{i-1}H^{2i-1}(X,\Q_\ell(i) ).
$$
Hence, $\lambda_{tr}^{i}([z])=[\beta/\ell^r]$, which concludes the proof of the lemma.
\end{proof}

\subsection{The image of the transcendental Abel--Jacobi map on torsion cycles} \label{subsec:im-lambda}
For $X\in \VV$ we write 
$$
H^i(X,\Q_\ell/\Z_\ell(n))_{div}:=\im( H^i(X,\Q_\ell(n))\to H^i(X,\Q_\ell/\Z_\ell(n))) ,
$$
which is a divisible subgroup of $H^i(X,\Q_\ell/\Z_\ell(n))$; if the torsion subgroup of $H^{i+1}(X,\Z_\ell(n))$ is finitely generated,  then $H^i(X,\Q_\ell/\Z_\ell(n))_{div}$ is in fact the maximal divisible subgroup of $H^i(X,\Q_\ell/\Z_\ell(n))$.
For $j\geq 0$, we consider the coniveau filtration
$$
N^jH^i(X,\Q_\ell/\Z_\ell(n))_{div}:=\ker(H^i(X,\Q_\ell/\Z_\ell(n))_{div}\to H^i(F_{j-1}X,\Q_\ell/\Z_\ell(n))) .
$$
The following result computes the image of $\lambda_{tr}^i$.
It is motivated by  the description of the image of the Abel--Jacobi map on $\CH^2(X)_{\tors}$ for smooth projective varieties   over algebraically closed fields, due to Bloch and Merkurjev--Suslin, see \cite[\S 18.4]{MS}. 

\begin{proposition}\label{prop:im-lambda}
For $X\in \VV$, the transcendental Abel--Jacobi map $\lambda^i_{tr}$ from Lemma \ref{lem:lambda-tr-well-defined} satisfies
$$
\im(\lambda^i_{tr})=\frac{N^{i-1}H^{2i-1}(X,\Q_\ell/\Z_\ell(i))_{div}}{N^{i-1}H^{2i-1}(X,\Q_\ell(i) )} . 
$$
\end{proposition}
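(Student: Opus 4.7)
The plan is to establish both inclusions separately, using the alternative description of $\lambda_{tr}^i$ from Lemma \ref{lem:lambda_tr-alternative}: if $[z]\in\Grifftilde^i(X)[\ell^r]$ is represented by $\alpha\in H^{2i-1}_{i-2,nr}(X,\Z_\ell(i))$ such that $\ell^r\alpha$ lifts to a class $\beta\in H^{2i-1}(X,\Z_\ell(i))$, then $\lambda_{tr}^i([z])=[\beta/\ell^r]$.

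For the inclusion $\subseteq$, the class $[\beta/\ell^r]$ is tautologically in $H^{2i-1}(X,\Q_\ell/\Z_\ell(i))_{div}$, being the image of $\beta/\ell^r\in H^{2i-1}(X,\Q_\ell(i))$. To see that it lies in $N^{i-1}$, I would pick a lift $\widetilde\alpha\in H^{2i-1}(F_{i-1}X,\Z_\ell(i))$ of $\alpha$ and compare $\ell^r\widetilde\alpha$ with $\beta|_{F_{i-1}X}$: the two restrict to the same class on $F_{i-2}X$, so by the Gysin sequence of Lemma \ref{lem:les} they differ by $\iota_\ast\xi$ for some $\xi\in\bigoplus_{x\in X^{(i-1)}}H^1(x,\Z_\ell(1))$. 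Restricting further to $F_{i-2}X$, the $\iota_\ast\xi$-term vanishes, so $\beta|_{F_{i-2}X}=\ell^r\alpha|_{F_{i-2}X}$, which is $\ell^r$ times an integral class and therefore becomes zero in $H^{2i-1}(F_{i-2}X,\Q_\ell/\Z_\ell(i))$ after dividing by $\ell^r$.

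For the inclusion $\supseteq$, I would take $\gamma\in N^{i-1}H^{2i-1}(X,\Q_\ell/\Z_\ell(i))_{div}$ and use axiom (\ref{ax:Ql/Zl,Ql}) to write $\gamma=[\beta/\ell^r]$ for some $\beta\in H^{2i-1}(X,\Z_\ell(i))$ and $r\geq 1$. The hypothesis $\gamma|_{F_{i-2}X}=0$, combined with the long exact sequence coming from $0\to\Z_\ell\to\Q_\ell\to\Q_\ell/\Z_\ell\to 0$, shows that $\beta|_{F_{i-2}X}/\ell^r\in H^{2i-1}(F_{i-2}X,\Q_\ell(i))$ lifts to an integral class. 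After replacing $(\beta,r)$ by $(\ell^m\beta,r+m)$ for some sufficiently large $m\geq 0$---which does not alter $\gamma$---I may therefore find $\alpha\in H^{2i-1}(F_{i-2}X,\Z_\ell(i))$ with the integral equality $\ell^r\alpha=\beta|_{F_{i-2}X}$ on the nose.

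The crux of the argument, and the step I expect to be the main obstacle, is lifting $\alpha$ further to a class in $H^{2i-1}(F_{i-1}X,\Z_\ell(i))$, i.e., showing that $\alpha$ already defines an element of $H^{2i-1}_{i-2,nr}(X,\Z_\ell(i))$. By the Gysin sequence of Lemma \ref{lem:les} this is equivalent to the vanishing of $\del\alpha\in\bigoplus_{x\in X^{(i-1)}}H^2(x,\Z_\ell(1))$, and one has $\ell^r\cdot\del\alpha=\del(\ell^r\alpha)=\del(\beta|_{F_{i-2}X})=0$ because $\beta|_{F_{i-2}X}$ visibly lifts to $\beta|_{F_{i-1}X}$. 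Thus $\del\alpha$ is $\ell^r$-torsion in a group which is torsion-free by Lemma \ref{lem:torsionfree}---this is where axiom (\ref{ax:Hilbert90}) enters decisively---so $\del\alpha=0$. Picking any lift $\widetilde\alpha\in H^{2i-1}(F_{i-1}X,\Z_\ell(i))$ of $\alpha$, a Gysin-sequence comparison as in the first half of the argument yields $\ell^r\widetilde\alpha-\beta|_{F_{i-1}X}=\iota_\ast\xi$ for some $\xi\in\bigoplus_{x\in X^{(i-1)}}H^1(x,\Z_\ell(1))$, so that the Griffiths class $[z]:=[\del\widetilde\alpha]\in\Grifftilde^i(X)_{\Z_\ell}$ (via Proposition \ref{prop:Griff}) is $\ell^r$-torsion and satisfies $\lambda_{tr}^i([z])=[\beta/\ell^r]=\gamma$ by Lemma \ref{lem:lambda_tr-alternative}, completing the proof.
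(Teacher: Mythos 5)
Your argument is correct, and its overall architecture matches the paper's: the inclusion of the image into the divisible coniveau piece is proved by the same observation (the Abel--Jacobi value is $[\beta/\ell^r]$ with $\beta/\ell^r$ rational, and its restriction to $F_{i-2}X$ is the image of the integral class $\alpha$, hence dies in $\Q_\ell/\Z_\ell$-coefficients), and for surjectivity you too manufacture the cycle as $\del$ of an integral class on $F_{i-1}X$ and identify its invariant. The mechanics of the surjectivity step differ, however. The paper works on $F_{i-1}X$ from the start: it writes $\gamma$ there as $\iota_\ast\xi/\ell^r$ via Lemma \ref{lem:les} and the surjectivity of $H^1(x,\Q_\ell(1))\to H^1(x,\Q_\ell/\Z_\ell(1))$ (from (\ref{ax:Ql/Zl,Ql}) and (\ref{ax:Hilbert90})), sets $\alpha'=\beta/\ell^r-\iota_\ast\xi/\ell^r$, lifts it integrally after adjusting by torsion, and takes $z=\del\alpha$. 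You instead work on $F_{i-2}X$: from $\gamma|_{F_{i-2}X}=0$ and the exactness of $H^\ast(\Z_\ell)\to H^\ast(\Q_\ell)\to H^\ast(\Q_\ell/\Z_\ell)$ (which the paper only makes explicit later, in the proof of Proposition \ref{prop:gr_N}, but which does follow from (\ref{ax:Ql/Zl,Ql}) and (\ref{ax:Bockstein})) you produce $\alpha$ with $\ell^r\alpha=\beta|_{F_{i-2}X}$ after rescaling, lift it to $F_{i-1}X$ using torsion-freeness of $\bigoplus_{x\in X^{(i-1)}}H^2(x,\Z_\ell(1))$ (Lemma \ref{lem:torsionfree}), and conclude by quoting Lemma \ref{lem:lambda_tr-alternative}; so Hilbert 90 enters for you through Lemma \ref{lem:torsionfree} rather than through the surjectivity of the point-level $H^1$-map, and Lemma \ref{lem:lambda_tr-alternative} absorbs the final bookkeeping that the paper does by hand. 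Both routes are valid and of comparable length; yours buys a cleaner endgame, the paper's stays entirely within the original construction of $\lambda^i_{tr}$. One small remark: in your first inclusion the detour through $\widetilde\alpha$ and $\iota_\ast\xi$ is superfluous, since $\beta|_{F_{i-2}X}=\ell^r\alpha$ is exactly the hypothesis of Lemma \ref{lem:lambda_tr-alternative} and you may pass directly to the vanishing on $F_{i-2}X$.
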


\begin{proof}
Let $[z]\in  \Grifftilde^i(X)[\ell^\infty]$ and let $\alpha\in  H^{2i-1}(F_{i-1}X,\Z_\ell(i))$ with $\del \alpha=z$.
Assume that $\ell^r\cdot[z]=0$.
Then, as we have seen above, there are classes  $\xi\in \bigoplus_{x\in X^{(i-1)}}H^1(x,\Z_\ell(1))$ and $\beta\in H^{2i-1}(X,\Z_\ell(i))$ such that (\ref{eq:beta=l^r-alpha}) holds true.
By definition,
$ 
\lambda^i_{tr}([z])= [\beta/\ell^r]
$ 
is the class represented by the image of $\beta/\ell^r\in H^{2i-1}(X,\Q_\ell(i))$.
This shows in particular
$$
\im(\lambda^i_{tr})\subset  \frac{ H^{2i-1}(X,\Q_\ell/\Z_\ell(i))_{div}}{N^{i-1}H^{2i-1}(X,\Q_\ell(i) )}= \frac{ \im (H^{2i-1}(X,\Q_\ell(i))\to H^{2i-1}(X,\Q_\ell/\Z_\ell(i)) )}{N^{i-1}H^{2i-1}(X,\Q_\ell(i) )}.
$$
Next, note that $\iota_\ast \xi$ vanishes on $F_{i-2}X$ and so (\ref{eq:beta=l^r-alpha})  implies that 
$$
[\beta/\ell^r]=[\alpha]=0\in H^{2i-1}(F_{i-2}X,\Q_\ell/\Z_\ell(i))
$$
because $\alpha$ is an integral class.
Hence,
$$
\im(\lambda^i_{tr})\subset \frac{N^{i-1}H^{2i-1}(X,\Q_\ell/\Z_\ell(i))_{div}}{N^{i-1}H^{2i-1}(X,\Q_\ell(i) )} ,
$$
as we want.

Conversely, let 
$$
\gamma \in N^{i-1}H^{2i-1}(X,\Q_\ell/\Z_\ell(i))_{div} .
$$
By Lemma \ref{lem:les},
$$
\gamma \in \im\left( \bigoplus_{x\in X^{(i)}}H^1(x,\Q_\ell/\Z_\ell(1))\to H^{2i-1}(F_{i-1}X,\Q_\ell/\Z_\ell(i)) \right).
$$
Since $H^1(x, \Q_\ell(1))\to H^1(x,\Q_\ell/\Z_\ell(1))$ is surjective by (\ref{ax:Ql/Zl,Ql}) and (\ref{ax:Hilbert90}), we conclude (using again (\ref{ax:Ql/Zl,Ql})) that there is a class $\xi\in  \bigoplus_{x\in X^{(i)}}H^1(x, \Z_\ell(1))$ and a positive integer $r$ such that $\gamma$ lifts to the class
$$
\frac{1}{\ell^r}\cdot \iota_\ast \xi\in H^{2i-1}(F_{i-1}X,\Q_\ell (i)).
$$
Since $\gamma$ lifts to a rational class by assumption, (\ref{ax:Ql/Zl,Ql}) implies there is a class $\beta\in H^{2i-1}(X,\Z_\ell (i))$ and a positive integer $r'$ such that  $\gamma$ lifts to $\beta/\ell^{r'}\in H^{2i-1}( X,\Q_\ell (i))$.
Up to replacing $r$ and $r'$ by their maximum (and $\xi$ resp.\ $\beta$ by a suitable multiple), we may assume that $r=r'$.
Let then
$$
\alpha':=\beta/\ell^r-\iota_\ast \xi/\ell^r \in H^{2i-1}(F_{i-1}X,\Q_\ell(i)).
$$
Note that the image of $\alpha'$ in $H^{2i-1}(F_{i-1}X,\Q_\ell/\Z_\ell(i))$ vanishes.
By (\ref{ax:Ql/Zl,Ql}) and (\ref{ax:Bockstein}), it follows that $\alpha'$ lifts to a class $\alpha\in H^{2i-1}(F_{i-1}X,\Z_\ell(i))$.
We then find that there is a torsion class $\tau\in H^{2i-1}(F_{i-1}X,\Z_\ell(i))$ such that
$$
\ell^r \alpha=\beta-\iota_\ast \xi+\tau\in H^{2i-1}(F_{i-1}X,\Z_\ell(i)).
$$
Since $\tau$ is torsion,  there is a positive integer $s$ such that $\ell^s\cdot \tau=0$ and we get
$$
\ell^s\beta= \ell^{r+s} \alpha+\ell^s\iota_\ast \xi \in H^{2i-1}(F_{i-1}X,\Z_\ell(i)).
$$
Let $z:=\del \alpha$ with associated class $[z]\in \Grifftilde^i(X)$.
Then the above identity shows that $[z]$ is $\ell^{r+s}$-torsion with associated transcendental Abel--Jacobi invariant
$$
\lambda_{tr}^i([z])=[\ell^s\beta/\ell^{r+s}]=[\beta/\ell^r]=[\gamma]\in H^{2i-1}(X,\Q_\ell/\Z_\ell(i))/N^{i-1} H^{2i-1}(X,\Q_\ell(i)).
$$
Hence, $[\gamma]\in \im(\lambda_{tr}^i)$, which concludes the proof of the proposition.
\end{proof}

\subsection{The kernel of the transcendental Abel--Jacobi map on torsion cycles} \label{subsec:ker-lambda} 

\begin{definition}\label{def:T^i}
For $X\in \VV$, we define
$$
\mathcal T^i(X)[\ell^\infty]:=\ker\left( \lambda^i_{tr}: \Grifftilde^i(X)[\ell^\infty]\longrightarrow  \frac{H^{2i-1}(X,\Q_\ell/\Z_\ell(i))}{N^{i-1}H^{2i-1}(X,\Q_\ell(i) )} \right) 
$$
where $\lambda^i_{tr}$ is the transcendental Abel--Jacobi map defined in Section \ref{subsec:AJ_tr}.
We further let $\mathcal T^i(X)[\ell^r]\subset \mathcal T^i(X)[\ell^\infty]$ denote the subgroup of $\ell^r$-torsion elements.
\end{definition}

Recall the filtrations $F^\ast$ and $G^\ast$ from Definitions \ref{def:F} and \ref{def:G}. 

\begin{lemma}\label{lem:Gi}
Let $X\in \VV$.
Then
$$
G^{i} H^{2i-2}_{i-3,nr}( X,\mu_{\ell^r}^{\otimes i})  \subset H^{2i-2}_{i-3,nr}( X,\mu_{\ell^r}^{\otimes i}) 
$$
is the subspace of classes $\alpha\in H^{2i-2}_{i-3,nr}( X,\mu_{\ell^r}^{\otimes i}) $ that admit a lift $\alpha'\in H^{2i-2} (F_{i-2} X,\mu_{\ell^r}^{\otimes i})$ such that $\delta(\alpha')\in H^{2i-1} (F_{i-2} X,\Z_{\ell}(i))$ lifts to $H^{2i-1} ( X,\Z_{\ell}(i))$.
\end{lemma}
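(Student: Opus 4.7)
The plan is to prove this by carefully unwinding the definition of $G^i H^{2i-2}_{i-3,nr}(X,\mu_{\ell^r}^{\otimes i})$ from Definition \ref{def:G} and matching it with the condition stated in the lemma via Corollary \ref{cor:F^i}. Concretely, I set $j=i-3$, $m=i$, and $n=i$ in Definition \ref{def:G}. By that definition, a class $\alpha$ lies in $G^i H^{2i-2}_{i-3,nr}(X,\mu_{\ell^r}^{\otimes i})$ if and only if there exists a lift $\alpha'\in H^{2i-2}(F_{i-2}X,\mu_{\ell^r}^{\otimes i})$ restricting to $\alpha$ on $F_{i-3}X$ such that $\delta(\alpha')\in F^i H^{2i-1}(F_{i-2}X,\Z_\ell(i))$.

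Next, I would unwind the definition of the filtration $F^\ast$ from (\ref{def:filtration:F^mHi(FjX)}): since $m=i\geq j+1=i-1$, we have
$$
F^i H^{2i-1}(F_{i-2}X,\Z_\ell(i))=\im\!\left(H^{2i-1}(F_iX,\Z_\ell(i))\longrightarrow H^{2i-1}(F_{i-2}X,\Z_\ell(i))\right).
$$
At this point I would invoke Corollary \ref{cor:F^i}, which applies because $i\geq \lceil (2i-1)/2\rceil=i$, to identify $H^{2i-1}(F_iX,\Z_\ell(i))$ canonically with $H^{2i-1}(X,\Z_\ell(i))$. Consequently,
$$
F^i H^{2i-1}(F_{i-2}X,\Z_\ell(i))=\im\!\left(H^{2i-1}(X,\Z_\ell(i))\longrightarrow H^{2i-1}(F_{i-2}X,\Z_\ell(i))\right).
$$

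Combining the two steps, the condition $\delta(\alpha')\in F^i H^{2i-1}(F_{i-2}X,\Z_\ell(i))$ is exactly the condition that $\delta(\alpha')\in H^{2i-1}(F_{i-2}X,\Z_\ell(i))$ lifts to a class in $H^{2i-1}(X,\Z_\ell(i))$, which is precisely the characterization in the statement of the lemma. Both directions follow immediately: if $\alpha$ is in $G^i H^{2i-2}_{i-3,nr}$, the required lift $\alpha'$ exists by definition; conversely, any lift $\alpha'$ satisfying the condition lies in $G^i H^{2i-2}(F_{i-2}X,\mu_{\ell^r}^{\otimes i})$, and its image in $H^{2i-2}(F_{i-3}X,\mu_{\ell^r}^{\otimes i})$ is in $G^i H^{2i-2}_{i-3,nr}$. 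There is no real obstacle here: the lemma is a direct unpacking of Definition \ref{def:G} combined with the stabilization of cohomology of $F_jX$ for $j$ above the middle dimension, as encoded in Corollary \ref{cor:F^i}.
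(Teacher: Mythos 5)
Your proof is correct and takes essentially the same approach as the paper: unwind Definition \ref{def:G} and the definition of the filtration $F^\ast$ from (\ref{def:filtration:F^mHi(FjX)}), then apply Corollary \ref{cor:F^i} to identify $H^{2i-1}(F_iX,\Z_\ell(i))$ with $H^{2i-1}(X,\Z_\ell(i))$. You simply spell out the bookkeeping more explicitly than the paper's terse one-line argument.
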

\begin{proof}
This is an immediate consequence of the definition and the fact that 
$$
F^iH^{2i-1} (F_{i-2} X,\Z_{\ell}(i))=\im (H^{2i-1} (X,\Z_{\ell}(i))\to H^{2i-1} (F_{i-2} X,\Z_{\ell}(i))) ,
$$
because $H^{2i-1} (F_{i} X,\Z_{\ell}(i))\cong H^{2i-1} ( X,\Z_{\ell}(i))$ by Corollary \ref{cor:F^i}.
\end{proof} 

The following result is motivated by \cite{Voi-unramified} and \cite{Ma}, where $\mathcal T^3(X)[\ell^\infty]$ is computed for smooth projective varieties over $k=\C$.

\begin{theorem}\label{thm:Griff_tors}
Let $X\in \VV$ and assume that for any $x\in X$,  $H^3(x,\Z_{\ell}(2))$ is torsion-free. 
Then there are canonical isomorphisms 
\begin{align*} 
\mathcal T^i(X)[\ell^\infty] \cong \frac{ H^{2i-2}(F_{i-2}X,\Q_\ell/\Z_\ell(i))  }{G^{i} H^{2i-2}(F_{i-2}X,\Q_\ell/\Z_\ell(i))  } \cong \frac{ H^{2i-2}_{i-3,nr}( X,\Q_\ell/\Z_\ell(i))  }{G^{i} H^{2i-2}_{i-3,nr}( X,\Q_\ell/\Z_\ell(i))  }.
\end{align*}
\end{theorem}

The above theorem will be deduced from the following two propositions below.

\begin{proposition} \label{prop:Griff_tors}
For any $X\in \VV$, there is a canonical isomorphism
$$
\mathcal T^i(X)[\ell^\infty] \cong \frac{  H^{2i-1}(F_{i-2}X,\Z_\ell(i))[\ell^\infty] }{ F^i H^{2i-1}(F_{i-2}X,\Z_\ell(i))[\ell^\infty]}.
$$
\end{proposition}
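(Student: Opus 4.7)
The plan is to build a natural map from the right-hand side to $\mathcal T^i(X)[\ell^\infty]$ and show it is an isomorphism, using the presentation of $\Grifftilde^i(X)$ from Proposition \ref{prop:Griff} and the alternative description of $\lambda_{tr}^i$ in Lemma \ref{lem:lambda_tr-alternative}.

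First, combining Proposition \ref{prop:Griff} with Corollary \ref{cor:F^i} (which ensures that $F^{i}H^{2i-1}(F_{i-2}X,\Z_\ell(i))=\im(H^{2i-1}(X,\Z_\ell(i))\to H^{2i-1}(F_{i-2}X,\Z_\ell(i)))$), I would rewrite
$$
\Grifftilde^i(X)_{\Z_\ell}\;\cong\; F^{i-1}H^{2i-1}(F_{i-2}X,\Z_\ell(i))/F^{i}H^{2i-1}(F_{i-2}X,\Z_\ell(i)).
$$
The key observation is that every torsion class in $H^{2i-1}(F_{i-2}X,\Z_\ell(i))$ already lies in $F^{i-1}$: applying the Gysin sequence from Lemma \ref{lem:les} with $j=i-1$, the residue lands in $\bigoplus_{x\in X^{(i-1)}}H^{2}(x,\Z_\ell(1))$, which is torsion-free by Lemma \ref{lem:torsionfree}. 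This defines a homomorphism
$$
\phi\colon \frac{H^{2i-1}(F_{i-2}X,\Z_\ell(i))[\ell^\infty]}{F^{i}H^{2i-1}(F_{i-2}X,\Z_\ell(i))[\ell^\infty]}\longrightarrow \Grifftilde^i(X)_{\Z_\ell}[\ell^\infty],
$$
whose image is contained in $\mathcal T^i(X)[\ell^\infty]$: given a torsion class $\tau$, Lemma \ref{lem:lambda_tr-alternative} applied with the trivial lift $\beta=0$ shows that $\lambda_{tr}^i([\tau])=0$. Injectivity of $\phi$ is immediate, because a class $[\tau]$ that maps to zero satisfies $\tau\in F^{i}$, and being torsion means $\tau$ lies in the denominator.

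For surjectivity, given $[z]\in\mathcal T^i(X)[\ell^\infty]$, I would choose representatives $\alpha\in F^{i-1}H^{2i-1}(F_{i-2}X,\Z_\ell(i))$ and $\beta\in H^{2i-1}(X,\Z_\ell(i))$ with $\beta|_{F_{i-2}X}=\ell^{r}\alpha$ as in Lemma \ref{lem:lambda_tr-alternative}. The hypothesis $\lambda^i_{tr}([z])=0$ supplies $\gamma\in N^{i-1}H^{2i-1}(X,\Q_\ell(i))$ and $\beta'\in H^{2i-1}(X,\Z_\ell(i))$ such that $\beta/\ell^{r}=\gamma+\beta'$ in $H^{2i-1}(X,\Q_\ell(i))$. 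Restricting this identity to $F_{i-2}X$, where $\gamma$ vanishes, the integral class $\tau:=\alpha-\beta'|_{F_{i-2}X}\in H^{2i-1}(F_{i-2}X,\Z_\ell(i))$ becomes trivial in $\Q_\ell$-coefficients and is therefore torsion; since $\beta'|_{F_{i-2}X}\in F^{i}$, we conclude $[\alpha]=[\tau]=\phi([\tau])$.

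The main obstacle is the surjectivity step, in which a vanishing statement in rational coefficients modulo the coniveau subspace $N^{i-1}$ must be converted into an explicit integral torsion class on $F_{i-2}X$. The mechanism works because the discrepancy between two integral classes on $F_{i-2}X$ that agree rationally is automatically torsion, and by the key observation above every such torsion class is $F^{i-1}$-unramified — that is, belongs to $H^{2i-1}_{i-2,nr}(X,\Z_\ell(i))$ — thanks to the torsion-freeness of $H^{2}(x,\Z_\ell(1))$.
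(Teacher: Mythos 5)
Your argument is correct and rests on exactly the same ingredients as the paper's proof: Lemma \ref{lem:les}, the torsion-freeness of $\bigoplus_{x\in X^{(i-1)}}H^2(x,\Z_\ell(1))$ from Lemma \ref{lem:torsionfree}, Corollary \ref{cor:F^i}, Proposition \ref{prop:Griff}, and Lemma \ref{lem:lambda_tr-alternative}. The only real difference is that you construct the inverse of the paper's map $\varphi$, which makes well-definedness and injectivity trivial and shifts the delicate point to surjectivity, where you handle the condition $\lambda^i_{tr}([z])=0$ by writing $\beta/\ell^r=\gamma+\beta'$ with $\beta'$ integral and restricting to $F_{i-2}X$ — the same normalization the paper performs (with its class $\epsilon$) when defining $\varphi$.
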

\begin{proof}
Let $[z]\in \Grifftilde^i(X)[\ell^r]$ for some $r$.
By construction and Lemma \ref{lem:lambda-tr-well-defined}, $\lambda^i_{tr}([z])=0$ if and only if for some  classes $\alpha \in  H^{2i-1}(F_{i-1}X,\Z_\ell(i))$, $\xi\in \bigoplus_{x\in X^{(i-1)}}H^1(x,\Z_\ell(1))$ and $\beta \in  H^{2i-1}( X,\Z_\ell(i))$ with $z=\del \alpha$  and 
\begin{align} \label{eq:beta}
\beta=\ell^r \alpha-\iota_\ast \xi \in H^{2i-1}(F_{i-1}X,\Z_\ell(i)),
\end{align}
 we have that $\beta/\ell^r\in H^{2i-1}(X,\Q_\ell/\Z_\ell(i))$ admits a lift
$$
\gamma\in N^{i-1}H^{2i-1}(X,\Q_\ell(i) )= \ker(H^{2i-1}(X,\Q_\ell(i) )\to H^{2i-1}(F_{i-2}X,\Q_\ell(i) ) ) .
$$
This means that
$$
\beta/\ell^r=\gamma+\epsilon\in H^{2i-1}(X,\Q_\ell(i) )
$$
for some $\epsilon\in H^{2i-1}(X,\Z_\ell(i) )$.
Replacing $\alpha$ by $\alpha-\epsilon$, we may assume that $\epsilon=0$ and so $\beta/\ell^r\in N^{i-1}H^{2i-1}(X,\Q_\ell(i) )$.

By Lemma \ref{lem:les}, there is an exact sequence
\begin{align*} 
\bigoplus_{x\in X^{(i-1)}}H^1(x,\Z_\ell(1))\stackrel{\iota_\ast}\longrightarrow  H^{2i-1}(F_{i-1}X,\Z_\ell(i))  \stackrel{f}\longrightarrow& H^{2i-1}(F_{i-2}X,\Z_\ell(i)) ,
\end{align*}
where $f$ denotes the canonical restriction map.
Since $\beta/\ell^r\in N^{i-1}H^{2i-1}(X,\Q_\ell(i) )$, we find that 
 $\lambda^i_{tr}([z])=0$ implies that the image $f(\alpha)\in  H^{2i-1}(F_{i-2}X,\Z_\ell(i)) $ of $\alpha$ is  torsion.
We claim that the map
$$
\varphi:\mathcal T^i(X)[\ell^\infty] \longrightarrow \frac{ H^{2i-1}(F_{i-2}X,\Z_\ell(i))[\ell^\infty] }{ F^i H^{2i-1}(F_{i-2}X,\Z_\ell(i))[\ell^\infty]},\ \ \ [z]\mapsto  \varphi([z]):=[f(\alpha)]
$$
 is well-defined.
 Here we emphasize that the class $\alpha\in H^{2i-1}(F_{i-1}X,\Z_\ell(i))$ used in the definition of $\varphi([z])$ is not an arbitrary representative that satisfies $\del \alpha=z$, but it is chosen in such a way that $\beta$ from (\ref{eq:beta}) satisfies  $\beta/\ell^r\in N^{i-1}H^{2i-1}(X,\Q_\ell(i) )$.
 
 To prove that $\varphi$ is well-defined,  let us first fix $z$.
 Then  the condition $\del \alpha =z$ shows that $\alpha$ is unique up to classes in $H^{2i-1}(X,\Z_\ell(i) )$ and so $[f(\alpha)]$ is independent of the choice of $\alpha$ for fixed $z$ as we quotient out $F^iH^{2i-1}(F_{i-2}X,\Z_\ell(i))[\ell^\infty]$ in the above formula.
If $z'$ and $z$ have the same class in $A^i(X)_{\Z_\ell}$, then, by (\ref{eq:Griff=def}), $z-z'=\del\iota_\ast \zeta$ for some $\zeta\in \bigoplus_{x\in X^{(i-1)}}H^1(x,\Z_\ell(1))$.
The class $\alpha':=\alpha -\iota_\ast \zeta$ then satisfies
 $\del(\alpha -\iota_\ast \zeta)=z'$ and, by (\ref{eq:beta}),
 $$
 \beta=\ell^r \alpha'-\iota_\ast (\xi-\zeta) \in H^{2i-1}(F_{i-1}X,\Z_\ell(i)).
 $$
 Since $\beta/\ell^r\in N^{i-1}H^{2i-1}(X,\Q_\ell(i) )$, we find that $\alpha'$ may be used to compute  $\varphi([z'])$, that is,  $\varphi([z'])=f(\alpha')$.
By exactness of the above sequence, $f(\alpha')=f(\alpha)$ and so $\varphi([z'])=\varphi([z])$.
This proves that $\varphi$ is well-defined, as claimed.
It remains to see that $\varphi$ is an isomorphism.

To see that $\varphi$ is injective, assume that in the above construction, $f(\alpha)$ lifts to a class in $H^{2i-1}(X,\Z_\ell(i))$.
Then, by Lemma \ref{lem:les}, there is a class $\epsilon\in \bigoplus_{x\in X^{(i-1)}}H^1(x,\Z_\ell(1))$ such that 
$$
\alpha-\iota_\ast \epsilon \in H^{2i-1}(F_{i-1}X,\Z_\ell(i))
$$ 
lifts to $H^{2i-1}(X,\Z_\ell(i))$.
Since  $\del \alpha $ and $\del (\alpha-\iota_\ast \epsilon )=0$ have the same image in $\Grifftilde^i(X)_{\Z_\ell}$, it follows that 
$$
 [z]=[\del \alpha ]=[\del (\alpha-\iota_\ast \epsilon ) ]=0 \in \Grifftilde^i(X)_{\Z_\ell},
$$ 
as we want.

Next, we claim that $\varphi$ is surjective.
For this, let $\gamma\in  H^{2i-1}(F_{i-2}X,\Z_\ell(i))[\ell^r]$.
Then 
$$
\del \gamma\in \bigoplus_{x\in X^{(i-1)}}H^2(x,\Z_\ell(1))
$$ 
is torsion and so it must vanish by Lemma \ref{lem:torsionfree}.
Hence, $\gamma=f(\alpha)$ for some $\alpha\in H^{2i-1}(F_{i-1}X,\Z_\ell(i))$.
The cycle $z=\del \alpha$ is then homologically trivial (i.e.\ lies in the kernel of $\cl^i_X$) by exactness of (\ref{eq:prop:Griff-tors-1}).
The class $[z] \in \Grifftilde^i(X)_{\Z_\ell}$ of $z$  is $\ell^r$-torsion,
because $f(\ell^r \alpha)=0$ and so $\ell^r\alpha=\iota_\ast \xi$ (and hence $\ell^r z=\del \iota_\ast \xi$) for some $\xi\in \bigoplus_{x\in X^{(i-1)}}H^1(x,\Z_\ell(1))$.
In particular, $0=\ell^r\alpha-\iota_\ast \xi$ and so $\lambda_{tr}^i([z])=0$ by construction in Section \ref{subsec:AJ_tr}.
Hence, $[z]\in \mathcal T^i(X)[\ell^r]$.
By definition of $\varphi$ above, we have $\varphi([z])=f(\alpha)=\gamma$.
This concludes the proof of the proposition.
\end{proof}

\begin{proposition}\label{prop:Griff-injective}
Let $X\in \VV$ and assume that for any $x\in X$,  $H^3(x,\Z_{\ell}(2))$ is torsion-free.
Then 
the natural map
$$
 \frac{   H^{2i-2}(F_{i-2}X,\mu_{\ell^r}^{\otimes i})  }{G^{i} H^{2i-2}(F_{i-2}X,\mu_{\ell^r}^{\otimes i})  }\longrightarrow \frac{ H^{2i-2}_{i-3,nr}( X,\mu_{\ell^r}^{\otimes i})  }{G^{i} H^{2i-2}_{i-3,nr}( X,\mu_{\ell^r}^{\otimes i})  }
$$
is an isomorphism.
\end{proposition}
\begin{proof}
The map in question is surjective by definition.  

Let now $\alpha\in H^{2i-2}(F_{i-2}X,\mu_{\ell^r}^{\otimes i})$ so that the image  $\alpha'\in  H^{2i-2}(F_{i-3}X,\mu_{\ell^r}^{\otimes i})$ of $\alpha$ is contained in $G^{i} H^{2i-2}_{i-3,nr}(X,\mu_{\ell^r}^{\otimes i}) $.
By Lemma \ref{lem:Gi}, this means that there is a lift $\alpha''\in  H^{2i-2}(F_{i-2}X,\mu_{\ell^r}^{\otimes i})$ of $\alpha'$ such that
$$
\delta(\alpha'')\in H^{2i-1}(F_{i-2}X,\Z_{\ell}(i)) 
$$
lifts to a class $\beta\in H^{2i-1}(X,\Z_{\ell}(i))$.
Then $\alpha-\alpha''$ lies in the kernel of 
$$
  H^{2i-2}(F_{i-2}X,\mu_{\ell^r}^{\otimes i})\longrightarrow  H^{2i-2}(F_{i-3}X,\mu_{\ell^r}^{\otimes i}).
$$
Lemma \ref{lem:les} thus implies that  
$$
\alpha-\alpha''=
\iota_\ast \xi\in H^{2i-2}(F_{i-2}X,\mu_{\ell^r}^{\otimes i})
$$
for some $\xi\in \bigoplus_{x\in X^{(i-2)}}H^2(x,\mu_{\ell^r}^{\otimes 2})$.
The class
$$
\delta(\iota_\ast \xi)=\iota_\ast(\delta(\xi))\in  \bigoplus_{x\in X^{(i-2)}}H^3(x,\Z_{\ell}(2))
$$
is torsion by property (\ref{ax:Bockstein}) and so it vanishes, because $H^3(x,\Z_{\ell}(2))$ is torsion-free by assumption.
This shows that
$ 
\delta(\alpha)=\delta(\alpha'') 
$. 
Since $\delta(\alpha'')$ extends to the class $\beta\in H^{2i-1}(X,\Z_{\ell}(i))$, the same holds for $\delta(\alpha)$ and so
$$
\alpha\in G^{i}  H^{2i-2}(F_{i-2}X,\mu_{\ell^r}^{\otimes i}) .
$$
This proves that the map in question is injective, as we want. 
\end{proof}

\begin{remark}
The torsion-freeness assumption in the proposition (resp.\ in Theorem \ref{thm:Griff_tors}) will in all applications be satisfied by Merkurjev--Suslin's theorem \cite{MS}, i.e.\ by the Bloch--Kato conjecture in degree 2, see Remark \ref{rem:bloch-kato}. 
\end{remark}

\begin{proof}[Proof of Theorem \ref{thm:Griff_tors}] 
We claim that the Bockstein map 
$$
\delta: H^{2i-2}(F_{i-2}X,\mu_{\ell^r}^{\otimes i} )\longrightarrow H^{2i-1}(F_{i-2}X,\Z_\ell(i)) 
$$ 
from property (\ref{ax:Bockstein}) induces an isomorphism
\begin{align}\label{eq:iso-Griff-tors}
 \frac{ H^{2i-2}(F_{i-2}X,\mu_{\ell^r}^{\otimes i})  }{G^{i} H^{2i-2}(F_{i-2}X,\mu_{\ell^r}^{\otimes i})  }\cong \frac{ H^{2i-1}(F_{i-2}X,\Z_\ell(i))[\ell^r] }{ F^i  H^{2i-1}(F_{i-2}X,\Z_\ell(i))[\ell^r]} .
\end{align}
By (\ref{ax:Bockstein}), the image of $\delta$ is  $  H^{2i-1}(F_{i-2}X,\Z_\ell(i))[\ell^r]$ and so it suffices to show that 
$$
\delta^{-1} \left( F^i H^{2i-1}(F_{i-2}X,\Z_\ell(i))[\ell^r] \right) =G^{i} H^{2i-2}(F_{i-2}X,\mu_{\ell^r}^{\otimes i}) ,
$$
which is exactly the definition of $G^i$ (see Definition \ref{def:G}).
This proves the above claim.
Taking direct limits and using (\ref{ax:Ql/Zl,Ql}), we get an isomorphism
$$
 \frac{ H^{2i-2}(F_{i-2}X,\Q_\ell/\Z_\ell(i))  }{G^{i} H^{2i-2}(F_{i-2}X,\Q_\ell/\Z_\ell(i))  }\cong \frac{ H^{2i-1}(F_{i-2}X,\Z_\ell(i))[\ell^\infty] }{ F^i  H^{2i-1}(F_{i-2}X,\Z_\ell(i))[\ell^\infty]}
$$
The first isomorphism in Theorem \ref{thm:Griff_tors} follows therefore from Proposition \ref{prop:Griff_tors}.
The second isomorphism follows from Proposition \ref{prop:Griff-injective} by taking direct limits.
This concludes the proof of the theorem.
\end{proof}

 \begin{corollary}   \label{cor:Ti_0[ell]}
For any $X\in \VV$  there are subgroups $\mathcal T_0^i(X)[\ell^r] \subset \mathcal T^i(X)[\ell^r]$ with $\mathcal T^i(X)[\ell^\infty]=\bigcup_r \mathcal T^i_0(X)[\ell^r]$ and a canonical isomorphism
$$
\mathcal T_0^i(X)[\ell^r] \cong \frac{ H^{2i-2}(F_{i-2}X,\mu_{\ell^r}^{\otimes i}) }{ G^{i} H^{2i-2}(F_{i-2} X,\mu_{\ell^r}^{\otimes i})}  \cong \frac{ H^{2i-2}_{i-3,nr}( X,\mu_{\ell^r}^{\otimes i}) }{ G^{i} H^{2i-2}_{i-3,nr}(X,\mu_{\ell^r}^{\otimes i})} .
$$  
\end{corollary}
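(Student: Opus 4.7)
My plan is to obtain the corollary as a finite-level refinement of the chain of isomorphisms established during the proof of Theorem \ref{thm:Griff_tors}. The key point is that each step in that proof---Proposition \ref{prop:Griff_tors}, the Bockstein isomorphism (\ref{eq:iso-Griff-tors}), and Proposition \ref{prop:Griff-injective}---is naturally compatible with $\ell^r$-torsion, so it suffices to track the $\ell^r$-torsion subgroups carefully and then pass to the direct limit to recover the $\Q_\ell/\Z_\ell$-level statements.

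First I would introduce the natural candidate for $\mathcal T_0^i(X)[\ell^r]$: under the isomorphism
$$
\mathcal T^i(X)[\ell^\infty] \cong \frac{H^{2i-1}(F_{i-2}X,\Z_\ell(i))[\ell^\infty]}{F^i H^{2i-1}(F_{i-2}X,\Z_\ell(i))[\ell^\infty]}
$$
from Proposition \ref{prop:Griff_tors}, I define $\mathcal T_0^i(X)[\ell^r]$ to be the subgroup corresponding to the image of $H^{2i-1}(F_{i-2}X,\Z_\ell(i))[\ell^r]$. The natural map from the $\ell^r$-level quotient to the $\ell^\infty$-level quotient is injective, because any $\ell^r$-torsion class that lies in $F^i H^{2i-1}(F_{i-2}X,\Z_\ell(i))$ automatically lies in its $\ell^r$-torsion subgroup. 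Hence
$$
\mathcal T_0^i(X)[\ell^r] \cong \frac{H^{2i-1}(F_{i-2}X,\Z_\ell(i))[\ell^r]}{F^i H^{2i-1}(F_{i-2}X,\Z_\ell(i))[\ell^r]}.
$$
The inclusion $\mathcal T_0^i(X)[\ell^r]\subset \mathcal T^i(X)[\ell^r]$ follows since $\ell^r$-torsion is preserved by quotients, and the union property $\bigcup_r \mathcal T_0^i(X)[\ell^r]=\mathcal T^i(X)[\ell^\infty]$ follows from exactness of direct limits together with $H^{2i-1}(F_{i-2}X,\Z_\ell(i))[\ell^\infty]=\bigcup_r H^{2i-1}(F_{i-2}X,\Z_\ell(i))[\ell^r]$.

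Next I would invoke the Bockstein-induced isomorphism (\ref{eq:iso-Griff-tors}) established in the proof of Theorem \ref{thm:Griff_tors} at finite level, namely
$$
\frac{H^{2i-2}(F_{i-2}X,\mu_{\ell^r}^{\otimes i})}{G^i H^{2i-2}(F_{i-2}X,\mu_{\ell^r}^{\otimes i})} \cong \frac{H^{2i-1}(F_{i-2}X,\Z_\ell(i))[\ell^r]}{F^i H^{2i-1}(F_{i-2}X,\Z_\ell(i))[\ell^r]},
$$
which proves the first displayed isomorphism in the corollary. Finally, the second displayed isomorphism follows from Proposition \ref{prop:Griff-injective}, applied at finite coefficients $\mu_{\ell^r}^{\otimes i}$ rather than in the $\Q_\ell/\Z_\ell$-limit.

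I do not anticipate any serious obstacle here: the substantive content was already extracted in Propositions \ref{prop:Griff_tors} and \ref{prop:Griff-injective}, and the only thing to check is the behavior under passing to $\ell^r$-torsion, which is transparent because the Bockstein identifies the two relevant quotients and the ``$G^i$'' filtration is defined exactly so that the isomorphism (\ref{eq:iso-Griff-tors}) holds at each finite level. The mildest care-point is verifying that Proposition \ref{prop:Griff-injective} stays injective at the $\mu_{\ell^r}^{\otimes i}$-level (not only after taking direct limits); but its proof was already written for fixed $r$, relying on torsion-freeness of $H^3(x,\Z_\ell(2))$ from Lemma \ref{lem:torsionfree}, so no additional work is needed.
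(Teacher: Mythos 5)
Your proposal is correct and takes essentially the same route as the paper: the paper also defines $\mathcal T_0^i(X)[\ell^r]$ as the finite-level quotient $H^{2i-1}(F_{i-2}X,\Z_\ell(i))[\ell^r]/F^i H^{2i-1}(F_{i-2}X,\Z_\ell(i))[\ell^r]$ via Proposition \ref{prop:Griff_tors}, notes the inclusion and union properties, and then invokes the finite-level Bockstein isomorphism (\ref{eq:iso-Griff-tors}) together with Proposition \ref{prop:Griff-injective}. Your extra care in verifying the injectivity of the finite-level quotient into the $\ell^\infty$-level quotient and the remark that Proposition \ref{prop:Griff-injective} was already proved at fixed $r$ are both accurate but do not alter the argument.
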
 
\begin{proof}
By Proposition \ref{prop:Griff_tors}, there is a canonical isomorphism
$$
\mathcal T^i(X)[\ell^\infty] \cong \frac{  H^{2i-1}(F_{i-2}X,\Z_\ell(i))[\ell^\infty] }{ F^i H^{2i-1}(F_{i-2}X,\Z_\ell(i))[\ell^\infty]}.
$$
Using this isomorphism, we define
$$
\mathcal T_0^i(X)[\ell^r] := \frac{  H^{2i-1}(F_{i-2}X,\Z_\ell(i))[\ell^r] }{ F^i H^{2i-1}(F_{i-2}X,\Z_\ell(i))[\ell^r]}.
$$
Hence, $\mathcal T_0^i(X)[\ell^r]\subset \mathcal T^i(X)[\ell^\infty]$ is a subgroup of $\ell^r$-torsion elements and so $\mathcal T_0^i(X)[\ell^r] \subset \mathcal T^i(X)[\ell^r]$.
Note also that $\mathcal T^i(X)[\ell^\infty]=\bigcup_r \mathcal T^i_0(X)[\ell^r]$.
The corollary thus follows from (\ref{eq:iso-Griff-tors}) and Proposition \ref{prop:Griff-injective}.
\end{proof}

For  the final result of this subsection, we will need the following definition, where for $X\in \VV$ we let
$$
\tilde \delta: H^i(X,\mu_{\ell^r}^{\otimes n})\longrightarrow  H^{i+1}(X,\mu_{\ell^r}^{\otimes n})
$$
be the composition of the Bockstein map $\delta$ from (\ref{ax:Bockstein}) with the reduction modulo $\ell^r$ map
$$
H^{i+1}(X,\Z_\ell(n))\longrightarrow H^{i+1}(X,\mu_{\ell^r}^{\otimes n})
$$
given by functoriality in the coefficients.
 
\begin{definition} \label{def:G-tilde}
For any $X\in \VV$, we define a decreasing filtration $\widetilde G^\ast$ 
on $ H^i(F_jX,\mu_{\ell ^r}^{\otimes n})$ by
$$
\alpha\in \widetilde G^m H^i(F_jX,\mu_{\ell ^r}^{\otimes n})\ \ \ \Longleftrightarrow\ \ \   \tilde \delta(\alpha)\in F^m H^{i+1}(F_jX,\mu_{\ell ^r}^{\otimes n}) .
$$ 
Moreover,
$$
\widetilde G^{m} H^{i}_{j,nr} ( X,\mu_{\ell^r}^{\otimes n}):=\im( \widetilde G^{m} H^{i} (F_{j+1}X,\mu_{\ell^r}^{\otimes n})\to  H^{i} (F_{j}X,\mu_{\ell^r}^{\otimes n})) .
$$  
\end{definition} 

It follows directly from the definition that $ 
G^m  H^i(F_jX,\mu_{\ell ^r}^{\otimes n}) 
\subset \widetilde G^m H^i(F_j X,\mu_{\ell ^r}^{\otimes n}) 
$.

\begin{proposition}\label{prop:refined:Ti/ell}
For any $X\in \VV$, the kernel of the canonical surjection
$$
\mathcal T_0^i(X)[\ell^r] \cong \frac{ H^{2i-2}(F_{i-2}X,\mu_{\ell^r}^{\otimes i}) }{ G^{i} H^{2i-2}(F_{i-2}X,\mu_{\ell^r}^{\otimes i})}  \twoheadrightarrow \frac{ H^{2i-2}(F_{i-2}X,\mu_{\ell^r}^{\otimes i}) }{\widetilde G^{i} H^{2i-2}(F_{i-2}X,\mu_{\ell^r}^{\otimes i})}
$$
is given by all classes in $ \mathcal T_0^i(X)[\ell^r] $ that are $\ell^r$-divisible in $A^i(X)_{\Z_\ell}$.
\end{proposition}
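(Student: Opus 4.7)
The plan is to transport the condition of $\ell^r$-divisibility in $A^i(X)_{\Z_\ell}$ through the injection $\Grifftilde^i(X)_{\Z_\ell}/\ell^r \hookrightarrow E^i_{\ell^r}(X)$ from Corollary \ref{cor:extension} combined with the description of $E^i_{\ell^r}(X)$ in Lemma \ref{lem:E}, and then to recognize the resulting cohomological condition as exactly membership in $\widetilde G^i$.

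First I would observe that the composition
$$\Grifftilde^i(X)_{\Z_\ell}/\ell^r \hookrightarrow E^i_{\ell^r}(X) \hookrightarrow A^i(X)_{\Z_\ell}/\ell^r,$$
where the second inclusion is the tautological one of Lemma \ref{lem:E}, coincides with the natural reduction map. Indeed, chasing the definitions, a class $[z] \in \Grifftilde^i(X)_{\Z_\ell}$ is sent via Corollary \ref{cor:extension} to the class of $\alpha \in H^{2i-1}(F_{i-1}X, \Z_\ell(i))$ with $\del\alpha = z$ (reduced mod $\ell^r$) in $E^i_{\ell^r}(X)$, which is in turn sent via Lemma \ref{lem:E} to $[\del\bar\alpha] = [z \bmod \ell^r] \in A^i(X)_{\Z_\ell}/\ell^r$. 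As a consequence, the natural reduction map $\Grifftilde^i(X)_{\Z_\ell}/\ell^r \to A^i(X)_{\Z_\ell}/\ell^r$ is injective, and $[z]$ is $\ell^r$-divisible in $A^i(X)_{\Z_\ell}$ if and only if its image in $E^i_{\ell^r}(X) \cong H^{2i-1}_{i-2,nr}(X,\mu_{\ell^r}^{\otimes i})/H^{2i-1}(X,\mu_{\ell^r}^{\otimes i})$ vanishes.

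Second, for $\alpha_0 \in H^{2i-2}(F_{i-2}X, \mu_{\ell^r}^{\otimes i})$ representing a class in $\mathcal T_0^i(X)[\ell^r]$ via Corollary \ref{cor:Ti_0[ell]}, I would compute the corresponding element $[z] \in \Grifftilde^i(X)[\ell^r]$ together with its image in $E^i_{\ell^r}(X)$. The Bockstein $\delta(\alpha_0) \in H^{2i-1}(F_{i-2}X, \Z_\ell(i))[\ell^r]$ lifts, by Lemma \ref{lem:les} and the torsion-freeness of $H^2(x,\Z_\ell(1))$ from Lemma \ref{lem:torsionfree}, to a class $\alpha_1 \in H^{2i-1}(F_{i-1}X, \Z_\ell(i))$ with $\alpha_1|_{F_{i-2}X} = \delta(\alpha_0)$, and then $z := \del\alpha_1$ gives $[z] \in \mathcal T_0^i(X)[\ell^r]$ by Proposition \ref{prop:Griff_tors}. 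The image of $[z]$ under the injection from Corollary \ref{cor:extension} is, by construction, represented by the restriction of $\alpha_1 \bmod \ell^r$ to $F_{i-2}X$, which equals $\tilde\delta(\alpha_0) \in H^{2i-1}_{i-2,nr}(X, \mu_{\ell^r}^{\otimes i})$.

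Combining the two steps, $[z]$ is $\ell^r$-divisible in $A^i(X)_{\Z_\ell}$ if and only if $\tilde\delta(\alpha_0)$ lies in the image of $H^{2i-1}(X,\mu_{\ell^r}^{\otimes i}) \to H^{2i-1}(F_{i-2}X,\mu_{\ell^r}^{\otimes i})$; by Corollary \ref{cor:F^i} this image is precisely $F^i H^{2i-1}(F_{i-2}X, \mu_{\ell^r}^{\otimes i})$, and by Definition \ref{def:G-tilde} the resulting condition $\tilde\delta(\alpha_0) \in F^i H^{2i-1}(F_{i-2}X, \mu_{\ell^r}^{\otimes i})$ is equivalent to $\alpha_0 \in \widetilde G^i H^{2i-2}(F_{i-2}X, \mu_{\ell^r}^{\otimes i})$. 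Since the kernel of the surjection in question is tautologically $\widetilde G^i/G^i$, this identifies it with the subgroup of $\mathcal T_0^i(X)[\ell^r]$ of classes $\ell^r$-divisible in $A^i(X)_{\Z_\ell}$. The main technical point is the first step, namely verifying that the composition $\Grifftilde^i/\ell^r \hookrightarrow E^i_{\ell^r} \hookrightarrow A^i/\ell^r$ recovers the natural reduction; once this identification is in place, the rest follows from an unwinding of the definitions.
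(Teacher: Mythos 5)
Your argument is correct, but it is organized differently from the paper's. The paper proves the proposition by a direct chase: it identifies the inclusion $\mathcal T_0^i(X)[\ell^r]\hookrightarrow A^i(X)_{\Z_\ell}$ explicitly as $[\alpha]\mapsto[\del(\delta(\alpha)')]$ for a lift $\delta(\alpha)'\in H^{2i-1}(F_{i-1}X,\Z_\ell(i))$, and then proves the two implications separately, using the mod-$\ell^r$ Gysin sequence of Lemma \ref{lem:les} in one direction and the Hilbert 90 lifting (\ref{ax:Hilbert90}) of $\xi\in\bigoplus_x H^1(x,\mu_{\ell^r})$ in the other. You instead route the divisibility condition through the injections $\Grifftilde^i(X)_{\Z_\ell}\otimes\Z/\ell^r\hookrightarrow E^i_{\ell^r}(X)$ of Corollary \ref{cor:extension} and $E^i_{\ell^r}(X)\hookrightarrow A^i(X)_{\Z_\ell}/\ell^r$ of Lemma \ref{lem:E}, check that the composite is the natural reduction map, and then reduce everything to the single computation that the image of the class corresponding to $[\alpha_0]$ in $E^i_{\ell^r}(X)$ is $[\tilde\delta(\alpha_0)]$; vanishing there is, by Corollary \ref{cor:F^i} and Definition \ref{def:G-tilde}, exactly membership in $\widetilde G^i$. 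This buys a shorter argument in which both implications collapse into one equivalence, at the price of depending on the precise form of the maps constructed in the proofs of Corollary \ref{cor:extension} and Lemma \ref{lem:E} (not just their statements): the Hilbert 90 input that the paper invokes directly is hidden inside those injectivity results. Your identification of the inverse of the isomorphism of Corollary \ref{cor:Ti_0[ell]} (lifting $\delta(\alpha_0)$ to $F_{i-1}X$ via Lemma \ref{lem:les} and the torsion-freeness of Lemma \ref{lem:torsionfree}, then taking $z=\del\alpha_1$) matches the surjectivity step of Proposition \ref{prop:Griff_tors}, and since $\ell^r\alpha_1=\iota_\ast\xi$ there, $\alpha_1$ is indeed an admissible representative for computing the maps, so the chase closes; the argument is sound.
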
 

\begin{proof}
By Proposition \ref{prop:Griff}, there is a canonical isomorphism
$$
\Grifftilde^i(X)_{\Z_\ell} \cong    H^{2i-1}_{i-2,nr}(X,\Z_\ell(i)) /H^{2i-1}(X,\Z_\ell(i)) . 
$$
The natural inclusion
$$
\mathcal T_0^i(X)[\ell^r] \hookrightarrow \Grifftilde^i(X)_{\Z_\ell} 
$$
corresponds via the isomorphism in Corollary \ref{cor:Ti_0[ell]} to the map
$$
\frac{   H^{2i-2}(F_{i-2}X,\mu_{\ell^r}^{\otimes i}) }{ G^{i} H^{2i-2}(F_{i-2}X,\mu_{\ell^r}^{\otimes i})} \longrightarrow \frac{ H^{2i-1}_{i-2,nr}( X,\Z_\ell(i)) }{H^{2i-1}(X,\Z_\ell(i))},
\ \ \ [\alpha]\mapsto [\delta (\alpha)] .
$$  
Here the fact that $\delta (\alpha)\in H^{2i-1}(F_{i-2}X,\Z_\ell(i))$ lies  in $F^{i-1} H^{2i-1}(F_{i-2}X,\Z_\ell(i))$ follows from Lemma \ref{lem:les}, because $\delta (\alpha)$ is torsion while $\bigoplus_{x\in X^{(i-1)}}H^2( x,\Z_\ell(i))$ is torsion-free by Lemma \ref{lem:torsionfree}.

Let now $[\alpha]\in \mathcal T_0^i(X)[\ell^r] $ with $\alpha\in H^{2i-2}(F_{i-2}X,\mu_{\ell^r}^{\otimes i}) $.
As we have seen above, the class
$$
\delta(\alpha)\in  H^{2i-1}(F_{i-2}X,\Z_\ell(i))
$$ 
 admits a lift
\begin{align} \label{eq:delta(alpha)'}
\delta(\alpha)'\in  H^{2i-1}(F_{i-1}X,\Z_\ell(i)).
\end{align}
By Lemma \ref{lem:les},  the lift is unique up to classes coming from $\bigoplus_{x\in X^{(i-1)}}H^1(\kappa(x),\Z_\ell(1))$.
Moreover, there is an exact sequence
$$
H^{2i-1}(X,\Z_\ell(i))\longrightarrow  H^{2i-1}(F_{i-1}X,\Z_\ell(i))\stackrel{\del}\longrightarrow \bigoplus_{x\in X^{(i)}}[x]\Z_\ell .
$$
The class
$$
\del(\delta(\alpha)')\in \bigoplus_{x\in X^{(i)}}[x]\Z_\ell
$$
is unique up to an element of the image of
$$
\del\circ \iota_\ast:\bigoplus_{x\in X^{(i-1)}}H^1(x,\Z_\ell(1))\longrightarrow  \bigoplus_{x\in X^{(i)}}[x]\Z_\ell .
$$
The cokernel of the above map is isomorphic to $A^i(X)_{\Z_\ell}$, see Definition \ref{def:Ai}.
Since $\del$ is trivial on classes that lift to $H^{2i-1}(X,\Z_\ell(i))$, we get a well-defined map
$$
\frac{H^{2i-2}(F_{i-2}X,\mu_{\ell^r}^{\otimes i} )}{G^i H^{2i-2}(F_{i-2}X,\mu_{\ell^r}^{\otimes i} )}\longrightarrow A^i(X)_{\Z_\ell},\ \ [\alpha]\mapsto [\del(\delta(\alpha)')] .
$$
This map identifies via the isomorphism in Corollary  \ref{cor:Ti_0[ell]} to the inclusion 
$\mathcal T_0^i(X)[\ell^r]\hookrightarrow A^i(X)_{\Z_\ell}$.

Let us first assume that 
$$
[\del(\delta(\alpha)')]\in A^i(X)_{\Z_\ell}
$$
is divisible by $\ell^r$.
Then up to a suitable choice of the lift $\delta(\alpha)'$, we may assume that $\del(\delta(\alpha)')$ is zero modulo $\ell^r$.
By Lemma \ref{lem:les}, there is an exact sequence
$$
H^{2i-1}(X,\mu_{\ell^r}^{\otimes i}) \longrightarrow   H^{2i-1}(F_{i-1}X,\mu_{\ell^r}^{\otimes i})\stackrel{\del}\longrightarrow \bigoplus_{x\in X^{(i)}}[x]\Z/\ell^r .
$$
We thus conclude that the reduction $\tilde \delta(\alpha)'$ modulo $\ell^r$ of  $ \delta(\alpha)'$ lifts to a class in $ H^{2i-1}(X,\mu_{\ell^r}^{\otimes i})$.
Since $\tilde \delta(\alpha)'$ is a lift of $\tilde \delta (\alpha)$, this implies $\alpha\in \widetilde G^i H^{2i-1}(F_{i-2}X,\mu_{\ell^r}^{\otimes i})$. 

Conversely, assume that $\alpha\in H^{2i-2}(F_{i-2}X,\mu_{\ell^r}^{\otimes i}) $ lies in $\widetilde G^i H^{2i-2}(F_{i-2} X,\mu_{\ell^r}^{\otimes i})$.
That is,
$$
\tilde \delta(\alpha)\in   H^{2i-1}(F_{i-2} X,\mu_{\ell^r}^{\otimes i}) 
$$ 
lifts to a class in $H^{2i-1}(X,\mu_{\ell^r}^{\otimes i})$. 
Consider the lift $\delta(\alpha)'\in   H^{2i-1}(F_{i-1} X,\Z_\ell(i))$ of $\delta(\alpha)$ from above.
The reduction $\overline{\delta(\alpha)'}\in   H^{2i-1}(F_{i-1} X,\mu_{\ell^r}^{\otimes i})$ modulo $\ell^r$ of the lift $\delta(\alpha)' $ is a lift of $\tilde \delta(\alpha)$.
Since $\tilde \delta(\alpha)$ lifts to $H^{2i-1}(X,\mu_{\ell^r}^{\otimes i})$,   Lemma \ref{lem:les} implies that there is a class
$$
\xi\in \bigoplus_{x\in X^{(i-1)}}H^1(x,\mu_{\ell^r}) ,
$$ 
such that 
$$
\del \left(  \overline{ \delta(\alpha)' }  -\iota_\ast \xi \right) =0\in \bigoplus_{x\in X^{(i)}}[x]\Z/\ell^r .
$$
Since $\xi$ lifts by (\ref{ax:Hilbert90}) to a class in $\bigoplus_{x\in X^{(i-1)}}H^1(\kappa(x),\Z_{\ell}(1))$, and because $\del \left(  \overline{ \delta(\alpha)' } \right) $ is the reduction modulo $\ell^r$ of $\del(\delta(\alpha)')$, we conclude that 
$$
[\del(\delta(\alpha)')]\in A^i(X)_{\Z_\ell}
$$
is zero modulo $\ell^r$.
Hence,
the class $[\alpha]\in \mathcal T_0^i(X)[\ell^r]$ is divisible by $\ell^r$ in $A^i(X)_{\Z_\ell}$, as we want.
This concludes the proof of the proposition.
\end{proof}

\subsection{More on the coniveau filtration on Chow groups} \label{subsec:coniveau}
Recall the coniveau filtration $N^\ast$ on $\CH^i(X)_{\Z_\ell}$ from Definition \ref{def:NjCHi}.
By Lemma \ref{lem:Ai-vs-Ni-1},
$$
A^i(X)_{\Z_\ell}=\CH^i(X)_{\Z_\ell}/N^{i-1}\CH^i(X)_{\Z_\ell}\ \ \text{and}\ \ A_0^i(X)_{\Z_\ell}=N^0\CH^i(X)_{\Z_\ell}/N^{i-1}\CH^i(X)_{\Z_\ell}.
$$
It follows that $N^\ast$ induces a filtration on $A^i_0(X)_{\Z_\ell}$, given by
$$
N^jA_0^i(X)_{\Z_\ell}=N^j\CH^i(X)_{\Z_\ell}/N^{i-1}\CH^i(X)_{\Z_\ell}
$$
for $0\leq j\leq i-1$.
Note that $N^{i-1}\Grifftilde^i(X)_{\Z_\ell}=0$.
We thus have a finite decreasing filtration of the form
$$
0=N^{i-1}\subset N^{i-2}\subset N^{i-3}\subset \dots N^1\subset N^0=\Grifftilde^i(X)_{\Z_\ell}.
$$

Let $f:X\to Y$ be a proper morphism of schemes $X,Y\in \VV$ and let $c=\dim Y-\dim X$.
The definition of $A^i(X)_{\Z_\ell}$ and $A^i_0(X)_{\Z_\ell}$ (see Definition \ref{def:Ai} and  \ref{def:Griff}) together with the functoriality of the Gysin sequence (\ref{ax:Gysin}) with respect to the pushforward maps from (\ref{ax:pushforward}) implies that there are natural pushforward maps
$$
f_\ast :A^i(X)_{\Z_\ell}\longrightarrow A^{i+c}(Y)_{\Z_\ell}\ \ \ \text{and}\ \ \ f_\ast :\Grifftilde^i(X)_{\Z_\ell}\longrightarrow \Grifftilde^{i+c}(Y)_{\Z_\ell} .
$$
Using these maps, we get the following description of the above filtration on $A^i_0(X)_{\Z_\ell}$.

\begin{lemma} \label{lem:NjAi}
Let $X\in \VV$. The coniveau filtration $N^\ast$ on $\Grifftilde^i(X)_{\Z_\ell}$ is given by
$$
N^j\Grifftilde^i(X)_{\Z_\ell} =\im\left(\lim_{\substack{\longrightarrow \\ Z\subset X}} \Grifftilde^{i-j}(Z)_{\Z_\ell}\to \Grifftilde^i(X)_{\Z_\ell} \right) ,
$$
where $Z\subset X$ runs through all closed subschemes with $\dim Z=\dim X-j$.
\end{lemma}

The following lemma shows that the coniveau filtration on algebraic cycles is surprisingly well-behaved.

\begin{lemma} \label{lem:N^jAi_0}
For $X\in \VV$, the canonical pushforward maps
$$
\lim_{\substack{\longrightarrow \\ Z\subset X}} N^0\CH^{i-j}(Z)_{\Z_\ell} \longrightarrow N^j\CH^i(X)_{\Z_\ell}
\ \ \ \text{and}\ \ \ 
\lim_{\substack{\longrightarrow \\ Z\subset X}} \Grifftilde^{i-j}(Z)_{\Z_\ell} \longrightarrow N^j\Grifftilde^i(X)_{\Z_\ell}
$$
are isomorphisms, where $Z\subset X$ runs through all closed   subschemes with $\dim Z=\dim X-j$.
\end{lemma}
\begin{proof} 
Both pushforward maps in question are  surjective by definition,  cf.\ Lemma \ref{lem:NjAi}.
Moreover, injectivity is trivial unless $0\leq j\leq i-1$, which we will assume from now on.

We first prove injectivity of the first map.
Let $z\in \CH^{i-j}(Z)_{\Z_\ell}$ be a cycle that is rationally equivalent to zero on $X$.
Then there is a closed subscheme $W\subset X$ with $i-1=\dim X-\dim W$ such that $z$ is rationally equivalent to zero on $Z\cup W$.
Since $j\leq i-1$, we find that the subscheme $Z\cup W$ appears in the direct limit in question, which settles the injectivity of the first map in the lemma.

Injectivity of the second map is similar. 
\end{proof}

For the following proposition, recall the definition of the coniveau filtration $N^\ast$ on refined unramified cohomology from Definition \ref{def:N^j}.

\begin{proposition} \label{prop:Nj-Ai}
For $X\in \VV$, the isomorphism from Proposition \ref{prop:Griff} induces an isomorphism
$$
N^j\Grifftilde^i(X)_{\Z_\ell} \stackrel{\cong}\longrightarrow \frac{N^jH^{2i-1}_{i-2,nr}(X,\Z_\ell(i))}{N^jH^{2i-1}(X,\Z_\ell(i))} .
$$
\end{proposition}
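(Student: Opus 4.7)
The plan is to show that under the isomorphism $\Grifftilde^i(X)_{\Z_\ell}\cong H^{2i-1}_{i-2,nr}(X,\Z_\ell(i))/H^{2i-1}(X,\Z_\ell(i))$ of Proposition \ref{prop:Griff}, the subgroup $N^j\Grifftilde^i(X)_{\Z_\ell}$ matches the subquotient $N^jH^{2i-1}_{i-2,nr}(X,\Z_\ell(i))/N^jH^{2i-1}(X,\Z_\ell(i))$. The argument is a diagram chase whose key inputs are Lemma \ref{lem:N^jAi_0} (which identifies $N^j\Grifftilde^i(X)_{\Z_\ell}$ as a colimit of Griffiths groups of closed codimension-$j$ subschemes) and the long exact sequence of Lemma \ref{lem:les-general} (which bridges the cohomology of the pro-object $F_{i-1}X$ and that of codimension-$j$ subschemes of $X$).

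For the forward direction, take $[z]\in N^j\Grifftilde^i(X)_{\Z_\ell}$ and, via Lemma \ref{lem:N^jAi_0}, write $[z]=\iota_\ast[z']$ for a closed immersion $\iota:Z\hookrightarrow X$ with $\codim(Z)=j$ and $[z']\in\Grifftilde^{i-j}(Z)_{\Z_\ell}$. Lemma \ref{lem:les} provides a lift $\beta\in H^{2(i-j)-1}(F_{(i-j)-1}Z,\Z_\ell(i-j))$ with $\del\beta=z'$, and functoriality of the Gysin sequence under proper pushforwards (axiom \ref{ax:Gysin}) gives $\del(\iota_\ast\beta)=z$ in $H^{2i-1}(F_{i-1}X,\Z_\ell(i))$. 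Thus, by the construction of the isomorphism in Proposition \ref{prop:Griff}, the image of $[z]$ is represented by the restriction of $\iota_\ast\beta$ to $F_{i-2}X$. Applying Lemma \ref{lem:les-general} with cohomological degree $2i-2$, twist $i$, and parameter $m=i-1-j$ (so $j+m=i-1$), exactness forces the image of the pushforward $\iota_\ast:\lim_Z H^{2(i-j)-1}(F_{(i-j)-1}Z,\Z_\ell(i-j))\to H^{2i-1}(F_{i-1}X,\Z_\ell(i))$ to be contained in the kernel of restriction to $F_{j-1}X$, so the representative lies in $N^jH^{2i-1}_{i-2,nr}(X,\Z_\ell(i))$.

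For the reverse direction, a class $\alpha\in N^jH^{2i-1}_{i-2,nr}(X,\Z_\ell(i))$ lifts to some $\tilde\alpha\in H^{2i-1}(F_{i-1}X,\Z_\ell(i))$ whose restriction to $F_{j-1}X$ vanishes (since $F_{j-1}X\subset F_{i-2}X$). The same exact sequence from Lemma \ref{lem:les-general} produces $\beta\in\lim_Z H^{2(i-j)-1}(F_{(i-j)-1}Z,\Z_\ell(i-j))$ with $\iota_\ast\beta=\tilde\alpha$; setting $z':=\del\beta$, exactness of Lemma \ref{lem:les} on $Z$ shows $[z']\in\Grifftilde^{i-j}(Z)_{\Z_\ell}$, and Gysin functoriality gives $\iota_\ast[z']=[\del\tilde\alpha]$, placing the corresponding class in $N^j\Grifftilde^i(X)_{\Z_\ell}$. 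The denominators coincide because the restriction $H^{2i-1}(X,\Z_\ell(i))\to H^{2i-1}(F_{j-1}X,\Z_\ell(i))$ factors through $H^{2i-1}(F_{i-2}X,\Z_\ell(i))$, so $N^jH^{2i-1}(X,\Z_\ell(i))$ maps onto $N^jH^{2i-1}_{i-2,nr}(X,\Z_\ell(i))\cap\im(H^{2i-1}(X,\Z_\ell(i)))$. The only potential obstacle is bookkeeping of the direct limits over codimension-$j$ subschemes $Z\subset X$; this is uniform across Lemmas \ref{lem:N^jAi_0} and \ref{lem:les-general} and therefore presents no real difficulty.
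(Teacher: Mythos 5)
Your proposal is correct and follows essentially the same route as the paper: both rely on Proposition \ref{prop:Griff}, the Gysin exact sequence in the form of Lemma \ref{lem:les-general}/Corollary \ref{cor:restr-H_nr}, the functoriality of Gysin under proper pushforward, and the description of $N^j\Grifftilde^i(X)_{\Z_\ell}$ via Lemma \ref{lem:N^jAi_0}. The only difference is organizational: the paper packages the argument as an exact sequence of quotient groups (passing from Corollary \ref{cor:restr-H_nr} to the corresponding sequence modulo $H^{2i-1}(X,\Z_\ell(i))$ and identifying terms), whereas you run the equivalent element-wise diagram chase; the content is the same.
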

\begin{proof}
By Proposition \ref{prop:Griff}, there is a canonical isomorphism
$$
\Grifftilde^i(X)_{\Z_\ell}\cong H^{2i-1}_{i-2,nr}(X,\Z_\ell(i))/H^{2i-1}(X,\Z_\ell(i)).
$$
By Corollary \ref{cor:restr-H_nr},   for any $0\leq j\leq i-1$,  there is a canonical exact sequence
$$
\lim_{\longrightarrow}H^{2(i-j)-1}_{i-j-2,nr}(Z,\Z_\ell(i-j))\stackrel{\iota_\ast}\longrightarrow H^{2i-1}_{i-2,nr}(X,\Z_\ell(i))\longrightarrow  H^{2i-1}_{j-1,nr}(X,\Z_\ell(i)) ,
$$
where the direct limit runs through all closed reduced subschemes $Z\subset X$ of dimension $\dim Z=\dim X-j$.
Here the first map is induced by the pushforward map with respect to  $Z\hookrightarrow X$ and the second map is the canonical restriction map.

The above sequence induces a sequence
$$
\lim_{\longrightarrow}\frac{H^{2(i-j)-1}_{i-j-2,nr}(Z,\Z_\ell(i-j))}{H^{2(i-j)-1} (Z,\Z_\ell(i-j))} \stackrel{\iota_\ast}\longrightarrow \frac{H^{2i-1}_{i-2,nr}(X,\Z_\ell(i))}{ H^{2i-1}(X,\Z_\ell(i))}\longrightarrow \frac{ H^{2i-1}_{j-1,nr}(X,\Z_\ell(i))}{ H^{2i-1} (X,\Z_\ell(i))}  ,
$$
and one directly checks that this sequence remains exact.
By Proposition \ref{prop:Griff}, the first arrow in this sequence identifies to the natural map
$$
\lim_{\longrightarrow}\Grifftilde^{i-j}(Z)_{\Z_\ell}\stackrel{\iota_\ast}\longrightarrow\Grifftilde^i(X)_{\Z_\ell} . 
$$
It follows from the functoriality of the Gysin sequence with respect to proper pushforwards (see (\ref{ax:Gysin})) that this map agrees with the pushforward of cycles induced by $Z\hookrightarrow X$.
Hence the image of the above map is given by $N^j\Grifftilde^i(X)_{\Z_\ell}$.
The above exact sequence thus yields a canonical isomorphism
$$
N^j\Grifftilde^i(X)_{\Z_\ell}\cong \ker\left( \frac{H^{2i-1}_{i-2,nr}(X,\Z_\ell(i))}{ H^{2i-1}(X,\Z_\ell(i))}\longrightarrow \frac{H^{2i-1}_{j-1,nr}(X,\Z_\ell(i))}{ H^{2i-1} (X,\Z_\ell(i))} \right) .
$$
By definition of the coniveau filtration (see Definition \ref{def:N^j}), we thus get
$$
N^j\Grifftilde^i(X)_{\Z_\ell}\cong \im\left(N^jH^{2i-1}_{i-2,nr}(X,\Z_\ell(i)) \longrightarrow \frac{H^{2i-1}_{i-2,nr}(X,\Z_\ell(i))}{ H^{2i-1}(X,\Z_\ell(i))}  \right) .
$$
The kernel of the above map is given by the image of $N^jH^{2i-1}(X,\Z_\ell(i))$ and so 
$$
N^j\Grifftilde^i(X)_{\Z_\ell}\cong \frac{N^jH^{2i-1}_{i-2,nr}(X,\Z_\ell(i))}{N^jH^{2i-1}(X,\Z_\ell(i))}
$$
as we want. This concludes the proof of the proposition.
\end{proof}

\subsection{Higher transcendental Abel--Jacobi mappings} \label{subsec:higher-AJ}
The coniveau filtration $N^\ast$ on $\CH^i(X)_{\Z_\ell}$ induces a filtration $N^\ast$ on
 $$A^i_0(X)_{\Z_\ell}=N^0\CH^i(X)_{\Z_\ell}/N^{i-1}\CH^i(X)_{\Z_\ell}$$ 
 and hence on the torsion subgroup $A^i_0(X)[\ell^\infty]\subset A^i_0(X)_{\Z_\ell}$.
 The goal of this section is to show that the graded pieces of this filtration are detected by higher Abel--Jacobi invariants.
 To this end it will be convenient to consider 
 $$  
 \overline{J}^i_{tr}(X)[\ell^\infty]:= H^{2i-1}(X,\Q_\ell/\Z_\ell(i)) / N^1H^{2i-1}(X,\Q_\ell(i)) .
$$
Here we use a bar in our notation to emphasize that we are quotiening out $N^1H^{2i-1}(X,\Q_\ell(i)) $ and not $N^{i-1}H^{2i-1}(X,\Q_\ell(i))$, as in the construction of $\lambda_{tr}^i$ in Section \ref{subsec:AJ_tr}.
For  $i\geq 2$, we have $N^{i-1}H^{2i-1}(X,\Q_\ell(i))\subset N^1H^{2i-1}(X,\Q_\ell(i))$ and so $\lambda_{tr}^i$ induces a canonical map
\begin{align} \label{def:bar-lambda} 
\bar \lambda_{tr}^i:\Grifftilde^i(X)[\ell^{\infty}]\longrightarrow \overline{J}^i_{tr}(X)[\ell^{\infty}],
\end{align}
where we note that
 $A^i_0(X)_{\Z_\ell}=0$ for $i\leq 1$ by Lemma \ref{lem:Ai-vs-Ni-1}. 

\begin{definition} \label{def:bar-J^i_j,tr}
For $0\leq j\leq i$, we define the $j$-th higher transcendental $\ell^\infty$-torsion intermediate Jacobian  of $X$ by 
$$
 \overline{J}^i_{j,tr}(X)[\ell^\infty]:=\lim_{\substack{\longrightarrow \\ Z\subset X}} \overline{J}^{i-j}_{tr}(Z)[\ell^\infty],
$$
where $Z\subset X$ runs through all subschemes with $\dim Z=\dim X-j$.
\end{definition}
 
It follows from Lemma \ref{lem:N^jAi_0} that  the map $\bar \lambda_{tr}^{i-j}$ from (\ref{def:bar-lambda}), applied to the subschemes $Z\subset X$ with  $\dim Z=\dim X-j$,  yields in the limit  a canonical higher Abel--Jacobi map 
\begin{align}\label{def:bar-lambda_j,tr}
\bar \lambda_{j,tr}^i:N^j\Grifftilde^i(X)[\ell^{\infty}]\longrightarrow \overline{J}^i_{j,tr}(X)[\ell^{\infty}].
\end{align}
Note that $\overline{J}^i_{0,tr}(X)[\ell^{\infty}]=\overline{J}^i_{tr}(X)[\ell^{\infty}]$ and $\bar \lambda_{0,tr}^i=\bar \lambda_{tr}^i$.
The following theorem computes the kernel of $\bar \lambda_{j,tr}^i$

\begin{theorem} \label{thm:ker-lambda}
Let $X\in \VV$ and assume that the twisted $\ell$-adic Borel--Moore cohomology theory $H^\ast$ on $\VV$ has the property that for all 
$Z\subset X$ with $\dim Z=\dim X-j$, the group $H^{2(i-j)-1}(F_0Z,\Z_\ell(i-j))$ is torsion-free.
Then for any $j\geq 0$, we have $
\ker\left( \bar \lambda_{j,tr}^i\right)=N^{j+1}\Grifftilde^i(X)[\ell^{\infty}] .
$ 
\end{theorem}

\begin{proof} 
By Lemma \ref{lem:N^jAi_0} and the construction of $\bar \lambda_{j,tr}^i$ via direct limits, it suffices by induction to show that 
$$
\ker\left( \bar \lambda_{tr}^i\right)=N^{1}\Grifftilde^i(X)[\ell^{\infty}] .
$$
To this end, let $[z]\in \Grifftilde^i(X)[\ell^{\infty}]$ and let $\alpha\in H^{2i-1}_{i-2,nr}(X,\Z_\ell(i))$ be a representative of $[z]$ via the isomorphism in Proposition \ref{prop:Griff}.
Let $r\geq 1$ such that $\ell^r[z]=0\in \Grifftilde^i(X)[\ell^{\infty}]$.
Then $\ell^r\cdot \alpha$ lifts to a class $\beta\in H^{2i-1}(X,\Z_\ell(i))$.
By Lemma \ref{lem:lambda_tr-alternative},
$$
\bar\lambda_{tr}^i([z])=[\beta/\ell^r] \in \frac{H^{2i-1}(X,\Q_\ell/\Z_\ell(i))}{N^1H^{2i-1}(X,\Q_\ell(i))}.
$$
Assume now that $\bar\lambda_{tr}^i([z])=0$.
Since $H^{2i-1}(F_0X,\Z_\ell(i))$ is torsion free by assumption, the preimage of $N^1H^{2i-1}(X,\Q_\ell(i))$ via the natural map $ H^{2i-1}(X,\Z_\ell(i))\to H^{2i-1}(X,\Q_\ell(i))$ is given by $N^1H^{2i-1}(X,\Z_\ell(i))$.
Hence, (\ref{ax:Ql/Zl,Ql}) and the assumption $[\beta/\ell^r]=0 $ implies that there is a class $\beta'\in N^1H^{2i-1}(X,\Z_\ell(i))$ and a positive integer $r'$ such that
$$
\beta/\ell^r=\beta'/\ell^{r'} \in H^{2i-1}(X,\Q_\ell/\Z_\ell(i)).
$$
Up to replacing $r$ and $r'$ by their maximum and $\beta$, resp.\ $\beta'$ by a suitable multiple, we may assume that $r=r'$.
We then consider the class
$$
\gamma:=\beta-\beta'\in H^{2i-1}(X,\Z_\ell(i)).
$$
Since $r=r'$, $\gamma/\ell^r=0\in H^{2i-1}(X,\Q_\ell/\Z_\ell(i))$.
Hence there is a class $\delta \in H^{2i-1}(X,\Z_\ell(i))$ and a torsion class $\tau\in H^{2i-1}(X,\Z_\ell(i))$ with
$$
\gamma=\ell^r\delta+\tau .
$$
Since $\tau$ is torsion, there is a positive integer $s$ such that $\ell^s\tau=0$.
Hence,
$$
\ell^s\beta=\ell^s\beta'+\ell^{r+s}\delta
$$
is a lift of $\ell^{r+s}\alpha\in H^{2i-1}_{i-2,nr}(X,\Z_\ell(i))$.
Since $\beta'\in N^1$, we deduce that the image of $\ell^{r+s}\alpha$ in $ H^{2i-1}(F_0X,\Z_\ell(i))$ agrees with the image of $\ell^{r+s}\delta$.
Replacing $\alpha$ by $\alpha-\delta$ (which does not change the class $[z]$ that $\alpha$ represents, because $\del \delta=0$), we may assume that $\delta=0$ and we find that the image of $\ell^{r+s}\alpha$ in $ H^{2i-1}(F_0X,\Z_\ell(i))$ vanishes.
The latter is torsion-free by assumption and so we conclude that the image of $\alpha$ in $ H^{2i-1}(F_0X,\Z_\ell(i))$ vanishes.
By Proposition \ref{prop:Nj-Ai}, this implies $[z]\in N^{1}\Grifftilde^i(X)[\ell^{\infty}]$, as we want.
This concludes the proof of the theorem. 
\end{proof}

\begin{remark}
The torsion-freeness condition in the above theorem will in our applications be satisfied by the Bloch--Kato conjecture, proven by Voevodsky, see Remark \ref{rem:bloch-kato}.
\end{remark}

\subsection{The second piece of the coniveau filtration}
In \cite{Voi-unramified,Ma},  Voisin and Ma showed that $\mathcal T^3(X)[\ell^\infty]$ is related to unramified cohomology up to an error term given by the torsion subgroup of 
$H^{5}(X,\Z_\ell(3))/N^2H^{5}(X,\Z_\ell(3))$.
The next result shows that this error term is exactly what is captured by the $G^\ast$-filtration on traditional unramified $\Q_\ell/\Z_\ell$-cohomology from Definition \ref{def:G}.
In particular, the statement in Theorem \ref{thm:Griff_tors} specializes in the case of codimension three cycles on smooth complex projective varieties to the result in \cite{Voi-unramified,Ma}.

 \begin{proposition}\label{prop:gr_N}
For $X\in \VV$, there is a canonical surjection
$$
\varphi:\left( \frac{H^{i}(X,\Z_\ell(n))}{N^2H^{i}(X,\Z_\ell(n))}\right)_{\tors}  \twoheadrightarrow \frac{G^{\lceil i/2\rceil}H^{i-1}_{0,nr}(X,\Q_\ell/\Z_\ell (n))}{  H^{i-1}_{0,nr}(X,\Q_\ell(n))}
$$
which maps the image of $H^{i}(X,\Z_\ell(n))_{\tors}$ on the left onto the image of $H^{i-1}(X,\Q_\ell/\Z_\ell(n))$ on the right.
If $H^{i-2}(x,\Z_\ell (n))$ is torsion-free for all $x\in X^{(1)}$, then $\varphi$ is an isomorphism.  
 \end{proposition}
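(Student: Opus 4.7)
The map $\varphi$ will be induced by the Bockstein connecting map applied to $F_1 X$. Concretely, given a representative $\tau \in H^i(X,\Z_\ell(n))$ of an $\ell^r$-torsion class in $H^i(X,\Z_\ell(n))/N^2$, we have $\ell^r \tau \in N^2$, which by definition of $N^2$ means $\ell^r \cdot \tau|_{F_1 X} = 0$. The Bockstein sequence (\ref{ax:Bockstein}) for $F_1 X$ then yields $\alpha \in H^{i-1}(F_1 X, \mu_{\ell^r}^{\otimes n})$ with $\delta(\alpha) = \tau|_{F_1 X}$, and since $\delta(\alpha)$ lifts to $\tau \in H^i(X,\Z_\ell(n)) \cong H^i(F_{\lceil i/2 \rceil} X, \Z_\ell(n))$ by Corollary \ref{cor:F^i}, Definition \ref{def:G} places $\alpha$ in $G^{\lceil i/2\rceil} H^{i-1}(F_1 X, \mu_{\ell^r}^{\otimes n})$. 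I will set
$$\varphi([\tau]) := [\alpha|_{F_0 X}] \in \frac{G^{\lceil i/2 \rceil} H^{i-1}_{0,nr}(X, \Q_\ell/\Z_\ell(n))}{H^{i-1}_{0,nr}(X,\Q_\ell(n))},$$
where $\alpha|_{F_0 X}$ is viewed in $\Q_\ell/\Z_\ell$-coefficients through the direct limit in (\ref{ax:Ql/Zl,Ql}).

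The formal properties of $\varphi$ will fall out from routine diagram-chasing. For well-definedness: the class $\tau|_{F_1 X}$ depends only on $\tau$ modulo $N^2$; another choice of Bockstein lift $\alpha$ differs by an element of the image of $H^{i-1}(F_1 X, \Z_\ell(n))/\ell^r$, whose restriction to $F_0 X$ lies in $H^{i-1}_{0,nr}(X,\Z_\ell(n))$ and in particular in the image of $H^{i-1}_{0,nr}(X,\Q_\ell(n))$. For surjectivity: given $\beta \in G^{\lceil i/2 \rceil} H^{i-1}_{0,nr}(X,\Q_\ell/\Z_\ell(n))$, represented by $\alpha \in H^{i-1}(F_1 X, \mu_{\ell^r}^{\otimes n})$ with $\delta(\alpha) = \tau|_{F_1 X}$ for some $\tau \in H^i(X, \Z_\ell(n))$, the class $[\tau]$ is $\ell^r$-torsion in $H^i(X)/N^2$ with $\varphi([\tau]) = \beta$. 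The compatibility with $H^i(X, \Z_\ell(n))_{\tors}$ is immediate in both directions: if $\tau$ itself is $\ell^r$-torsion, the Bockstein sequence on $X$ already yields $\gamma \in H^{i-1}(X, \mu_{\ell^r}^{\otimes n})$ with $\delta(\gamma) = \tau$, so one may take $\alpha = \gamma|_{F_1 X}$ and observe $\alpha|_{F_0 X}$ is the image of a global class; conversely, any $\eta \in H^{i-1}(X,\Q_\ell/\Z_\ell(n))$ is $\ell^r$-torsion for some $r$ and has $\delta(\eta) \in H^i(X,\Z_\ell(n))_{\tors}$ mapping back to it.

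The main step, and the only place the torsion-freeness assumption enters, is injectivity. Suppose $\varphi([\tau]) = 0$, so that $\alpha|_{F_0 X}$ equals the image of some rational class $\beta \in H^{i-1}_{0,nr}(X,\Q_\ell(n))$. Lifting $\beta$ to $F_1 X$ and subtracting (after possibly enlarging $r$), we may replace $\alpha$ by a class with the same Bockstein $\delta(\alpha) = \tau|_{F_1 X}$ (since rational classes lie in $\ker \delta$) whose restriction to $F_0 X$ vanishes in $H^{i-1}(F_0 X,\mu_{\ell^r}^{\otimes n})$. Lemma \ref{lem:les} for $j = 1$ then gives $\alpha = \iota_* \xi$ with $\xi \in \bigoplus_{x \in X^{(1)}} H^{i-2}(x,\mu_{\ell^r}^{\otimes(n-1)})$, and functoriality of the Bockstein sequence under proper pushforward (\ref{ax:Bockstein}) yields
$$\tau|_{F_1 X} = \delta(\alpha) = \iota_*(\delta \xi) \in H^i(F_1 X, \Z_\ell(n)),$$
with $\delta(\xi) \in \bigoplus_{x \in X^{(1)}} H^{i-1}(x,\Z_\ell(n-1))$ automatically torsion. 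The torsion-freeness hypothesis, applied via the Bockstein short exact sequence at each point $x$, forces $\delta(\xi) = 0$, hence $\tau|_{F_1 X} = 0$ and $[\tau] = 0$ in $H^i(X,\Z_\ell(n))/N^2$.

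The main obstacle I anticipate is the careful bookkeeping of the direct limit over $r$ in the injectivity step: translating between $\Q_\ell/\Z_\ell$-coefficients and finite-level $\mu_{\ell^r}^{\otimes n}$-coefficients when adjusting $\alpha$ by the image of a rational class, and ensuring that the enlargement of $r$ does not disturb the identifications $\delta(\alpha) = \tau|_{F_1 X}$ or the matching indexing of the Gysin residue twist. These are routine but easy to bungle; everything else is essentially a direct application of the Bockstein sequence (\ref{ax:Bockstein}), the long exact sequence of Lemma \ref{lem:les}, and Corollary \ref{cor:F^i}.
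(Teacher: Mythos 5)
Your construction is essentially the paper's proof written out on finite-level representatives: the paper identifies $\bigl(H^i(X,\Z_\ell(n))/N^2\bigr)_{\tors}$ with $\Tors\bigl(F^{\lceil i/2\rceil}H^i(F_1X,\Z_\ell(n))\bigr)$ via Corollary \ref{cor:F^i}, inverts the (colimit) Bockstein to get $G^{\lceil i/2\rceil}H^{i-1}(F_1X,\Q_\ell/\Z_\ell(n))/H^{i-1}(F_1X,\Q_\ell(n))$, and then restricts to $F_0X$, with injectivity proved exactly as in your last step via Lemma \ref{lem:les} and torsion-freeness at codimension-one points; so the approach and all key steps coincide. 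One indexing slip to fix in the injectivity step: the Gysin pushforward from points of $X^{(1)}$ raises the degree by $2$, so $\xi\in\bigoplus_{x\in X^{(1)}}H^{i-3}(x,\mu_{\ell^r}^{\otimes (n-1)})$ and hence $\delta(\xi)\in\bigoplus_{x\in X^{(1)}}H^{i-2}(x,\Z_\ell(n-1))$, which is precisely the group the torsion-freeness hypothesis kills; as written (with $\xi$ in degree $i-2$ and $\delta(\xi)$ in degree $i-1$) the appeal to torsion-freeness of $H^{i-2}(x,\Z_\ell(n))$ does not literally apply, though the corrected bookkeeping restores the argument verbatim.
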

 
 \begin{remark} \label{rem:gr_N}
 By Remark \ref{rem:bloch-kato},  Voevodsky's proof of the Bloch--Kato conjecture implies that $H^{i-2}(x,\Z_\ell (i-3))$ is torsion-free for the cohomology theories in Proposition \ref{prop:proetale-coho-arbitrary-field} and \ref{prop:Betti-coho}, so that the surjection in the above proposition will be an isomorphism for $n=i-3$ in those cases, but we will not use this result in the remainder of this paper.
 \end{remark}
 
\begin{proof}[Proof of Proposition \ref{prop:gr_N}]
Recall that $N^2H^{i}(X,\Z_\ell(n))=\ker(H^{i}(X,\Z_\ell(n))\to H^{i}(F_{1}X,\Z_\ell(n)))$.
Hence, 
\begin{align}\label{eq:Hi/N2Hi}
\left( \frac{H^{i}(X,\Z_\ell(n))}{N^2H^{i}(X,\Z_\ell(n))}\right)_{\tors}\stackrel{\cong}\longrightarrow \Tors\left( F^{\lceil i/2\rceil}H^{i}(F_{1}X,\Z_\ell(n))) \right),
\end{align}
because $H^{i}(F_{\lceil i/2\rceil}X,\Z_\ell(n)))\cong H^{i}( X,\Z_\ell(n)))$ by Corollary \ref{cor:F^i}.

By (\ref{ax:Ql/Zl,Ql}) and exactness of the direct limit functor, the integral Bockstein sequence (\ref{ax:Bockstein}) yields in the limit $r\to \infty$ a Bockstein sequence
\begin{align*} 
\dots \longrightarrow H^i(X,\Z_\ell(n)) \longrightarrow H^i(X,\Q_\ell(n)) \longrightarrow H^i(X,\Q_{\ell}/\Z_\ell(n)) \stackrel{ \delta}\longrightarrow & H^{i+1}(X,\Z_\ell(n)) \longrightarrow \dots ,
\end{align*}
where by slight abuse of notation we denote the boundary map still by $\delta$.
By the description of $\Q_\ell$-cohomology in (\ref{ax:Ql/Zl,Ql}), the image of $ \delta$ agrees with the torsion subgroup of $H^{i+1}(X,\Z_\ell(n))$.
Using exactness of the direct limit functor once again, we find that the above sequence remains exact for $F_jX$ in place of $X$. 
By definition of $G^\ast$ in Definition \ref{def:G}, $ \delta$ induces therefore an exact sequence
$$H^{i-1} (F_1X,\Q_\ell (n))\longrightarrow 
G^{\lceil i/2\rceil}H^{i-1} (F_1X,\Q_\ell/\Z_\ell (n)) \longrightarrow
 \Tors( F^{\lceil i/2\rceil}H^{i}(F_{1}X,\Z_\ell(n))) ) \longrightarrow 0.
$$ 
From this we conclude a canonical isomorphism
$$
 G^{\lceil i/2\rceil}H^{i-1} (F_1X,\Q_\ell/\Z_\ell (n))/H^{i-1} (F_1X,\Q_\ell (n))\stackrel{\cong}\longrightarrow  \Tors( F^{\lceil i/2\rceil}H^{i}(F_{1}X,\Z_\ell(n))) )
$$
induced by $ \delta$. 
Combining this with (\ref{eq:Hi/N2Hi}), we get the surjection $\varphi$ as claimed in the proposition.

It remains to analyse the kernel of the canonical map
$$
\frac{G^{\lceil i/2\rceil}H^{i-1} (F_1X,\Q_\ell/\Z_\ell (n))}{H^{i-1} (F_1X,\Q_\ell (n))}\longrightarrow \frac{G^{\lceil i/2\rceil}H^{i-1} (F_0X,\Q_\ell/\Z_\ell (n))}{H^{i-1} (F_1X,\Q_\ell (n))} .
$$
To this end,  let $\alpha\in G^{\lceil i/2\rceil}H^{i-1} (F_1X,\Q_\ell/\Z_\ell (n))$ be a class that vanishes on $F_0X$.
By Lemma \ref{lem:les},
$$
\alpha=\iota_\ast \xi,\ \ \ \ \text{for some }\ \ \xi\in \bigoplus_{x\in X^{(1)}}H^{i-3}(x,\Q_\ell/\Z_\ell (n-1)).
$$
If $H^{i-2}(x,\Z_\ell (n))$ is torsion-free for all $x\in X^{(1)}$, then $\delta(\xi)=0$ and so $\delta(\alpha)=0$ by functoriality of the Bockstein sequence  (see (\ref{ax:Bockstein})).
Hence, $\alpha$ lifts to  a class in $H^{i-1}(F_1X,\Q_\ell(n))$ and so it vanishes in the above quotient.
This concludes the proof of the proposition.
\end{proof}

 \begin{remark}
 A version of Proposition \ref{prop:gr_N} has been proven independently by Ma \cite{ma2}.
 \end{remark}

\subsection{Comparison to Bloch--Ogus theory and to Kato homology}\label{subsec:higher-unramified}
In this section we  define
\begin{align} \label{def:Hi-mathcal H^j}
E_2^{j,i+j}(X,A(n)):= \frac{
\ker(\del \circ \iota_\ast :\oplus_{x\in X^{(j)}} H^{i}(x )\to \oplus_{x\in X^{(j+1)}} H^{i-1}(x )) }{\im(\del \circ \iota_\ast:\oplus_{x\in X^{(j-1)}} H^{i+1}(x )\to \oplus_{x\in X^{(j)} }H^{i}(x ) )},
\end{align}
where $H^\ast(x)$ is a short hand for $H^\ast(x,A(n-c))$, where $c=\codim(x)=\dim X-\dim (\overline{\{x\}})$.
If $X$ is smooth and equi-dimensional over a field $k$ and $H^\ast$ satisfies  the properties of Bloch--Ogus in \cite[\S 1]{BO} (see e.g.\ \cite[\S 2]{BO}), then $E_2^{j,i+j}(X,A(n))\cong H^j(X,\mathcal H^{i+j}_X(A(n)))$ identifies by \cite{BO} to the $j$-th cohomology of the Zariski sheaf associated to $U\mapsto H^{i+j}(U,A(n))$.

\begin{proposition}\label{prop:higher-unramified}
For any $X\in \VV$,  there is a canonical long exact sequence
$$
\ldots \to H^{i+2j-1}_{j-1,nr}( X,A(n))\to H^{i+2j-1}_{j-2,nr}( X,A(n))\to E_2^{j,i+j}(X,A(n)) \to  H^{i+2j}_{j,nr}(X,A(n)) \to \dots 
$$
\end{proposition}
 
\begin{proof}
The result follows, as explained in Section \ref{subsec:intro:BO}, from the derived couple associated to the couple from Lemma \ref{lem:les}.
With the aim of making the involved maps explicit, 
we spell out the argument  in some  detail in what follows.


Let $[\xi]\in E_2^{j,i+j}(X,A(n))$ with $\xi\in \oplus_{x\in X^{(j)}} H^{i}(x)$ and $\del \circ \iota_\ast (\xi)=0$.
By  Lemma \ref{lem:les}, the condition $\del \circ \iota_\ast (\xi)=0$ is equivalent to 
$$
\iota_\ast \xi\in F^{j+1}H^{2j+i}(F_jX) .
$$
If $\xi=\del \circ \iota_\ast (\zeta)$ for some $\zeta\in  \oplus_{x\in X^{(j-1)}} H^{i+1}(x)$, then
$$
\iota_\ast  \xi=\iota_\ast\circ  \del \circ \iota_\ast (\zeta)=0
$$
by the exactness of the Gysin sequence.
It follows that there is a well-defined map
\begin{align}\label{eq:refined-vs-higher-1}
E_2^{j,i+j}(X,A(n))\longrightarrow  H^{i+2j}_{j,nr}(X,A(n)) ,\ \ [\xi]\mapsto \iota_\ast \xi
\end{align}
Any class in the image of this map lies in the kernel of 
\begin{align}\label{eq:refined-vs-higher-2}
 H^{i+2j}_{j,nr}(X,A(n)) \longrightarrow H^{i+2j}_{j-1,nr}(X,A(n)) 
\end{align}
because $\iota_\ast \xi$ vanishes when restricted to $F_{j-1}X$ by Lemma \ref{lem:les}.
Conversely, any class $\alpha \in  H^{i+2j}_{j,nr}(X,A(n)) $ in the kernel of the above restriction map is by Lemma \ref{lem:les} of the form $\alpha=\iota_\ast \xi$ for some $\xi\in \oplus_{x\in X^{(j)}} H^{i}(x)$.
The fact that $\alpha\in H^{i+2j}_{j,nr}(X,A(n)) \subset  H^{i+2j} (F_jX,A(n))$ is unramified implies $\del\circ \iota_\ast (\xi)=0$, and so $\alpha$ lies in the image of (\ref{eq:refined-vs-higher-1}).
Hence, the composition of (\ref{eq:refined-vs-higher-1}) and (\ref{eq:refined-vs-higher-2}) is exact.

Let now $[\xi]\in E_2^{j,i+j}(X,A(n))$ with $\xi\in \oplus_{x\in X^{(j)}} H^{i}(x)$ and $\del \circ \iota_\ast (\xi)=0$ be a class in the kernel of (\ref{eq:refined-vs-higher-1}).
By the exactness of the Gysin sequence, this means that
$
 \xi=\del \alpha
$
for some $\alpha\in H^{i+2j-1}(F_{j-1}X,A(n))$.
Hence, the natural sequence
\begin{align} \label{eq:refined-vs-higher-3}
H^{i+2j-1}(F_{j-1}X,A(n))\stackrel{\del}\longrightarrow E_2^{j,i+j}(X,A(n)) \stackrel{\iota_\ast}\longrightarrow H^{i+2j}_{j,nr}(X,A(n)) 
\end{align}
is exact.
The image of
$$
\iota_\ast: \oplus_{x\in X^{(j-1)}} H^{i+1}(x,A(n-j+1))\longrightarrow H^{i+2j-1}(F_{j-1}X,A(n))
$$
lies in the kernel of the first map in (\ref{eq:refined-vs-higher-3}) by the definition in (\ref{def:Hi-mathcal H^j}).
By the Gysin sequence, it follows that (\ref{eq:refined-vs-higher-3}) descends to an exact sequence
\begin{align} \label{eq:refined-vs-higher-4}
H^{i+2j-1}_{j-2,nr}(X,A(n))\stackrel{\del}\longrightarrow E_2^{j,i+j}(X,A(n)) \stackrel{\iota_\ast}\longrightarrow  H^{i+2j}_{j,nr}(X,A(n)) .
\end{align}

Let $[\alpha]\in H^{i+2j-1}_{j-2,nr}(X,A(n))$ with $\alpha\in  H^{i+2j-1}(F_{j-1}X,A(n))$ and assume that
$$
\del \alpha=0\in E_2^{j,i+j}(X,A(n)).
$$
This means that there is a class $\zeta\in  \oplus_{x\in X^{(j-1)}} H^{i+1}(x)$ with
$ 
\del (\alpha-\iota_\ast \zeta)=0 
$. 
Hence, up to replacing $\alpha$ by $\alpha-\iota_\ast \zeta$, we may assume $\del\alpha=0$ and so 
$$
[\alpha]\in F^{j}H^{i+2j-1}(F_{j-2}X,A(n)).
$$
Conversely, any class in $F^{j}H^{i+2j-1}(F_{j-2}X,A(n))$ clearly maps to zero in $H^j(X,\mathcal H^{i+j}_X(A(n)))$.
Hence, the kernel of the first map in (\ref{eq:refined-vs-higher-4}) agrees with the image of the canonical restriction map
$$
H^{i+2j-1}_{j-1,nr}(X,A(n))\longrightarrow H^{i+2j-1}_{j-2,nr}(X,A(n)) .
$$
This concludes the proof of the proposition. 
\end{proof}

\begin{corollary}\label{cor:kato}
Let $c\geq 0$ be a non-negative integer.
Let $X\in \VV$ with $d:=\dim X$ and assume that for any $x\in X_{(j)}$,  $H^i(x,\mu_{\ell^r}^{\otimes n})=0$ for $i>j+c$.
Then there is a canonical isomorphism
$$
E_2^{j,d+c}(X,\mu_{\ell^r}^{\otimes n})
\stackrel{\sim}\longrightarrow H^{d+c+j}_{j,nr}(X,\mu_{\ell^r}^{\otimes n}).
$$ 
\end{corollary}
\begin{proof}
Our assumption implies by Corollary \ref{cor:F^i-2}:
$$
H^i_{j,nr}(X,A(n))=0\ \ \ \text{for all $j<i-d-c$.}
$$
The result in question is then an immediate consequence of Proposition \ref{prop:higher-unramified}.  
\end{proof}

\begin{remark} \label{rem:kato}
The condition in Corollary \ref{cor:kato} is satisfied for $c=0$ if $k=\C$ and the underlying cohomology theory is singular/\'etale cohomology. 
It is also satisfied if $k$ has finite cohomological dimension $c$ and the cohomology theory is twisted $\ell$-adic pro-\'etale cohomology, which for finite coefficients agrees with \'etale cohomology and so $H^i(x,\mu_{\ell^r}^{\otimes n})$ identifies by \cite[p.\ 88, III.1.16]{milne} to the Galois cohomology of the residue field $\kappa(x)$. 
In both cases,  $E_2^{j,d+c}(X,\mu_{\ell^r}^{\otimes n})$ coincides by definition with Kato homology of $X$,  see \cite{kato,KeSa,tian}.
Corollary \ref{cor:kato} thus shows that Kato homology is a special case of  refined unramified cohomology.
%
\end{remark}


%

\section{Comparison to Bloch's map}\label{sec:lambda}
In \cite{bloch-compositio}, Bloch constructed an Abel--Jacobi map on torsion cycles in the Chow group of smooth projective varieties over algebraically closed ground fields.
Bloch's map induces a transcendental Abel--Jacobi map on torsion-cycles in the Griffiths group of such varieties and we aim to show in this section that  
Bloch's map agrees with the map that we constructed in Section \ref{subsec:AJ_tr} (applied to the cohomology theory from Proposition \ref{prop:proetale-coho-arbitrary-field} in the case where $k=\bar k$ is algebraically closed and $X$ is smooth projective).

In contrast to Bloch's map,  the transcendental Abel--Jacobi map that we defined in Section \ref{subsec:AJ_tr} works for arbitrary algebraic schemes over a field.
This is crucial for the construction of the higher Abel--Jacobi maps in Section \ref{subsec:higher-AJ}. 

\subsection{\texorpdfstring{$\ell^r$}{ell}-torsion in Chow groups}
Fix a prime $\ell$ and an $\ell$-adic twisted Borel--Moore cohomology theory $H^\ast(-,A(n))$ on a constructible category of Noetherian schemes $\VV$ with coefficients in a full subcategory $\mathcal A\subset \Mod_{\Z_\ell}$, as in Definitions \ref{def:Borel--Moore-cohomology} and \ref{def:Borel--Moore-cohomology-l-adic}.
For $X\in \VV$, $x\in X$ there are isomorphisms $H^0(x,A(0))\cong A$ that are functorial in $A$.
Moreover, there is a distinguished class $[x]\in H^0(x,\Z_\ell(0))$ and we denote the image of that class in $H^0(x,\mu_{\ell^r}^{\otimes 0})$ by the same symbol, so that $H^0(x,\mu_{\ell^r}^{\otimes 0})=[x]\Z/\ell^r$.
For any $X\in \VV$, properties (\ref{ax:pushforward})--(\ref{ax:normalization}), (\ref{ax:Bockstein}), and (\ref{ax:Hilbert90}) thus imply the existence of the following commutative diagram with exact rows (cf.\ \cite[(2.1)]{bloch-compositio}): 
{\small
$$
\xymatrix{ 
& \bigoplus_{x\in X^{(i-1)}} \kappa(x)^\ast\ar[r]^{\times \ell^r} \otimes_\Z \Z_\ell \ar[d]^{\epsilon}& \bigoplus_{x\in X^{(i-1)}} \kappa(x)^\ast \otimes_\Z \Z_\ell \ar[r]\ar[d]^{\epsilon}& \bigoplus_{x\in X^{(i-1)}} \kappa(x)^\ast/(\kappa(x)^\ast)^{\ell^r} \ar[d]^{\cong}\ar[r]& 0\\ 
& \bigoplus_{x\in X^{(i-1)}} H^1( x,\Z_{\ell}(1)) \ar[r]^{\times \ell^r}\ar[d]^{\del\circ \iota_\ast}&\bigoplus_{x\in X^{(i-1)}}H^1(x,\Z_{\ell}(1)) \ar[r]\ar[d]^{\del \circ \iota_\ast}&\bigoplus_{x\in X^{(i-1)}} H^1( x,\mu_{\ell^r}^{\otimes 1})  \ar[d]^{\del \circ \iota_\ast} \ar[r]& 0 \\ 
 0\ar[r]& \ar[r] \ar[d] \bigoplus_{x\in X^{(i)}}[x]\Z_\ell\ar[r]^{\times \ell^r}& \bigoplus_{x\in X^{(i)}} [x]\Z_\ell\ar[r]\ar[d]  & \bigoplus_{x\in X^{(i)}}[x] \Z/\ell^r \ar[r]& 0\\
& \ar[d] \CH^i(X)_{\Z_\ell} \ar[r]^{\times \ell^r}&\ar[d] \CH^i(X)_{\Z_\ell}&  &\\
&  A^i(X)_{\Z_\ell}   \ar[r]^{\times \ell^r}&  A^i(X)_{\Z_\ell} & &
}
$$ }
The following is motivated by  \cite[\S 2]{bloch-compositio}.

\begin{lemma} \label{lem:snake}
For any $X\in \VV$, there are canonical isomorphisms
$$
 \phi_r:
\CH^i(X)[\ell^r] \stackrel{\cong}\longrightarrow  \frac{\ker\left(\del\circ \iota_\ast:\bigoplus_{x\in X^{(i-1)}} H^1( x,\mu_{\ell^r}^{\otimes 1})  \longrightarrow  \bigoplus_{x\in X^{(i)}} [x] \Z/\ell^r  \right) }{ \ker\left(\del\circ \iota_\ast\circ \epsilon:\bigoplus_{x\in X^{(i-1)}} \kappa(x)^\ast\otimes_\Z \Z_\ell \longrightarrow  \bigoplus_{x\in X^{(i)}}[x]\Z_\ell \right)  } 
$$
and
$$
\psi_r:
A^i(X)[\ell^r] \stackrel{\cong}\longrightarrow  \frac{\ker\left(\del\circ \iota_\ast:\bigoplus_{x\in X^{(i-1)}} H^1(x,\mu_{\ell^r}^{\otimes 1})  \longrightarrow  \bigoplus_{x\in X^{(i)}} [x]\Z/\ell^r \right) }{\ker\left(\del\circ \iota_\ast:\bigoplus_{x\in X^{(i-1)}}H^1( x,\Z_{\ell}(1)) \longrightarrow  \bigoplus_{x\in X^{(i)}}[x]\Z_\ell \right)} .
$$
\end{lemma}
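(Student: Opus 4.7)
The plan is to derive both isomorphisms by a snake-lemma argument built from the commutative diagram displayed just before the lemma. I will first introduce notation, then carry out the chase for $\phi_r$, and observe that the chase for $\psi_r$ is formally identical up to one additional input.

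Abbreviate $K := \bigoplus_{x \in X^{(i-1)}} \kappa(x)^\ast \otimes_\Z \Z_\ell$, $H := \bigoplus_{x \in X^{(i-1)}} H^1(x,\Z_\ell(1))$, $H_r := \bigoplus_{x \in X^{(i-1)}} H^1(x,\mu_{\ell^r}^{\otimes 1})$, $Z := \bigoplus_{x \in X^{(i)}}[x]\Z_\ell$ and $Z_r := Z/\ell^r$, and write $D_K := \del\circ\iota_\ast\circ\epsilon \colon K \to Z$, $D_H := \del\circ\iota_\ast \colon H \to Z$ and $D_r \colon H_r \to Z_r$ for the corresponding mod-$\ell^r$ map. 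By Lemma \ref{lem:CHi} and Definition \ref{def:Ai} I have $\CH^i(X)_{\Z_\ell} = \coker(D_K)$ and $A^i(X)_{\Z_\ell} = \coker(D_H)$.

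To construct $\phi_r$, I would factor $D_K$ as $K \twoheadrightarrow \im D_K \hookrightarrow Z$ and apply $(-)\otimes_\Z \Z/\ell^r$ to the two resulting short exact sequences. Because $Z$ is a free $\Z_\ell$-module, $\im D_K \subseteq Z$ is torsion-free, so the two relevant $\operatorname{Tor}_1$-terms vanish and the long exact sequences collapse to
\begin{equation*}
0 \to (\ker D_K)/\ell^r \to K/\ell^r \to (\im D_K)/\ell^r \to 0
\end{equation*}
together with the standard identification $\CH^i(X)[\ell^r] \cong \ker\!\bigl((\im D_K)/\ell^r \to Z/\ell^r\bigr)$. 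Splicing the two produces a canonical isomorphism identifying $\CH^i(X)[\ell^r]$ with the quotient of $\ker(K/\ell^r \to Z/\ell^r)$ by the image of $(\ker D_K)/\ell^r$. The key input now is property (\ref{ax:Hilbert90}), which supplies the Kummer isomorphism $K/\ell^r \xrightarrow{\sim} H_r$; this is exactly the rightmost vertical isomorphism in the displayed diagram, and commutativity of that column identifies $K/\ell^r \to Z/\ell^r$ with $D_r$. The resulting isomorphism is $\phi_r$.

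The argument for $\psi_r$ is formally identical with $D_H$ in place of $D_K$. The only point at which a different input is required is the identification $H/\ell^r \cong H_r$, replacing the Kummer isomorphism. This follows from the Bockstein sequence (\ref{ax:Bockstein}), whose failure to give an isomorphism is measured by $H^2(x,\Z_\ell(1))[\ell^r]$; by Lemma \ref{lem:torsionfree} applied with $i=2$, this group vanishes. I do not expect a genuine obstacle here: the proof is essentially bookkeeping once the torsion-freeness inputs are correctly assembled. The only mild subtlety is to interpret the quotients in the statement via the convention $G/H = \coker(H \to G)$ from Section 2, so that $\ker D_K$ and $\ker D_H$, although not literally subgroups of $\ker D_r$, are mapped into it through $K/\ell^r \cong H_r$ and $H/\ell^r \cong H_r$ respectively.
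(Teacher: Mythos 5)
Your proposal is correct and takes essentially the same route as the paper: your Tor long exact sequences are just the snake lemma applied to the displayed diagram, with the same inputs (torsion-freeness of $\bigoplus_{x\in X^{(i)}}[x]\Z_\ell$, surjectivity of the reduction maps coming from (\ref{ax:Hilbert90}), and the identification $\bigoplus_x\kappa(x)^\ast/(\kappa(x)^\ast)^{\ell^r}\cong\bigoplus_x H^1(x,\mu_{\ell^r}^{\otimes 1})$ that the paper's diagram records), combined with the presentations of $\CH^i(X)_{\Z_\ell}$ and $A^i(X)_{\Z_\ell}$ from Lemma \ref{lem:CHi} and Definition \ref{def:Ai}. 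Your derivation of $H^1(x,\Z_\ell(1))/\ell^r\cong H^1(x,\mu_{\ell^r}^{\otimes 1})$ via the Bockstein sequence together with Lemma \ref{lem:torsionfree} is a harmless variant of the paper's direct appeal to (\ref{ax:Hilbert90}).
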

\begin{proof}
Note that the first arrow in the third row of the above diagram is injective, while the last arrows in the first two rows are surjective by property (\ref{ax:Hilbert90}).
The result is therefore an immediate consequence of the snake lemma and the presentation of $ \CH^i(X)_{\Z_\ell}$ in Lemma \ref{lem:CHi}, respectively the definition of $A^i(X)_{\Z_\ell}$ in Definition \ref{def:Ai}.
\end{proof}

\subsection{The case of smooth projective varieties over algebraically closed fields}

In this section we assume that $k$ is an algebraically closed field, $\ell$ is a prime that is invertible in $k$ and $\VV$ denotes the category of separated schemes of finite type over $k$.
Let $\mathcal A\subset \Mod_{\Z_\ell}$ be the full subcategory spanned by $ \Z_\ell,\Q_\ell,\Q_\ell/\Z_\ell$ and $\Z/\ell^r$ for all $r\geq 1$.
We further fix the $\ell$-adic  twisted Borel--Moore cohomology theory on $\VV$ with coefficients in $\mathcal A$  given by Proposition \ref{prop:proetale-coho-arbitrary-field}, cf.\ Definitions \ref{def:Borel--Moore-cohomology} and \ref{def:Borel--Moore-cohomology-l-adic}.
We also note that (\ref{ax:homolog=alg-div}) holds true by Proposition \ref{prop:proetale-coho-arbitrary-field}, as $k$ is algebraically closed. 

If $X\in \VV$ is regular and equi-dimensional, then
$$
H^i(X,A(n))\cong H_{cont}^i(X_\et,A(n))\cong H^i(X_\et,A(n))
$$
where the first isomorphism comes from  Lemma \ref{lem:proet:coho=homo} and the second isomorphism uses that $k$ is algebraically closed,  so that continuous \'etale cohomology of algebraic schemes over $k$ coincides with usual \'etale cohomology,  as the $\RR^1 \lim$ term in (\ref{eq:ses-lim-Jannsen}) vanishes in this case by finiteness of the corresponding \'etale cohomology groups, cf.\ \cite{jannsen}.
(As usual,  \'etale cohomology with $\Z_\ell$-coefficients has in the above formula to be understood as inverse limit $\lim H^i(X_{\et},\mu_{\ell^r}^{\otimes n})$ and cohomology with $\Q_\ell$ or $\Q_\ell/\Z_\ell$ coefficients is as usual defined by asking that (\ref{ax:Ql/Zl,Ql}) holds.)

Bloch \cite{bloch-compositio} used Bloch--Ogus theory \cite{BO} and the Weil conjectures, proven by Deligne \cite{deligne}, to construct a map
\begin{align} \label{eq:lambda}
\lambda:\CH^i(X)[\ell^\infty] \longrightarrow H^{2i-1}(X,\Q_{\ell}/\Z_\ell(i)) 
\end{align}
which agrees  with the Abel--Jacobi map on homologically trivial cycles  in the case where $k=\C$, see \cite[Proposition 3.7]{bloch-compositio}.   
To give a description of Bloch's map in the present context, we need the following.

\begin{lemma} \label{lem:weil}
Let $k$ be an algebraically closed field and let $\ell$ be a prime that is invertible in $k$.
Let $X$ be a smooth projective variety over $k$.
Then the image of 
$$
\ker\left(\del\circ \iota_\ast\circ \epsilon:\bigoplus_{x\in X^{(i-1)}} \kappa(x)^\ast \longrightarrow  \bigoplus_{x\in X^{(i)}}[x]\Z_\ell \right) \otimes_\Z \Z_\ell 
$$
via the composition
$$
\bigoplus_{x\in X^{(i-1)}} \kappa(x)^\ast\otimes_\Z \Z_\ell  \stackrel{\epsilon} \longrightarrow  \bigoplus_{x\in X^{(i-1)}} H^1(x,\Z_\ell(1))\stackrel{\iota_\ast }\longrightarrow  H^{2i-1}(F_{i-1}X,\Z_\ell(i))  
$$ 
is torsion.
\end{lemma}
\begin{proof}
Our proof is similar to \cite[Lemma 2.4]{bloch-compositio}  
but we avoid Bloch--Ogus theory.
 
Let $\xi\in \bigoplus_{x\in X^{(i-1)}} \kappa(x)^\ast  \otimes_\Z \Z_\ell$ with
$ 
\del (\iota_\ast (\epsilon(\xi)))=0 
$. 
By Lemma \ref{lem:les}, we get
$$
\iota_\ast (\epsilon(\xi))\in F^i H^{2i-1}(F_{i-1}X,\Z_\ell(i))\cong H^{2i-1}(X,\Z_\ell(i)).
$$

If $k$ is the algebraic closure of a finite field, then $X$ and $\xi$ are both defined over $\F_q$ for some finite field $\F_q\subset k$.
In particular, $X=X_0\times_{\F_q} k$ and the Frobenius $F$ (given by $x\mapsto x^q$ on $X_0$ and by $\id$ on $k$) satisfies
$$
F(\iota_\ast ( \epsilon(\xi)))=\iota_\ast ( \epsilon(\xi^q))=q\cdot \iota_\ast (\epsilon(\xi)) .
$$
Since $X$ is smooth projective, the Weil conjectures \cite{deligne} imply that $q$ cannot appear as an eigenvalue of the action of $F$ on $H^{2i-1}(X,\Q_\ell(i))$ and so $ \iota_\ast ( \epsilon(\xi))$ must be torsion, as claimed.

If $k$ is not the algebraic closure of a finite field, then the result in question follows from spreading out the problem over a finitely generated field, which allows us to specialize to a finite field and so the smooth proper base change theorem yields the result.
This proves the lemma.
\end{proof}

Taking the direct limit of the isomorphisms from Lemma \ref{lem:snake}, we obtain an isomorphism
$$
 \phi :
\CH^i(X)[\ell^\infty] \stackrel{\cong}\longrightarrow  \frac{\ker\left(\del\circ \iota_\ast:\bigoplus_{x\in X^{(i-1)}} H^1( x,\Q_\ell/\Z_\ell(1))  \to \bigoplus_{x\in X^{(i)}} [x] \Q_\ell/\Z_\ell \right) }{ \ker\left(\del\circ \iota_\ast\circ \epsilon:\bigoplus_{x\in X^{(i-1)}} \kappa(x)^\ast\otimes_\Z \Q_\ell \to  \bigoplus_{x\in X^{(i)}}[x]\Q_\ell \right)  } .
$$

\begin{proposition}\label{prop:lambda'}
There is a well-defined map
$$
\lambda':\CH^i(X)[\ell^\infty]\longrightarrow H^{2i-1}(X,\Q_\ell/\Z_\ell(i)),
$$
given by
$$
\lambda'(\phi^{-1}([\xi])):=-\iota_\ast \xi\in F^i H^{2i-1}(F_{i-1}X,\Q_\ell/\Z_\ell(i))\cong H^{2i-1}(X,\Q_\ell/\Z_\ell(i)).
$$ 
\end{proposition}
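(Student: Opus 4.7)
The plan is to verify that the formula $\lambda'(\phi^{-1}([\xi])) := -\iota_\ast \xi$ gives a well-defined element of $H^{2i-1}(X,\Q_\ell/\Z_\ell(i))$. This requires three checks: (i) the class $\iota_\ast\xi$ lies in $F^i H^{2i-1}(F_{i-1}X,\Q_\ell/\Z_\ell(i))$; (ii) this subspace is canonically identified with $H^{2i-1}(X,\Q_\ell/\Z_\ell(i))$; and (iii) the resulting class is independent of the representative $\xi$ of $[\xi]$.

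For (i), I start with $\xi\in\bigoplus_{x\in X^{(i-1)}}H^1(x,\Q_\ell/\Z_\ell(1))$ mapping to $0$ in $\bigoplus_{x\in X^{(i)}}[x]\Q_\ell/\Z_\ell$ under $\del\circ\iota_\ast$. Viewing $\iota_\ast\xi\in H^{2i-1}(F_{i-1}X,\Q_\ell/\Z_\ell(i))$, the Gysin sequence from Lemma \ref{lem:les} (applied with $j=i$) gives $\del(\iota_\ast\xi)=0$ in $\bigoplus_{x\in X^{(i)}}H^0(x,\Q_\ell/\Z_\ell(0))$, so $\iota_\ast\xi$ lies in the image of $H^{2i-1}(F_iX,\Q_\ell/\Z_\ell(i))\to H^{2i-1}(F_{i-1}X,\Q_\ell/\Z_\ell(i))$, which by definition is $F^i H^{2i-1}(F_{i-1}X,\Q_\ell/\Z_\ell(i))$.

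For (ii), Corollary \ref{cor:F^i} already identifies $H^{2i-1}(F_iX,\Q_\ell/\Z_\ell(i))$ with $H^{2i-1}(X,\Q_\ell/\Z_\ell(i))$ since $i\geq\lceil(2i-1)/2\rceil$. To promote this to an identification of $F^i H^{2i-1}(F_{i-1}X,\Q_\ell/\Z_\ell(i))$ with $H^{2i-1}(X,\Q_\ell/\Z_\ell(i))$, I need injectivity of the restriction $H^{2i-1}(F_iX,\Q_\ell/\Z_\ell(i))\to H^{2i-1}(F_{i-1}X,\Q_\ell/\Z_\ell(i))$, which follows from the exact sequence in Lemma \ref{lem:les} since $\bigoplus_{x\in X^{(i)}}H^{-1}(x,A(0))=0$ by (\ref{ax:normalization}). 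Thus $\iota_\ast\xi$ corresponds to a well-defined class in $H^{2i-1}(X,\Q_\ell/\Z_\ell(i))$.

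The main content is (iii), and this is where Lemma \ref{lem:weil} enters. Suppose $\xi_1,\xi_2$ both represent $[\xi]$; then $\xi_1-\xi_2$ is the image under $\bigoplus\kappa(x)^\ast\otimes_\Z\Q_\ell\xrightarrow{\epsilon}\bigoplus H^1(x,\Q_\ell(1))\to\bigoplus H^1(x,\Q_\ell/\Z_\ell(1))$ of some $\eta\in\bigoplus\kappa(x)^\ast\otimes_\Z\Q_\ell$ with $\del\circ\iota_\ast\circ\epsilon(\eta)=0$ in $\bigoplus_{x\in X^{(i)}}[x]\Q_\ell$. Clearing denominators, I write $\eta=\eta'/\ell^r$ with $\eta'\in\bigoplus\kappa(x)^\ast\otimes_\Z\Z_\ell$. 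Since $\bigoplus_{x\in X^{(i)}}[x]\Z_\ell$ is $\Z_\ell$-torsion-free, the vanishing of $\del\circ\iota_\ast\circ\epsilon(\eta')$ in $\bigoplus_{x\in X^{(i)}}[x]\Q_\ell$ already implies its vanishing in $\bigoplus_{x\in X^{(i)}}[x]\Z_\ell$. Lemma \ref{lem:weil} then tells me that $\iota_\ast\epsilon(\eta')\in F^iH^{2i-1}(F_{i-1}X,\Z_\ell(i))\cong H^{2i-1}(X,\Z_\ell(i))$ is torsion, hence becomes zero in $H^{2i-1}(X,\Q_\ell(i))$, and therefore $\iota_\ast\epsilon(\eta)=\iota_\ast\epsilon(\eta')/\ell^r$ vanishes in $H^{2i-1}(X,\Q_\ell(i))$ as well. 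Its image $\iota_\ast(\xi_1-\xi_2)$ in $H^{2i-1}(X,\Q_\ell/\Z_\ell(i))$ is therefore zero, proving well-definedness. The hard part is really just (iii), and the essential input is Lemma \ref{lem:weil} (which packages the Weil conjectures and a specialization argument); the remainder is a formal consequence of the Gysin sequence.
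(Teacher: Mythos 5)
Your proof is correct and takes essentially the same route as the paper: the only substantive input is Lemma \ref{lem:weil}, which makes the ambiguity class $\iota_\ast(\epsilon(\eta'))$ torsion in $F^iH^{2i-1}(F_{i-1}X,\Z_\ell(i))\cong H^{2i-1}(X,\Z_\ell(i))$ and hence zero after passing to $\Q_\ell$- and thus $\Q_\ell/\Z_\ell$-coefficients, while the membership in $F^i$ and the identification $F^iH^{2i-1}(F_{i-1}X,-)\cong H^{2i-1}(X,-)$ are the same Gysin-sequence formalities the paper relies on. Your explicit clearing of denominators, reducing the $\Q_\ell$-coefficient ambiguity to the $\Z_\ell$-form of Lemma \ref{lem:weil}, merely spells out what the paper's two-line argument leaves implicit.
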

\begin{proof}
The natural map
$$
H^{2i-1}(X, \Z_\ell( i))_{\tors}\longrightarrow \lim_{\substack{\longrightarrow \\ r}} H^{2i-1}(X,\mu_{\ell^r}^{\otimes i})\cong H^{2i-1}(X,\Q_{\ell}/\Z_\ell( i))
$$
is zero.
  Lemma \ref{lem:weil} thus implies that for any $\xi\in \bigoplus_{x\in X^{(i-1)}} \kappa(x)^\ast\otimes_\Z \Q_\ell $ with $\del(\iota_\ast(\epsilon(\xi)))=0$,  
$$
\iota_\ast(\epsilon(\xi))\in F^i H^{2i-1}(F_{i-1}X,\Q_\ell/\Z_\ell(i))\cong H^{2i-1}(X,\Q_\ell/\Z_\ell(i))
$$
vanishes.
This concludes the proof. 
\end{proof}

The minus sign in Proposition \ref{prop:lambda'} is necessary to make our definition compatible with $\lambda^i_{tr}$ defined in Section \ref{subsec:AJ_tr}; a similar sign issue was noticed by Bloch, see \cite[p.\ 112]{bloch-compositio}.

\begin{lemma}\label{lem:deligne-cycle-map}
The map $\lambda'$ constructed above coincides with the map (\ref{eq:lambda}) constructed by Bloch: $\lambda=\lambda'$. 
\end{lemma}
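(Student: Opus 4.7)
The plan is to show $\lambda=\lambda'$ by unpacking both constructions on an explicit representative of an $\ell^r$-torsion cycle and observing that they reduce to the same snake-lemma boundary followed by Gysin pushforward.

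First, I would make the snake-lemma isomorphism $\phi_r$ from Lemma \ref{lem:snake} concrete: given $z\in \CH^i(X)[\ell^r]$ with representative $z'=\sum n_x[x]\in \bigoplus_{x\in X^{(i)}}[x]\Z_\ell$, rational equivalence $\ell^r z' = \del\circ \iota_\ast\circ \epsilon(f)$ produces $f\in \bigoplus_{x\in X^{(i-1)}}\kappa(x)^\ast\otimes_\Z\Z_\ell$, and $\phi_r(z)$ is the class of $\bar\epsilon(f\bmod \ell^r)\in \bigoplus_x H^1(x,\mu_{\ell^r}^{\otimes 1})$. Passing to the direct limit, $\phi(z)=[\xi]$ where $\xi$ is the image of $\epsilon(f)/\ell^r\in \bigoplus_x H^1(x,\Q_\ell/\Z_\ell(1))$, so by Proposition \ref{prop:lambda'},
\[
\lambda'(z) = -\iota_\ast \xi \in F^i H^{2i-1}(F_{i-1}X,\Q_\ell/\Z_\ell(i))\cong H^{2i-1}(X,\Q_\ell/\Z_\ell(i)).
\]

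Second, I would recall Bloch's construction of $\lambda$ from \cite[\S2--3]{bloch-compositio}. Starting from the same data $\ell^r z' = \div(f)$, Kummer theory associates to $f$ the class $[f]\in\bigoplus_x H^1(x,\mu_{\ell^r}^{\otimes 1})$; the vanishing of $\partial(\iota_\ast[f])$ (equivalent to $\ell^r z'=\div(f)$) allows one to regard its Gysin pushforward as living in $F^i H^{2i-1}(F_{i-1}X,\mu_{\ell^r}^{\otimes i})\cong H^{2i-1}(X,\mu_{\ell^r}^{\otimes i})$. Bloch's map sends $z$ (up to sign) to the image of $\iota_\ast[f]$ in $H^{2i-1}(X,\Q_\ell/\Z_\ell(i))$ after taking the limit in $r$.

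Third, with both maps written out, the equality is manifest: both use the Kummer map $\bar\epsilon$ (which equals $\epsilon\bmod \ell^r$ by the Kummer sequence underlying (\ref{ax:Hilbert90})) to lift $f$ to cohomology, then the same Gysin pushforward $\iota_\ast$ on pro-\'etale (equivalently \'etale, by Lemma \ref{lem:proet:coho=homo}) cohomology of smooth varieties. The sign in $\lambda'$ was inserted precisely so that it matches Bloch's convention, cf.\ the sign comment on \cite[p.~112]{bloch-compositio}.

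The main obstacle will be reconciling Bloch's use of the Bloch--Ogus spectral sequence with our direct snake-lemma presentation: one must verify that the boundary of the Bloch--Ogus spectral sequence used by Bloch to single out $[f]$ from a torsion class agrees with the snake-lemma boundary in the commutative diagram preceding Lemma \ref{lem:snake}. This follows by tracing both through the Gersten-type resolution of $\mu_{\ell^r}^{\otimes i}$ and using functoriality of Gysin maps in (\ref{ax:Gysin}), together with Lemma \ref{lem:proet:coho=homo} to identify pro-\'etale and \'etale cohomology on the smooth open subsets used in the direct limit defining $H^i(x,A(n))$. Once this dictionary is set up, the claimed equality $\lambda=\lambda'$ reduces to the equality of two pushforward classes with identical representatives.
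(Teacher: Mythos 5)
Your overall plan---unpack both $\lambda$ and $\lambda'$ on an explicit representative of an $\ell^r$-torsion cycle, show both are a snake-lemma boundary followed by Gysin pushforward of a Kummer class, then match signs---is exactly the comparison the paper has in mind. However, the step you flag as the ``main obstacle'' (reconciling Bloch's Gersten/Bloch--Ogus setup with the direct $\del\circ\iota_\ast$ diagram preceding Lemma \ref{lem:snake}) is left at the level of ``follows by tracing through functoriality of Gysin maps,'' which is not quite precise enough. The paper reduces this entire comparison to a single stated ingredient, Lemma \ref{lem:del=delta}: the commutative diagram identifying the residue map at the generic point of the normalization $\tilde W\to W\subset X$ with the composition $\del\circ\iota_\ast$ on $X$ itself. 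Bloch's diagram (2.2) in \cite{bloch-compositio} is phrased via normalizations of closed subvarieties (the Gersten picture), whereas the present construction is phrased intrinsically via $\del\circ\iota_\ast$ on $F_{j}X$; Lemma \ref{lem:del=delta} is exactly the dictionary making those two presentations of the residue agree column by column, after which the two snake-lemma boundaries $\phi_r$ and Bloch's map are literally the same map. So the content is not ``manifest from functoriality of Gysin'' as stated---it is Lemma \ref{lem:del=delta}, which you should cite in place of that paragraph. Once that is inserted, the rest of your argument (the sign convention on \cite[p.~112]{bloch-compositio}, the identification of pro-\'etale with \'etale cohomology on smooth opens via Lemma \ref{lem:proet:coho=homo}, and passing to the limit over $r$) is correct and matches the paper.
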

\begin{proof} 
This follows directly from Lemma \ref{lem:del=delta} by comparing our construction with Bloch's construction via diagram (2.2) in \cite{bloch-compositio}, where we recall that Bloch included the minus sign in \cite[p.\ 112]{bloch-compositio}. 
\end{proof}

As mentioned above, since $k$ is algebraically closed,  (\ref{ax:homolog=alg-div}) holds true.
Lemma \ref{lem:AiX} thus implies 
$$
\Grifftilde^i(X)_{\Z_\ell}=\Griff^i(X)\otimes_{\Z}{\Z_\ell}
$$
is the group of homologically trivial codimension $i$ cycles with coefficients in $\Z_\ell$ modulo algebraic equivalence.
In particular, $\Grifftilde^i(X)[\ell^\infty]=\Griff^i(X)[\ell^\infty]$ is the group of classes in $\Griff^i(X)$ that are annihilated by some power of $\ell$.

\begin{proposition} \label{prop:lambda=lambda_tr}
Let $k$ be an algebraically closed field and let $X$ be a smooth projective variety over $k$.
The map
$$
\lambda^i_{tr}: \Griff^i(X)[\ell^\infty]\longrightarrow  \frac{H^{2i-1} (X,\Q_\ell/\Z_\ell(i))}{N^{i-1}H^{2i-1} (X,\Q_\ell(i) )}
$$
constructed in Section  \ref{subsec:AJ_tr} is induced by Bloch's map in (\ref{eq:lambda}) and hence agrees with the transcendental Abel--Jacobi map if $k=\C$.
\end{proposition}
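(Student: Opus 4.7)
The plan is to first apply Lemma~\ref{lem:deligne-cycle-map} to identify Bloch's map $\lambda$ with the map $\lambda'$ constructed in Proposition~\ref{prop:lambda'}, and then to verify the commutativity of
\[
\xymatrix{
\CH^i(X)[\ell^\infty]\cap \ker(\cl_X^i) \ar[r]^-{\lambda'} \ar[d] & H^{2i-1}(X,\Q_\ell/\Z_\ell(i)) \ar[d]^{\pi} \\
\Grifftilde^i(X)[\ell^\infty] \ar[r]^-{\lambda_{tr}^i} & H^{2i-1}(X,\Q_\ell/\Z_\ell(i))/N^{i-1}H^{2i-1}(X,\Q_\ell(i))
}
\]
by tracing through both constructions on the same datum. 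Given $[z]\in \Grifftilde^i(X)[\ell^r]$, lift it to $[\tilde z]\in \CH^i(X)[\ell^r]\cap \ker(\cl_X^i)$ and pick a representative $\tilde z$ with $\iota_\ast \tilde z=0$. By Lemma~\ref{lem:snake} we may write $[\tilde z]=\phi_r^{-1}([\bar \xi])$ for some $\bar\xi\in \bigoplus_{x\in X^{(i-1)}} H^1(x,\mu_{\ell^r}^{\otimes 1})$ with $\del(\iota_\ast\bar\xi)=0$, and Hilbert~90 (property~(\ref{ax:Hilbert90})) supplies a lift $\xi\in \bigoplus H^1(x,\Z_\ell(1))$ which may be chosen so that $\ell^r\tilde z=\del(\iota_\ast\xi)$ in $\bigoplus [x]\Z_\ell$. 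Proposition~\ref{prop:lambda'} then gives
\[
\lambda'([\tilde z])\;=\;-\iota_\ast \bar\xi\;\in\;F^iH^{2i-1}(F_{i-1}X,\mu_{\ell^r}^{\otimes i})\;\cong\; H^{2i-1}(X,\mu_{\ell^r}^{\otimes i})\;\hookrightarrow\; H^{2i-1}(X,\Q_\ell/\Z_\ell(i)),
\]
where the isomorphism comes from Corollary~\ref{cor:F^i} and the inclusion from (\ref{ax:Ql/Zl,Ql}).

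With the same $\tilde z$ and $\xi$, I will compute $\lambda_{tr}^i([z])$. By Lemma~\ref{lem:les} one picks $\alpha\in H^{2i-1}(F_{i-1}X,\Z_\ell(i))$ with $\del\alpha=\tilde z$; then $\del(\ell^r\alpha-\iota_\ast\xi)=\ell^r\tilde z-\del(\iota_\ast\xi)=0$, so by Lemma~\ref{lem:les} and Corollary~\ref{cor:F^i} there exists $\beta\in H^{2i-1}(X,\Z_\ell(i))$ with $\beta=\ell^r\alpha-\iota_\ast\xi$ in $F^iH^{2i-1}(F_{i-1}X,\Z_\ell(i))$. The defining formula (\ref{def:lambda-2}) gives $\lambda_{tr}^i([z])=[\beta/\ell^r]$. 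Reducing the identity $\beta=\ell^r\alpha-\iota_\ast\xi$ modulo $\ell^r$ in $F^iH^{2i-1}(F_{i-1}X,\mu_{\ell^r}^{\otimes i})$ kills the $\ell^r\alpha$ term and leaves $\bar\beta=-\iota_\ast\bar\xi$; combined with the compatibility of the Bockstein sequence~(\ref{ax:Bockstein}) with the coefficient isomorphisms~(\ref{ax:Ql/Zl,Ql}), the image of $\beta/\ell^r\in H^{2i-1}(X,\Q_\ell(i))$ in $H^{2i-1}(X,\Q_\ell/\Z_\ell(i))$ equals the natural image of $\bar\beta=-\iota_\ast\bar\xi$, that is, $\lambda_{tr}^i([z])=\pi\circ\lambda'([\tilde z])$, as required. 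The identification with Griffiths' transcendental Abel--Jacobi map for $k=\C$ then follows directly from Bloch's \cite[Proposition~3.7]{bloch-compositio}.

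The main obstacle is the bookkeeping: harmonizing the minus signs appearing in Proposition~\ref{prop:lambda'} and in (\ref{def:lambda-2}) with the snake connecting map from Lemma~\ref{lem:snake} (which produces the representative $\del(\iota_\ast\xi)/\ell^r$), and keeping track of the identifications $F^iH^{2i-1}(F_{i-1}X,A(i))\cong H^{2i-1}(X,A(i))$ from Corollary~\ref{cor:F^i} across the coefficient systems $A\in\{\Z_\ell,\mu_{\ell^r}^{\otimes i},\Q_\ell,\Q_\ell/\Z_\ell\}$. Each individual verification is routine, but coordinating all sign conventions and functorialities is the delicate part.
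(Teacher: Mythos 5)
Your proposal is correct and takes essentially the same route as the paper: identify Bloch's $\lambda$ with $\lambda'$ via Lemma \ref{lem:deligne-cycle-map}, then run both constructions on the same data and compare through the relation $\beta=\ell^r\alpha-\iota_\ast\xi$ in $F^iH^{2i-1}(F_{i-1}X,\Z_\ell(i))$ after passing to $\Q_\ell/\Z_\ell$-coefficients. The only cosmetic differences are that you lift $[z]$ to an $\ell^r$-torsion class of $\CH^i(X)_{\Z_\ell}$ and use $\phi_r$, whereas the paper stays at the level of $A^i(X)[\ell^r]$ with $\psi_r$, and that you conclude by reducing the relation modulo $\ell^r$ while the paper divides it by $\ell^r$ and uses that the integral class $\alpha$ dies in $\Q_\ell/\Z_\ell$-cohomology.
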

\begin{proof}
If $k=\C$, then Bloch's map agrees with the Abel--Jacobi map on torsion cycles, see \cite[Proposition 3.7]{bloch-compositio}.
It thus suffices to show that $\lambda_{tr}^i$ from Section  \ref{subsec:AJ_tr} is induced by Bloch's map in (\ref{eq:lambda}).
For this, let $z\in \bigoplus_{x\in X^{(i)}}[x]\Z_\ell$ be a homologically trivial cycle.
Then $\del \alpha=z$ for some $\alpha\in H^{2i-1}(F_{i-1}X,\Z_\ell(i))$.
Assume that $z$ is $\ell^r$-torsion modulo algebraic equivalence.
As in Section \ref{subsec:AJ_tr}, we find classes $\beta\in H^{2i-1}(X,\Z_\ell(i))$ and  $\xi\in \bigoplus_{x\in X^{(i-1)}}H^1(x,\Z_\ell(i))$ with
$$
\beta=\ell^r \cdot \alpha-\iota_\ast \xi\in F^i  H^{2i-1}(F_{i-1}X,\Z_\ell(i)) .
$$
In particular, $\del\circ  \iota_\ast (\xi) /\ell^r=z$ and so $\psi_r([z])=[\xi/\ell^r]$, where $\psi_r$ is the isomorphism from Lemma \ref{lem:snake}.
By our construction of Bloch's map, we thus find
$$
\lambda([z])=\lambda'([z])=-\iota_\ast\xi/\ell^r \in F^i  H^{2i-1}(F_{i-1}X,\Q_\ell/\Z_\ell(i))\cong H^{2i-1}(X,\Q_\ell/\Z_\ell(i)).
$$
On the other hand,
$$
\lambda^i_{tr}([z])=[\beta/\ell^r] \in H^{2i-1}(X,\Q_\ell/\Z_\ell(i))/N^{i-1}H^{2i-1}(X,\Q_\ell(i))
$$
by our construction of $\lambda^i_{tr}$ in Section \ref{subsec:AJ_tr}.
The result thus follows from the fact that
$$
\beta/\ell^r+\iota_\ast\xi/\ell^r =\alpha=0\in  H^{2i-1}(F_{i-1}X,\Q_\ell/\Z_\ell(i)) ,
$$
because $\alpha$ is an integral class.
This proves the proposition.
\end{proof}

\section{Proof of main results ($\ell$-adic)}  

\subsection{$\ell$-adic twisted Borel--Moore cohomology} \label{subsec:notation-proetal}
Fix a field $k$ and a prime $\ell$ that is invertible in $k$.
Let $\VV$ be the category whose objects are separated schemes of finite type over $k$ and such that the morphisms are given by open immersions of schemes of the same dimension.
This is a constructible category of Noetherian schemes as in Definition \ref{def:V}.
Let $\mathcal A\subset \Mod_{\Z_\ell}$ be the full subcategory with objects  $\Z_\ell,\Q_\ell,\Q_\ell/\Z_\ell$ and $\Z/\ell^r$ for all $r\geq 1$.
By Proposition \ref{prop:proetale-coho-arbitrary-field}, $\ell$-adic pro-\'etale Borel--Moore cohomology $H^\ast(-,A(n))$ as defined in (\ref{eq:H^i(X,mu-ell)})--(\ref{eq:H^i(X,Q/Z-ell)}) is a  twisted Borel--Moore cohomology theory on $\VV$ with coefficients in $\mathcal A$ which is $\ell$-adic if $k$ is perfect, see Definitions \ref{def:Borel--Moore-cohomology} and \ref{def:Borel--Moore-cohomology-l-adic}.
In particular, all results from Section \ref{sec:def} and \ref{sec:comparison-thm} hold true in this set-up.
Here we recall that for $i\geq 1$:
\begin{align} \label{eq:Ai-main}
A^i(X)_{\Z_\ell}=\frac{\CH^i(X)_{\Z_\ell}}{N^{i-1}\CH^i(X)_{\Z_\ell}} \ \ \text{and}\ \ N^jA_0^i(X)_{\Z_\ell}=N^jA^i(X)_{\Z_\ell}=\frac{N^j\CH^i(X)_{\Z_\ell}}{N^{i-1}\CH^i(X)_{\Z_\ell} }
\end{align} 
for $0\leq j\leq i-1$, see Lemma \ref{lem:Ai-vs-Ni-1}.  
Moreover, Lemma \ref{lem:AiX} implies that
 $\Grifftilde^i(X)_{\Z_\ell} = \Griff ^i(X)_{\Z_\ell}  $ if $k$ is algebraically closed and $\Grifftilde^i(X)_{\Z_\ell} = \ker(\cl_X^i)\subset \CH^i(X)_{\Z_\ell}$ if $k$ is the perfect closure of a finitely generated field.
 
 \begin{lemma} \label{lem:cl=Jannsen}
 In the above notation,  assume in addition that $X$ is smooth and equi-dimensional.
 Then the cycle class map 
 $$
 \cl_X^i:\CH^i(X)_{\Z_\ell}\longrightarrow H^{2i}(X,\Z_\ell(i)),
 $$ 
 constructed in (\ref{eq:cli}) coincides with Jannsen's cycle class map in continuous \'etale cohomology from \cite{jannsen}.
 \end{lemma}
 \begin{proof}
By Lemma \ref{lem:Chow-inseparable-extension} and the topological invariance of the pro-\'etale topos (see \cite[Lemma 5.4.2]{BS}), we may replace $k$ by its perfect closure and assume that $k$ is perfect.
 Let $X$ be a smooth variety over $k$.
 By Lemma \ref{lem:proet:coho=homo},  $H^\ast(X,A(n))$ agrees with the corresponding continuous \'etale cohomology group.
 The cycle class map in (\ref{eq:cli}) is defined via the Gysin pushforward, where one uses excision to reduce to the case of a cycle whose support is smooth.
Our claim thus follows from \cite[Remark 3.24]{jannsen}.
 \end{proof}

\subsection{Proof of Theorem \ref{thm:singular}}

\begin{proof}[Proof of Theorem \ref{thm:singular}]
We use the notation from Section \ref{subsec:notation-proetal}
and claim that it  suffices to prove Theorem \ref{thm:singular} after replacing $k$ by its perfect closure $k^{per}$.
Indeed, this does not change $\ell$-adic Chow groups by Lemma \ref{lem:Chow-inseparable-extension} and it does not change the (pro-)\'etale topos (see \cite[Lemma 5.4.2]{BS}), so that  $H^\ast(-,A(n))$ remains unchanged by passing from $k$ to $k^{per}$.

We may and will thus from now on assume that $k$ is perfect, so that $H^\ast(-,A(n))$ is an $\ell$-adic   twisted Borel--Moore cohomology theory on $\VV$ by Proposition \ref{prop:proetale-coho-arbitrary-field}.
For any $X\in \VV$, we thus get a cycle class map 
$$
\cl^i_X:\CH^i(X)_{\Z_\ell}:=\CH^i(X)\otimes_{\Z}{\Z_\ell}\longrightarrow H^{2i}(X,\Z_\ell(i)),
$$
constructed in Section \ref{subsec:cli}.
If $X$ is smooth and equi-dimensional, then, by Lemma \ref{lem:proet:coho=homo}, $H^i(X,\Z_\ell(n))\cong H^i_{cont}(X_\et,\Z_\ell(n))$ agrees with Jannsen's $\ell$-adic continuous \'etale cohomology groups, see Section \ref{subsec:jannsen}. 
It follows from the construction of $\cl^i_X$ via the Gysin sequence (see (\ref{ax:Gysin})) that if $X$ is a smooth variety, then $\cl^i_X$ agrees with Jannsen's cycle class map (see Lemma \ref{lem:cl=Jannsen}).

Recall from Definitions \ref{def:Ai} and \ref{def:Griff} the groups $A^i(X)_{\Z_\ell}$ and $A_0^i(X)_{\Z_\ell}$ and recall the description from (\ref{eq:Ai-main}) (that we used as definition in the introduction).

Item (\ref{item:coker-proet}) in Theorem \ref{thm:singular} is then a consequence of Theorem \ref{thm:IHC} and Proposition \ref{prop:Griff}.

By Section \ref{subsec:AJ_tr}, there is a map
$$
\lambda_{tr}^i:\Grifftilde^i(X)[\ell^\infty]\longrightarrow  H^{2i-1}(X,\Q_\ell/\Z_\ell(i))/N^{i-1}H^{2i-1}(X,\Q_\ell(i) ) ,
$$
where $N^jH^i(X,A(n)):=\ker(H^i(X,A(n))\to H^i(F_{j-1}X,A(n)))$.
If $k$ is algebraically closed and $X$ is smooth projective, then this map agrees by Proposition \ref{prop:lambda=lambda_tr} with Bloch's transcendental Abel--Jacobi mapping on torsion cycles from \cite{bloch-compositio} (cf.\ Section \ref{sec:lambda}).
Item (\ref{item:lambda-thm:singular}) thus follows from Theorem \ref{thm:Griff_tors} and Proposition \ref{prop:im-lambda}.
This concludes the proof of Theorem \ref{thm:singular}.
\end{proof}

\subsection{Proof of Theorem \ref{thm:green-ell-adic-arbitrary-field}}

The following lemma shows that in our set-up, the result of Theorem \ref{thm:ker-lambda} holds true whenever $k$ contains all $\ell$-roots of unity.

\begin{lemma} \label{lem:voev-torsionfree}
In the notation of Section \ref{subsec:notation-proetal},  the following holds.
Then for any $X\in \VV$ and any $i$ and $n$, $H^i(F_0X,\Z_\ell(n))$ is torsion-free if one of the following conditions holds:
\begin{enumerate}
\item $n=i-1$; \label{item:n=i-1}
\item $k$ contains all $\ell$-power roots of unity.
\end{enumerate}
\end{lemma}
\begin{proof}
By additivity of the cohomology functor (see Lemma \ref{lem:XsqcupY}), we may assume that $X$ is irreducible with generic point $\eta_X\in X$.
If $k$ contains all $\ell$-power roots of unity, $H^i(F_0X,\Z_\ell(n))\cong H^i(F_0X,\Z_\ell(i-1))$ for all $i$ and $n$.
It thus suffices to prove the lemma under assumption (\ref{item:n=i-1}).
Since $X$ is irreducible,  $H^i(F_0X,\Z_\ell(i-1))= H^i(\eta_X,\Z_\ell(i-1))$ and so the claim follows from Remark \ref{rem:bloch-kato} and Voevodsky's proof of the Bloch--Kato conjecture \cite{Voe:Bloch-Kato}.
\end{proof}

By (\ref{def:bar-lambda_j,tr}) there is a higher Abel--Jacobi mapping
\begin{align}\label{def:bar-lambda_j,tr-2}
\bar \lambda_{j,tr}^i:N^j\Grifftilde^i(X)[\ell^{\infty}]\longrightarrow \overline{J}^i_{j,tr}(X)[\ell^{\infty}]
\end{align}
where $N^\ast$ denotes the coniveau filtration on $\Grifftilde^i(X)[\ell^{\infty}]$ from Section \ref{subsec:coniveau}.

\begin{theorem} \label{thm:green-ell-adic-arbitrary-field-main}
Let $X$ be a separated scheme of finite type over $k$.
Let $i\geq 2$ and assume that one of the following holds:
\begin{enumerate}
\item  $k$ contains all  $\ell$-power roots of unity, or
\item $i=2$.
\end{enumerate} 
Then for all $0\leq j\leq i-2$, we have
$$
N^{j+1}\Grifftilde^i(X)[\ell^{\infty}]=\ker\left(
\bar \lambda_{j,tr}^i:N^j\Grifftilde^i(X)[\ell^{\infty}]\to  \overline{J}^i_{j,tr}(X)[\ell^{\infty}] \right) .
$$
\end{theorem}
\begin{proof}
We aim to apply  Theorem \ref{thm:ker-lambda}.
To this end we need to ensure that for any closed subscheme $Z\subset X$,  $H^{2(i-j)-1}(F_0Z,\Z_\ell(i-j))$ is torsion free.
By Lemma  \ref{lem:voev-torsionfree}, this condition is satisfied if $k$ contains all  $\ell$-power roots of unity, or if $i=2$ and $j=0$.
This concludes the proof.
\end{proof}

We are now in position to proof Theorem \ref{thm:green-ell-adic-arbitrary-field}, which follows from the following slightly stronger result.

\begin{theorem} \label{thm:green-ell-adic-arbitrary-field-body}
Let $X$ be a separated scheme of finite type over a field $k$ and let $\ell$ be a prime invertible in $k$.
Let $i\geq 2$ and assume that one of the following holds:
\begin{enumerate}
\item  $k$ contains all  $\ell$-power roots of unity, or
\item $i=2$.
\end{enumerate} 
Then for all $0\leq j\leq i-2$, we have
$$
N^{j+1}\CH^i(X)[\ell^\infty]=\ker \left(\overline \lambda_{j,tr}^i:N^j\CH^i(X)[\ell^\infty]\to \overline{J}^i_{j,tr}(X)[\ell^\infty] \right).
$$ 
\end{theorem}

\begin{proof} 
By (\ref{eq:Ai-main}), $$
\Grifftilde^i(X)[\ell^{\infty}]=N^0\CH^i(X)_{\Z_\ell}/N^{i-1}\CH^i(X)_{\Z_\ell}.
$$
The higher transcendental Abel--Jacobi mapping from (\ref{def:bar-lambda_j,tr}) thus yields for $0\leq j\leq i-2$ mappings
$$
\bar \lambda_{j,tr}^i:N^j\CH^i(X)[\ell^\infty]\longrightarrow \overline{J}^i_{j,tr}(X)[\ell^\infty]=\lim_{\substack{\longrightarrow \\ Z\subset X}} \frac{H^{2i-2j-1}(Z,\Q_\ell/\Z_\ell(i-j))}{N^1H^{2i-2j-1}(Z,\Q_\ell(i-j) )}
$$
where $Z\subset X$ runs through all closed subschemes with $j=\dim X-\dim Z$.
Theorem \ref{thm:green-ell-adic-arbitrary-field-main} then implies that for $0\leq j\leq i-2$, the kernel of the above map is given by $N^{j+1}\CH^i(X)[\ell^\infty]$, as we want.
\end{proof}

\subsection{Applications of Theorem \ref{thm:green-ell-adic-arbitrary-field} and \ref{thm:green-ell-adic-arbitrary-field-body}}

The simplest (non-trivial) consequence of Theorem \ref{thm:green-ell-adic-arbitrary-field-body} is as follows. 

\begin{corollary} \label{cor:lambda^2}
Let $X$ be a smooth equi-dimensional algebraic  scheme over a finitely generated field $k$.
Let $\ell$ be a prime invertible in $k$ and let $\CH^i_{0}(X)[\ell^\infty]$ denote the group of $\ell$-power torsion cycles with trivial cycle class in Jannsen's continuous $\ell$-adic \'etale cohomology.
Then there is a canonical injection
$$
\lambda_{tr}^2:\CH^2_{0}(X)[\ell^\infty]\hookrightarrow H^3_{cont}(X_{\et},\Q_\ell/\Z_\ell(2))/N^1H^3_{cont}(X_{\et},\Q_\ell(2)) .
$$
\end{corollary}

Corollary \ref{cor:lambda^2} should be compared to a result of Merkurjev--Suslin \cite[\S 18]{MS}, who showed that Bloch's Abel--Jacobi mapping   on $\ell$-power  torsion cycles on smooth projective varieties  over algebraically closed fields \cite{bloch-compositio} is injective on codimension 2 cycles.  
Corollary \ref{cor:lambda^2} has previously been proven in the particular case where $k=\mathbb F_q$ is a finite field and $X$ is smooth projective in \cite[Th\'eor\`eme 4]{CTSS} (in fact, loc. cit. proves that $\CH^2_{0}(X)[\ell^\infty]=0$ in this case; this also follows from our set-up, see Proposition \ref{prop:im-lambda} and note that $H^3_{cont}(X_{\et},\Q_\ell(2))=H^3 (X_{\et},\Q_\ell(2))=0$ for weight reasons, cf.\ \cite[p.\ 780-781]{CTSS}).

\begin{proof}[Proof of Corollary \ref{cor:lambda^2}]
By the same argument as in the proof of Theorem \ref{thm:singular}, we may replace $k$ by its perfect closure and hence assume that $k$ is the perfect closure of a finitely generated field.
The result then follows from Theorem \ref{thm:green-ell-adic-arbitrary-field} (or Theorem \ref{thm:Griff_tors}) together with the fact that $A^2_0(X)_{\Z_\ell}=\CH_0^2(X)_{\Z_\ell} $ if the ground field $k$ is (the perfect closure of a) finitely generated, see Proposition \ref{prop:proetale-coho-arbitrary-field} and Lemma \ref{lem:AiX}.
\end{proof}

Let $X$ and $Y$ be smooth projective equi-dimensional $k$-schemes.
A cycle $\Gamma \in \CH^{\dim X}(X\times Y)$  yields actions on Chow groups that are compatible with the cycle class maps in continuous \'etale cohomology.
Hence we get an action $\Gamma_\ast : N^0\CH^i(X)[\ell^\infty] \to N^0\CH^i(Y)[\ell^\infty]$.
Similarly, there are actions $\Gamma_\ast : H^i_{cont}(X_\et,A(n))\to H^i_{cont}(Y_\et,A(n))$  for $A\in \{ \Q_\ell/\Z_\ell,\Q_\ell\}$.
The latter respect Grothendieck's coniveau  filtration $N^\ast$, as can be checked with the help of a moving lemma (see e.g.\ \cite[Theorem 2.13]{levine}).
We conclude that correspondences act on source and target of the map
$$
\lambda_{tr}^i:N^0\CH^i(X)[\ell^\infty]\longrightarrow  H^{2i-1}_{cont}(X,\Q_\ell/\Z_\ell(i))/N^{i-1}H^{2i-1}_{cont}(X,\Q_\ell (i))
$$
induced from the map in Section \ref{subsec:AJ_tr}  (cf.\ Lemma \ref{lem:Ai-vs-Ni-1}). 
We will show in  \cite{Sch-preparation} that these actions are compatible with the map $\lambda_{tr}^i$.
The case $i=2$ is simpler, and we give a direct proof in the following lemma.

\begin{lemma} \label{lem:compatible-lambda^2}
Let $X,Y$ be smooth projective equi-dimensional $k$-schemes and let $\Gamma \in \CH^{\dim X}(X\times Y)$.
Then the following diagram commutes:
$$
\xymatrix{
A_0^2(X)[\ell^\infty] \ar[d]^{\Gamma_\ast}\ar[rr]^-{\lambda_{tr}^2}& & H^{3}_{cont}(X,\Q_\ell/\Z_\ell(2))/N^{1}H^{3}_{cont}(X,\Q_\ell (2)) \ar[d]^{\Gamma_\ast} \\
A_0^{2}(Y)[\ell^\infty]  \ar[rr]^-{\lambda_{tr}^{2}}&&  H^{3}_{cont}(Y,\Q_\ell/\Z_\ell(2))/ N^{1}H^{3}_{cont}(Y,\Q_\ell (2)) .
}
$$
\end{lemma}
\begin{proof} 
By the description of $\lambda_{tr}^i$ from Lemma \ref{lem:lambda_tr-alternative}, it suffices to show that the isomorphism
$$
A_0^2(X)_{\Z_\ell}\stackrel{\cong}\longrightarrow H^3_{nr}(X,\Z_\ell(2))/H^3(X,\Z_\ell(2))
$$
from Proposition \ref{prop:Griff} 
is compatible with the action of correspondences.
Since $X$ is smooth and equi-dimensional,  the Gersten conjecture \cite{BO,CTHK} identifies $H^3_{nr}(X,\Z_\ell(2))$ with the global sections of the corresponding Bloch--Ogus sheaf, associated to $U\mapsto H^3_{cont}(U_\et,\Z_\ell(2))$.
This description makes it easy to define an action on unramified cohomology, cf.\  \cite[\S 9,  Appendice]{CTV},  (an action in a more general setting has been constructed by Rost \cite{rost}).
Using this description one readily checks that the above isomorphism is compatible with the action of cycles, which concludes the lemma.
\end{proof}

Corollary \ref{cor:lambda^2} implies for instance the Rost nilpotence conjecture for surfaces up to inverting the exponential characteristic, originally due to Gille \cite{gille-inventiones,gille-crelle} and with an alternative proof due to Rosenschon--Sawant \cite{RS}:

\begin{corollary}\label{cor:rost-1}
Let $X$ be a smooth projective equi-dimensional scheme over a field $k$ with base change $\bar X=X\times_k\bar k$, where $\bar k$ denotes an algebraic closure.
Let $\Gamma\in \CH^{\dim X}(X\times X)$ 
and assume that the base change $\bar \Gamma=\Gamma\times_k\bar k$  acts trivially on $H^i(\bar X_{\et},\Q_\ell/\Z_\ell(2))$ for $i\leq 3$.
Then the action 
$$
\Gamma_\ast^{\circ N}:\CH^2(X)[\ell^\infty]\longrightarrow \CH^2(X)[\ell^\infty]
$$  
is zero for $N\geq 10$.
\end{corollary}
\begin{proof}
A straightforward limit argument reduces us to the case where $k$ is finitely generated.
(This uses that \'etale cohomology does not change under algebraically closed field extensions.)
By Lemma \ref{lem:compatible-lambda^2} and Corollary \ref{cor:lambda^2}, it thus suffices to show that  
$ 
\Gamma ^{\circ 5}
$ 
acts trivially on $H^4_{cont}(X_\et,\Z_\ell(2))[\ell^\infty]$ and on $H^3_{cont}(X_\et,\Q_\ell/\Z_\ell(2))$.
The Bockstein sequence yields a canonical surjection $$H^3_{cont}(X_\et,\Q_\ell/\Z_\ell(2))\to H^4_{cont}(X_\et,\Z_\ell(2))[\ell^\infty]$$ that is compatible with the action of correspondences.
It thus suffices to show that 
$ 
\Gamma_\ast^{\circ 5}
$ 
acts trivially on $H^3_{cont}(X_\et,\Q_\ell/\Z_\ell(2))$. 
Note that $H^3_{cont}(X_\et,\Q_\ell/\Z_\ell(2))=\colim_ r H^3_{cont}(X_\et,\mu_{\ell^r}^{\otimes 2})$ and   $H^3_{cont}(X_\et,\mu_{\ell^r}^{\otimes 2})=H^3 (X_\et,\mu_{\ell^r}^{\otimes 2})$, cf.\ \cite[(3.1)]{jannsen}. 
The  assertion in question thus follows from the fact that the Hochschild--Serre spectral sequence for  \'etale cohomology (see \cite[p. 105,  III.2.20]{milne}) is compatible with the action of correspondences. 
(In the last step, we are implicitly working with the separable closure of $k$ and not with the algebraic closure; this is possible because neither Chow groups nor \'etale cohomology change by purely inseparable field extensions, see Lemma \ref{lem:Chow-inseparable-extension}.)
\end{proof}

Recall that the exponential characteristic $e$ of a field $k$ is $1$ if $\operatorname{char} k=0$ and $p$ if $p=\operatorname{char} k>0$.

\begin{corollary} \label{cor:rost-2}
Let $S$ be a smooth projective scheme of pure dimension $2$ over a field $k$ of exponential characteristic $e$.
Let $\Gamma\in \CH^2(S\times_k S)$ be a correspondence with base change $\bar \Gamma\in \CH^2(S_{\bar k}\times_{\bar k} S_{\bar k})$  to the algebraic closure $\bar k$ of $k$.
Assume that $\bar \Gamma $ is  torsion and homologically trivial in $\ell$-adic \'etale cohomology for any prime $\ell$ invertible in $\bar k$.
Then up to inverting $e$, the composition $\Gamma^{\circ N} $ is zero for $N\geq 11$.
\end{corollary}
\begin{proof}
A standard norm argument shows that $\Gamma$ is torsion.
By the Chinese remainder theorem, we may assume that $\Gamma$ is $\ell^r$-torsion for some integer $r\geq 0$ and some prime $\ell$ invertible in $k$.
Let $X:=S\times S$.
The assumptions imply that there is a correspondence $\Omega\in \CH^4(X\times X)[\ell^\infty]$ which is homologically trivial over $\bar k$ and such that $\Omega_\ast^{\circ N}(\Gamma)=\Gamma^{\circ N+1}$.
The assertion thus follows from   Corollary \ref{cor:rost-1}.
\end{proof}

\begin{remark}
Up to inverting the exponential characteristic, Corollary \ref{cor:rost-2} is  slightly stronger than the original conjecture of Rost for surfaces, proven in \cite{gille-inventiones,gille-crelle,RS}.
Indeed, we are only asking that $\bar \Gamma$ is torsion and homologically trivial, while the original formulation asks that $ \Gamma $ is rationally equivalent to $0$ over $\bar k$  (or equivalently over some field extension of $k$).
\end{remark}

\section{Proof of main results over $\C$}
\subsection{Integral twisted Borel--Moore cohomology} \label{subsec:notation-betti}
Let $\VV$ be the category whose objects are separated schemes of finite type over $\C$ and such that the morphisms are given by open immersions of schemes of the same dimension.
This is a constructible category of Noetherian schemes as in Definition \ref{def:V}.
Let $\mathcal A:=\Mod_\Z$ and define for $X\in \VV$ and $A\in \mathcal A$,
$$
H^i(X,A(n)):=H^{BM}_{2d_X-i}(X_{\cx},A(d_X-n)),
$$
where $d_X:=\dim X$ and  $H^{BM}_\ast$ denotes Borel--Moore homology and $X_{\cx}$ denotes the analytic space that underlies $X$ and $A(n):=A\otimes_{\Z}(2\pi i)^n \Z$ denotes the $n$-th Tate twist of $A$.
By Proposition \ref{prop:Betti-coho}, $H^\ast$ defines an integral twisted Borel--Moore cohomology theory that is adapted to algebraic equivalence, cf.\ Definition \ref{def:Borel--Moore-integral}.
It follows that all results from Sections \ref{sec:def} and \ref{sec:comparison-thm} hold true in the above set-up if we formally make the replacements
$ \Z_\ell \rightsquigarrow \Z$, $\Q_\ell \rightsquigarrow \Q$,  $\ell^r \rightsquigarrow r$, and $[\ell^\infty]  \rightsquigarrow { }_{\tors}$.


\subsection{Proof of Theorem \ref{thm:main-2-intro}}

\begin{proof}[Proof of Theorems \ref{thm:main-2-intro}]
We use the notation from Section \ref{subsec:notation-betti}.
Performing the aforementioned formal replacements $ \Z_\ell \rightsquigarrow \Z$, $\Q_\ell \rightsquigarrow \Q$,  $\ell^r \rightsquigarrow r$, and $[\ell^\infty]  \rightsquigarrow { }_{\tors}$, Lemma \ref{lem:AiX} shows that $\Grifftilde^i(X)_{\Z}=\Griff ^i(X)$ is the group of homologically trivial cycles modulo algebraic equivalence.
The arguments in Section \ref{subsec:AJ_tr} yield a map
\begin{align} \label{def:lambda-betti-main}
\lambda_{tr}^{i}:\Griff^i(X)_{\tors}\longrightarrow J^i_{tr}(X)_{\tors}:= H^{2i-1}(X,\Q /\Z (i))/N^{i-1}H^{2i-1}(X,\Q (i) ),
\end{align}
where $N^jH^i(X,A(n)):=\ker(H^i(X,A(n))\to H^i(F_{j-1}X,A(n)))$.
If $X$ is smooth projective, we claim that this map agrees with Griffiths transcendental Abel--Jacobi map restricted to torsion cycles.
By the Chinese remainder theorem, it suffices to show this for classes that are $\ell$-power torsion for some prime $\ell$.
In this case our map identifies by Proposition \ref{prop:lambda=lambda_tr} with Bloch's map, which in turn identifies with Griffiths map by \cite[Proposition 3.7]{bloch-compositio}.

Theorem \ref{thm:main-2-intro} follows now as above from Theorem \ref{thm:IHC}, Proposition \ref{prop:Griff}, Theorem \ref{thm:Griff_tors}, and Proposition \ref{prop:im-lambda} (after performing the  formal replacements $ \Z_\ell \rightsquigarrow \Z$, $\Q_\ell \rightsquigarrow \Q$,  $\ell^r \rightsquigarrow r$, and $[\ell^\infty]  \rightsquigarrow { }_{\tors}$).
This concludes the proof of Theorem \ref{thm:main-2-intro}.
\end{proof}

\subsection{Proof of Theorems \ref{thm:coniveau1} and \ref{thm:green}, and Corollaries \ref{cor:n-torsion} and \ref{cor:jannsen}}

\begin{lemma} \label{lem:voev-torsionfree-betti}
In the notation of Section \ref{subsec:notation-betti},   for any $X\in \VV$ and any $i$ and $n$, $H^i(F_0X,\Z(n))$ is torsion-free.
\end{lemma}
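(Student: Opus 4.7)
The plan is to prove torsion-freeness of $H^i(\eta_X,\Z)$ at each generic point $\eta_X$ of an irreducible component of $X$; by additivity (Lemma \ref{lem:XsqcupY}) and the fact that $A(n)=A$ in the integral Betti theory of Proposition \ref{prop:Betti-coho}, this suffices to handle $F_0X$.

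First I would invoke the integral Bockstein sequence (property (\ref{ax:Bockstein}'), applied in the direct limit over dense opens $U\subset X$) to reduce torsion-freeness in degree $i$ to surjectivity of the mod-$r$ reduction map
$$H^{i-1}(\eta_X,\Z)\longrightarrow H^{i-1}(\eta_X,\Z/r)$$
for every $r\geq 1$. The case $i=1$ is immediate from (\ref{ax:normalization}), which gives $H^0(\eta_X,\Z)\cong \Z$, so I may assume $i\geq 2$. Since we may compute $H^\ast(\eta_X,A)$ as the colimit over smooth dense opens $U\subset X$, where the theory reduces to ordinary singular cohomology, Artin's comparison theorem identifies $H^{i-1}(\eta_X,\Z/r)$ with $\colim_U H^{i-1}_\et(U,\Z/r)\cong H^{i-1}_\et(\eta_X,\mu_r^{\otimes(i-1)})$; here I use that $\C\subset \kappa(\eta_X)$ contains all roots of unity so that $\mu_r^{\otimes(i-1)}$ is (non-canonically) isomorphic to $\Z/r$.

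Next I would apply Voevodsky's proof of the Bloch--Kato conjecture \cite{Voe:Bloch-Kato} (cf.\ Remark \ref{rem:bloch-kato}) to conclude that $H^{i-1}_\et(\eta_X,\mu_r^{\otimes(i-1)})$ is generated, as a $\Z/r$-module, by cup products of Kummer classes $\bar \epsilon(f_j)\in H^1(\eta_X,\mu_r)$ for $f_j\in \kappa(\eta_X)^\ast$. The proof of (\ref{ax:Hilbert90}') in Proposition \ref{prop:Betti-coho} constructs the integral map $\epsilon:\kappa(\eta_X)^\ast\to H^1(\eta_X,\Z)$ via the exponential sequence and compares it with the Kummer sequence; this comparison shows that the reduction of $\epsilon(f)$ modulo $r$ recovers $\bar\epsilon(f)$ under the canonical identification $\Z/r\cong\mu_r$, $1\mapsto e^{2\pi i/r}$. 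Taking cup products of these integral lifts $\epsilon(f_j)\in H^1(\eta_X,\Z)$ thus produces integral preimages of every generating symbol, yielding the required surjectivity.

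The hard part will be ensuring the compatibility, in the limit over smooth dense opens, of the cup product pairing on singular cohomology with both the Bloch--Kato symbol presentation on the \'etale side and the exponential-sequence construction of $\epsilon$. This is standard on any smooth complex variety via Artin comparison and compatibility of boundary maps in the exponential and Kummer sequences, but will require a careful bookkeeping of signs and of the identification $\Z/r\cong \mu_r$; exactness of the colimit functor then transports the statement to $\eta_X$.
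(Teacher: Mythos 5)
Your proposal is correct and follows essentially the same route as the paper: reduce by additivity to the generic point of an irreducible $X$ (twists being irrelevant since $A(n)=A$ in the Betti theory), and then deduce torsion-freeness from Voevodsky's norm residue theorem via the Bockstein argument. The only difference is that you spell out the content the paper delegates to Remark \ref{rem:bloch-kato} and Lemma \ref{lem:torsionfree} (mod-$r$ surjectivity via symbols generated by cup products of the integral classes $\epsilon(f)$, compared with Kummer classes through Artin comparison), which is exactly the intended justification.
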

\begin{proof}
By additivity of the cohomology functor (see Lemma \ref{lem:XsqcupY}), we may assume that $X$ is irreducible with generic point $\eta_X\in X$.
By definition, $H^i(F_0X,\Z(n))\cong H^i(F_0X,\Z(i-1))$ for all $i$ and $n$.
Since $X$ is irreducible, the latter coincides with $ H^i(\eta_X,\Z(i-1))$ and so the claim follows from Remark \ref{rem:bloch-kato} and Voevodsky's proof of the Bloch--Kato conjecture \cite{Voe:Bloch-Kato}.
\end{proof}

\begin{proof}[Proof of Theorem \ref{thm:green}] 
Theorem \ref{thm:green} follows with help of Lemma \ref{lem:voev-torsionfree-betti} via the same arguments as in the proof of Theorem \ref{thm:green-ell-adic-arbitrary-field}.
This requires as in Theorem \ref{thm:main-2-intro} the formal replacements $ \Z_\ell \rightsquigarrow \Z$, $\Q_\ell \rightsquigarrow \Q$,  $\ell^r \rightsquigarrow r$, and $[\ell^\infty]  \rightsquigarrow { }_{\tors}$ in Section \ref{sec:comparison-thm}.
\end{proof}

\begin{proof}[Proof of Theorem \ref{thm:coniveau1}] 
This is a consequence of Theorem \ref{thm:green} together with the fact that $\lambda_{tr}^i$ factorizes for smooth projective varieties through Bloch's Abel--Jacobi map for torsion cycles, which in turn agrees with the Abel--Jacobi invariants due to Griffiths in that case, see Proposition \ref{prop:lambda=lambda_tr}.
The assumption $i\geq 2$ is needed, because the assertion in Theorem \ref{thm:green} is empty for $i=1$.
\end{proof}

\begin{proof}[Proof of Corollary \ref{cor:n-torsion}] 
By Theorem \ref{thm:coniveau1}, the $n$-torsion of $N^0\CH^i(X)_{\tors}/N^1\CH^i(X)_{\tors}$ injects into the $n$-torsion of a quotient of $H^{2i-1}(X,\Q/\Z)$, hence is finite.
Moreover, the cycle class map yields an injection of $ \CH^i(X)/N^0\CH^i(X)$
into $H^{2i}(X,\Z)$ and so the $n$-torsion subgroup of $\CH^i(X)_{\tors}/N^0\CH^i(X)_{\tors}$ must be finite as well.
Altogether we conclude that the $n$-torsion subgroup of $\CH^i(X)_{\tors}/N^1\CH^i(X)_{\tors}$ is finite, as claimed.
\end{proof}

\begin{proof}[Proof of Corollary \ref{cor:jannsen}] 
This is an immediate consequence of Theorem \ref{thm:coniveau1}.
\end{proof}

\subsection{Applications of Theorem \ref{thm:green}}
For a complex algebraic scheme $X$, we recall that the coniveau filtration $N^j$ on $\Griff^i(X)$ is defined by saying that a cycle $z\in \Griff^i(X)$ lies in $N^j$ if and only if there is a closed subset $Z\subset X$ with $j=\dim X-\dim Z$ and a homologically trivial cycle $z'$ on $Z$ such that $z$ agrees with the pushforward of $z'$, cf.\  Definition \ref{def:NjCHi}. 
This yields a finite decreasing filtration on $\Griff^i(X)$ with $N^{i-1}=0$, cf.\ Lemmas \ref{lem:Ai-vs-Ni-1} and \ref{lem:AiX}.

\begin{corollary} \label{cor:green-Nj}
Let $X$ be a separated scheme of finite type over $\C$.
Then
$$
\Griff^i(X)_{\tors}=N^1\Griff^i(X)_{\tors}=\dots =N^j\Griff^i(X)_{\tors}\ \ \ \text{for all $j\leq 2i-1-\dim X$}.
$$
\end{corollary}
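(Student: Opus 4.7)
The plan is to derive the corollary as a formal consequence of Theorem~\ref{thm:green}, together with the paper's vanishing assertion that $\im(\overline{\lambda}_{j,tr}^{i}) = 0$ whenever $j < 2i-1-\dim X$. The mechanism is mechanical: when the higher transcendental Abel--Jacobi map has trivial image, its kernel equals its whole source, and Theorem~\ref{thm:green} identifies that kernel with the next step of the coniveau filtration.

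Concretely, I would fix any integer $j$ with $0 \le j \le 2i-2-\dim X$ (so that the strict inequality $j < 2i - 1 - \dim X$ holds) and apply the vanishing to conclude that $\overline{\lambda}_{j,tr}^{i}$ is identically zero on $N^j \Griff^i(X)_{\tors}$. This gives
\[
N^j\Griff^i(X)_{\tors} \;=\; \ker\!\bigl(\overline{\lambda}_{j,tr}^{i}\bigr) \;=\; N^{j+1}\Griff^i(X)_{\tors},
\]
with the second equality furnished by Theorem~\ref{thm:green}. Chaining these equalities for $j = 0, 1, \dots, 2i-2-\dim X$, and using $N^0\Griff^i(X) = \Griff^i(X)$ by construction, one obtains
\[
\Griff^i(X)_{\tors} \;=\; N^1\Griff^i(X)_{\tors} \;=\; \dots \;=\; N^{2i-1-\dim X}\Griff^i(X)_{\tors},
\]
which is exactly the claim. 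The range $2i-1-\dim X \le 0$ is vacuous: only $j=0$ is permitted by the hypothesis $j \le 2i-1-\dim X$, and then $N^0\Griff^i(X)_{\tors} = \Griff^i(X)_{\tors}$ holds trivially.

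The only genuine obstacle in this proposal is the vanishing of $\im(\overline{\lambda}_{j,tr}^{i})$ itself, which I take as an input from elsewhere in the paper. Heuristically, one expects it to follow from a dimension/degree count: a class in $N^j\Griff^i(X)$ is realized on some closed $Z \subset X$ of codimension $j$, and its transcendental Abel--Jacobi invariant lives in $\overline{J}^{i-j}_{tr}(Z)_{\tors}$, a quotient of the Borel--Moore cohomology group $H^{2(i-j)-1}(Z,\Q/\Z(i-j))$; the numerical condition $j < 2i-1-\dim X$ should force this target to vanish, possibly only after enlarging $Z$ (using the direct limit in (\ref{def:higher-AJ})), once one accounts for the quotient by the coniveau subspace $N^1$. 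Granted this input, the deduction of the corollary is purely formal.
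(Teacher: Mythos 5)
Your formal reduction is exactly the paper's concluding step: once one knows $\im(\overline{\lambda}^{i}_{j,tr})=0$ for $j<2i-1-\dim X$, Theorem \ref{thm:green} gives $N^j\Griff^i(X)_{\tors}=\ker(\overline{\lambda}^{i}_{j,tr})=N^{j+1}\Griff^i(X)_{\tors}$ in that range, and chaining these equalities yields the corollary. The gap is that this vanishing is not "an input from elsewhere in the paper": although it is announced in the introduction, the place where it is proved is precisely the proof of Corollary \ref{cor:green-Nj} itself. Taking it as a black box therefore assumes the entire substantive content of the statement, and your sketched justification would not deliver it.

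Concretely, your heuristic points at the wrong mechanism. The target $\overline{J}^{\,i-j}_{tr}(Z)_{\tors}=H^{2(i-j)-1}(Z,\Q/\Z(i-j))/N^{1}H^{2(i-j)-1}(Z,\Q(i-j))$ does \emph{not} vanish under the numerical condition $2(i-j)-1>\dim Z$: for singular or open $Z$ the Borel--Moore groups $H^{m}(Z,\Q/\Z)$ can be nonzero for all $m\leq 2\dim Z$ (e.g.\ torsion classes detected by the Bockstein in $H^{m+1}(Z,\Z)$), and such classes do not lift to $\Q$-coefficients, hence cannot be killed by quotienting by any subgroup of $H^{m}(Z,\Q)$; enlarging $Z$ does not help. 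What vanishes is only the \emph{image} of $\overline{\lambda}^{\,i-j}_{tr}$, and this is where the real work lies: by Proposition \ref{prop:im-lambda} the image of $\lambda^{i-j}_{tr}$ consists of classes lying in $H^{2(i-j)-1}(Z,\Q/\Z(i-j))_{div}$, i.e.\ classes that lift to rational cohomology; and when $2(i-j)-1>\dim Z$ one has $N^{1}H^{2(i-j)-1}(Z,\Q(i-j))=H^{2(i-j)-1}(Z,\Q(i-j))$, because every class restricts to zero on a smooth affine dense open (Borel--Moore cohomology of a smooth variety is singular cohomology, and affine varieties have no cohomology above their dimension). Hence the image dies in the quotient by $N^{1}$, and taking the limit over codimension-$j$ subschemes $Z$ gives $\im(\overline{\lambda}^{i}_{j,tr})=0$ exactly for $2(i-j)-1>\dim X-j$, i.e.\ $j<2i-1-\dim X$. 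With Proposition \ref{prop:im-lambda} and this coniveau computation supplied, your formal argument is complete and coincides with the paper's proof; without them it is not a proof.
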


\begin{proof} 
We use the notation from Section \ref{subsec:notation-betti}.
Proposition \ref{prop:im-lambda} implies that for any separated scheme $X$ of finite type over $\C$, we have
$$
\im(\bar \lambda_{tr}^i)=\frac{N^{i-1}H^{2i-1}(X,\Q /\Z (i))_{div}}{N^{1}H^{2i-1}(X,\Q (i) )} ,
$$
where $H^{2i-1}(X,\Q /\Z (i))_{div}=\im( H^{2i-1}(X,\Q (i))\to H^{2i-1}(X,\Q /\Z (i)))$.
Since affine varieties have no cohomology in degrees greater than their dimension, and because $H^i(X,A(n))$ depends only on the underlying reduced scheme and agrees with singular cohomology if $X$ is smooth, we find that $N^{1}H^{2i-1}(X,\Q (i) )= H^{2i-1}(X,\Q (i) )$ for $2i-1>\dim X$ and so $\im(\bar \lambda_{tr}^i)=0$ for $2i-1>\dim X$.
It follows that $\im(\bar \lambda_{j,tr}^i)=0$ for $2i-2j-1>\dim X-j$, i.e.\ for $j<2i-1-\dim X$.
Theorem \ref{thm:green} thus implies that $N^{j+1}\Griff^i(X)_{\tors}=N^j\Griff^i(X)_{\tors}$ for $j<2i-1-\dim X$, which proves Corollary \ref{cor:green-Nj}.
\end{proof}

 \subsection{Algebraic cycles and traditional unramified cohomology in arbitrary degree}

For $j\geq m$, there is a canonical restriction map
\begin{align} \label{eq:H_j,nr-to-H_m,nr}
H^i_{j,nr}(X,A(n))\longrightarrow H^i_{m,nr}(X,A(n)) .
\end{align}
Recall from Section \ref{sec:def} that we denote its image by $F^{j+1}H^i_{m,nr}(X,A(n))$.  
 We describe the effect of applying the restriction map (\ref{eq:H_j,nr-to-H_m,nr}) to Theorem \ref{thm:main-2-intro} next.

\begin{corollary} \label{cor:main-intro}
Let $X$ be a separated scheme of finite type over $\C$.
Then for any $0\leq j\leq i-2$, there are canonical exact sequences
\begin{align*} 
&\lim_{ \longrightarrow }  Z^{i-j}(Z)_{\tors}\longrightarrow Z^{i}(X)_{\tors}\longrightarrow \frac{F^{i-1}H^{2i-1}_{j-1,nr}(X,\Q/\Z(i) )}{F^{i-1}H^{2i-1}_{j-1,nr}(X,\Q(i))}\longrightarrow 0 ,
\\ 
\lim_{ \longrightarrow }  \Griff^{i-j}(Z)\longrightarrow &\Griff^i(X) \longrightarrow \frac{F^{i-1}H^{2i-1}_{j-1,nr}(X,\Z(i))}{H^{2i-1}(X,\Z(i)) }\longrightarrow 0 ,
\\  
&\lim_{ \longrightarrow }  \mathcal T^{i-j}(Z)\longrightarrow \mathcal T^{i}(X)  \longrightarrow \frac{F^{i-2}H^{2i-2}_{j-1,nr}(X,\Q /\Z(i)) }{ G^{i} F^{i-2} H^{2i-2}_{j-1,nr}(X,\Q/ \Z (i))}\longrightarrow 0,
\end{align*}
where in the direct limits, $Z\subset X$ runs through all reduced closed subschemes of $X$ with $\dim (X)-\dim(Z)=j$. 
\end{corollary}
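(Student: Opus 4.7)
The strategy is to combine the identifications of $Z^i(X)_{\tors}$, $\Griff^i(X)$ and $\mathcal T^i(X)$ provided by Theorem \ref{thm:main-2-intro} with the pushforward exact sequence in refined unramified cohomology established in Corollary \ref{cor:restr-H_nr}. Applied at cohomological degree $2i-1$ with parameter $m = i-2-j$, Corollary \ref{cor:restr-H_nr} yields, for each abelian group $B \in \{\Z, \Q, \Q/\Z\}$, an exact sequence
$$\lim_{\substack{\longrightarrow \\ Z \subset X}} H^{2i-1-2j}_{i-j-2,nr}(Z, B(i-j)) \longrightarrow H^{2i-1}_{i-2,nr}(X, B(i)) \longrightarrow H^{2i-1}_{j-1,nr}(X, B(i)),$$
whose third term has image exactly $F^{i-1}H^{2i-1}_{j-1,nr}(X, B(i))$ by construction of the filtration $F^{\ast}$. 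The analogous application at cohomological degree $2i-2$ with $m = i-3-j$ will produce the cokernel $F^{i-2}H^{2i-2}_{j-1,nr}(X,\Q/\Z(i))$ needed for the $\mathcal T^i$ sequence.

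For the first claimed sequence I would stack the Corollary \ref{cor:restr-H_nr} sequences for $B = \Q$ and $B = \Q/\Z$ into a commutative diagram with vertical maps induced by $\Q \to \Q/\Z$, and pass to cokernels. Theorem \ref{thm:main-2-intro}(1) identifies these cokernels column-by-column with $\lim_Z Z^{i-j}(Z)_{\tors}$, $Z^i(X)_{\tors}$ and $F^{i-1}H^{2i-1}_{j-1,nr}(X,\Q/\Z(i))/F^{i-1}H^{2i-1}_{j-1,nr}(X,\Q(i))$; a direct diagram chase, using the surjectivity of the right-hand maps in both rows, gives exactness of the resulting cokernel sequence. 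For the second sequence, apply Corollary \ref{cor:restr-H_nr} with $B = \Z$ and then quotient by $H^{2i-1}(X,\Z(i))$; by Corollary \ref{cor:F^i} one has $H^{2i-1}(X,\Z(i)) \cong H^{2i-1}(F_iX,\Z(i))$, so its image in $H^{2i-1}(F_{j-1}X,\Z(i))$ sits inside $F^i \subset F^{i-1}$, producing the desired cokernel.

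For the third sequence, apply Corollary \ref{cor:restr-H_nr} with $B = \Q/\Z$ in degree $2i-2$ and parameter $m = i-3-j$, and then pass to the quotient by the $G^i$ filtrations. The key compatibility is that proper pushforward sends $G^{i-j}H^{2i-2j-2}_{i-j-3,nr}(Z,\Q/\Z(i-j))$ into $G^iH^{2i-2}_{i-3,nr}(X,\Q/\Z(i))$: if $\xi$ admits a lift $\xi'$ with $\delta(\xi')$ extending to an integral class on $Z$, then functoriality of the Bockstein sequence under proper pushforward (axiom (\ref{ax:Bockstein})) gives $\delta(\iota_\ast \xi') = \iota_\ast \delta(\xi')$, which lifts integrally on $X$. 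Theorem \ref{thm:main-2-intro}(2) then identifies the map on quotients with $\lim_Z \mathcal T^{i-j}(Z) \to \mathcal T^i(X)$. The boundary case $j = i-2$ is vacuous: $\mathcal T^2(Z) = 0$ by Merkurjev--Suslin, and the cokernel collapses since $F^{i-2}H^{2i-2}_{i-3,nr} = H^{2i-2}_{i-3,nr}$ by definition of refined unramified cohomology.

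The main obstacle will be verifying exactness at $\mathcal T^i(X)$ in the third sequence. Given $[\alpha] \in \mathcal T^i(X)$ whose image in the cokernel vanishes, I must promote the resulting identity, which holds modulo $G^iF^{i-2}H^{2i-2}_{j-1,nr}$ at level $j-1$, to an identity modulo $G^iH^{2i-2}_{i-3,nr}$ at level $i-3$ after modifying $\alpha$ by a pushforward from some $Z$. This will require a simultaneous diagram chase with $\Q/\Z$- and integral coefficients in Corollary \ref{cor:restr-H_nr}, invoking Lemma \ref{lem:torsionfree} to lift torsion Bocksteins to integral classes and axiom (\ref{ax:Hilbert90}) to adjust representatives by residues at codimension-one points. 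The maneuver is parallel to the one used in the proof of Corollary \ref{cor:extension}, and I expect it to go through by essentially the same argument.
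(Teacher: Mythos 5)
Your proposal is correct and takes essentially the same route as the paper: identify the groups via Theorem~\ref{thm:main-2-intro}, then feed the pushforward exact sequence of Corollary~\ref{cor:restr-H_nr} (with parameters $m = i-2-j$ in degree $2i-1$, resp.\ $m = i-3-j$ in degree $2i-2$) through the relevant quotients. The paper only writes out the $\Griff^i$ case and says ``the other cases are similar''; your write-up spells out all three and correctly flags the one nonformal compatibility for the third sequence --- that proper pushforward sends $G^{i-j}$ on $Z$ into $G^i$ on $X$ via functoriality of the Bockstein (\ref{ax:Bockstein}) --- which the paper leaves implicit.
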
 

The above corollary is particularly interesting for $j=1$.
In this case the refined unramified cohomology groups above agree with traditional unramified cohomology  $H^i_{nr}(X,A(n))=H^i_{0,nr}(X,A(n))$.
The corollary then identifies certain graded pieces of traditional unramified cohomology with certain birationally invariant quotients of the above cycle groups.
In particular, 
non-trivial elements in certain pieces of the $F^\ast$ filtration on traditional unramified cohomology $H^i_{nr}(X,A(n))=H^i_{0,nr}(X,A(n))$ detect exactly those cycles on $X$ that are not supported in codimension 1 in the sense that they are not pushforwards of the respective cycle groups on some divisor on $X$.
This  improves some results obtained independently by Ma in \cite{ma2}.

\begin{proof} [Proof of Corollary \ref{cor:main-intro}] 
The corollary follows from Theorem \ref{thm:main-2-intro} and Corollary \ref{cor:restr-H_nr}.
We give some details for $\Griff^i(X)$; the other cases are similar.

We use the same notation as in the proof of Theorem \ref{thm:main-2-intro} and fix the integral twisted Borel--Moore cohomology theory $H^\ast(-,A(n))$ on separated schemes of finite type over $\C$ from Proposition \ref{prop:Betti-coho}.
By Theorem \ref{thm:main-2-intro}, there is a canonical isomorphism
$$
\Griff^i(X)\cong H^{2i-1}_{i-2,nr}(X,\Z(i))/H^{2i-1}(X,\Z(i)).
$$
By Corollary \ref{cor:restr-H_nr},   for any $0\leq j\leq i-1$,  there is a canonical exact sequence
$$
\lim_{\longrightarrow}H^{2(i-j)-1}_{i-j-2,nr}(Z,\Z(i-j))\stackrel{\iota_\ast}\longrightarrow H^{2i-1}_{i-2,nr}(X,\Z(i))\longrightarrow F^{i-1} H^{2i-1}_{j-1,nr}(X,\Z(i))\longrightarrow 0,
$$
where the direct limit runs through all closed reduced subschemes $Z\subset X$ of dimension $\dim Z=\dim X-j$.
Here the first map is induced by the pushforward map with respect to  $Z\hookrightarrow X$ and the second map is the canonical restriction map.
The  latter is surjective by definition of the filtration $F^\ast$ (see Definition \ref{def:F}).

The above sequence induces a sequence
$$
\lim_{\longrightarrow}\frac{H^{2(i-j)-1}_{i-j-2,nr}(Z,\Z(i-j))}{H^{2(i-j)-1} (Z,\Z(i-j))} \stackrel{\iota_\ast}\longrightarrow \frac{H^{2i-1}_{i-2,nr}(X,\Z(i))}{ H^{2i-1}(X,\Z(i))}\longrightarrow \frac{F^{i-1} H^{2i-1}_{j-1,nr}(X,\Z(i))}{ H^{2i-1} (X,\Z(i))} \longrightarrow 0,
$$
and one directly checks that this sequence remains exact.
By Theorem \ref{thm:main-2-intro}, this sequence identifies to an exact sequence
$$
\lim_{\longrightarrow}\Griff^{i-j}(Z)\stackrel{\iota_\ast}\longrightarrow\Griff^i(X) \longrightarrow \frac{F^{i-1} H^{2i-1}_{j-1,nr}(X,\Z(i))}{ H^{2i-1} (X,\Z(i))} \longrightarrow 0.
$$
It follows from the functoriality of the Gysin sequence with respect to proper pushforwards (see (\ref{ax:Gysin})) that the first map above agrees with the pushforward of cycles induced by $Z\hookrightarrow X$.
This concludes the proof of the corollary.
\end{proof}

\begin{remark}
Theorem \ref{thm:singular} together with Corollary \ref{cor:restr-H_nr} implies analogues of
 Corollary \ref{cor:main-intro} over arbitrary fields. 
 We leave it to the reader to formulate and prove those results.
\end{remark}

\subsection{Applications of Theorem \ref{thm:main-2-intro}} 
If $X$ is an integral scheme over $\C$, we write in this section 
$$
H^i(\C(X),A):=H^i(F_0X,A)=\lim_{\substack{\longrightarrow \\ \emptyset\neq U\subset X}} H^i(U,A) ,
$$
which is consistent with some of the notation used in the literature (see e.g.\ \cite{CTV,Voi-unramified}).
The above group  is the cohomology of the generic point of $X$ as defined in (\ref{eq:Hi(kappa(x))}).
If $A=\Z/\ell^r$ or $A=\Q/\Z$,  this group coincides by \cite[p.\ 88, III.1.16]{milne} with the corresponding Galois cohomology group of the field $\C(X)$.

We will need the following result that is proven with methods from \cite{Sch-JAMS}.

\begin{proposition} \label{prop:example}
For any positive integer $n$, there is a smooth projective unirational variety $Y$ of dimension $3n$ over $\C$ such that the composition
$$
H^{2i}(Y,\Z/2)\longrightarrow H^{2i}(\C(Y),\Z/2)\longrightarrow H^{2i}(\C(Y),\Q/\Z)
$$
is nonzero for all $i=1,\dots ,n$.
\end{proposition}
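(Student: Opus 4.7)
The strategy is to take $Y := X^n$, where $X$ is an Artin--Mumford-type unirational threefold, and to build the required classes as cup products of the Artin--Mumford Brauer class pulled back along the projections. Concretely, by the classical Artin--Mumford construction (with a divisor-theoretic refinement as in \cite{Sch-JAMS}), I would fix a smooth projective unirational threefold $X/\C$ together with a class $\tilde\alpha \in H^2(X, \Z/2)$ whose restriction to $H^2(\C(X), \Z/2)$ is nonzero. Setting $Y := X^n$, which is smooth projective unirational of dimension $3n$, I would then form the cup products
\[
\tilde\gamma_i := p_1^*\tilde\alpha \cup \dots \cup p_i^*\tilde\alpha \in H^{2i}(Y, \Z/2), \qquad i = 1, \dots, n,
\]
with $p_j\colon Y \to X$ the $j$-th projection.

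For the second map in the composition I would apply Voevodsky's norm residue theorem: for every $m \geq 1$ we have $H^n(\C(Y), \Z/m) \cong K_n^M(\C(Y))/m$, and passing to the colimit gives $H^n(\C(Y), \Q/\Z) \cong K_n^M(\C(Y)) \otimes_\Z \Q/\Z$. Under these identifications the map $H^n(\C(Y), \Z/2) \to H^n(\C(Y), \Q/\Z)$ corresponds to $\alpha \mapsto \alpha \otimes \tfrac{1}{2}$, which is injective by a direct check. The problem thus reduces to showing $\tilde\gamma_i|_{\C(Y)} \neq 0$ in $K_{2i}^M(\C(Y))/2$.

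Next, via the projection $\pi\colon Y \to X^i$ onto the first $i$ factors, $\tilde\gamma_i|_{\C(Y)}$ is pulled back from a class $\gamma_i \in K_{2i}^M(\C(X^i))/2$; by the unirationality of $X^{n-i}$ there is a dominant rational map $\P^{3(n-i)} \dashrightarrow X^{n-i}$, so the extension $\C(Y)/\C(X^i)$ embeds into the purely transcendental extension $\C(X^i)(t_1, \dots, t_{3(n-i)})$, and Milnor's theorem on purely transcendental extensions yields an injection $K_{2i}^M(\C(X^i))/2 \hookrightarrow K_{2i}^M(\C(Y))/2$. The problem thus reduces to $\gamma_i \neq 0$ in $K_{2i}^M(\C(X^i))/2$. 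Writing $\tilde\alpha|_{\C(X)} = \{f, g\}$ as a symbol (applying Merkurjev's theorem if $\tilde\alpha|_{\C(X)}$ is not already a single symbol), this amounts to the nonvanishing of the iterated symbol $\{f_1, g_1, \dots, f_i, g_i\}$, with $\{f_j, g_j\}$ pulled back from the $j$-th factor.

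The main obstacle is this last nonvanishing, which I would prove by induction on $i$ via iterated tame symbols. Select a prime divisor $D \subset X$ along which $f$ has order one while $g$ remains a non-square on $D$; the residue at the valuation on $\C(X^i)$ associated with $X^{i-1} \times D$ then produces an expression involving the iterated symbol on $X^{i-1}$, so nonvanishing for $i$ follows from nonvanishing for $i-1$, with base case $i = 1$ provided by the Artin--Mumford property of $X$. Ensuring that divisors with the required genericity properties exist at each inductive step---so that the residue calculation proceeds as expected and the inductive hypothesis applies---is the technical heart of the argument and follows the framework of \cite{Sch-JAMS}.
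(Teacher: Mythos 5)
Your reduction steps are fine: taking $Y=X^n$, lifting the degree-two unramified class to $H^2(X,\Z/2)$, pulling back along the projections, and using unirationality of $X^{n-i}$ together with Milnor's theorem on purely transcendental extensions to reduce to $\C(X^i)$. Your appeal to the norm residue theorem for the second map can also be made to work, though not quite ``by a direct check'': the kernel of $K^M_{2i}(F)/2\to K^M_{2i}(F)\otimes_\Z\Q/\Z$ is the image of the torsion subgroup of $K^M_{2i}(F)$, and its vanishing is best seen from the coefficient sequence $0\to\Z/2\to\Q/\Z\to\Q/\Z\to 0$ together with the divisibility of $H^{2i-1}(\C(Y),\Q/\Z)\cong K^M_{2i-1}(\C(Y))\otimes_\Z\Q/\Z$ — i.e.\ it uses Bloch--Kato again, a heavier input than the paper uses for this proposition (the paper's proof avoids Bloch--Kato here entirely).

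The genuine gap is the final non-vanishing, which you delegate to an induction ``via iterated tame symbols'' and to ``the framework of \cite{Sch-JAMS}''. The induction as described does not close: the residue of $\{f_1,g_1,\dots,f_i,g_i\}$ along the divisor $X^{i-1}\times D$ is (up to sign) the class $\{\bar f_1,\bar g_1,\dots,\bar f_{i-1},\bar g_{i-1},\bar g_i\}$ over the residue field $\C(X^{i-1}\times D)$, not over $\C(X^{i-1})$. Since $D\subset X$ is a fixed and in general non-rational surface, the inductive hypothesis gives no control over this class: you would need both that the $(2i-2)$-fold symbol stays nonzero after restriction along the non-trivial extension $\C(X^{i-1}\times D)/\C(X^{i-1})$ and that it remains nonzero after cupping with $\bar g|_D$, and cup products of nonzero classes can very well vanish. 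So the inductive step is a problem of exactly the same nature and difficulty as the statement being proved, and nothing has been reduced. The paper circumvents this by a different mechanism: it takes a conic bundle $f:T\to\mathbb{P}^2$ from \cite{Sch-JAMS} whose class $f^\ast\alpha$ is pulled back from the \emph{rational} base, so that the product class on $Y=T^n$ comes from $\C((\mathbb{P}^2)^n)$, a purely transcendental field where successive residues do give the non-vanishing with $\Q/\Z$-coefficients; to transfer this to $\C(Y)$ it specializes $T$ to a degenerate conic bundle $T_0\to\mathbb{P}^2$ whose generic fibre has a rational point in its smooth locus, and restricts along this section, deriving a contradiction from the assumed vanishing. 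This degeneration-with-section argument is the key idea missing from your proposal; without it (or a genuine substitute) the ``technical heart'' you defer is precisely the whole problem.
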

\begin{proof}
By the proof of \cite[Theorem 1.5]{Sch-JAMS}, there is a unirational smooth complex projective threefold $T$ together with a morphism $f:T\to \CP^2$ whose generic fibre is a conic, such that the following holds:
\begin{itemize}
\item the class $\alpha=(x_1/x_0,x_2/x_0)\in H^2(\C(\CP^2),\Z/2)$ has the property that $f^\ast \alpha\in H^2_{nr}(T,\Z/2)$ is unramified and non-trivial;
\item there is a specialization $T_0$ of $T$ such that the specialization $f_0:T_0\to \CP^2$ of $f$ has the property that its generic fibre has a $\C(\CP^2)$-rational point in its smooth locus.
\end{itemize}

Let us now consider $Y:=T^n$, which is a smooth complex projective variety of dimension $3n$ that is unirational.
Let $\pr_j:Y\to T$ denote the projection onto the $j$-th factor and consider the class 
$$
\gamma_i:=\pr_1^\ast f^\ast\alpha\cup\pr_2^\ast f^\ast \alpha\cup \cdots \cup \pr_i^\ast f^\ast \alpha \in H^{2i}(\C(Y),\Z/2).
$$
Since $\alpha$ is of degree two, the unramified class $f^\ast \alpha$ admits a lift to a class in $H^2(T,\Z/2)$, see Corollary \ref{cor:F^i}.
Hence, $\gamma_i$ admits a lift to a class in $H^{2i}(Y,\Z/2)$ and so 
$$
\gamma_i\in F^iH^{2i}_{nr}(Y,\Z/2).
$$
It remains to show that the image $\gamma'_i$ of $\gamma_i$ in $H^{2i}(\C(Y),\Q/\Z)$ is nonzero for all $i=1,\dots ,n$.
By construction of the class $\gamma_i$, it suffices to prove that $\gamma'_n$ is nonzero and our argument is similar to the proofs of \cite[Proposition 6.1]{Sch-JAMS} and \cite[Theorem 5.3(3)]{Sch-torsion}.

Assume for a contradiction that $\gamma':=\gamma'_n$ is zero in $H^{2n}(\C(Y),\Q/\Z)$.
Let us then specialize $T$ to $T_0$.
Then $Y$ specializes to a projective variety $Y_0$ together with a morphism $Y_0\to (\CP^2)^n$ whose generic fibre admits a rational point in its smooth locus.
The specialization $\gamma'_0$ of $\gamma'$ vanishes, because $\gamma'$ vanishes by assumption.
It follows that the restriction of $\gamma_0'$ to the rational point in the smooth locus of the generic fibre of $Y_0\to (\CP^2)^n$ is zero.
This restriction in turn computes explicitly as the image of
$$
\pr_1^\ast  \alpha\cup\pr_2^\ast  \alpha\cup \cdots \cup \pr_n^\ast  \alpha \in H^{2n}\left( \C\left( (\CP^2)^n\right) ,\Z/2 \right)  
$$ 
in $  H^{2n}(\C((\CP^2)^n),\Q/\Z)$.
But this class is nonzero, as one can check by computing successive residues.
This is a contradiction, which concludes the proof.
\end{proof}

\subsubsection{Integral Hodge conjecture for uniruled varieties} \label{sec:IHC}

Recall that for any algebraic scheme $X$  of dimension $d$ over $\C$, there is a cycle class map $\cl^i_X:\CH^i(X)\to H^{2i}(X,\Z)$, where  $H^i(X,A):=H_{2d-i}^{BM}(X_{\cx},A)$.
We denote its cokernel by $Z^i(X):=\coker(\cl_X^i)$.

\begin{theorem} \label{thm:IHC:example}
For any $n\geq 1$, there is a smooth complex projective unirational variety $Y$ of dimension $3n$ and an elliptic curve $E$ such that $X:=E\times Y$ satisfies
\begin{align} \label{eq:IHC:examples}
\coker \left( 
 \lim_{\longrightarrow}Z^{i-1}(D)_{\tors}\longrightarrow Z^{i}(X)_{\tors} \right) \neq 0 \ \ \text{for all $2\leq i\leq n+1$,}
\end{align}
where $D$ runs through all closed reduced subvarieties $D\subset X$ of codimension $ 1$.
\end{theorem}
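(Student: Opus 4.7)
Let $Y$ be the smooth projective unirational variety of dimension $3n$ from Proposition \ref{prop:example}, carrying classes $\tilde\gamma_i \in H^{2i}(Y, \Z/2)$ for $i = 1, \ldots, n$ whose image in $H^{2i}(\C(Y), \Q/\Z)$ is non-zero. Fix any complex elliptic curve $E$ together with a non-trivial $2$-torsion line bundle $L$ on $E$ and let $\epsilon \in H^1(E, \Z/2)$ be its Kummer class. Set $X := E \times Y$, which is smooth projective and unirational of dimension $3n+1$. For each $i' \in \{2, \ldots, n+1\}$ my candidate element in the cokernel of (\ref{eq:IHC:examples}) will be the image in $Z^{i'}(X)_{\tors}$ of
\[
\beta_{i'} := \pr_E^* \epsilon \cup \pr_Y^* \tilde\gamma_{i'-1} \in H^{2i'-1}(X, \Z/2),
\]
viewed in $H^{2i'-1}(X, \Q/\Z)$ via $\Z/2 \hookrightarrow \Q/\Z$. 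For $i' = 2$, Theorem \ref{thm:main-2-intro}(\ref{item:Tors(coker)}) gives $Z^1(D)_{\tors} \cong H^1_{-1,nr}(D,\Q/\Z)/H^1_{-1,nr}(D,\Q) = 0$ for every divisor $D$, so the direct limit in (\ref{eq:IHC:examples}) vanishes and the cokernel equals $Z^2(X)_{\tors} \cong H^3_{nr}(X,\Q/\Z)/H^3_{nr}(X,\Q)$. For $i' \geq 3$, Corollary \ref{cor:main-intro} with $j=1$ identifies the cokernel with $F^{i'-1}H^{2i'-1}_{0,nr}(X,\Q/\Z)/F^{i'-1}H^{2i'-1}_{0,nr}(X,\Q)$, and the class $[\beta_{i'}]$ lies in $F^{i'-1}H^{2i'-1}_{0,nr}(X,\Q/\Z)$ because $\beta_{i'}$ extends globally to $X$.

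\textbf{Non-vanishing at the generic point via specialization and residues.} I would adapt the strategy of Proposition \ref{prop:example}. Specializing $Y$ to $Y_0$ while leaving $E$ unchanged, one obtains $X_0 = E \times Y_0$ together with a dominant morphism $X_0 \to E \times (\CP^2)^n$ whose generic fiber carries a rational section $\sigma$ into its smooth locus (the fiber-product of the rational sections of each $T_0 \to \CP^2$ produced in the proof of Proposition \ref{prop:example}). The pullback $\sigma^*\beta_{i',0}$ equals the Milnor symbol
\[
\{u\} \cdot \{t_1, t_2\} \cdot \{t_3, t_4\} \cdots \{t_{2i'-3}, t_{2i'-2}\} \in H^{2i'-1}(\C(E)(t_1, \ldots, t_{2n}), \Z/2),
\]
where $u \in \C(E)^*$ is the non-square representing $\epsilon$ and $(t_{2j-1}, t_{2j})$ are affine coordinates on the $j$-th $\CP^2$-factor. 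Taking iterated residues at $t_{2i'-2} = 0, \ldots, t_1 = 0$ strips off the $\CP^2$-factors one by one and reduces the class to the non-trivial element $\{u\} \in H^1(\C(E)(t_{2i'-1}, \ldots, t_{2n}), \Z/2)$. Consequently $\beta_{i'}|_{\C(X)} \neq 0$ in $H^{2i'-1}(\C(X), \Z/2)$; Voevodsky's proof of the Bloch--Kato conjecture then gives that $H^k(K, \Z/2) \hookrightarrow H^k(K, \Q/\Z)$ is injective for any field $K$ of characteristic zero, so $[\beta_{i'}]|_{\C(X)}$ is non-zero in $H^{2i'-1}(\C(X), \Q/\Z)$.

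\textbf{The main obstacle: ruling out rational lifts.} The principal difficulty is promoting this to non-vanishing in the cokernel, i.e.\ showing $[\beta_{i'}]$ is not in the image of the $\Q$-coefficient denominator. Since the image of $H^{2i'-1}(\C(X),\Q)$ in $H^{2i'-1}(\C(X),\Q/\Z)$ is precisely the kernel of the integral Bockstein $\delta: H^{2i'-1}(\C(X), \Q/\Z) \to H^{2i'}(\C(X), \Z)$, it suffices to show $\delta [\beta_{i'}]|_{\C(X)} \neq 0$. Because $\epsilon$ lifts to an integral class $\tilde\epsilon \in H^1(E, \Z)$ (a degree-zero $2$-torsion line bundle on an elliptic curve has trivial first Chern class), the Leibniz rule for the integral Bockstein $\delta^{(2)}$ yields
\[
\delta\beta_{i'} \;=\; \pr_E^* \tilde\epsilon \cup \pr_Y^* \delta^{(2)}\tilde\gamma_{i'-1} \in H^{2i'}(X, \Z),
\]
and a parallel residue analysis on the specialized variety $X_0$, tracking the integral Bockstein of the symbol through the section $\sigma$, shows that $\delta\beta_{i'}$ restricts non-trivially to $\C(X)$. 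The key input is again Bloch--Kato, which forces the vanishing of $H^k(K, \Z)/2 \hookrightarrow H^k(K, \Z/2)$ for $K$ containing all roots of unity (equivalently, $H^{k-1}(K, \Q/\Z)$ is $2$-divisible), so that no non-trivial mod-$2$ Milnor symbol over such $K$ admits an integral lift; applied iteratively this forces $\delta^{(2)}$ of the Milnor symbol $\{u\}\cdot\{t_1,t_2\}\cdots\{t_{2i'-3},t_{2i'-2}\}$ to remain non-zero after restriction to $\C(X_0)$, and by specialization the same holds over the generic $X$. The main technical work of the proof lies precisely in this integral residue computation.
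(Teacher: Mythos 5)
Your setup (producing $Y$ via Proposition \ref{prop:example} and identifying the cokernel through Theorem \ref{thm:main-2-intro} and Corollary \ref{cor:main-intro}) agrees with the paper, but the step you yourself flag as the main obstacle is not just unproven — it cannot work. You reduce to showing that the integral Bockstein $\delta[\beta_{i'}]|_{\C(X)}$ is non-zero in $H^{2i'}(\C(X),\Z)$. By Voevodsky's proof of the Bloch--Kato conjecture, $H^{2i'}(F_0X,\Z)$ is torsion-free (this is exactly Remark \ref{rem:bloch-kato} and Lemma \ref{lem:voev-torsionfree-betti}), so the Bockstein $H^{2i'-1}(\C(X),\Q/\Z)\to H^{2i'}(\C(X),\Z)$ vanishes identically and every $\Q/\Z$-class of the function field lifts to a $\Q$-class of the function field; your sufficient criterion is never satisfiable. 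Your stated ``key input'' is in fact backwards: Bloch--Kato forces every mod-$2$ class over a field containing all roots of unity to admit an integral lift (that is the torsion-freeness used throughout the paper), not the opposite. Concretely, $\delta^{(2)}\tilde\gamma_{i'-1}$ restricted to $\C(Y)$ is a torsion class in a torsion-free group, hence zero, so your Leibniz expression for $\delta\beta_{i'}$ already dies at the generic point.

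The way out is to notice that the denominator in Corollary \ref{cor:main-intro} is not the image of all rational classes of the generic point, but only of the unramified ones, $F^{i-1}H^{2i-1}_{0,nr}(X,\Q)$, and these can be killed globally: since $Y$ is unirational, $\CH_0(X)$ is supported on $\{pt\}\times E$, so a Bloch--Srinivas decomposition of the diagonal gives $H^{k}_{nr}(X,\Q)=0$ for $k\geq 2$ (cf.\ \cite[Proposition 3.3(i)]{CTV}); then any non-zero class in the numerator already gives a non-zero element of the cokernel. This is the paper's route. For the numerator the paper also does not take an arbitrary elliptic curve with a $2$-torsion class: it invokes Colliot-Th\'el\`ene's theorem \cite[Theorem 1.1]{CT-manuscripta}, which produces an elliptic curve $E$ (depending on $Y$) such that $H^{2i+1}(Y\times E,\Q/\Z)\to H^{2i+1}(\C(Y\times E),\Q/\Z)$ is non-zero for all $i\leq n$. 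Your specialization-and-residue sketch for the mod-$2$ product class could conceivably serve as a substitute for that input (the passage from $\Z/2$ to $\Q/\Z$ at the generic point via divisibility is fine), but as written it is only a sketch, and in any case it does not repair the fatal step concerning the rational denominator.
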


Note that for any closed subscheme $Z\subsetneq X$ of codimension $c\geq 1$, the pushforward $Z^{i-c}(Z)\to Z^i(X)$ factors through $Z^{i-1}(D)$ for any divisor $D\subset X$ that contains $Z$, and so the non-trivial class in the cokernel of the above corollary is not hit by $Z^{i-c}(Z)_{\tors}$ and hence in particular not by the torsion in $H^{2i-2c}(Z,\Z)=H^{BM}_{2d_X-2i}(Z,\Z)$, where $d_X=\dim X$. 
In particular, the above theorem implies Corollary \ref{cor:IHC} stated in the introduction.

\begin{proof}[Proof of Theorem \ref{thm:IHC:example}]
By Proposition \ref{prop:example}, there is a unirational smooth complex projective variety $Y$ of dimension $3n$ such that $H^{2i}(Y,\Q/\Z)\longrightarrow H^{2i}(\C(Y),\Q/\Z)$ is nonzero for all $i=1,\dots ,n$. 
It thus follows from a theorem of Colliot-Thélène \cite[Theorem 1.1]{CT-manuscripta} that there is an elliptic curve $E$ such that
the product $X=Y\times E$ has the property that
$$
H^{2i+1}(X,\Q/\Z)\longrightarrow H^{2i+1}(\C(X),\Q/\Z)
$$
is nonzero for all $i=1,\dots ,n$.
Since the Chow group of zero-cycles of $X$ is supported on a curve (i.e.\ $\CH_0(\{pt.\}\times E)\to \CH_0(Y\times E)$ is surjective), the rational unramified cohomology groups of $X$ above degree one vanish by a simple Bloch--Srinivas decomposition of the diagonal argument, see e.g.\ \cite[Proposition 3.3.(i)]{CTV}.\footnote{This step uses the existence of an action of algebraic cycles on unramified cohomology, hence \cite{BO} or \cite{Sch-preparation},  but it does not use the Bloch--Kato conjectures, as  we are only concerned about the vanishing of unramified cohomology with rational coefficients and so torsion-freeness of $H^i_{nr}(X,\Z)$ is not needed.}
The result thus follows from Corollary \ref{cor:main-intro}.
\end{proof}

\subsubsection{Applications to the Artin--Mumford invariant}
In \cite{AM}, Artin and Mumford showed that for any smooth complex projective variety $X$, the torsion subgroup of $H^3(X,\Z)$ is a birational invariant and used this to construct unirational threefolds that are not rational.
For $i>3$, the torsion subgroup of $H^{i}(X,\Z)$ is not a birational invariant.
However, Voisin observed (see \cite[Remark 2.4]{Voi-unramified}) that the Bloch--Kato conjecture proven by Voevodsky implies that the torsion subgroup of $H^{5}(X,\Z)/ N^2H^{5}(X,\Z)$ 
is a birational invariant.
By Proposition \ref{prop:gr_N}, there is a canonical surjection
$$
\varphi:\Tors \left( \frac{H^{i}(X,\Z )}{N^2H^{i}(X,\Z )}\right) \twoheadrightarrow \frac{ G^{\lceil i/2\rceil} H^{i-1}_{nr}(X,\Q/\Z )}{ H^{i-1}_{nr}(X,\Q )} .
$$
(It follows from the Bloch--Kato conjecture, proven by Voevodsky, that this surjection is in fact an isomorphism, see Remark \ref{rem:gr_N}, but we will not use this.)

As an application,  we prove that Voisin's generalization of the Artin--Mumford invariant is non-trivial in any odd degree.  

\begin{corollary}\label{cor:higher-artin-mumford}
For any positive integer $i$, there is a unirational smooth complex projective variety $X$ with a torsion class in $H^{2i+1}(X,\Z)$ that is non-zero in the quotient
$$
 H^{2i+1}(X,\Z)/ N^2H^{2i+1}(X,\Z) .
$$
\end{corollary}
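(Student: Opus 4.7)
The plan is to apply Proposition \ref{prop:gr_N} in the integral Betti set-up (Proposition \ref{prop:Betti-coho}) to the variety produced by Proposition \ref{prop:example}. Taking $n=i$ in Proposition \ref{prop:example} produces a smooth projective unirational variety $Y$ of dimension $3i$ together with a class $\alpha\in H^{2i}(Y,\Z/2)$ whose image in $H^{2i}(\C(Y),\Q/\Z)$ is non-zero; I would set $X:=Y$ and let $\overline\alpha\in H^{2i}(X,\Q/\Z)$ denote the image of $\alpha$ under the natural map $\Z/2\hookrightarrow\Q/\Z$. Proposition \ref{prop:gr_N} then gives, in degree $2i+1$ (note $\lceil(2i+1)/2\rceil=i+1$), a surjection
\[
\varphi\colon\left(\frac{H^{2i+1}(X,\Z)}{N^2H^{2i+1}(X,\Z)}\right)_{\tors}\twoheadrightarrow \frac{G^{i+1}H^{2i}_{0,nr}(X,\Q/\Z)}{H^{2i}_{0,nr}(X,\Q)}
\]
and asserts that the image of $H^{2i+1}(X,\Z)_{\tors}$ on the left maps onto the image of $H^{2i}(X,\Q/\Z)$ on the right.

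I would then feed $\overline\alpha$ into this last assertion. The class $\overline\alpha$ lies in $G^{i+1}H^{2i}_{0,nr}(X,\Q/\Z)$ because its Bockstein $\delta(\overline\alpha)=\delta_2(\alpha)$ is globally defined on $X$, and by Corollary \ref{cor:F^i} the image $\im(H^{2i+1}(X,\Z)\to H^{2i+1}(F_1X,\Z))$ equals $F^{i+1}H^{2i+1}(F_1X,\Z)$, which is exactly the condition cutting out $G^{i+1}$. Non-vanishing of $\overline\alpha$ in $G^{i+1}H^{2i}_{0,nr}(X,\Q/\Z)\subset H^{2i}(\C(X),\Q/\Z)$ is immediate from Proposition \ref{prop:example}. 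Granting that $\overline\alpha$ also remains non-zero modulo $H^{2i}_{0,nr}(X,\Q)$, any preimage under $\varphi$ is a torsion class $\tau\in H^{2i+1}(X,\Z)$ non-zero in $H^{2i+1}(X,\Z)/N^2H^{2i+1}(X,\Z)$, and $X$ is unirational as required.

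The main obstacle is precisely this last condition: that $\overline\alpha$ is not in the image of $H^{2i}_{0,nr}(X,\Q)\to H^{2i}_{0,nr}(X,\Q/\Z)$. Using the long exact sequence attached to $0\to\Z\to\Q\to\Q/\Z\to 0$ and the compatibility of the Bockstein with the inclusion $\Z/2\hookrightarrow\Q/\Z$, a class in this image coming from $\Z/2$ is characterized by the condition $\delta_2(\alpha)\in N^1H^{2i+1}(X,\Z)$, so the crux is to show $\delta_2(\alpha)$ is generically non-trivial on $X$. I expect this to follow from the cup-product construction of $\alpha$ in the proof of Proposition \ref{prop:example}: $\alpha$ is built from Brauer-type classes $f^*\alpha_0$ whose individual integral Bocksteins encode the Artin--Mumford obstruction, and the Leibniz rule for $\delta_2$ over cup products together with the specialization argument already present in Proposition \ref{prop:example} should propagate the non-triviality of those individual Bocksteins to $\delta_2(\alpha)$ on the generic point, closing out the application of $\varphi$.
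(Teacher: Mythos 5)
Your outline is the right one and matches the paper's structure: set $X:=Y$ with $Y$ from Proposition \ref{prop:example} (for $n=i$), push the class $\alpha\in H^{2i}(Y,\Z/2)$ to $\overline\alpha\in H^{2i}(X,\Q/\Z)$, observe that its image in $H^{2i}_{0,nr}(X,\Q/\Z)\subset H^{2i}(\C(X),\Q/\Z)$ is nonzero and lies in $G^{i+1}$, and then use the second assertion of Proposition \ref{prop:gr_N} to produce a torsion class in $H^{2i+1}(X,\Z)$ that survives modulo $N^2$. Up to that point everything is correct.

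The gap is exactly where you flag it, but the way you propose to close it is not what the paper does, and it is both unnecessary and unproven. You try to show that $\overline\alpha$ does not lie in the image of $H^{2i}_{0,nr}(X,\Q)\to H^{2i}_{0,nr}(X,\Q/\Z)$ by an intricate Bockstein/cup-product/specialization argument which you yourself describe as expected rather than established; moreover, your intermediate reformulation (that membership in this image is ``characterized by $\delta_2(\alpha)\in N^1H^{2i+1}(X,\Z)$'') is not justified and is dubious as stated, since the image of $H^{2i}_{0,nr}(X,\Q)$ is a divisible subgroup and membership in it is not obviously detected by a single Bockstein condition. The paper instead uses the much simpler and decisive fact that for a rationally connected (in particular, unirational) smooth projective variety the \emph{rational} unramified cohomology $H^{m}_{nr}(X,\Q)=H^m_{0,nr}(X,\Q)$ vanishes for $m\geq 2$; this follows from a Bloch--Srinivas decomposition of the diagonal and is cited (to Colliot-Th\'el\`ene--Voisin) in the proof of Theorem \ref{thm:IHC:example}. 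With that vanishing, the denominator $H^{2i}_{0,nr}(X,\Q)$ in Proposition \ref{prop:gr_N} is zero for $i\geq 1$, so the condition you worry about is automatic and there is nothing further to check. You should replace your speculative final paragraph by this vanishing statement; as written, your argument does not prove the claim.
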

\begin{proof} 
Rationally connected varieties have no rational unramified cohomology in positive degrees, see e.g.\  \cite[Proposition 3.3.(i)]{CTV}.
The claim in Corollary \ref{cor:higher-artin-mumford} follows therefore directly from Propositions \ref{prop:Betti-coho},  \ref{prop:gr_N}, and  \ref{prop:example}.
\end{proof}

\section*{Acknowledgements} 
I am grateful for conversations with and comments from Giuseppe Ancona,  Theodosis Alexandrou,  Samet Balkan, Jean-Louis Colliot-Th\'el\`ene,  Matthias Paulsen,  Anand Sawant,   Domenico Valloni,  Claire Voisin, and Lin Zhou.
Thanks to Hélène Esnault and the referees for  helpful comments on the presentation and to  Bhargav Bhatt  for checking (a previous version of) Section \ref{subsec:constructible-complex}. 
  	This project has received funding from the European Research Council (ERC) under the European Union's Horizon 2020 research and innovation programme under grant agreement No 948066 (ERC-StG RationAlgic).

\end{document}